\documentclass[a4paper, 11pt, reqno]{amsart}

\usepackage{amssymb,amsthm,amsmath,mathrsfs}
\usepackage{mathtools}
\usepackage{bbm}
\usepackage{stackengine}
\usepackage{enumitem}
\usepackage{soul}
\usepackage{thm-restate}

\usepackage[foot]{amsaddr}

\usepackage{caption,color,graphicx}
\usepackage[dvipsnames]{xcolor}
\usepackage{soul}

\usepackage{hyperref}
\hypersetup{colorlinks  = true,
            urlcolor    = blue,
            linkcolor   = red,
            citecolor   = blue}

\usepackage{verbatim}

\usepackage{environ}
\mathtoolsset{showonlyrefs,showmanualtags}

\usepackage[centering, top=3cm, bottom=2.5cm]{geometry}

\allowdisplaybreaks

\newcommand{\R}{\mathbb{R}}

\renewcommand{\d}{\mathrm{d}}
\newcommand{\proj}{\mathrm{proj}}

\newcommand{\e}{\varepsilon}

\newcommand{\vertiii}[1]{{\left\vert\kern-0.25ex\left\vert\kern-0.25ex\left\vert #1 \right\vert\kern-0.25ex\right\vert\kern-0.25ex\right\vert}}

\let\oldtocsection=\tocsection
\let\oldtocsubsection=\tocsubsection
\renewcommand{\tocsection}[2]{\hspace{0em}\oldtocsection{#1}{#2}}
\renewcommand{\tocsubsection}[2]{\hspace{1em}\oldtocsubsection{#1}{#2}}

\theoremstyle{plain}
\newtheorem*{theorem*}{Theorem}
\newtheorem{theorem}{Theorem}[section]
\newtheorem{lemma}[theorem]{Lemma}
\newtheorem{proposition}[theorem]{Proposition}
\newtheorem{corollary}[theorem]{Corollary}
\newtheorem{claim}[theorem]{Claim}

\newtheorem{manualtheorem}{Theorem}
\newenvironment{mytheorem}[1][]{%
    \renewcommand\themanualtheorem{#1}%
    \begin{manualtheorem}
}{\end{manualtheorem}}

\newtheorem{manualcorollary}{Corollary}

\newtheorem{manualhyp}{Hypothesis}
\newenvironment{hypothesis}[1][]{%
    \renewcommand\themanualhyp{#1}%
    \begin{manualhyp}
}{\end{manualhyp}}

\theoremstyle{remark}
\newtheorem{definition}[theorem]{Definition}
\newtheorem{remark}[theorem]{Remark}
\newtheorem{example}[theorem]{Example}

\newtheorem{manualstep}{\bf Step}
\newenvironment{step}[1][]{%
    \renewcommand\themanualstep{#1}%
    \begin{manualstep}\itshape
}{\end{manualstep}}



\IfPackageLoadedTF{MnSymbol}{}{}

\newlist{todolist}{itemize}{2}
\setlist[todolist]{label=$\square$}
\usepackage{pifont}

\author[Lucas Amorim]{Lucas Amorim$^{1,2}$}
\author[Matheus M.~Castro]{Matheus M. Castro$^{3}$}
\author[Benoît Saussol]{Benoît Saussol$^{4}$}
\author[Sandro Vaienti]{Sandro Vaienti$^{1,5}$}
\email{%
\href{mailto:lucas.amorim.vb@gmail.com}{lucas.amorim.vb@gmail.com},
\href{mailto:mmcastro@ime.unicamp.br}{mmcastro@ime.unicamp.br}, 
 \href{mailto:benoit.saussol@univ-amu.fr}{benoit.saussol@univ-amu.fr}}
\email{\href{mailto:vaienti@cpt.univ-mrs.fr}{vaienti@cpt.univ-mrs.fr}}

\address{$^{1}$Université de Toulon, Aix Marseille Université, CNRS, CPT, 13009 Marseille, France}
\address{$^{2}$Centro de Matemática da Universidade do Porto, Rua do Campo Alegre 687, 4169-007 Porto, Portugal}
\address{$^{3}$Departamento de Matemática --  Universidade Estadual de Campinas (UNICAMP), 13083-859, Campinas, SP, Brazil}
\address{$^{4}$Aix Marseille Univ, CNRS, I2M, Marseille, France}
\address{$^{5}$Department of Mathematics, School of Sciences, Great Bay University, Dongguan, Guangdong 523000, China}

\numberwithin{equation}{section}
\usepackage{bbm}

\usepackage[backend = biber, doi = true, url = false, isbn = false, maxbibnames = 10, giveninits=true, style = numeric]{biblatex}
\AtBeginBibliography{\small}

\newcommand{\opnorm}[1]{\left\| #1 \right\|}

\title[Thermodynamic formalism for hyperbolic random dynamical systems]{Thermodynamic formalism for hyperbolic
random dynamical systems}

\begin{document}

\begin{abstract}
We develop thermodynamic formalism for random Anosov maps and uniformly H\"older random potentials. We assume uniform fibre hyperbolicity given by deterministic invariant cone fields, a one-dimensional stable direction, and a fibrewise mixing condition whose mixing time may depend on the base point. To do so, we construct adapted projective cones for the random Perron--Frobenius cocycle and prove that the cocycle contracts the associated Hilbert projective metrics. This allows us to construct a $\mathbb P$-relative equilibrium state, prove its uniqueness, and establish quenched exponential decay of correlations.

\end{abstract}

\keywords{Thermodynamic Formalism; skew products; random dynamical systems; hyperbolic dynamical systems}
\subjclass[2020]{28D20; 37D20; 37D35; 37H05}
\maketitle
\tableofcontents

\section{Introduction}

Given a dynamical system $T:M\to M$, ergodic theory provides a way of studying the long-time behaviour of the orbits of $T$ through a probabilistic viewpoint. Once a $T$-invariant probability measure $\mu$ is fixed, every observable $g:M\to\mathbb R$ gives rise to the stationary process $X_k=g\circ T^k:(M,\mathbb P)\to\mathbb R$ where  $k\in\mathbb N$. One may then ask whether this process satisfies the usual statistical laws of probability theory, such as the law of large numbers, decay of correlations, limit theorems, and large deviations. One weakness of such an approach comes from the fact that the invariant measure is usually not canonically determined by the dynamics. Indeed, uniformly hyperbolic systems admit uncountably many invariant measures \cite[Theorem 4]{Sigmund1970}, and the problem is therefore not merely to find invariant measures, but to identify the dynamically meaningful ones and establish their statistical properties.

Inspired by ideas from statistical mechanics, thermodynamic formalism gives a systematic way to select distinguished invariant measures. Instead of choosing an invariant probability measure directly, one fixes a potential $\phi:M\to\mathbb R$ and looks for invariant measures maximising the free energy $h_\mu(T)+\int \phi\,\d\mu$ over the $T$-invariant probability measures $\mu$. Such maximisers are called equilibrium states. Thus the problem of selecting meaningful invariant measures is transformed into a variational problem combining dynamical complexity, measured by entropy, with the physical information, given by the potential $\phi$.

For uniformly hyperbolic systems, this point of view was developed in the works of Sinai, Ruelle and Bowen. Sinai introduced Gibbs measures into ergodic theory, showing that ideas from statistical mechanics naturally describe invariant measures for hyperbolic dynamics \cite{Sinai1972}. Ruelle developed the abstract thermodynamic formalism and the transfer-operator viewpoint \cite{Ruelle1978}. Bowen developed a construction based on Markov partitions, which allow one to reduce the smooth dynamics on a basic hyperbolic set to symbolic dynamics given by a subshift of finite type. Through such a reduction, Bowen proved the existence and uniqueness of equilibrium states for Hölder potentials for Axiom A diffeomorphisms \cite{Bowen1975}. Similar results, as well as refined statistical properties such as sharp decay of correlations and spectral stability, have also been obtained by transfer-operator methods acting on suitable anisotropic spaces adapted to the hyperbolic splitting; see, for instance, \cite{liverani1995decay,LSV1998,BlankKellerLiverani2002,GouezelLiverani2006,BaladiTsujii2007,YellowBook,Baladi2018}.

Thermodynamic formalism can also be extended to random dynamical systems. In this setting, the dynamics is described by a skew product
$$F:\Omega\times M\to \Omega\times M,\  F(\omega,x)=(\theta\omega,T_\omega (x)),$$
where the base dynamics $\theta$ is a $\mathbb P$-invariant map and $T_\omega:M\to M$ is the fibre map applied over $\omega$; see \cite{arnold1995random,liu2006smooth,crauel2002random}. The probability measure $\mathbb P$ is fixed, and the main object is the fibre dynamics along a $\mathbb{P}$-typical realisation of the base. Accordingly, the relevant invariant measures are those $F$-invariant probabilities whose projection to the base is $\mathbb P$. Hence, the deterministic selection principle has a relative random counterpart: for a random potential $\phi:\Omega\times M\to\mathbb R$, one seeks invariant measures maximising $h_\mu(F\mid \mathbb P)+\int\phi\,\d\mu$  among all $F$-invariant measures whose projection is $\mathbb P$, where $h_\mu(F\mid \mathbb P)$ denotes the relative metric entropy of $F$ with respect to the base measure $\mathbb P$ (see Definition \ref{def:relativeentro}). The basic theory of thermodynamic formalism for random systems was developed by Kifer and Bogenschütz \cite{Kifer92,bogenschutz1992entropy,Kifer2001,kifer2006random}. This relative viewpoint is also closely related to the relativised variational principle of Ledrappier--Walters and to Walters' compensation functions for factor maps \cite{LedrappierWalters77,Walters86}.

Thermodynamic formalism for random systems has been developed in many directions: random symbolic dynamics \cite{BogenschutzGundlach1995ETDS,GundlachKifer2000DCDS,DenkerKiferStadlbauer2008DCDS,Stadlbauer2010StochDyn,Stadlbauer2017ETDS}; smooth expanding maps \cite{Kifer92,MSU11}; non-uniformly expanding maps \cite{ArbietoMatheusOliveira03,StadlbauerSuzukiVarandas2021CMP}; and non-uniformly hyperbolic random interval maps, both closed and open and possibly with discontinuities, \cite{AtnipFroylandGonzalezTokmanVaienti2021CMP,AtnipFroylandGonzalezTokmanVaienti2023ETDS,AtnipFroylandGonzalezTokmanVaienti2024DM}. Nevertheless, for random systems generated by diffeomorphisms, results beyond the SRB setting remain comparatively scarce. For SRB-type results in deterministic and random hyperbolic systems, see \cite{Ledrappier-Young,BlumenthalYoung2019,DragicevicFroylandGonzalezTokmanVaienti2020TAMS,Alves2023,huang2016ergodic,liu2024exponential} and the references therein.

For random expanding maps and symbolic systems, thermodynamic formalism is often obtained through Birkhoff cone-contraction arguments for random transfer operators. For hyperbolic diffeomorphisms, however, the deterministic spectral theory is usually based on anisotropic Banach spaces adapted to the stable and unstable directions \cite{GouezelLiverani2006,BaladiTsujii2007,Baladi2000}. These methods are powerful, but they do not directly provide a random cone-contraction framework for transfer-operator cocycles. This is the gap addressed here.

In this paper, we work under two hyperbolicity and mixing assumptions. Hypothesis~\ref{hyp:h} requires uniform hyperbolicity along the fibres, expressed through deterministic invariant cone fields, with one-dimensional stable direction. It also assumes a fibrewise topological mixing condition formulated in terms of a random mixing time: images of local unstable manifolds are required to become dense in the fibre, but the time at which this happens may depend on the base point $\omega$. Hypothesis~\ref{hyp:h'} strengthens this condition by requiring an exponential tail estimate for the successive fibrewise mixing times.

Under Hypothesis~\ref{hyp:h}, we introduce adapted projective cones for the random Perron--Frobenius cocycle associated with the system. The cones are defined through stable leaves and unstable holonomies, and their construction is based on \cite{viana1997stochastic,liu2024exponential}. The cocycle acts on these cones by Hilbert-metric contractions, yielding a quenched spectral decomposition. This allows us to construct a $\mathbb P$-relative equilibrium state $\upsilon_\phi$ for every uniformly H\"older random potential $\phi:\Omega\times M\to\mathbb R$.

Our first main result shows that this equilibrium state is unique among all $F$-invariant probability measures whose projection to the base is $\mathbb P$. Equivalently, $\upsilon_\phi$ is the unique maximiser of
$h_\mu(F\mid\mathbb P)+\int\phi\,\d\mu$.

Our second main result is quenched exponential decay of correlations for $\upsilon_\phi$. More explicitly, for $\mathbb P$-almost every $\omega$, there exist $\Lambda\in(0,1)$ and a measurable constant $C(\omega)>0$ such that, for all $f,g\in \mathcal C^\beta(M)$ and all $n\geq1$,
\begin{align*}
\left|
\int_M f\circ T_\omega^n\, g\,\d \upsilon_\omega
-
\int_M f\,\d \upsilon_{\theta^n\omega}
\int_M g\,\d \upsilon_\omega
\right|
\leq
C(\omega)\Lambda^n
\|f\|_{\mathcal C^\beta(M)}\|g\|_{\mathcal C^\beta(M)}.
\end{align*}
Thus the decay is quenched in the sense that the estimate holds fibrewise, along almost every realisation of the base, with constants depending on $\omega$. Under Hypothesis~\ref{hyp:h}, the constant is measurable. Under the stronger Hypothesis~\ref{hyp:h'}, it is shown that the constant $C\in L^p(\Omega,\mathbb P)$ for any $p\in[1,\infty)$. Uniform fibrewise mixing, as in \cite{huang2016ergodic,liu2024exponential}, corresponds to the special case in which the mixing time is uniformly bounded.

The paper is organised as follows. In Section~2 we introduce the setting, the relative notions of entropy, pressure and equilibrium state, and state the main theorems. Section~3 presents examples satisfying the hypotheses. Section~4 recalls the required geometric results for hyperbolic random dynamical systems, including stable and unstable manifolds, local product structure and holonomies. Section~5 constructs the adapted projective cones and the corresponding Hilbert metrics. In Section~6 we define the random Perron--Frobenius operator and prove the quenched spectral decomposition, which is then used to construct the candidate equilibrium state and establish decay of correlations. Section~7 proves the variational principle and the weak Gibbs property. Section~8 proves uniqueness of the equilibrium state. Section~9 proves the main theorems.

\section{Setup and main results}

Throughout the paper, we fix a compact, connected, smooth Riemannian manifold $M$. We write $\|\cdot\|$ for the norm on $TM$ induced by the Riemannian metric, and $d$ for the corresponding distance on $M$. We also fix a compact metric space $\Omega$. We consider skew products of the form
\begin{align*}
F:\Omega\times M&\to\Omega\times M\\
\left(\omega,x\right)&\mapsto\left(\theta\left(\omega\right),T_\omega(x)\right),
\end{align*}
under the following standing assumptions:
\begin{itemize}
\item $F$ is a homeomorphism;
\item $\theta:\Omega\to\Omega$ is a homeomorphism, and $\mathbb P$ is a fixed ergodic $\theta$-invariant Borel probability measure on $\Omega$;
\item for each $\omega\in\Omega$, the map $T_\omega:M\to M$ is a $\mathcal C^{2}$ diffeomorphism; 
\item $\sup_{\omega\in \Omega}\|T_\omega\|_{\mathcal C^2} <\infty$ and $\sup_{\omega\in \Omega}\|(T_\omega)^{-1}\|_{\mathcal C^2} <\infty$. 
\end{itemize}
We refer to a skew product satisfying these assumptions as a \emph{regular random dynamical system}.

For $n\in\mathbb Z$ and $\left(\omega,x\right)\in\Omega\times M$, we define the fibre iterates by
$$
T_\omega^n\left(x\right)=\begin{cases}
    T_{\theta^{n-1}\omega}\circ\cdots\circ T_{\theta\omega}\circ T_\omega\left(x\right),&\text{if }n\in\mathbb N\\
    x,& \text{if }n=0,\\
    (T_{\theta^n\omega})^{-1}\circ \ldots \circ (T_{\theta^{-2}\omega})^{-1}\circ (T_{\theta^{-1}\omega})^{-1}(x),&\text{if }n\in \mathbb Z_{<0}
\end{cases}.
$$
Let $\proj_M:\Omega\times M\to M$ and $\proj_\Omega:\Omega\times M\to\Omega$ denote the coordinate projections:
$$ \proj_M\left(\omega,x\right)=x, \proj_\Omega\left(\omega,x\right)=\omega. $$
Then $T_\omega^n\left(x\right)=\proj_M\left(F^n\left(\omega,x\right)\right)$ for all $n\in\mathbb N$. Unless stated otherwise, all measures are Borel probability measures.

\subsection{Relative entropy and relative equilibrium states}
In this section we recall the notion of equilibrium states. We start by recalling the concept of relative entropy.

\begin{definition}[Relative metric entropy]\label{def:relativeentro}
Let $F:\Omega\times M\to\Omega\times M$ be a regular random dynamical system, and let $\mu$ be an $F$-invariant probability measure such that $(\proj_\Omega)_*\mu=\mathbb P$. We define the \emph{$\mathbb P$-relative metric entropy of $\mu$ with respect to $F$} by
$$
h_\mu(F\mid\mathbb P)
:=
\sup_{\mathcal P}
\lim_{n\to\infty}
\frac{1}{n}
\int_\Omega
H_{\mu_\omega}
\left(
\bigvee_{k=0}^{n-1}
(T_\omega^k)^{-1}\mathcal P(\theta^k\omega)
\right)
\mathbb P(\d\omega),
$$
where the supremum is taken over all finite measurable partitions $\mathcal P$ of $\Omega\times M$.

More explicitly, if $\mathcal P=\{P_1,\ldots,P_r\}$, then for each $\omega\in\Omega$ we define
$$
P_i(\omega):=\{x\in M:(\omega,x)\in P_i\} \ \text{and}\ \mathcal P(\omega):=\{P_1(\omega),\ldots,P_r(\omega)\}.
$$
Hence, $\mathcal P(\omega)$ is a finite measurable partition of $M$, up to $\mu_\omega$-null sets, and
$$
(T_\omega^k)^{-1}\mathcal P(\theta^k\omega)
=
\{(T_\omega^k)^{-1}P_i(\theta^k\omega):1\le i\le r\}.
$$

The family $\{\mu_\omega\}_{\omega\in\Omega}$ denotes the disintegration of $\mu$ over $\mathbb P$, that is, $\mu(\d\omega,\d x)=\mu_\omega(\d x)\mathbb P(\d\omega)$
see \cite[Theorem 5.1.11]{Viana2016}. For a finite partition $\mathcal Q$ we set
$$
H_\nu(\mathcal Q)
=
\sum_{Q\in\mathcal Q}
-\nu(Q)\log\nu(Q).
$$
Since $\theta$ is $\mathbb P$-ergodic, the limit exists $\mathbb P$-almost surely and agrees with the integrated value above, see \cite[Page 383]{kifer2006random}.
\end{definition}

\begin{definition}[Relative pressure and relative equilibrium states]
Let $F:\Omega\times M\to\Omega\times M$ be a regular random dynamical system and let $\phi:\Omega\times M\to\mathbb R$ be a measurable function. The \emph{$\mathbb P$-relative topological pressure} of $\phi$ with respect to $F$ is defined, via the variational principle, by
$$
P_{\mathrm{top}}\left(F,\phi\mid\mathbb P\right)
:=\sup\left\{h_\nu\left(F\mid\mathbb P\right)+\int\phi\,\mathrm d\nu:\ \nu\ \text{is }F\text{-invariant and }\left(\proj_\Omega\right)_*\nu=\mathbb P\right\}.
$$
A probability measure $\mu$ on $\Omega\times M$ is called a \emph{$\mathbb P$-relative equilibrium state} for $\phi$ if $\mu$ is $F$-invariant, $\left(\proj_\Omega\right)_*\mu=\mathbb P$, and it attains the above supremum, i.e. 
$$
h_\mu\left(F\mid\mathbb P\right)+\int\phi\,\d\mu
=P_{\mathrm{top}}\left(F,\phi\mid\mathbb P\right).
$$

\end{definition}

The following theorem is a classic result in Random Dynamical Systems (see \cite[Corollary 1.2.8 and Theorem 1.2.13]{kifer2006random}, see also \cite{Kifer2001,bogenschutz1992entropy}). Such an alternative characterisation of topological pressure will be useful in Section \ref{sec:wgibbs}.

\begin{proposition}[Spanning-set formula for the relative pressure]\label{prop:psaning}
Let $F:\Omega\times M\to\Omega\times M$ be a regular random dynamical system. Let $\phi:\Omega\times M\to\mathbb R$ be such that, for $\mathbb P$-almost every $\omega\in\Omega$, the map
$x\in M\mapsto \phi(\omega,x)\in\mathbb R$ 
is continuous and $\mathbb E\left[ \|\phi(\omega,\cdot)\|_\infty\right]<\infty$.

For $\omega\in\Omega$, $n\in\mathbb N$ and $x,y\in M$, define
\begin{align}
d_n^\omega(x,y):=\max_{0\le k<n} d(T_\omega^k(x),T_\omega^k(y))
\text{ and }
S_n\phi(\omega,x):=\sum_{k=0}^{n-1}\phi(F^k(\omega,x)).\label{eq:dnomega}\end{align}
A set $D\subset M$ is called $(\omega,n,\varepsilon)$-spanning if for every $x\in M$ there exists $y\in D$ such that $
d_n^\omega(x,y)\le\varepsilon.$
Set
$$
Z_0(n,\e, \omega)
:=\inf\left\{\sum_{y\in D} e^{S_n\phi(\omega,y)}:\ D\subset M\ \text{is }(\omega,n,\varepsilon)\text{-spanning}\right\}.
$$
Then
\begin{align*}
P_{\mathrm{top}}(F,\phi\mid\mathbb P)
&=\lim_{\varepsilon\to0}\limsup_{n\to\infty}\int_\Omega \frac1n \log Z_0(n,\e, \omega)\,\mathbb P(\d \omega) \\
&= \lim_{\varepsilon\to0}\liminf_{n\to\infty}\int_\Omega \frac1n  \log Z_0(n,\e, \omega)\,\mathbb P(\d\omega)  .
\end{align*}
Moreover, if $\mathbb P$ is $\theta$-ergodic, then the above equation remains true $\mathbb P$-almost surely without taking the integral.
\end{proposition}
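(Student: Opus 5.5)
The plan is to reduce the statement to Kifer's relative variational principle for random bundle transformations \cite[Theorem 1.2.13]{kifer2006random} and his spanning/separated-set characterisation of relative pressure \cite[Corollary 1.2.8]{kifer2006random}. The work lies in checking that a regular random dynamical system with a potential as in the statement satisfies those hypotheses. We regard $\Omega\times M$ as a trivial bundle $\mathcal E$ with compact fibres $\{\omega\}\times M$. Each $T_\omega$ is a homeomorphism of the fibre, and $(\omega,x)\mapsto T_\omega(x)$ is measurable, indeed continuous, since $F$ is a homeomorphism. The potential $\phi$ is fibrewise continuous, and $\|\phi(\omega,\cdot)\|_\infty\in L^1(\mathbb P)$. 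This places $\phi$ in Kifer's class $\mathbb L^1_{\mathcal E}(\Omega,\mathcal C(M))$. Kifer defines the relative pressure $\pi_T(\phi)$ by the spanning-set formula with $\lim_{\varepsilon\to0}\limsup_{n}$. His variational principle identifies $\pi_T(\phi)$ with $\sup\{h_\nu(F\mid\mathbb P)+\int\phi\,\d\nu\}$ over invariant $\nu$ projecting to $\mathbb P$. The entropy he uses is exactly the fibre entropy of Definition \ref{def:relativeentro}. Together these give the first equality.

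For the equality of $\limsup$ and $\liminf$, and for the almost-sure version, I would argue as follows. First take $(\omega,n,\varepsilon)$-spanning sets $D_1$ for $(\omega,n)$ and $D_2$ for $(\theta^n\omega,m)$. Then $\{y\in M : T^n_\omega y\in\ldots\}$ constructions give an approximate subadditivity:
$$\log Z_0(n+m,2\varepsilon,\omega)\le \log Z_0(n,\varepsilon,\omega)+\log Z_0(m,\varepsilon,\theta^n\omega)+\text{error}.$$
The error is controlled by the uniform continuity of $T_\omega^{-1}$ coming from the uniform $\mathcal C^2$ bounds, together with the modulus of continuity of $\phi(\omega,\cdot)$. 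The more standard route, which I would prefer, is to compare the spanning quantity with separated-set sums $Z_1(n,\varepsilon,\omega)$ at scales $\varepsilon$ and $2\varepsilon$. This gives
$$Z_0(n,\varepsilon,\omega)\le Z_1(n,\varepsilon,\omega)\le e^{n\,\mathrm{var}_\varepsilon}\,Z_0(n,\varepsilon/2,\omega),$$
where $\mathrm{var}_\varepsilon$ is the averaged oscillation of $\phi$ on $\varepsilon$-balls. That average tends to $0$ by dominated convergence, which is where the integrability of $\|\phi(\omega,\cdot)\|_\infty$ is used.

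Next apply Kingman's subadditive ergodic theorem to the cocycle $\log Z_0$ at fixed scale, or to Kifer's open-cover version, which is genuinely subadditive. This shows that $\frac1n\log Z(n,\varepsilon,\omega)$ converges $\mathbb P$-a.s. and in $L^1$ to a constant, using ergodicity of $\theta$. The constant converges to $P_{\mathrm{top}}$ as $\varepsilon\to0$. Sandwiching between scales then shows that the $\limsup$ and $\liminf$ coincide, both in integrated form and pointwise almost surely.

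I expect the main obstacle to be the subadditivity and measurability bookkeeping. One issue is that $\omega\mapsto Z_0(n,\varepsilon,\omega)$ must be measurable; I would handle this by restricting to finite spanning sets drawn from a countable dense subset, which is harmless by continuity. The other is that the exact spanning-set quantity is not subadditive. I would address this by passing through Kifer's open-cover pressure, which is subadditive, and comparing it to $Z_0$ at neighbouring scales. Since all of this is carried out in \cite{kifer2006random,Kifer2001,bogenschutz1992entropy}, in the paper I would present the argument as a verification of hypotheses plus citation.
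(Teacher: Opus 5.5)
Your proposal is correct and follows the same route as the paper. The paper gives no argument of its own: it only observes that the setting fits Kifer's framework of random bundle transformations and cites \cite[Corollary 1.2.8 and Theorem 1.2.13]{kifer2006random}. Your extra sketch of the $\liminf$ and almost-sure versions (subadditive open-cover pressure, Kingman's theorem, and the spanning/separated sandwich with the oscillation term removed by dominated convergence) is the standard argument behind those cited results.
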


\subsection{Main Theorems}

Below, we state the main hypotheses on $F$. Hypothesis \ref{hyp:h} assumes a deterministic pair of invariant cone fields on $TM$ giving uniform hyperbolicity for the fibre maps $T_\omega$, and a fibrewise topological mixing condition for $F$. Hypothesis \ref{hyp:h'} strengthens Hypothesis \ref{hyp:h} by requiring an exponential tail bound for the fibrewise mixing time, that is, an exponential large deviation estimate for the time needed for the iterates $T_\omega^n$ to achieve the fibrewise mixing property.

\begin{hypothesis}[H]\label{hyp:h}
In the above notation, we say that $F$ satisfies Hypothesis \ref{hyp:h} if the following conditions hold:
\begin{enumerate}
\item[{\bf(H1)}]
There exists a continuous deterministic family of cone fields
$$
\mathcal C=\left(\mathcal C^-(x),\mathcal C^+(x)\right)_{x\in M},\ \mathcal C^\pm(x)\subset T_xM,
$$
such that for every $x\in M$ and for $\mathbb P$-almost every $\omega\in\Omega$,
$$
DT_{\omega}^{-1}(x)\mathcal C^+(x)\subset \mathring{\mathcal C}^+\left(T_{\omega}^{-1}x\right),
DT_\omega(x)\mathcal C^-(x)\subset \mathring{\mathcal C}^-\left(T_\omega x\right).
$$
Moreover, defining
$$
E^s(\omega,x)=\bigcap_{n\in\mathbb N}DT^{-n}_{\theta^n\omega}\left(T_\omega^n(x)\right)\mathcal C^+\left(T_\omega^n(x)\right),
$$
$$
E^u(\omega,x)=\bigcap_{n\in\mathbb N}DT^{n}_{\theta^{-n}\omega}\left(T_\omega^{-n}(x)\right)\mathcal C^-\left(T_\omega^{-n}(x)\right),
$$
we assume that $$\dim E^s(\omega,x)=1\ \text{for every }x\in M\text{ and for }\mathbb P\text{-almost every }\omega\in\Omega.$$ Finally, there exists a constant $\lambda_0>0$ such that for every $n\in\mathbb N$, every $x\in M$, and for $\mathbb P$-almost every $\omega\in\Omega$,
$$
\|DT_\omega^n(x)v\|\le e^{-\lambda_0 n}\|v\|,\ \forall v\in E^s(\omega,x)
$$
and
$$
\|DT_\omega^{-n}(x)v\|\le e^{-\lambda_0 n}\|v\|,\ \forall v\in E^u(\omega,x).
$$

\item[{\bf(H2)}]
For every sufficiently small $\e>0$ and every $\delta>0$, there exists a constant $B=B(\delta,\e)\geq 1$ such that the stopping time
\begin{align}
N(\omega)&:=\inf\left\{n\in\mathbb N:
T_\omega^n(W_\e^u(\omega,x))\text{ is }\delta\text{-dense in }M\ \text{for every}\ x\in M
\right\}\label{eq:N}
\end{align}
satisfies $\mathbb P\left[N\leq B\right]>0.$ The local unstable manifold $W_\e^u(\omega,x)$ is defined in Definition \ref{def:localmanifolds}.
\end{enumerate}
\end{hypothesis}

\begin{remark}\label{rem:adaptednorm}
Assume that the exponential estimates in \textbf{(H1)} hold (possibly with an additional constant $C_0\geq 1$), namely
$$
\|DT_\omega^n(x)v\|\leq C_0 e^{-\lambda_0 n}\|v\|,\  \forall v\in E^s(\omega,x),
$$
and
$$
\|DT_\omega^{-n}(x)v\|\leq C_0 e^{-\lambda_0 n}\|v\|,\ \forall v\in E^u(\omega,x),
$$
for every $n\in\mathbb N$, every $x\in M$, and for $\mathbb P$-almost every $\omega\in\Omega$.
Then one may replace the ambient Riemannian norm $\|\cdot\|$ by an equivalent (adapted) norm $\|\cdot\|'$ on $TM$
for which the same estimates hold with $C_0=1$, that is,
$$
\|DT_\omega^n(x)v\|'\leq e^{-\lambda_0 n}\|v\|',\  \forall v\in E^s(\omega,x),
$$
and
$$
\|DT_\omega^{-n}(x)v\|'\leq e^{-\lambda_0 n}\|v\|',\  \forall v\in E^u(\omega,x).
$$
In particular, after passing to the adapted norm, the formulation of \textbf{(H1)} may be assumed with $C_0=1$
without loss of generality (see \cite[Proposition 4.2]{Shub1987}). 
\end{remark}

\begin{hypothesis}[H']\label{hyp:h'}
We say that $F$ satisfies Hypothesis \ref{hyp:h'} if $F$ satisfies Hypothesis \ref{hyp:h} and, in addition, the following holds:
\begin{itemize}
\item[{\bf(H2')}] Let $N$ be as in Hypothesis \ref{hyp:h} for the pair $(\delta,\e)$ with $\e>0$ small enough, and define
$$N_k(\omega) := \begin{cases}
    N(\omega),\ \text{if }k=1\\
    N(\theta^{N_{k-1}(\omega)}\omega) + N_{k-1}(\omega) ,\ \text{if }k\in\mathbb N\setminus \{1\}.
\end{cases} $$
\end{itemize}
Then, there exists $c=c(\delta,\e)$, $I=I(\delta,\e)>0$ and $\iota=\iota(\delta,\e)>0$ such that
$$\mathbb P[N_k(\omega)\geq c k] \leq I e^{-\iota k} \ \text{for every }k\in\mathbb N.$$
\end{hypothesis}

The class of potentials to be considered is defined as follows.

\begin{definition}[Uniform $\beta$-H\"older potential]\label{def:potential}
Let $\beta>0$. A measurable function $\phi:\Omega\times M\to\mathbb R$ is called a \emph{uniform $\beta$-H\"older potential} if $
\phi\in L^\infty\left(\Omega;\mathcal C^\beta\left(M\right)\right),$
that is, if the fibre functions  $\phi_\omega\left(x\right)=\phi\left(\omega,x\right)$ lie in $\mathcal C^\beta\left(M\right)$ for $\mathbb P$-almost every $\omega$ and
$$
\|\phi\|_{L^\infty\left(\Omega;C^\beta\left(M\right)\right)}
=\mathrm{ess}\sup_{\omega\in\Omega}\|\phi_\omega\|_{\mathcal C^\beta}<\infty,
$$
where
$$
\|f\|_{\mathcal C^\beta}=\sup_{x\in M}\left|f\left(x\right)\right|+[f]_\beta,\ 
\text{and}\ 
[f]_\beta=\sup_{\substack{x,y\in M\\ x\neq y}}\frac{\left|f\left(x\right)-f\left(y\right)\right|}{d\left(x,y\right)^\beta}.
$$
\end{definition}
 
It is also useful to define the following spaces of probabilities. 
\begin{definition}[Random invariant probabilities] Given a topological space $X$, we denote $\mathcal M_1(X)$ as the space of Borel probability measures on $X$. Moreover, given a regular random dynamical system we define $\mathcal M_1(F\mid \mathbb P)$ as the $F$-invariant probability measures which satisfy $(\proj_\Omega)_*\mu = \mathbb P$. We mention that each $\mu\in \mathcal M_1(F\mid \mathbb P)$ can be written as $\mu(\d \omega,\d x) = \mu_\omega(\d x) \mathbb P(\d \omega)$ (see \cite[Proposition 3.3]{crauel2002random} and \cite[Section 1.4]{arnold1995random}).
\end{definition}

The first main theorem concerns the existence and uniqueness of quenched equilibrium states for uniform H\"{o}lder potentials under Hypothesis \ref{hyp:h}, whereas the second main theorem states that Hypothesis \ref{hyp:h'} implies quenched exponential decay of correlations for uniform H\"{o}lder potentials.

\begin{mytheorem}[A]\label{theorem:mainthmA} Let $F:\Omega\times M\to \Omega\times M$ be a regular random dynamical system satisfying Hypothesis \ref{hyp:h} and $\phi:\Omega\times M\to \R$ be a uniform $\beta$-Hölder potential for some $\beta>0$. Then, there exists a unique $\mathbb P$-relative equilibrium state $\upsilon \in \mathcal M_1(F\mid \mathbb P)$ for $\phi$. 
\end{mytheorem}

Theorem \ref{theorem:mainthmA} is proved in Section \ref{sec:maintheorems}.

\begin{mytheorem}[B]\label{theorem:mainthmB}
Let $F:\Omega\times M\to\Omega\times M$ be a regular random dynamical system satisfying Hypothesis \ref{hyp:h}. Let $\phi:\Omega\times M\to\mathbb R$ be a uniform $\beta$-H\"older potential for some $\beta>0$, and let $\upsilon$ be the unique $\mathbb P$-relative equilibrium state for $\phi$, with disintegration $\upsilon(\d \omega,\d x) = \upsilon_\omega(\d x)\mathbb P(\d \omega)$. Then there exists $\Lambda\in(0,1)$ and a measurable function $C:\Omega\to \mathbb R$  such that for all $f,g\in \mathcal C^\beta\left(M\right)$,
$$
\left|\int_M f\circ T_{\omega}^n\, g\,\mathrm \d\upsilon_{\omega}
-\int_M f\,\mathrm \d\upsilon_{\theta^n\omega}\int_M g\,\mathrm \d\upsilon_{\omega}\right|
\le C\left(\omega\right)\Lambda^n\|f\|_{ \mathcal C^\beta}\|g\|_{\mathcal C^\beta}
$$
and
$$
\left|\int_M f\circ T_{\theta^{-n}\omega}^n\, g\,\mathrm \d\upsilon_{\theta^{-n}\omega}
-\int_M f\,\mathrm \d\upsilon_{\omega}\int_M g\,\mathrm \d\upsilon_{\theta^{-n}\omega}\right|
\le C\left(\omega\right)\Lambda^n\|f\|_{ \mathcal C^\beta}\|g\|_{\mathcal C^\beta}.
$$

If, in addition, we assume Hypothesis \ref{hyp:h'}.
Then for every $p\in[1,\infty)$, there exist $\Lambda_p\in\left(0,1\right)$ and $C_p\in L^p(\Omega,\mathbb P)$ such that for every $n\in\mathbb N$, and for all $f,g\in \mathcal C^\beta\left(M\right)$,
$$
\left|\int_M f\circ T_{\omega}^n\, g\,\mathrm \d\upsilon_{\omega}
-\int_M f\,\mathrm \d\upsilon_{\theta^n\omega}\int_M g\,\mathrm \d\upsilon_{\omega}\right|
\le C_p\left(\omega\right)\Lambda_p^n\|f\|_{ \mathcal C^\beta}\|g\|_{\mathcal C^\beta}
$$
and
$$
\left|\int_M f\circ T_{\theta^{-n}\omega}^n\, g\,\mathrm \d\upsilon_{\theta^{-n}\omega}
-\int_M f\,\mathrm \d\upsilon_{\omega}\int_M g\,\mathrm \d\upsilon_{\theta^{-n}\omega}\right|
\le C_p\left(\omega\right)\Lambda_p^n\|f\|_{ \mathcal C^\beta}\|g\|_{\mathcal C^\beta}.
$$

\end{mytheorem}

Theorem \ref{theorem:mainthmB} is proved in Section \ref{sec:maintheorems}.

\begin{remark}\label{rmk:anosovonfibres}
The topological mixing on fibres condition used in \cite{liu2024exponential} and \cite{huang2016ergodic} is stronger than \textbf{(H2')}. Indeed, in those works the mixing time is uniform in the fibre parameter: after reducing the condition to the finite collection of sets used in Hypothesis \ref{hyp:h'}, one obtains a constant $B>0$ such that $\mathbb P[N\leq B]=1$ (in the notation of Hypothesis $\mathbf{(H2)})$. Hence \textbf{(H2')} holds with $c = 2B$. Moreover, in this uniformly mixing situation, the conclusion of Theorem \ref{theorem:mainthmB} can be strengthened: the multiplicative constant in the exponential decay of correlations can be chosen uniformly bounded in $\omega$. In particular, for the SRB measure considered in \cite{liu2024exponential}, the measures $\upsilon_\omega$ are defined for every $\omega\in\Omega$, and the exponential decay of correlations for H\"older observables holds with a uniform constant. We will not prove such a strengthened result, but it follows naturally from the proof presented.
\end{remark}

We mention that our formulation of Hypotheses \ref{hyp:h} and \ref{hyp:h'} assumes that the stable direction is one-dimensional, while the unstable direction may have arbitrary dimension. Theorems \ref{theorem:mainthmA} and \ref{theorem:mainthmB} can also be proved in the dual situation where $\dim E^u(\omega,x)=1$ and $\dim E^s(\omega,x)$ is arbitrary. In that case, the same proof strategy is applied to the inverse skew product. More precisely, one considers
$$
F^{-1}(\omega,x)=\left(\theta^{-1}\omega,(T_{\theta^{-1}\omega})^{-1}(x)\right)=\left(\theta^{-1}\omega,\widehat T_\omega(x)\right),
$$
where $\widehat T_\omega=(T_{\theta^{-1}\omega})^{-1}.$
Thus the inverse dynamics is a cocycle over $\theta^{-1}$ generated by the family $\widehat T_\omega$. This exchanges the roles of the stable and unstable directions. Accordingly, condition \textbf{(H2)} must be replaced by the analogous requirement that stable manifolds of length $\varepsilon$ become $\delta$-dense under the inverse dynamics, with the corresponding exponential tail condition replacing \textbf{(H2')}. The geometric and cone arguments used below then apply in the same way after reversing the time direction.

\section{Examples}

In this section, we provide three examples of regular random dynamical systems $F$ that satisfy Hypothesis \ref{hyp:h'}.

\begin{example}\label{ex:An}
Let $N\in\mathbb N$, and let $A_1,\ldots,A_N:\mathbb T^2\to\mathbb T^2$ be toral automorphisms of the form
$$
A_i
\begin{pmatrix}
x\\
y
\end{pmatrix}
=
\begin{pmatrix}
a_i & b_i\\
c_i & d_i
\end{pmatrix}
\begin{pmatrix}
x\\
y
\end{pmatrix}
\mod 1,
$$
where $a_i,b_i,c_i,d_i\in\mathbb N$, $\det A_i=1$, and $\operatorname{tr}(A_i)>2$ for every $i\in\{1,\ldots,N\}$. Since $\det A_i=1$ and $\operatorname{tr}(A_i)>2$, each $A_i$ is a hyperbolic toral automorphism. In particular, $A_i$ is Anosov for every $i\in\{1,\ldots,N\}$.

Consider now a topologically mixing subshift of finite type
$$
\Sigma_Q=\{\omega\in\{1,\ldots,N\}^{\mathbb Z}:Q_{\omega_j,\omega_{j+1}}=1\text{ for every }j\in\mathbb Z\},
$$
where $Q$ is an $N\times N$ matrix with entries in $\{0,1\}$. Let $\theta:\Sigma_Q\to\Sigma_Q$ be the left shift, and let $\mathbb P$ be any ergodic $\theta$-invariant Borel probability measure on $\Sigma_Q$. Define the skew product
$$
F:\Sigma_Q\times\mathbb T^2\to\Sigma_Q\times\mathbb T^2,\ 
F(\omega,x)=(\theta\omega,A_{\omega_0}x).
$$
Since $\theta$ is a homeomorphism of the two-sided subshift and each $A_i$ is a toral automorphism, $F$ is a homeomorphism. Moreover, the map $\omega\mapsto A_{\omega_0}$ is locally constant.

We claim that $F$ satisfies Hypothesis \ref{hyp:h'}. First, $F$ satisfies \textbf{(H1)}. Indeed, the matrices $A_i$ are positive hyperbolic elements of $\mathrm{SL}(2,\mathbb N)$, and therefore preserve the standard unstable and stable cone fields
$$
\mathcal C^-=\{(u,v)\in\mathbb R^2:uv\geq 0\},\ 
\mathcal C^+=\{(u,v)\in\mathbb R^2:uv\leq 0\}.
$$
Since the family $\{A_1,\ldots,A_N\}$ is finite, the corresponding hyperbolicity constants may be chosen uniformly. Hence the associated stable and unstable directions satisfy the uniform cone and exponential estimates required in \textbf{(H1)} (see \cite{Liverani2009} for more details).

By \cite[Appendix A.1]{liu2024exponential}, the present random composition of positive area-preserving $2\times2$ matrices is topologically mixing on fibres, meaning that for every pair of non-empty open sets $U,V\subset\mathbb T^2$, there exists $n_0=n_0(U,V)\in\mathbb N$ such that
$$
T_\omega^n(U)\cap V\neq\varnothing
$$
for every $n\geq n_0$ and every $\omega\in\Sigma_Q$. By Remark \ref{rmk:anosovonfibres}, topological mixing on fibres is stronger than \textbf{(H2')}: for every sufficiently small $\varepsilon>0$ and every $\delta>0$, there exists a deterministic constant $B=B(\delta,\varepsilon)>0$ such that the stopping time $N$ in \textbf{(H2)} satisfies $
\mathbb P[N\leq B]=1.$
Hence the exponential tail required in \textbf{(H2')} holds automatically. Consequently, $F$ satisfies Hypothesis \ref{hyp:h'}.

Moreover, by Remark \ref{rmk:anosovonfibres}, the conclusion is stronger in this uniformly mixing case: the multiplicative constant in the quenched exponential decay of correlations can be chosen uniformly in $\omega$.
\end{example}

\begin{example}
Let $T_1,\ldots,T_k:\mathbb T^2\to\mathbb T^2$ be $\mathcal C^2$ Anosov diffeomorphisms preserving the same deterministic cone system. More precisely, assume that there exist continuous cone fields $\mathcal C^u$ and $\mathcal C^s$ on $\mathbb T^2$ such that, for every $i\in\{1,\ldots,k\}$,
$$
DT_i(x)\mathcal C^u(x)\subset\operatorname{int}\mathcal C^u(T_i x),
\ 
DT_i^{-1}(x)\mathcal C^s(x)\subset\operatorname{int}\mathcal C^s(T_i^{-1}x),
$$
and assume that the corresponding expansion and contraction estimates are uniform over the finite family. For instance, this applies when $T_2,\ldots,T_k$ are sufficiently small $\mathcal C^1$ perturbations of $T_1^{m_2},\ldots,T_1^{m_k}$ for some $m_i\in\mathbb N$, so that the same invariant cone fields persist.

Let $\Sigma_Q\subset\{1,\ldots,k\}^{\mathbb Z}$ be a topologically mixing subshift of finite type, let $\theta:\Sigma_Q\to\Sigma_Q$ be the left shift, and let $\mathbb P$ be a $\theta$-invariant equilibrium state for a Hölder potential (recall that this implies that $\mathbb P$ has full supported \cite[Theorem 1.16]{Bowen1975}). Define
$$
F:\Sigma_Q\times\mathbb T^2\to\Sigma_Q\times\mathbb T^2,
\ 
F(\omega,x)=(\theta\omega,T_{\omega_0}x).
$$
Observe that $F$ is a regular random dynamical system. We prove that $F$ satisfies Hypothesis \ref{hyp:h'}. The condition \textbf{(H1)} follows immediately from the assumptions already stated. It remains to verify \textbf{(H2')}. We do this in three steps.

\begin{step}[1]
Let $\varepsilon>0$ and $\delta>0$ be sufficiently small. We construct a finite admissible word $v$ in the shift $\Sigma_Q$ of length $\ell$ and the cylinder
\begin{align}
G=[v]_0^{\ell-1}
=
\{\omega\in\Sigma_Q:\omega_0\omega_1\cdots\omega_{\ell-1}=v\}.
\label{eq:Gdef-general}
\end{align}
Then, denoting $\tau_G(\omega):=\min\{r\ge 0:\theta^r\omega\in G\}$, we prove that
\begin{equation}
N(\omega)\le\tau_G(\omega)+\ell.
\label{eq:N-leq-tau-plus-ell-general}
\end{equation}
\end{step}

Choose an admissible periodic word $w=(a_0,\ldots,a_{m-1})$ in $\Sigma_Q$, and set
$$
T_w:=T_{a_{m-1}}\circ\cdots\circ T_{a_0}.
$$
By the common cone assumptions, $T_w$ is an Anosov diffeomorphism of $\mathbb T^2$. Since every Anosov diffeomorphism of $\mathbb T^2$ is topologically mixing, the inclination lemma \cite[Proposition 6.2.23]{Katok1995} implies that there exists $q=q(\delta,\varepsilon)\in\mathbb N$ such that, for every local unstable curve $\gamma\subset\mathbb T^2$ of length $\varepsilon$, the image $T_w^q(\gamma)$ is $\delta$-dense in $\mathbb T^2$.

Set
$$
v=\underbrace{w\cdots w}_{q\text{ times}},
\ 
|v|=qm=:\ell,
$$
and define the cylinder $G$ as in \eqref{eq:Gdef-general} and $\tau_G$ as in the Step 1 statement. Since $\mathbb P$ has full support, $\mathbb P(G)>0$. If $\theta^r\omega\in G$, then the fibre map from time $r$ to time $r+\ell$ is exactly $T_w^q$. Moreover, the maps $T_i$ send local unstable curves to local unstable curves and expand them uniformly. Hence, for every local unstable curve $\gamma$ of length $\varepsilon$, the image $T_\omega^r(\gamma)$ contains a local unstable subcurve of length $\varepsilon$. Applying $T_w^q$ to this subcurve gives a $\delta$-dense subset of $\mathbb T^2$. Therefore $T_\omega^{r+\ell}(\gamma)$ is $\delta$-dense in $\mathbb T^2$ for every local unstable curve $\gamma$ of length $\varepsilon$. Taking $r=\tau_G(\omega)$ gives \eqref{eq:N-leq-tau-plus-ell-general}.

\begin{step}[2]
For the cylinder $G$ constructed in Step 1, we prove that there exist $h\in\mathbb N$ and $p>0$ such that, whenever $H\in\mathcal B(\Sigma_Q)$ is a finite union of cylinders determined by coordinates $0,\ldots,L-1$, one has, for every $n$ large enough so that $n-L\ge h-\ell$:
\begin{equation}
\mathbb P(H\cap\theta^{-n}G)\ge p\,\mathbb P(H)\text{ which implies }\mathbb P[\theta^{-n}G\mid H] = \frac{\mathbb P[\theta^{-n}G\cap H]}{\mathbb P[H]} \geq p.
\label{eq:uniform-hit-deterministic-general}
\end{equation}
\end{step}

By \cite[Proposition 1.14]{Bowen1975}, there exist constants $K_0>0$ and $\gamma>0$ such that, if $C$ and $D$ are cylinders and $C$ is determined by coordinates $0,\ldots,L-1$, then
\begin{align}
\left|
\mathbb P(C\cap\theta^{-n}D)-\mathbb P(C)\mathbb P(D)
\right|
\le
K_0e^{-\gamma(n-L)}\mathbb P(C)\mathbb P(D)
\label{eq:bowen-mixing-general}
\end{align}
for every $n\ge L$.

Choose $n_0\in\mathbb N$ such that $K_0e^{-\gamma n_0}\le 1/2$, and set
$$
h:=\ell+n_0,
\ 
p:=\frac{\mathbb P(G)}{2}>0.
$$
Let $H\in\mathcal B(\Sigma_Q)$ be a finite union of cylinders determined by coordinates $0,\ldots,L-1$, and assume that $n-L\ge n_0$. Write $H=\bigsqcup_{a\in\mathcal A}C_a,$
where each $C_a$ is a cylinder determined by coordinates $0,\ldots,L-1$. Applying \eqref{eq:bowen-mixing-general} with $C=C_a$ and $D=G$, we get
\begin{align*}
\mathbb P(C_a\cap\theta^{-n}G)
&\ge
\left(1-K_0e^{-\gamma(n-L)}\right)\mathbb P(C_a)\mathbb P(G)\\
&\ge
\frac12\mathbb P(C_a)\mathbb P(G)
=
p\,\mathbb P(C_a).
\end{align*}
Summing over $a\in\mathcal A$, we obtain $\mathbb P(H\cap\theta^{-n}G)\ge p\,\mathbb P(H)$. This proves \eqref{eq:uniform-hit-deterministic-general}.

\begin{step}[3]
We show that there exist constants $c>0$, $K>0$, and $\kappa>0$ such that
$$
\mathbb P[N_i\ge ci]\le Ke^{-\kappa i}
$$
for every $i\ge 1$.
\end{step}

Let $G$ be as in Step 1 and $h,p$ as in Step 2. For $k\ge 0$, define $Y_k(\omega):=\mathbbm 1_G(\theta^{kh}\omega)$. Hence, $Y_k=1$ means that the orbit of $\omega$ hits $G$ at the sampled time $kh$.

We claim that, for every $J\ge 1$ and every word $(y_0,\ldots,y_{J-1})\in\{0,1\}^J$,
\begin{equation}
\mathbb P[Y_0=y_0,\ldots,Y_{J-1}=y_{J-1}]
\le
(1-p)^{\#\{0\le k\le J-1:y_k=0\}}.
\label{eq:pattern-bound-general}
\end{equation}
Indeed, for each $k\in\{0,\ldots,J-1\}$, set $H_k:=\{Y_0=y_0,\ldots,Y_{k-1}=y_{k-1}\}$. Then $H_k$ is a finite union of cylinders determined by coordinates $0,\ldots,(k-1)h+\ell-1$, and $\theta^{-kh}G$ is separated from these coordinates by a gap $h-\ell$. Hence, from \eqref{eq:uniform-hit-deterministic-general},
$$
\mathbb P[Y_k=1\mid Y_0= y_0,\ldots, Y_{k-1}=y_{k-1}]
=
\mathbb P[Y_k=1\mid H_k]
=
\mathbb P[\theta^{-kh}G\mid H_k]
\ge p,
$$
whenever $\mathbb P(H_k)>0$. Therefore
$$
\mathbb P[Y_k=0\mid Y_0=y_0,\ldots,Y_{k-1}=y_{k-1}]
\le 1-p,
$$
and trivially
$$
\mathbb P[Y_k=1\mid Y_0=y_0,\ldots,Y_{k-1}=y_{k-1}]
\le 1.
$$
The two inequalities above, multiplied for $k=0,\ldots,J-1$, imply \eqref{eq:pattern-bound-general}.

Choose $\alpha>2$ large enough so that $\rho:=e(\alpha+1)(1-p)^{\alpha-1}<1$. For $i\ge 1$, set $J_i:=\lceil\alpha i\rceil$. From \eqref{eq:pattern-bound-general}, $\alpha>2$ and the well-known bound $i!\geq(i/e)^i$ for every $i\in\mathbb N$, we have that
\begin{align}
\mathbb P\left[\sum_{k=0}^{J_i-1}Y_k<i\right]
&\le
\sum_{r=0}^{i-1}
\sum_{\substack{I\subset\{0,\ldots,J_i-1\}\\ |I|=r}}
\mathbb P\left[Y_k=1 \text{ for } k\in I,\ Y_k=0 \text{ for } k\notin I\right]\notag\\
&\le
\sum_{r=0}^{i-1}
\sum_{\substack{I\subset\{0,\ldots,J_i-1\}\\ |I|=r}}
(1-p)^{J_i-r}
=
\sum_{r=0}^{i-1}\binom{J_i}{r}(1-p)^{J_i-r}\notag\\
&\le
\sum_{r=0}^{i-1}\binom{J_i}{i}(1-p)^{J_i-r}
=
\sum_{r=0}^{i-1}\frac{J_i!}{i!(J_i-i)!}(1-p)^{J_i-r}
\le
\sum_{r=0}^{i-1}\frac{J_i^i}{i!}(1-p)^{J_i-r}\notag\\
&\le
\sum_{r=0}^{i-1}
\left(\frac{eJ_i}{i}\right)^i
(1-p)^{J_i-r}
\le
\sum_{r=0}^{i-1}
\left(e(\alpha+1)\right)^i
(1-p)^{J_i-r}\notag\\
&\le
\sum_{r=0}^{i-1}
\left(e(\alpha+1)\right)^i
(1-p)^{J_i-i+1}
\le
\sum_{r=0}^{i-1}
\left(e(\alpha+1)\right)^i
(1-p)^{(\alpha-1)i}\notag\\
&\le
i\left(e(\alpha+1)\right)^i(1-p)^{(\alpha-1)i}
\le
(i+1)\left(e(\alpha+1)(1-p)^{\alpha-1}\right)^i\notag\\
&=
(i+1)\rho^i.
\label{eq:few-sampled-hits-general}
\end{align}
Since $\rho<1$, from \eqref{eq:few-sampled-hits-general} we obtain that there exist constants $I>0$ and $\iota>0$ such that
\begin{equation}
\mathbb P\left[\sum_{k=0}^{J_i-1}Y_k<i\right]\le Ie^{-\iota i}
\label{eq:few-sampled-hits-exp-general}
\end{equation}
for every $i\ge 1$.

If $\sum_{k=0}^{J_i-1}Y_k\ge i$, let $0\le k_1<\cdots<k_i\le J_i-1$ be the first $i$ indices such that $Y_{k_r}=1$, and set $t_r=k_rh$. Then $\theta^{t_r}\omega\in G$ for every $r=1,\ldots,i$. Since $t_{r+1}-t_r\ge h\ge\ell$ for every $r$, the bound $N(\cdot)\le\tau_G(\cdot)+\ell$ can be applied successively at the times $t_1,\ldots,t_i$. Thus,
$$
N_r(\omega)\le t_r+\ell
$$
for every $r=1,\ldots,i$, which implies
\begin{align}
N_i(\omega)\le t_i+\ell\le J_ih+\ell\le(\alpha+1)hi+\ell.
\label{eq:Ni-bound-general}
\end{align}
Choose $c:=(\alpha+1)h+\ell+1$. From \eqref{eq:Ni-bound-general} we obtain
$$
\left\{\sum_{k=0}^{J_i-1}Y_k\ge i\right\}\subset\{N_i<ci\},\ \text{which implies }
\{N_i\ge ci\}\subset\left\{\sum_{k=0}^{J_i-1}Y_k<i\right\}.
$$
Using \eqref{eq:few-sampled-hits-exp-general}, we obtain $\mathbb P[N_i\ge ci]\le I e^{-\iota i}$. Thus \textbf{(H2')} holds, and therefore $F$ satisfies Hypothesis \ref{hyp:h'}.
\end{example}

\begin{example}
Let $\Omega$ be a compact metric space, let $\theta:\Omega\to\Omega$ be a homeomorphism, and let $\mathbb P$ be an ergodic $\theta$-invariant Borel probability measure. Let $T:\mathbb T^2\to\mathbb T^2$ be a mixing Anosov diffeomorphism. Define
$$
F:\Omega\times\mathbb T^2\to\Omega\times\mathbb T^2,\ 
F(\omega,x)=(\theta\omega,Tx).$$
Then $F$ is a regular random dynamical system and satisfies Hypothesis \ref{hyp:h'}.

In particular, Theorems \ref{theorem:mainthmA} and \ref{theorem:mainthmB} apply
to any uniformly H\"older random potential
$\phi:\Omega\times\mathbb T^2\to\mathbb R$. Notice that the potential plays no role
in Hypothesis \ref{hyp:h'}, which depends only on the underlying random dynamical
system. Moreover, by Remark \ref{rmk:anosovonfibres}, the random constants appearing
in Theorem \ref{theorem:mainthmB} can be chosen uniformly in $\omega$.
\end{example}

\section{Geometric theory of hyperbolic random dynamical systems}\label{sec:geom}

This section has two purposes. The first is to collect the geometric facts on fibre stable and unstable manifolds of $F$ that will be used later. The second is to fix the constants $\varepsilon$, $\varepsilon^*$, $\delta$ and a finite cover $\left\{B_M\left(\delta/4,x_i\right)\right\}_{i=1}^\ell$ appearing in the definition of the mixing time in Hypotheses \ref{hyp:h} and \ref{hyp:h'}. Throughout this section, we assume that $F:\Omega\times M\to\Omega\times M$ is a regular random dynamical system satisfying Hypothesis \ref{hyp:h}. We also fix $\lambda\in(0,\lambda_0]$, where $\lambda_0$ is the hyperbolicity exponent in \textbf{(H1)}. The results recalled below follow \cite[Section 3]{liu2024exponential} and \cite[Section 4.2]{kifer2006random} (see also \cite[Section 3.1]{huang2016ergodic}) . We start by defining the fibre local stable and unstable manifolds and recalling the result that guarantees their existence.

\begin{definition}[Fibre local stable and unstable manifolds]\label{def:localmanifolds}
For $\varepsilon>0$, $\omega\in\Omega$ and $x\in M$, we define the fibre local stable and unstable sets at $(\omega,x)$ of dynamical length $\e$ by
\begin{align*}
W_\varepsilon^s(\omega,x)
&:=\left\{y\in M:d\left(T_\omega^n (x),T_\omega^n(y)\right)\le\varepsilon\text{ for every }n\ge0\right\},\\
W_\varepsilon^u(\omega,x)
&:=\left\{y\in M:d\left( (T_\omega^n)^{-1}(x),(T_\omega^{n})^{-1}(y)\right)\le\varepsilon\text{ for every }n\ge0\right\}.
\end{align*}
Throughout the text, by abuse of terminology, we refer to $W_\varepsilon^s(\omega,x)$ and $W_\varepsilon^u(\omega,x)$ as local stable and unstable manifolds of length $\varepsilon$, respectively.
\end{definition}

\begin{proposition}[{\cite[Lemma 3.2]{liu2024exponential}}]\label{prop:localmanifolds}
There exists $\varepsilon_0>0$ such that for every $\varepsilon\in\left(0,\varepsilon_0\right]$, every $x\in M$ and $\mathbb P$-almost every $\omega\in\Omega$, 
\begin{itemize}
    \item the sets $W_\varepsilon^{u/s}\left(\omega,x\right)$  are $\mathcal C^2$ embedded discs;
    \item there exists $C_1,\nu_0>0$ such that for every $\omega\in \Omega$ fixed the maps 
    \begin{align*}\max\left\{
\sup_{\substack{v\in E^{u/s}(\omega,x)\\ \|v\|=1}}
\inf_{w\in E^{u/s}(\omega,y)}
\left\|
\frac{v}{\|v\|}-\frac{w}{\|w\|}
\right\|,
\,
\sup_{\substack{w\in E^{u/s}(\omega,y)\\ \|w\|=1}}
\inf_{v\in E^{u/s}(\omega,x)}
\left\|
\frac{v}{\|v\|}-\frac{w}{\|w\|}
\right\|
\right\}\\
\leq C_1d(x,y)^{\nu_0}.
\end{align*}
\item the maps $x\mapsto W_\e(\omega,x)$ is $C^{\nu_0}$-continuous in the Hausdorff topology.

\end{itemize}
\end{proposition}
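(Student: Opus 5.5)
The plan is the standard Hadamard--Perron graph-transform construction, run along the fibre orbit of $\omega$ rather than for a single map. The point is that \textbf{(H1)} supplies the same deterministic cone field $\mathcal C=(\mathcal C^-,\mathcal C^+)$ for $\mathbb P$-almost every $\omega$. Together with the uniform bounds $\sup_\omega\|T_\omega^{\pm1}\|_{\mathcal C^2}<\infty$, this makes every constant below depend only on the cones, $\lambda_0$ and the $\mathcal C^2$ bound, never on $\omega$.

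\emph{Local coordinates and admissible graphs.} First we fix $\varepsilon_0$ below the injectivity radius. We choose it so small that, in exponential charts of size $\varepsilon_0$, each $T_\theta^{\pm1}$ is a uniformly small $\mathcal C^1$ perturbation of its derivative. Uniform continuity of $DT_\theta$ in $(\theta,x)$ gives this, and we then also have $DT_\theta(y)\mathcal C^-(y)\subset \mathcal C^-(T_\theta y)$ with uniform expansion for all $y$ in the chart. For the unstable discs we consider admissible graphs over $E^u$-type directions whose tangent planes lie in $\mathcal C^-$ and which have bounded $\mathcal C^{1,1}$ norm. We define $W^u_\varepsilon(\omega,x)$ as the limit of the graph transforms $T^n_{\theta^{-n}\omega}(D_n)$, where $D_n$ is any admissible disc through $T^{-n}_\omega x$. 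Cone invariance and the backward contraction $\|DT^{-n}v\|\le e^{-\lambda_0 n}\|v\|$ on $\mathcal C^-$ make the graph transform a contraction in the $\mathcal C^0$ (and then $\mathcal C^1$) topology. Hence the limit is a $\mathcal C^1$ disc tangent to $E^u(\omega,\cdot)$. The usual fibre-contraction (invariant section) argument upgrades this to $\mathcal C^2$. That argument uses bounded second derivatives and the domination between the contraction on $\mathcal C^+$ and the expansion on $\mathcal C^-$. The resulting disc coincides with the dynamically defined set in Definition~\ref{def:localmanifolds}, because points that shadow backwards within $\varepsilon$ are forced into the invariant graph by hyperbolicity. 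The stable case is symmetric, using $T^{-1}$ and $\mathcal C^+$ (here $\dim E^s=1$, so these are curves).

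\emph{Hölder continuity of the bundles.} We use the standard two-sequence argument. For $x,y$ close we compare $DT^n_{\theta^{-n}\omega}\mathcal C^-(T^{-n}_\omega x)$ with the corresponding cone image at $T^{-n}_\omega y$. After $n$ steps the cones have been contracted to width $\lesssim e^{-2\lambda_0 n}$ in projective distance. Meanwhile the distance between base points grows by at most $L^n$, where $L=\sup\|DT^{\pm1}\|$, so the derivatives differ by $\lesssim L^n d(x,y)$. We choose $n\approx -\log d(x,y)/\log(L e^{2\lambda_0})$ and balance the two terms. This gives the Hausdorff (Grassmannian) bound $C_1 d(x,y)^{\nu_0}$ with $\nu_0=2\lambda_0/(\log L+2\lambda_0)$, uniformly in $\omega$. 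The Hölder continuity of $x\mapsto W_\varepsilon(\omega,x)$ in the Hausdorff topology then follows by the same balancing applied to the graphs themselves.

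\emph{Main obstacle.} The delicate step is the $\mathcal C^2$ regularity together with the uniformity of all constants in $\omega$. The cocycle is non-autonomous, so we cannot appeal to a fixed-point theorem for a single map. Instead we work in the Banach space of bounded sequences of sections indexed by $n\in\mathbb Z$ along the orbit $\theta^n\omega$. On that space the sequential graph transform is a uniform contraction. This requires that the contraction rate and the $\mathcal C^2$ bounds hold for every $\theta^n\omega$, which is exactly why the hypotheses are stated $\mathbb P$-a.s. with a deterministic cone field. After that we apply the result of \cite[Lemma 3.2]{liu2024exponential} (or \cite[Section 4.2]{kifer2006random}) verbatim.
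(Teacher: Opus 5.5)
The paper gives no argument for this statement; it imports it wholesale from \cite[Lemma 3.2]{liu2024exponential}. Your proposal instead reconstructs the argument behind that citation, and this is the right, standard route. It has two parts: the non-autonomous Hadamard--Perron (graph transform) construction run along the whole orbit $(\theta^n\omega)_{n\in\mathbb Z}$, and the two-sequence balancing argument for H\"older continuity of $E^{u/s}(\omega,\cdot)$.

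What your sketch buys over the bare citation is transparency. It shows that only \textbf{(H1)} and the uniform $\mathcal C^2$ bounds on $T_\omega^{\pm1}$ enter, and none of the uniform fibrewise mixing assumed in \cite{liu2024exponential}. It also shows that $C_1,\nu_0$ are uniform on the $\theta$-invariant full-measure set $\bigcap_{n\in\mathbb Z}\theta^{-n}\Omega_1$, where $\Omega_1$ is the set on which \textbf{(H1)} holds. For the same reason, your last sentence is circular and should be removed: it invokes \cite[Lemma 3.2]{liu2024exponential} ``verbatim'' after you have sketched its proof. Either the sketch is the proof or the citation is.

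Two steps need tightening, though neither is a real gap.

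First, \textbf{(H1)} gives $\|DT^{-n}v\|\le e^{-\lambda_0 n}\|v\|$ only for $v\in E^u(\omega,x)$. For a general $v\in\mathcal C^-$, backward iterates expand the $E^s$-component, so the backward contraction on $\mathcal C^-$ that you claim is false for large $n$. What the graph transform actually needs is forward expansion of $\mathcal C^-$-vectors (one step, in a suitable norm). You must derive this from the rates on $E^u\oplus E^s$ together with a uniform lower bound on the angle between $E^u$ and $E^s$, which comes from transversality of $\mathcal C^-$ and $\mathcal C^+$. This yields expansion only up to a multiplicative constant, which you then remove by passing to an iterate or to the adapted norm of Remark~\ref{rem:adaptednorm}.

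Second, you identify the limiting disc with the set in Definition~\ref{def:localmanifolds}. This is correct for the intended reading $T^{-n}_\omega=(T^n_{\theta^{-n}\omega})^{-1}$, which is what your construction uses. It is not correct for the literal $(T^n_\omega)^{-1}$ written in that definition, which does not describe a backward orbit of $(\omega,x)$.
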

\begin{remark}
From \textbf{(H1)}, the stable direction is one-dimensional, that is,
$\dim E^s(\omega,x)=1$ for every $x\in M$ and for $\mathbb P$-almost every $\omega\in\Omega$.
Consequently, whenever $\varepsilon>0$ is small enough so that the local manifold theorem applies,
$W_\varepsilon^s(\omega,x)$ is a one-dimensional embedded disk. No analogous restriction is imposed on
the unstable direction: $W_\varepsilon^u(\omega,x)$ has dimension $\dim E^u(\omega,x)$, which may be arbitrary.
\end{remark}

The following proposition gives the local product structure for the random system. This allows us to define the corresponding rectangles coming from the local product structure.
\begin{proposition}[{\cite[Lemma 3.3.]{liu2024exponential}}]\label{prop:productstructure}
For every $\varepsilon\in\left(0,\varepsilon_0\right]$ there exists $\delta=\delta\left(\varepsilon\right)\in\left(0,\varepsilon\right)$ such that, for every $x,y\in M$ with $d\left(x,y\right)<\delta$ and for $\mathbb P$-almost every $\omega\in\Omega$, the intersection $W_\varepsilon^s\left(\omega,x\right)\cap W_\varepsilon^u\left(\omega,y\right)$ consists of a single point, denoted by $\left[x,y\right]_\omega^\varepsilon$. The map 
$$\left(x,y\right)\mapsto\left[x,y\right]_\omega^\varepsilon$$
is continuous on $\left\{\left(x,y\right)\in\Omega\times M\times M:d\left(x,y\right)<\delta\right\}$.
\end{proposition}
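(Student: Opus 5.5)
The plan is to deduce the local product structure from a fibrewise version of the standard argument for uniformly hyperbolic systems: transversality of the local manifolds, a uniform size for them, and a quantitative implicit function theorem in charts.

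The first step is to record uniform transversality. By \textbf{(H1)}, $E^s(\omega,x)\subset\mathcal C^+(x)$ and $E^u(\omega,x)\subset\mathcal C^-(x)$. The cone fields are deterministic, continuous and defined on the compact manifold $M$. Since the invariance conditions put images of each cone into the interior of the other's complement, $\mathcal C^+(x)\cap\mathcal C^-(x)=\{0\}$. Hence there is a uniform lower bound $\alpha_0>0$ on the angle between the two cones. This bound is independent of $\omega$ and $x$, so $E^s(\omega,x)\oplus E^u(\omega,x)=T_xM$ with angle at least $\alpha_0$.

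The second step uses Proposition \ref{prop:localmanifolds}. For $\varepsilon\le\varepsilon_0$, $W^s_\varepsilon(\omega,x)$ and $W^u_\varepsilon(\omega,y)$ are $\mathcal C^2$ embedded discs tangent to $E^s$ and $E^u$. Their tangent spaces vary H\"older continuously with constants $C_1,\nu_0$ that are uniform in $\omega$. I also need their inner radius to be bounded below uniformly, by a constant comparable to $\varepsilon$. This follows from the graph-transform construction, which uses only the uniform $\mathcal C^2$ bounds on $T_\omega^{\pm1}$ and the uniform rate $\lambda_0$. Fix exponential charts of a uniform radius $r_0$ at each point of $M$. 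In the chart at $x$, write $W^s_\varepsilon(\omega,x)$ as the graph of a map $\psi^s:E^s(\omega,x)(\rho)\to E^u(\omega,x)$ and $W^u_\varepsilon(\omega,y)$ as the graph of a map $\psi^u:E^u(\omega,x)(\rho)\to E^s(\omega,x)$. Here $\rho\asymp\varepsilon$, and the Lipschitz constants of $\psi^s,\psi^u$ are at most some $\kappa<1$, small in terms of $\alpha_0$. For $\psi^u$ this requires $d(x,y)$ small, so that $E^u(\omega,y)$ is close to $E^u(\omega,x)$.

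The third step finds the intersection. A point of the intersection corresponds to a fixed point $(a,b)$ of $(a,b)\mapsto(\psi^u(b),\psi^s(a))$. This map is a contraction with constant $\kappa$ on $E^s(\rho)\times E^u(\rho)$. Once $d(x,y)<\delta$, with $\delta=\delta(\varepsilon)$ chosen so that $|\psi^u(0)|\ll\rho$, the map sends the product of balls into itself. The Banach fixed point theorem then gives existence and uniqueness of the intersection point inside the chart. The resulting point lies at distance $O(\delta)$ from $x$ and $y$. To confirm that it lies in $W^s_\varepsilon(\omega,x)\cap W^u_\varepsilon(\omega,y)$ in the dynamical sense of Definition \ref{def:localmanifolds}, I use the characterisation of the local manifolds from the proof of Proposition \ref{prop:localmanifolds}. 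Uniqueness within the whole of $M$ (not just the chart) holds because both sets have diameter at most $2\varepsilon<r_0$.

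The fourth step is continuity. The fixed point of a uniform contraction depends continuously on parameters, and $\psi^s,\psi^u$ depend continuously on $x,y$ in the $\mathcal C^0$ topology by the Hausdorff H\"older continuity in Proposition \ref{prop:localmanifolds}. Together these give continuity of $(x,y)\mapsto[x,y]^\varepsilon_\omega$ for fixed $\omega$. Joint continuity in $\omega$ would further require continuous dependence of the local manifolds on $\omega$. That follows from $F$ being a homeomorphism with $\mathcal C^2$ fibres depending continuously on $\omega$, via the stable manifold theorem with parameters.

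The main obstacle is the uniformity in $\omega$ of $\delta(\varepsilon)$, which requires uniform size and uniform graph Lipschitz bounds for the local manifolds. I would extract these from the uniform $\mathcal C^2$ bounds and the deterministic cones, following \cite[Section 3]{liu2024exponential}.
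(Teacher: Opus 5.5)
The paper does not prove this statement itself; it imports it from \cite[Lemma 3.3]{liu2024exponential}. Your outline (uniform transversality, uniform size and flatness of the local manifolds, a contraction in a chart, continuity of the fixed point) is the standard argument behind such a lemma. As written, however, your first step rests on a false implication. Hypothesis \textbf{(H1)} only asks that $\mathcal C^-$ be strictly forward invariant and $\mathcal C^+$ strictly backward invariant. It says nothing about how the two cones sit relative to each other, and it does not give $\mathcal C^+(x)\cap\mathcal C^-(x)=\{0\}$. For instance, for the linear map $\mathrm{diag}(2,\tfrac12)$ the cones $\{|v_2|\le 2|v_1|\}$ and $\{|v_1|\le 2|v_2|\}$ are strictly forward, respectively backward, invariant, and both contain $(1,1)$. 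So the containments $E^s\subset\mathcal C^+$ and $E^u\subset\mathcal C^-$ give no angle bound at all. That leaves the uniformity of $\delta$ in $\omega$ unsupported, and you rightly identify it as the crux. The fix is to get transversality from the growth rates instead of the cones. Put $\Lambda:=\sup_{\omega,x}\|DT_\omega(x)\|<\infty$, and let $v^s\in E^s(\omega,x)$, $v^u\in E^u(\omega,x)$ be unit vectors. Then $\|DT_\omega(x)v^s\|\le e^{-\lambda_0}$. Invariance of $E^u$ together with the backward contraction on $E^u(\theta\omega,T_\omega x)$ gives $\|DT_\omega(x)v^u\|\ge e^{\lambda_0}$. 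Hence $\Lambda\|v^u-v^s\|\ge e^{\lambda_0}-e^{-\lambda_0}$, a bound independent of $\omega$ and $x$. With this, the rest of your chart argument goes through.

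Two smaller points need care. First, for uniqueness it is not enough that $W^s_\varepsilon(\omega,x)$ and $W^u_\varepsilon(\omega,y)$ \emph{contain} graphs over balls of radius $\rho\asymp\varepsilon$. You need the \emph{whole} sets to lie in $\kappa$-Lipschitz graphs over the ball on which you run the contraction, for example graphs of slightly larger local manifolds, or Lipschitz extensions. Then every intersection point is a fixed point of the extended map, and that fixed point lies within $O(\delta)$ of $x$, hence in the genuine local manifolds. Second, the standing assumptions give only $\mathcal C^0$-continuity of $\omega\mapsto T_\omega$, not the $\mathcal C^1$ continuity a parametrised stable manifold theorem needs. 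It can be recovered from the uniform $\mathcal C^2$ bounds via Arzel\`a--Ascoli, but there is a simpler route. The relation $\{(\omega,x,z):d(T^n_\omega x,T^n_\omega z)\le\varepsilon\ \text{for all } n\ge0\}$ and its analogue with $F^{-n}$ are closed, because each $F^{\pm n}$ is continuous. So any limit point of $[x_k,y_k]^\varepsilon_{\omega_k}$ lies in $W^s_\varepsilon(\omega,x)\cap W^u_\varepsilon(\omega,y)$. Uniqueness and compactness of $M$ then give joint continuity in $(\omega,x,y)$, on the $\theta$-invariant full-measure set where \textbf{(H1)} holds.
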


\begin{definition}[Rectangles]\label{def:rectangles}
Fix $\varepsilon\in\left(0,\varepsilon_0\right]$ and set $\delta=\delta\left(\varepsilon\right)$ as in Proposition \ref{prop:productstructure}. For $\omega\in\Omega$ and $x_0\in M$, define the rectangle
$$
R_\delta\left(\omega,x_0\right)=\left\{\left[z_1,z_2\right]_\omega^\varepsilon:
z_1\in W_\delta^u\left(\omega,x_0\right),\, z_2\in W_\delta^s\left(\omega,x_0\right)\right\}.
$$
\end{definition}

To perform the analysis of Sections \ref{sec:projhilb} and \ref{sec:RPF}, it will be useful to define the geometric potential below and to record its basic properties.
\begin{definition}\label{def:jacobians}
For $\tau\in\left\{s,u\right\}$ define
$$
J_\tau\left(\omega,x\right)=\left|\det\left(\left(\mathrm DT_\omega\left(x\right)\right)\big|_{E^\tau\left(\omega,x\right)}\right)\right|,\ 
\phi^{J_\tau}\left(\omega,x\right)=-\log J_\tau\left(\omega,x\right).
$$
 
\end{definition}

\begin{proposition}[{\cite[Lemmas 3.4 and 3.5]{liu2024exponential}}]\label{prop:holderjacs}
There exist constants $C_{10}\geq 1$, $C_2>0$, $\nu_0\in\left(0,\beta\right)$, such that for any $\tau \in\{s,u\}$, for  $\mathbb P$-almost every $\omega\in\Omega$ and all $x,y\in M$,
$$
C_{10}^{-1}\le J_\tau\left(\omega,x\right)\le C_{10},
\ 
\left|J_\tau\left(\omega,x\right)-J_\tau\left(\omega,y\right)\right|\le C_2 d\left(x,y\right)^{\nu_0},
$$
and hence
$$
\left|\phi^{J_\tau}\left(\omega,x\right)-\phi^{J_\tau}\left(\omega,y\right)\right|\le C_2 d\left(x,y\right)^{\nu_0}.
$$
 
\end{proposition}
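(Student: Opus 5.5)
The statement to prove is Proposition \ref{prop:holderjacs}: uniform two-sided bounds on $J_\tau$ and uniform H\"older continuity of $J_\tau$ and $\phi^{J_\tau}$ in $x$.

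\emph{Two-sided bounds.} These are immediate. By the standing assumption, $\sup_\omega\|T_\omega\|_{\mathcal C^2}$ and $\sup_\omega\|T_\omega^{-1}\|_{\mathcal C^2}$ are finite, so $\|DT_\omega(x)\|$ and $\|DT_\omega(x)^{-1}\|$ are bounded by a constant $K$ uniformly in $(\omega,x)$. By invariance of the splitting, $DT_\omega(x)$ maps $E^\tau(\omega,x)$ onto $E^\tau(F(\omega,x))$, and these spaces have fixed dimension $d_\tau$. Hence every singular value of the restriction lies in $[K^{-1},K]$. The determinant of the restriction therefore satisfies
\[
K^{-d_\tau}\le J_\tau(\omega,x)\le K^{d_\tau},
\]
which gives $C_{10}$.

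\emph{H\"older continuity of $J_\tau$.} I would write $J_\tau(\omega,x)=\|\Lambda^{d_\tau}DT_\omega(x)\,\xi(\omega,x)\|$, where $\xi(\omega,x)$ is a unit $d_\tau$-vector spanning $E^\tau(\omega,x)$. For $\tau=s$ this is just $\|DT_\omega(x)v\|$ with $v$ a unit vector in $E^s$. To compare $x$ and $y$, work in a chart (or use parallel transport) when $d(x,y)$ is below the injectivity radius. If $d(x,y)$ is large, the bound follows from the uniform bounds above after enlarging $C_2$. Split the difference into two terms:
\[
\bigl|\,\|\Lambda DT_\omega(x)\xi_x\|-\|\Lambda DT_\omega(y)\xi_y\|\,\bigr|\le \|\Lambda DT_\omega(x)-\Lambda DT_\omega(y)\|+\|\Lambda DT_\omega(y)\|\,\|\xi_x-\xi_y\|.
\]
The first term is $O(d(x,y))$ by the uniform $\mathcal C^2$ bound. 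The second term is $O(d(x,y)^{\nu_0})$ by the H\"older continuity of the distributions $E^\tau(\omega,\cdot)$ in Proposition \ref{prop:localmanifolds}. Two technical points remain there. The Grassmannian distance controls the distance between suitably oriented unit $d_\tau$-vectors. Since $M$ is compact, $d(x,y)\le d(x,y)^{\nu_0}\,\mathrm{diam}(M)^{1-\nu_0}$, so the Lipschitz term is absorbed into the H\"older one.

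\emph{H\"older continuity of $\phi^{J_\tau}$.} The logarithm is Lipschitz on $[C_{10}^{-1},C_{10}]$ with constant $C_{10}$. Replacing $C_2$ by $C_{10}C_2$ yields the claimed estimate for $\phi^{J_\tau}$.

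\emph{The requirement $\nu_0<\beta$.} This costs nothing: a H\"older estimate with some exponent implies the same estimate with any smaller exponent, up to a constant depending on the diameter of $M$. So I would shrink $\nu_0$ if necessary.

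\emph{Main obstacle.} The only genuine input is the uniform (in $\omega$) H\"older continuity of $x\mapsto E^\tau(\omega,x)$. I take this from Proposition \ref{prop:localmanifolds}, and I would check that its constants $C_1,\nu_0$ are indeed uniform in $\omega$. If one had to prove it, the standard route is the graph-transform argument. One compares the backward iterates $DT^{n}_{\theta^{-n}\omega}\mathcal C^-(T_\omega^{-n}x)$ and the corresponding iterates at $y$. The cones contract at rate $e^{-2\lambda n}$, while the base points separate at most like $\mathrm{Lip}^n\,d(x,y)$. Choosing $n\approx \log(1/d(x,y))/\log \mathrm{Lip}$ balances the two effects and gives exponent $\nu_0\approx 2\lambda/\log \mathrm{Lip}$, uniformly in $\omega$ because every constant involved is deterministic.
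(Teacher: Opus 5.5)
Your argument is correct and is essentially the standard proof behind \cite[Lemmas 3.4 and 3.5]{liu2024exponential}, which the paper quotes without giving a proof of its own. The uniform bounds on $DT_\omega^{\pm1}$ give the two-sided bound on $J_\tau$, and the H\"older estimates follow from three ingredients: the uniform H\"older continuity of $E^\tau(\omega,\cdot)$ from Proposition~\ref{prop:localmanifolds}, the Lipschitz dependence of $DT_\omega(x)$ on $x$, and the Lipschitz property of $\log$ on $[C_{10}^{-1},C_{10}]$.

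There is one slip in your optional aside, and it does not affect the main argument. With $n\approx\log(1/d(x,y))/\log\mathrm{Lip}$, the separation term $\mathrm{Lip}^n d(x,y)$ is of order one. The balancing choice is $n\approx\log(1/d(x,y))/(2\lambda+\log\mathrm{Lip})$, which gives an exponent of about $2\lambda/(2\lambda+\log\mathrm{Lip})$.
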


We next define the holonomy maps between nearby stable manifolds induced by unstable manifolds, and record their basic properties.
\begin{definition}[{\cite[Lemma~3.9]{liu2024exponential}}]
\label{def:holonomy}
Fix $\varepsilon\in(0,\varepsilon_0]$. For $\omega\in\Omega$, set
$\mathcal F_\omega^s=\{W_\varepsilon^s(\omega,x):x\in M\}.$
Given $\tilde\gamma_{\omega},\gamma_{\omega}\in\mathcal F_\omega^s$, we say that
$(\tilde\gamma_{\omega},\gamma_{\omega})$ is a nearby pair if the map
$$
\operatorname{hol}_\omega^u:\tilde\gamma_{\omega}\to\gamma_{\omega},
\ 
x\mapsto W_\varepsilon^u(\omega,x)\cap\gamma_{\omega},
$$
is well-defined.
\end{definition}

\begin{proposition}[{\cite[Proposition 3.11 and Equation (3.76)]{liu2024exponential}}]\label{prop:holjac}
Let $m_U$ denote the Riemannian volume measure induced on a submanifold $U$. For $\mathbb P$-almost every $\omega\in\Omega$ and every nearby pair $\left(\tilde\gamma\left(\omega\right),\gamma\left(\omega\right)\right)$, the pushforward $\left(\left(\operatorname{hol}_\omega^u\right)^{-1}\right)_*m_{\gamma\left(\omega\right)}$ is absolutely continuous with respect to $m_{\tilde\gamma\left(\omega\right)}$, and its Radon--Nikodym derivative satisfies
\begin{align*}
\operatorname{Jac}\left(\operatorname{hol}_\omega^u\right)\left(x\right)
&=\frac{\mathrm d\left[\left(\left(\operatorname{hol}_\omega^u\right)^{-1}\right)_*m_{\gamma\left(\omega\right)}\right]}{\mathrm d m_{\tilde\gamma\left(\omega\right)}}\left(x\right)=\lim_{n\to\infty}\frac{\left|\det\left(\left(\mathrm DT_\omega^{-n}\left(x\right)\right)\big|_{E^s\left(\omega,x\right)}\right)\right|}{\left|\det\left(\left(\mathrm DT_\omega^{-n}\left(\operatorname{hol}_\omega^u x\right)\right)\big|_{E^s\left(\omega,\operatorname{hol}_\omega^u x\right)}\right)\right|} \\ & = \prod_{j=1}^\infty \frac{e^{\phi^{J_s} \circ F^{-j} (\omega,\operatorname{hol}_\omega^u x ) } }{ e^{\phi^{J_s} \circ F^{-j} (\omega,x ) }}.
\end{align*}

\end{proposition}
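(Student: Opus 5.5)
We need to prove Proposition \ref{prop:holjac}, which the paper attributes to \cite{liu2024exponential}. The plan is the classical absolute-continuity argument for holonomies, carried out fibrewise: transport the comparison between $\tilde\gamma$ and $\gamma$ backwards in time, where the stable curves become exponentially long and the unstable leaves exponentially close, and then read off the Jacobian from a telescoping product of stable Jacobians.

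Fix $\omega$ in the full-measure set where \textbf{(H1)} and Propositions \ref{prop:localmanifolds}--\ref{prop:holderjacs} hold, and a nearby pair $(\tilde\gamma,\gamma)$. For $n\ge1$, set $\tilde\gamma_n=T_\omega^{-n}\tilde\gamma$ and $\gamma_n=T_\omega^{-n}\gamma$; these are stable curves for the fibre over $\theta^{-n}\omega$. The holonomy commutes with the dynamics, in the sense that
\[
\operatorname{hol}^u_\omega=T^n_{\theta^{-n}\omega}\circ h_n\circ T^{-n}_\omega ,
\]
where $h_n:\tilde\gamma_n\to\gamma_n$ is the unstable holonomy at time $-n$. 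Along unstable leaves, backward iterates contract by $e^{-\lambda_0 n}$, so $d(T_\omega^{-n}x,T_\omega^{-n}\operatorname{hol}^u_\omega x)\le e^{-\lambda_0 n}\varepsilon$. Since stable manifolds are one-dimensional, the chain rule gives
\[
\operatorname{Jac}(\operatorname{hol}^u_\omega)(x)=\frac{|\det DT_\omega^{-n}(x)|_{E^s}|}{|\det DT_\omega^{-n}(\operatorname{hol}^u_\omega x)|_{E^s}|}\cdot \operatorname{Jac}(h_n)(T_\omega^{-n}x).
\]
Writing $\det DT^{-n}|_{E^s}$ as a product of inverse stable Jacobians $J_s(F^{-j}(\omega,\cdot))^{-1}$, the prefactor equals $\prod_{j=1}^n e^{\phi^{J_s}\circ F^{-j}(\omega,\operatorname{hol} x)-\phi^{J_s}\circ F^{-j}(\omega,x)}$.

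Convergence of this product follows from Proposition \ref{prop:holderjacs}. The $j$-th log-term is bounded by $C_2 d(\cdot,\cdot)^{\nu_0}\le C_2(\varepsilon e^{-\lambda_0 j})^{\nu_0}$, which is summable. So the infinite product converges uniformly in $x$, is bounded away from $0$ and $\infty$, and is Hölder in $x$.

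It then remains to show that $h_n$ becomes asymptotically measure preserving, i.e. $\operatorname{Jac}(h_n)\to1$ in a weak sense. This is the main obstacle: we do not yet know that $h_n$ is even absolutely continuous. I would avoid assuming differentiability and instead argue with measures of small arcs. Take an arc $I\subset\tilde\gamma$ and compare $m_\gamma(\operatorname{hol}I)$ with $\int_I \prod_j(\cdots)\,dm_{\tilde\gamma}$. Pull back by $T_\omega^{-n}$ and approximate $h_n$ on $T^{-n}_\omega I$ by a $C^1$ map $\pi_n$. Here $\pi_n$ slides along a smooth transversal foliation of small size, for instance the exponential-map image of the cone $\mathcal C^-$ directions. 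The key estimates are as follows. First, $h_n$ and $\pi_n$ differ in endpoints by $O(e^{-\lambda_0 n})$ relative to arc lengths, which are themselves expanded under $T^{-n}$ by at least $e^{\lambda_0 n}\cdot|I|$. Second, $\operatorname{Jac}\pi_n\to1$ uniformly, because $\tilde\gamma_n$ and $\gamma_n$ are $C^1$-close: their tangent directions are both in $E^s$, with Hölder dependence by Proposition \ref{prop:localmanifolds}, and the points are exponentially close. Third, bounded distortion of $DT^{n}|_{E^s}$ along stable curves, again from Proposition \ref{prop:holderjacs}, lets us push these estimates forward.

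Letting $n\to\infty$ yields $m_\gamma(\operatorname{hol}I)=\int_I\prod_{j\ge1}(\cdots)\,dm_{\tilde\gamma}$ for every arc $I$. Since arcs generate the Borel $\sigma$-algebra on the one-dimensional curve $\tilde\gamma$, this gives absolute continuity together with the stated Radon--Nikodym formula, in both the limit form and the product form.
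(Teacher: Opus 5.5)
Your plan follows the classical absolute-continuity argument. The paper gives no proof of its own and only cites \cite{liu2024exponential}. As written, however, two steps of your argument fail.

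\textbf{The product identity has the wrong sign.} Since $\phi^{J_s}=-\log J_s$, one has $\bigl|\det \mathrm DT^{-n}_\omega(y)|_{E^s}\bigr|=\prod_{j=1}^{n}J_s\bigl(F^{-j}(\omega,y)\bigr)^{-1}=\prod_{j=1}^{n}e^{\phi^{J_s}\circ F^{-j}(\omega,y)}$. Hence
\[
P_n(x):=\frac{\bigl|\det \mathrm DT^{-n}_\omega(x)|_{E^s}\bigr|}{\bigl|\det \mathrm DT^{-n}_\omega(\operatorname{hol}^u_\omega x)|_{E^s}\bigr|}=\prod_{j=1}^{n}e^{\phi^{J_s}\circ F^{-j}(\omega,x)-\phi^{J_s}\circ F^{-j}(\omega,\operatorname{hol}^u_\omega x)}=\prod_{j=1}^{n}\frac{J_s\bigl(F^{-j}(\omega,\operatorname{hol}^u_\omega x)\bigr)}{J_s\bigl(F^{-j}(\omega,x)\bigr)},
\]
which is the reciprocal of what you wrote. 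Your slip reproduces an inconsistency in the statement as printed. Its limit expression and its final product are reciprocals of each other, so in general they cannot both equal $\operatorname{Jac}(\operatorname{hol}^u_\omega)$.

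The limit form is the correct one. At time $-n$ the holonomy almost preserves length, so for a small arc $A\ni x$ one gets $m_\gamma(\operatorname{hol}^u_\omega A)\approx P_n(x)\,m_{\tilde\gamma}(A)$. The paper also implicitly uses this form later: for $\phi=0$ one has $\mu_\gamma\propto m_\gamma$, and Proposition \ref{prop:absconstable} then returns the $J_s$-ratio above. You should prove the limit form and state the product with $-\phi^{J_s}$, rather than derive the printed product through a sign error.

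\textbf{The measure comparison must be localized.} You compare $h_n$ with $\pi_n$ on the whole arc $T^{-n}_\omega I$, whose length tends to infinity, and then invoke bounded distortion. But $m_\gamma(\operatorname{hol}^u_\omega I)=\int_{h_n(T^{-n}_\omega I)}\bigl|\det \mathrm DT^{n}|_{E^s}\bigr|\,\d m$ is a weighted integral. Proposition \ref{prop:holderjacs} only bounds the log-distortion of $\mathrm DT^{n}|_{E^s}$ on a stable arc of length $L$ by $O(L^{\nu_0})$. On $T^{-n}_\omega I$ this distortion is in general unbounded in $n$, so knowing $m(h_nT^{-n}_\omega I)\approx m(T^{-n}_\omega I)$ tells you nothing about the weighted integrals. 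In addition, a single $C^1$ sliding map between two long curves that accumulate on themselves is not naturally defined.

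The missing step is to work at bounded scale:
\begin{itemize}
\item Cut $T^{-n}_\omega I$ into subarcs $J_k$ of length in $[\delta,2\delta]$, and define $\pi_n$ in a chart around each.
\item Your pointwise bound $d(h_nu,\pi_nu)=O(e^{-\lambda_0 n})$ and $\operatorname{Jac}\pi_n=1+O(e^{-\lambda_0\nu_0 n})$ give $m(h_nJ_k)=(1+o(1))\,m(J_k)$ uniformly in $k$ as $n\to\infty$.
\item $\log|\det \mathrm DT^n|_{E^s}|$ oscillates by $O(\delta^{\nu_0})$ on each $h_nJ_k$.
\item By the change of variables $x=T^n_{\theta^{-n}\omega}u$, one has exactly $\int_{J_k}|\det \mathrm DT^n|_{E^s}|\circ h_n\,\d m=\int_{T^n_{\theta^{-n}\omega}J_k}P_n\,\d m_{\tilde\gamma}$.
\item Summing over $k$ gives $m_\gamma(\operatorname{hol}^u_\omega I)=e^{\pm O(\delta^{\nu_0})}(1+o(1))\int_I P_n\,\d m_{\tilde\gamma}$. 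Let $n\to\infty$, then $\delta\to0$.
\end{itemize}
Alternatively, as in the standard proof and in the paper's argument for Proposition \ref{prop:absconstable}, shrink $I$ to a point, choose $n=n(I)$ so that $T^{-n}_\omega I$ has length of order one, and differentiate.

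With these two corrections, the rest of your plan is sound: summability of the product via Proposition \ref{prop:holderjacs}, $C^1$-closeness of the pulled-back curves via Proposition \ref{prop:localmanifolds}, and identification of the two measures through arcs.
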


The next proposition records how the Riemannian volume on $M$ disintegrates over rectangles, and the following definition introduces the admissible stable leaves.
\begin{proposition}[{\cite[Lemma 3.12]{liu2024exponential}}]\label{prop:holderhol}
There exist constants $a_0',C,J,\nu_0>0$ such that for $\mathbb P$-almost every $\omega\in\Omega$ and every nearby pair $\left(\tilde\gamma\left(\omega\right),\gamma\left(\omega\right)\right)$:
\begin{itemize}
\item the maps $\operatorname{hol}_\omega^u$ and $\log\operatorname{Jac}\left(\operatorname{hol}_\omega^u\right)$ are $\mathcal C^{\nu_0}$, and $J^{-1}\le\operatorname{Jac}\left(\operatorname{hol}_\omega^u\right)\le J$;
\item for all $x\in\tilde\gamma\left(\omega\right)$,
$$
\left|\log\operatorname{Jac}\left(\operatorname{hol}_\omega^u\right)\left(x\right)\right|\le a_0' d\left(x,\operatorname{hol}_\omega^u\left(x\right)\right)^{\nu_0};
$$
\item for all $x\in\tilde\gamma\left(\omega\right)$,
$$
d\left(T_\omega^{-1}x,T_\omega^{-1}\operatorname{hol}_\omega^u x\right)\le  e^{-\lambda}d\left(x,\operatorname{hol}_\omega^u x\right).
$$
\end{itemize}
Analogous statements hold for the stable holonomy between unstable manifolds.
\end{proposition}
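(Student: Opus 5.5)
This statement is imported from \cite[Lemma 3.12]{liu2024exponential}, so the natural route is to follow that reference's argument using the earlier results collected in this section. For the first item, I will show that $\operatorname{hol}_\omega^u$ is H\"older using the H\"older continuity of $x\mapsto W^u_\varepsilon(\omega,x)$ in the Hausdorff topology (Proposition \ref{prop:localmanifolds}). I will also use the continuity of the bracket $[\cdot,\cdot]^\varepsilon_\omega$ from Proposition \ref{prop:productstructure}, since $\operatorname{hol}^u_\omega(x)=[x',x]_\omega^\varepsilon$ for a reference point $x'\in\gamma_\omega$. Quantitatively, the transversality of the leaves, which is uniform because they lie in the deterministic cones $\mathcal C^\pm$, converts the H\"older dependence of the unstable leaves into a H\"older estimate for the intersection point with the one-dimensional leaf $\gamma_\omega$.

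For the Jacobian, I will start from the infinite product formula in Proposition \ref{prop:holjac}:
\[
\log\operatorname{Jac}(\operatorname{hol}^u_\omega)(x)=\sum_{j\ge1}\Big(\phi^{J_s}(F^{-j}(\omega,\operatorname{hol}^u_\omega x))-\phi^{J_s}(F^{-j}(\omega,x))\Big).
\]
The two points lie on the same local unstable leaf, so by \textbf{(H1)} and the adapted norm of Remark \ref{rem:adaptednorm} their backward iterates satisfy $d(T^{-j}_\omega x,T^{-j}_\omega\operatorname{hol}^u_\omega x)\le e^{-\lambda j}d(x,\operatorname{hol}^u_\omega x)$. This is also the third item, obtained by contraction of $E^u$ under $DT^{-1}$ along a curve inside the unstable disc. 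Proposition \ref{prop:holderjacs} then bounds each summand by $C_2 e^{-\lambda\nu_0 j}d(x,\operatorname{hol}^u_\omega x)^{\nu_0}$. Summing the geometric series gives the second item with $a_0'=C_2/(e^{\lambda\nu_0}-1)$. Since $d(x,\operatorname{hol}^u_\omega x)\le 2\varepsilon$, we also get $J=\exp(a_0'(2\varepsilon)^{\nu_0})$.

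For H\"older continuity of $\log\operatorname{Jac}$, take $x,y\in\tilde\gamma_\omega$ and split the sum at an index $n$. For $j\le n$, the four backward-iterated points are controlled by $e^{Cj}d(x,y)^{\nu_0}$, using the H\"older continuity of $\operatorname{hol}$ and the Lipschitz bound on $T^{-1}$ coming from the uniform $\mathcal C^2$ bounds. For $j>n$, the tail is at most $2C_2e^{-\lambda\nu_0 n}(2\varepsilon)^{\nu_0}/(1-e^{-\lambda\nu_0})$. Optimising with $n\approx c\log(1/d(x,y))$ yields a possibly smaller H\"older exponent, and I will rename it $\nu_0$.

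The stable holonomy statement follows by the same argument applied to $F^{-1}$, interchanging the roles of $s$ and $u$.

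I expect the main obstacle to be the H\"older regularity of $\operatorname{hol}^u_\omega$ itself, uniformly in $\omega$. This needs the local unstable leaves to depend H\"older-continuously on the base point with constants independent of $\omega$. I would derive this from the standard graph-transform argument in which the deterministic cones supply uniform constants, and otherwise quote it from \cite{liu2024exponential}.
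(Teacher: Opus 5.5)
Your proposal is correct. The paper gives no argument of its own here: it imports the statement verbatim from \cite[Lemma 3.12]{liu2024exponential}. Your reconstruction is the standard argument one would expect behind that citation: H\"older dependence of $W^u_\varepsilon(\omega,\cdot)$ plus uniform cone transversality for $\operatorname{hol}^u_\omega$; the product formula of Proposition~\ref{prop:holjac}, combined with Proposition~\ref{prop:holderjacs} and backward contraction, for the Jacobian bounds; and a split sum for the H\"older regularity of $\log\operatorname{Jac}$.

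One small correction concerns the third item. Contraction of $E^u$ under $DT^{-1}$ bounds the intrinsic distance along the unstable disc, not the ambient distance $d$. Passing between the two on an $\varepsilon$-small $\mathcal C^2$ disc costs a factor $1+O(\varepsilon)$. So the third item with multiplicative constant $1$ needs $\lambda<\lambda_0$, or $\varepsilon$ small in terms of $\lambda_0-\lambda$. This is harmless, since $\lambda\in(0,\lambda_0]$ is at your disposal. Likewise, shrinking $\nu_0$ at the end is compatible with every other use of $\nu_0$ in Section~\ref{sec:geom}.
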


\begin{proposition}[{\cite[Proposition 3.3.]{liu2024exponential}}]\label{prop:disintegration}
There exists $a_0''>0$ such that for $\mathbb P$-almost every $\omega\in\Omega$ and every rectangle $
R\left(\omega\right)=R_\delta\left(\omega,x_0\right),$
there exists a measurable function $H\left(\omega\right):R\left(\omega\right)\to\mathbb R_{\ge0}$ with the following property: for every bounded measurable $\psi:M\to\mathbb R$,
$$
\int_{R\left(\omega\right)}\psi\left(x\right) m\left(\d x\right)
=\int_{W_\varepsilon^u\left(\omega,x_0\right)}
\left(\int_{W_\varepsilon^s\left(\omega,x\right)}\psi\left(u\right)H\left(\omega\right)\left(u\right) m_{W_\varepsilon^s\left(\omega,x\right)}\left(\d u\right)\right)
\mathrm d\widetilde m_{R\left(\omega\right)}\left(x\right),
$$
where $m$ is the Riemannian volume on $M$ and $\widetilde m_{R\left(\omega\right)}$ is the quotient measure induced by $m|_{R\left(\omega\right)}$ on $W_\varepsilon^u\left(\omega,x_0\right)$. Moreover, for every $x\in W_\varepsilon^u\left(\omega,x_0\right)$ and all $u,v\in W_\varepsilon^s\left(\omega,x\right)$,
$$
\left|\log H\left(\omega\right)\left(u\right)-\log H\left(\omega\right)\left(v\right)\right|\le a_0'' d\left(u,v\right)^{\nu_0}.
$$
\end{proposition}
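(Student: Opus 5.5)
The plan is to realise the rectangle $R(\omega)=R_\delta(\omega,x_0)$ as a topological product and to obtain the disintegration of $m|_{R(\omega)}$ by two successive Fubini-type changes of variables. Each change of variables is controlled by a holonomy Jacobian with H\"older logarithm.

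By Proposition \ref{prop:productstructure}, the map $\Phi_\omega:(z_1,z_2)\mapsto[z_1,z_2]^\varepsilon_\omega$ is a homeomorphism from $W^u_\delta(\omega,x_0)\times W^s_\delta(\omega,x_0)$ onto $R(\omega)$. The sets $\Phi_\omega(\{z_1\}\times W^s_\delta)$ are the stable plaques, and the sets $\Phi_\omega(W^u_\delta\times\{z_2\})$ are the unstable plaques. Since the partition of $R(\omega)$ into stable plaques is measurable, Rokhlin's theorem already gives conditional measures on the stable plaques and a quotient measure $\widetilde m_{R(\omega)}$ on $W^u_\varepsilon(\omega,x_0)$. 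The content of the proposition is therefore that these conditionals are equivalent to the leaf volumes $m_{W^s_\varepsilon(\omega,x)}$, with densities $H(\omega)$ whose logarithms are uniformly $\nu_0$-H\"older.

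\textbf{Step 1: Fubini along the unstable plaques.} First I will show that $m|_{R(\omega)}$ disintegrates along the unstable plaques with a density $\rho$ with respect to the leaf volumes $m_{W^u}$, and that $\log\rho$ is uniformly H\"older. The unstable plaques are $\mathcal C^2$ discs depending H\"older continuously on $z_2$ (Proposition \ref{prop:localmanifolds}). I will pick a smooth foliation of a chart containing $R(\omega)$ that is uniformly transverse to $E^s$, namely discs tangent to the cone $\mathcal C^-$. For this smooth foliation, the classical Fubini theorem gives a smooth density.

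I will then pull this foliation back by $T^{n}_{\theta^{-n}\omega}$. By the invariance of $\mathcal C^-$ in \textbf{(H1)}, the pulled-back leaves stay in the unstable cones and converge to the unstable plaques. The densities transform by the ratio of the full Jacobian to the unstable Jacobian along backward orbits. Bounded distortion gives uniform H\"older bounds on the logarithms of these densities, using Proposition \ref{prop:holderjacs} together with the exponential contraction of backward orbits along unstable leaves. Passing to the limit $n\to\infty$ gives $\rho$.

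\textbf{Step 2: move everything onto stable leaves.} Write the result of Step 1 as an iterated integral: the outer integral is over $z_2\in W^s_\delta(\omega,x_0)$ against some measure, and the inner one is over the unstable plaque through $z_2$ against $\rho\, m_{W^u}$. Next I push each unstable plaque onto $W^u_\varepsilon(\omega,x_0)$ by the stable holonomy. By the stable analogue of Proposition \ref{prop:holderhol}, this map is absolutely continuous with Jacobian bounded above and below, with $\log\operatorname{Jac}$ of class $\mathcal C^{\nu_0}$.

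After exchanging the order of integration, the outer integral is over $x\in W^u_\varepsilon(\omega,x_0)$. The inner integral runs over the parameter $z_2$. I identify this parameter with a point of the stable leaf $W^s_\varepsilon(\omega,x)$ via the unstable holonomy between $W^s_\varepsilon(\omega,x_0)$ and $W^s_\varepsilon(\omega,x)$. Proposition \ref{prop:holjac} turns this identification into integration against $m_{W^s_\varepsilon(\omega,x)}$, at the cost of a factor $\operatorname{Jac}(\operatorname{hol}^u_\omega)^{-1}$.

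Collecting all factors, I define $H(\omega)(u)$ as the product of three terms: $\rho(u)$, the stable-holonomy Jacobian at $u$, and the inverse unstable-holonomy Jacobian. Any factor depending only on $x$ is absorbed into $\widetilde m_{R(\omega)}$. Each of the three factors has $\nu_0$-H\"older logarithm along the leaf, with constants uniform in $\omega$, and the compositions with holonomies preserve this because the holonomies are themselves $\mathcal C^{\nu_0}$. This gives the constant $a_0''$, possibly after decreasing $\nu_0$.

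\textbf{Main obstacle.} I expect Step 1 to be the hard part, namely the absolute continuity of the unstable lamination inside $R(\omega)$ with a uniformly H\"older density. One has to check that the graph-transform limit commutes with the disintegration, and that the distortion constants do not depend on $\omega$. I will handle this with the uniform $\mathcal C^2$ bounds of the standing assumptions and with the $\omega$-independent cone fields of \textbf{(H1)}. With these two ingredients, the deterministic proof of Pesin--Sinai type goes through verbatim along each $\mathbb P$-typical base orbit.
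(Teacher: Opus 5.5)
The paper does not prove this statement; it imports it from \cite[Proposition 3.3]{liu2024exponential}. Your argument therefore has to stand on its own, and it has a genuine gap in Step~2. After Step~1 you have $m|_{R(\omega)}=\int\rho\,m_{W^u(z_2)}\,\d\hat m(z_2)$. Here $\hat m$ is the quotient measure of the \emph{unstable} partition, carried by $W^s_\delta(\omega,x_0)$. On each plaque, $\rho$ is defined only up to a constant depending on $z_2$, which can be traded against $\hat m$. Proposition~\ref{prop:holjac} converts integration against $m_{W^s_\varepsilon(\omega,x_0)}$ into integration against $m_{W^s_\varepsilon(\omega,x)}$, but your inner integral is against $\hat m$, not leaf volume. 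To conclude you would need two things. First, $\hat m=g\,m_{W^s_\varepsilon(\omega,x_0)}$ with $\log g$ H\"older. Second, the suitably normalised $\rho$ must be H\"older as its argument moves \emph{along a stable leaf}, that is, across different unstable plaques. Step~1 gives neither. Bounded distortion only controls $\rho(y)/\rho(y')$ for $y,y'$ on the same unstable plaque. The weak limit of the quotient measures of the approximating foliations $T^n_{\theta^{-n}\omega}\mathcal G$ carries no absolute-continuity information. So your three factors for $H$ omit $g$, and the claim that $\log\rho$ is $\nu_0$-H\"older along stable leaves is unsupported.

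This is not a technicality. Up to holonomy Jacobians, the missing input is H\"older regularity in the stable variable $z_2$ of the density of $\Phi_\omega^*m$ with respect to $m_{W^u}\otimes m_{W^s}$. That is essentially the proposition itself, so the plan is circular at this point. The limit in Step~1 throws away the only transverse control you had.

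The simplest repair is to skip that limit and apply Fubini directly to the smooth foliation $\mathcal G$ by discs tangent to $\mathcal C^-$. Its leaf densities, and its transverse measure on a smooth transversal $\Sigma$, are smooth. Map each $\mathcal G$-leaf onto $W^u_\varepsilon(\omega,x_0)$ by stable holonomy and exchange the integrals. Then parametrise the stable plaque through $x$ by $\Sigma$ via the $\mathcal G$-holonomy, which is $\mathcal C^1$ with H\"older Jacobian. The hyperbolic input is absolute continuity of stable holonomy between $\mathcal C^{1+}$ transversals lying in $\mathcal C^-$. The Jacobian factor varies as the $\mathcal G$-leaf moves along the stable plaque. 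That variation is controlled by the bound $|\log\operatorname{Jac}(\operatorname{hol})(y)|\le C\,d(y,\operatorname{hol}y)^{\nu_0}$ together with the cocycle property of holonomy Jacobians, not by H\"older continuity of the holonomy map.

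Alternatively, run your graph-transform argument on the stable lamination. Pull back by $(T^n_\omega)^{-1}$ discs tangent to $\mathcal C^+$ in the fibre over $\theta^n\omega$, and use forward distortion along stable leaves. This gives log-H\"older densities on stable plaques directly, and $\widetilde m_{R(\omega)}$ needs no regularity at all.
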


\begin{definition}[Admissible stable leaves and the scale $\varepsilon^*$]\label{def:stableleaves}
Fix $\varepsilon\in\left(0,\varepsilon_0\right]$. Choose $A\left(\varepsilon\right)>0$ such that, for $\mathbb P$-almost every $\omega\in\Omega$ and every $\gamma\left(\omega\right)\in\mathcal F_\omega^s=\{W_\e(\omega,x): x\in M\}$,
$$
A\left(\varepsilon\right)\le m_{\gamma\left(\omega\right)}\left(\gamma\left(\omega\right)\right).
$$
Define the family of admissible stable leaves
$$
\mathscr F_\omega^s
=\left\{\gamma\left(\omega\right)\subset W_\varepsilon^s\left(\omega,x\right):
x\in M,\ \gamma\left(\omega\right)\ \text{connected},\ \frac{A\left(\varepsilon\right)}{4J^2}<m_{\gamma\left(\omega\right)}\left(\gamma\left(\omega\right)\right)<A\left(\varepsilon\right)\right\}.
$$
where $J$ is defined in Proposition \ref{prop:holderhol}. We also choose $\varepsilon^*>0$ such that for every $\gamma\left(\omega\right)\in\mathscr F_\omega^s$ there exists $x\in M$ with $
W_{\varepsilon^*}^s\left(\omega,x\right)\subset \gamma\left(\omega\right).
$

\end{definition}

We now fix the remaining geometric constants used throughout the paper. The parameter $\delta$ is chosen small enough to be compatible both with the local product structure and with the stable scale $\varepsilon^*$. We also fix a finite cover of $M$ by balls of radius $\delta/4$, which will be used in the definition of the fibrewise mixing time. These constants are used throughout the paper.
\begin{definition}
\label{def:deltaandcover}
Let $\varepsilon\in(0,\min\{\varepsilon_0,\delta_0\}]$ be the scale fixed above, and let $
\delta_{\mathrm{loc}}:=\delta(\varepsilon)$ be the local product-structure scale given by Proposition~\ref{prop:productstructure}.
Fix $
0<\delta<\min\left\{\varepsilon^*/{8},\delta_{\mathrm{loc}}/{2}\right\}.$
Let $\{B_M(\delta/4,x)\}_{x\in M}$ be the open cover of $M$ by metric balls, and fix a finite subcover $
\{B_M(\delta/4,x_i)\}_{i=1}^{\ell}.$ This choice depends only on $M$ and $\delta$, hence it is independent of $\omega$.
For each $\gamma_\omega\in\mathcal F_\omega^s$, by abuse of notation we write
$$
\int_{\gamma_\omega}f:=\int_{\gamma_\omega}f\,\d m_{\gamma_\omega}.
$$
\end{definition}

\begin{remark}
All statements above are taken from \cite{liu2024exponential} (see Lemmas 3.1--3.5, 3.12 and Propositions 3.2--3.3 therein), with constants depending only on the standing data of the system.
\end{remark}

\section{Projective cones and Hilbert metrics}
\label{sec:projhilb}

We begin by recalling the basic objects in the deterministic theory of projective cones. Let $\mathbb{V}$ be a topological vector space, a set $\mathcal{C}\subset\mathbb{V}$ is called a convex cone if:
\begin{itemize}
    \item  $\mathcal{C}\cap(-\mathcal{C})=\emptyset$.
    \item given $v,w\in\mathcal C$ and $\lambda\in \mathbb R_+$ then $v+ \lambda w\in \mathcal C$
    \item  $\mathcal{C}\cup\{0\}$ is a closed subset of $\mathbb{V}$.\footnote{Meaning that if $x_n\in\mathcal{C}\cup\{0\}$ and $x_n\to x$ in $\mathbb{V}$, then $x\in\mathcal{C}\cup\{0\}$.}
\end{itemize}
Then $\mathcal{C}$ induces a (closed) partial order $\preceq$ on $\mathbb{V}$ by $
f\preceq g \Longleftrightarrow g-f\in\mathcal{C}\cup\{0\}.$ Here \emph{closed} means that if $f_n\to f$ in $\mathbb{V}$ and $f_n\preceq g$ for all $n$, then $f\preceq g$.\footnote{Equivalently, for each fixed $g\in\mathbb{V}$ the order interval $\{f\in\mathbb{V}: f\preceq g\}$ is closed.}

On $\mathcal{C}$, define the projective equivalence relation by
$$
f\sim g \Longleftrightarrow \exists\,\lambda\in\mathbb{R}_{>0}\ \text{such that}\ f=\lambda g. $$
The associated projectivisation of the cone is the quotient $\widetilde{\mathcal{C}}=\mathcal{C}/\sim,$ whose elements are the rays in $\mathcal{C}$.

One defines the Hilbert (projective) metric as the semimetric $\Theta:\mathcal{C}\times\mathcal{C}\to[0,\infty]$ given by
$$
\Theta(f,g)=\log\frac{\beta(f,g)}{\alpha(f,g)},
$$
where
$$
\alpha(f,g)=\sup\{\lambda\in\mathbb{R}_{>0}:\lambda f\preceq g\}\ \ (\sup\emptyset:=0),
$$
and
$$
\beta(f,g)=\inf\{\mu\in\mathbb{R}_{>0}:g\preceq \mu f\}\ \ (\inf\emptyset:=\infty).
$$
This semimetric is homogeneous along rays (so $\Theta(\lambda f,\mu g)=\Theta(f,g)$ for $\lambda,\mu>0$) and therefore induces a genuine metric on $\widetilde{\mathcal{C}}$.

The following result implies that a linear operator strictly\footnote{Meaning that the image of the cone has finite diameter in the cone itself, with respect to the associated Hilbert metric.} preserving a cone is a contraction in its Hilbert metric. In particular, if the cone is complete under the Hilbert metric, one will obtain a positive eigenfunction.

\begin{theorem}[{\cite[Theorem 2.1]{LSV1998}}]\label{thm:birkhoff}
Let $(\mathbb{V}_1,\mathcal{C}_1)$ and $(\mathbb{V}_2,\mathcal{C}_2)$ be topological vector spaces with convex cones, and let $\Theta_i$ denote the associated Hilbert (projective) metrics on $\mathcal{C}_i$. Let $\mathcal{L}:\mathbb{V}_1\to\mathbb{V}_2$ be linear and assume $\mathcal{L}(\mathcal{C}_1)\subset\mathcal{C}_2$. Define the projective diameter of $\mathcal{L}$ by
$$
\Delta:=\sup_{f,g\in\mathcal{C}_1}\Theta_2(\mathcal{L}f,\mathcal{L}g)\in\mathbb{R}_{\ge0}\cup\{\infty\}.
$$
Then, for all $f,g\in\mathcal{C}_1$,
$$
\Theta_2(\mathcal{L}f,\mathcal{L}g)\le\tanh\left(\frac{\Delta}{4}\right)\Theta_1(f,g)\le(1-e^{-\Delta})\Theta_1(f,g).
$$
Moreover, $\tanh(\Delta/4)\in[0,1]$ and $1-e^{-\Delta}\in[0,1]$, with the convention that if $\Delta=\infty$ then $\tanh(\Delta/4)=1$ and $1-e^{-\Delta}=1$.
\end{theorem}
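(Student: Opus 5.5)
This is Birkhoff's contraction theorem in the form of Liverani. The plan is to reduce to the two-dimensional case, which amounts to a cross-ratio computation. Fix $f,g\in\mathcal C_1$ with $\Theta_1(f,g)<\infty$; otherwise there is nothing to prove. Let $\alpha=\alpha(f,g)$ and $\beta=\beta(f,g)$, so that $g-\alpha f\succeq0$ and $\beta f-g\succeq0$ in $\mathcal C_1\cup\{0\}$, where the closedness of the order lets us attain the sup and the inf. If either difference is $0$, then $f\sim g$ and both sides vanish. Otherwise, applying $\mathcal L$ gives the elements $\mathcal L(g-\alpha f)$ and $\mathcal L(\beta f-g)$, which lie in $\mathcal C_2$. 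Since $\Delta$ bounds their Hilbert distance, there exist $\lambda,\mu>0$ with
\[
\mu\,\mathcal L(g-\alpha f)\succeq\mathcal L(\beta f-g)\succeq\lambda\,\mathcal L(g-\alpha f)
\]
and $\log(\mu/\lambda)\le\Delta+\eta$ for any $\eta>0$.

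Next we rewrite these two inequalities as comparisons between $\mathcal L g$ and $\mathcal L f$. From $\mathcal L(\beta f-g)\succeq\lambda\mathcal L(g-\alpha f)$ we get $(\beta+\lambda\alpha)\mathcal L f\succeq(1+\lambda)\mathcal L g$. Similarly, the other inequality gives $(1+\mu)\mathcal L g\succeq(\beta+\mu\alpha)\mathcal L f$. Hence
\[
\alpha(\mathcal Lf,\mathcal Lg)\ge\frac{\beta+\mu\alpha}{1+\mu},\qquad \beta(\mathcal Lf,\mathcal Lg)\le\frac{\beta+\lambda\alpha}{1+\lambda},
\]
and therefore
\[
\Theta_2(\mathcal Lf,\mathcal Lg)\le\log\frac{(e^{\Theta}+\lambda)(1+\mu)}{(1+\lambda)(e^{\Theta}+\mu)},
\]
where $\Theta=\Theta_1(f,g)=\log(\beta/\alpha)$ and we divided through by $\alpha$.

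The main step is the elementary optimisation. Write $\lambda=e^{a}$ and $\mu=e^{b}$ with $b-a\le\Delta$. We must show
\[
\log\frac{(e^{\Theta}+e^{a})(1+e^{b})}{(1+e^{a})(e^{\Theta}+e^{b})}\le\tanh\Big(\frac{b-a}4\Big)\Theta.
\]
The expression vanishes at $\Theta=0$. Differentiating in $\Theta$ gives
\[
\frac{e^\Theta}{e^\Theta+e^a}-\frac{e^\Theta}{e^\Theta+e^b}=\frac{e^{\Theta}(e^b-e^a)}{(e^\Theta+e^a)(e^\Theta+e^b)}.
\]
By AM–GM this derivative is maximised at $e^\Theta=e^{(a+b)/2}$, where its value is $\frac{e^{(b-a)/2}-1}{e^{(b-a)/2}+1}=\tanh\frac{b-a}{4}$. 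Integrating in $\Theta$ then yields the bound. Since $\tanh$ is monotone, letting $\eta\to0$ gives the first inequality.

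For the second inequality, note that $\tanh(x/4)=\frac{1-e^{-x/2}}{1+e^{-x/2}}\le1-e^{-x/2}\le1-e^{-x}$. Finally, $\tanh(\Delta/4)$ and $1-e^{-\Delta}$ obviously lie in $[0,1]$, and the case $\Delta=\infty$ reduces to $\Theta_2(\mathcal Lf,\mathcal Lg)\le\Theta_1(f,g)$. This follows from the same computation, since the logarithm above is always at most $\Theta$. The expected obstacle is only the degenerate bookkeeping, namely the infinite distances and attaining $\alpha$ and $\beta$, which is handled by the closedness of the cone.
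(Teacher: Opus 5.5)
Your proposal is correct and is the standard two-dimensional reduction, Birkhoff's argument in the form given by Liverani, which is what the cited result of Liverani--Saussol--Vaienti relies on; the paper itself only imports the statement and gives no proof of its own. One point to tighten: when $\Delta=\infty$, finite $\lambda,\mu>0$ need not exist. In that case $\Theta_2(\mathcal Lf,\mathcal Lg)\le\Theta_1(f,g)$ follows directly by applying the order-preserving map $\mathcal L$ to $\alpha f\preceq g\preceq\beta f$, rather than from "the same computation".
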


The following lemma establishes an upper bound of an adapted norm in terms of the Hilbert metric.
\begin{lemma}[{\cite[Lemma 2.2]{LSV1998}}]\label{lem:lsv}
    Let $(\mathbb{V},\mathcal{C})$ be a topological vector space and let $\mathcal{C}\subset\mathbb{V}$ be a closed convex cone, endowed with its Hilbert metric $\Theta$. Let $\|\cdot\|$ be a seminorm on $\mathbb{V}$ that is compatible with the order induced by $\mathcal{C}$, in the sense that
$$
-f\preceq g\preceq f\Longrightarrow\|g\|\le\|f\|.
$$

Let $\rho:\mathcal{C}\to\mathbb{R}_{\ge0}$ be any map (for instance, one may take $\rho(\cdot)=\|\cdot\|$) such that
$$
\rho(\lambda f)=\lambda\rho(f)\ \forall f\in\mathcal{C},\ \forall\lambda\in\mathbb{R}_{>0},
$$
and
$$
f\preceq g\Longrightarrow\rho(f)\le\rho(g)\ \forall f,g\in\mathcal{C}.
$$
Then, for all $f,g\in\mathcal{C}$ with $\rho(f)=\rho(g)>0$,
$$
\|f-g\|\le(e^{\Theta(f,g)}-1)\min\{\|f\|,\|g\|\}.
$$
\end{lemma}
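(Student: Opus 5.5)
The plan is to follow the standard Liverani argument, using only the definitions of $\alpha$, $\beta$ and the two axioms on $\rho$ and $\|\cdot\|$. Fix $f,g\in\mathcal C$ with $\rho(f)=\rho(g)>0$. If $\Theta(f,g)=\infty$ there is nothing to prove, so assume it is finite. Then $0<\alpha:=\alpha(f,g)\le\beta:=\beta(f,g)<\infty$. Because the order is closed, the suprema and infima are attained, so $\alpha f\preceq g\preceq \beta f$.

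The first step is to localise $\alpha$ and $\beta$ around $1$ using $\rho$. Since $\alpha f\preceq g$ with both $\alpha f$ and $g$ in $\mathcal C$, or $g-\alpha f=0$, monotonicity and homogeneity of $\rho$ give $\alpha\rho(f)\le\rho(g)=\rho(f)$, so $\alpha\le1$. Symmetrically, $g\preceq\beta f$ gives $\beta\ge1$. The degenerate case $g=\alpha f$ needs only the trivial remark that $\rho$ is then equal on both sides.

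The second step produces a two-sided order bound for $g-f$. Subtracting $f$ from $\alpha f\preceq g\preceq\beta f$ yields $(\alpha-1)f\preceq g-f\preceq(\beta-1)f$. Set $c:=\max\{1-\alpha,\beta-1\}\ge0$. Since $f\in\mathcal C$, we have $(\alpha-1)f\succeq -cf$ and $(\beta-1)f\preceq cf$. Hence $-cf\preceq g-f\preceq cf$, and compatibility of the seminorm gives $\|g-f\|\le c\|f\|$.

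The third step compares $c$ with $e^{\Theta}-1=\beta/\alpha-1$. Because $\alpha\le1\le\beta$, we have $\beta-1\le\beta/\alpha-1$. We also have $1-\alpha\le(1-\alpha)/\alpha\le\beta/\alpha-1$, using $\beta\ge1$. So $\|f-g\|\le(e^{\Theta(f,g)}-1)\|f\|$. Exchanging the roles of $f$ and $g$, which leaves $\Theta$ unchanged since $\alpha(g,f)=1/\beta(f,g)$, gives the same bound with $\|g\|$, and therefore with the minimum.

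The only delicate point is the first step. There one must justify applying the monotonicity of $\rho$, which is stated for elements of $\mathcal C$, to $\alpha f\preceq g$; this is exactly where the hypothesis $\rho(f)=\rho(g)$ is used. The boundary cases ($g-\alpha f=0$, and attainment of $\alpha$ and $\beta$ via closedness of the order) also need care.
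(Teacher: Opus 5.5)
Your proof is correct and is essentially the standard argument behind \cite[Lemma 2.2]{LSV1998}, which the paper cites without reproducing. That argument runs as follows: obtain $\alpha f\preceq g\preceq\beta f$ from closedness of the order, use homogeneity and monotonicity of $\rho$ with $\rho(f)=\rho(g)>0$ to get $\alpha\le 1\le\beta$, sandwich $g-f$ between $\pm c f$, bound $c\le\beta/\alpha-1$, and symmetrise via $\Theta(g,f)=\Theta(f,g)$. The step you flag as delicate is actually immediate: $\alpha f$ and $g$ both lie in $\mathcal C$, and $\preceq$ already allows equality, so no separate degenerate case is needed.
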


If $(\mathbb{V},\mathcal{C})$ is Archimedean, i.e. there exists $\mathbbm{e} \in \mathcal{C}$ so that for any $f \in \mathbb{V}$ there exists $\lambda_f \in \mathbb{R}_{\ge 0}$ for which $-\lambda_f \mathbbm{e} \preceq f \preceq \lambda_f \mathbbm{e}$, then $\|f\|_* := \inf \{ \lambda \in \mathbb{R}_{\ge 0} : -\lambda \mathbbm{e} \preceq f \preceq \lambda \mathbbm{e}\}$ is a norm in $\mathbb{V}$ preserving $(\mathcal{C}, \preceq)$. One can always assume $(\mathbb{V},\| \cdot \|_*)$ is a Banach space by considering its completion with respect to the $\|\cdot\|_*$ norm {\cite[Remark D.6.]{YellowBook}}. 

The next theorem relates the contraction of $\mathcal{L}$ in the Hilbert metric with a spectral decomposition for $\mathcal{L}$.

\begin{theorem}[{\cite[Theorem D.8.]{YellowBook}}]\label{thm:conegap}
    Let $(\mathbb{V}, \mathcal{C})$ be a topological vector space with a closed convex cone, equipped with its Hilbert metric $\Theta$ and order $\preceq$, and consider a norm $\|\cdot\|$ in $\mathbb{V}$ preserving $(\mathcal{C}, \preceq)$. Let $\mathcal{L}: \mathbb{V} \to \mathbb{V}$ be a linear operator so that $\mathcal{L} \mathcal{C} \subset \mathcal{C}$ and  \begin{equation*}
        \Delta := \sup_{f,g \in \mathcal{C}} \Theta(\mathcal{L}f,\mathcal{L}g) < \infty.
    \end{equation*}Then there exists $h \in \mathbb{V}$ and $\ell \in \mathbb{V}^*$, so that, writing $\chi = \operatorname{tanh}(\frac{\Delta}{4})$ and $\lambda = \rho(\mathcal{L})$ for the spectral radius of $\mathcal{L}$, one has, for all $f \in \mathbb{V}$ and $n \ge 1$:
    \begin{equation*}
        \mathcal{L}^nf = \lambda^n h \ell (f) + Q^nf,
    \end{equation*}where $\ell(h)=1$, $Qh =0$, $\ell \circ Q =0$ and $\|Q^n \| \le \chi^{n-1} \lambda^n \Delta$.    
\end{theorem}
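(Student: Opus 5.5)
The plan is the classical Birkhoff--Hopf argument. We first produce the leading eigenvector $h$ as the projective limit of $\mathcal L^n$ applied to cone elements. Next we define $\ell$ by a sandwich argument in the order $\preceq$. Finally we read off the bound on $Q^n=\mathcal L^n-\lambda^n h\,\ell(\cdot)$ from Theorem~\ref{thm:birkhoff} together with Lemma~\ref{lem:lsv}.

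\emph{Assumptions on the ambient setting.} As in the remark preceding the statement, I take $(\mathbb V,\|\cdot\|)$ Banach, which may require passing to the completion. I also take the cone Archimedean with order unit $\mathbbm e$, so that $\|\cdot\|$ is order-compatible in the sense of Lemma~\ref{lem:lsv}. In particular every $f\in\mathbb V$ decomposes as $f=(f+\|f\|\mathbbm e)-\|f\|\mathbbm e$, a difference of two elements of $\mathcal C\cup\{0\}$ with norms at most $2\|f\|$ and $\|f\|$. If the statement is meant in more generality, this is the point where an extra hypothesis is implicitly used. I will use the decomposition to extend everything from $\mathcal C$ to $\mathbb V$.

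\emph{Step 1 (eigenvector).} By Theorem~\ref{thm:birkhoff}, for $f,g\in\mathcal C$ and $n\ge1$ we have $\Theta(\mathcal L^{n}f,\mathcal L^{n}g)\le\chi^{n-1}\Theta(\mathcal Lf,\mathcal Lg)\le\chi^{n-1}\Delta$.
\begin{itemize}
\item Fix $f\in\mathcal C$ and set $f_n=\mathcal L^nf/\|\mathcal L^nf\|$. Taking $g=\mathcal L^m f$ shows $\Theta(f_n,f_{n+m})\le\chi^{n-1}\Delta$.
\item Lemma~\ref{lem:lsv} with $\rho=\|\cdot\|$ then gives $\|f_n-f_{n+m}\|\le e^{\chi^{n-1}\Delta}-1$. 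So $(f_n)$ is Cauchy and converges to some $h$ with $\|h\|=1$.
\item Since $\mathcal C\cup\{0\}$ is closed and $h\neq0$, we get $h\in\mathcal C$.
\item By the same estimate the limit does not depend on $f$.
\item Continuity of $\mathcal L$ then gives $\mathcal Lh=\lambda' h$ with $\lambda'=\lim\|\mathcal L^{n+1}f\|/\|\mathcal L^nf\|>0$.
\end{itemize}
The identification $\lambda'=\rho(\mathcal L)$ will follow at the end. Indeed, once the decomposition holds, $\|\mathcal L^n\|^{1/n}\to\lambda'$, because $\|Q^n\|\le\chi^{n-1}\lambda'^n\Delta$ and $h\ell\neq0$.

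\emph{Step 2 (the functional $\ell$).} For $f\in\mathcal C$, put $a_n=\lambda^{-n}\alpha(h,\mathcal L^nf)$ and $b_n=\lambda^{-n}\beta(h,\mathcal L^nf)$.
\begin{itemize}
\item Positivity of $\mathcal L$ and $\mathcal Lh=\lambda h$ give $a_nh\preceq\lambda^{-n}\mathcal L^nf\preceq b_nh$. Applying $\mathcal L$ to this sandwich shows $a_n$ is nondecreasing and $b_n$ is nonincreasing.
\item Since $h$ and $\mathcal L^n f$ lie in the image cone, we have $\log(b_n/a_n)=\Theta(h,\mathcal L^nf)\le\chi^{n-1}\Delta\to0$.
\item Hence $a_n,b_n$ converge to a common limit $\ell(f)>0$.
\end{itemize}
Next, $\ell$ is additive and positively homogeneous on $\mathcal C$: the sandwich bounds add, and the limits coincide. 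Extend $\ell$ linearly to $\mathbb V$ via the decomposition above. Order-compatibility gives $|\ell(f)|\le C\|f\|$, so $\ell\in\mathbb V^*$. The identities $\ell(h)=1$ and $\ell(\mathcal Lf)=\lambda\ell(f)$ are immediate from the definition. The latter gives $\ell\circ Q=0$ for $Q:=\mathcal L-\lambda h\ell$. We also get $Qh=\lambda h-\lambda h=0$, and hence $\mathcal L^n=\lambda^nh\ell+Q^n$ by induction, using $h\ell\,Q=0=Q\,h\ell$.

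\emph{Step 3 (the bound on $Q^n$), which I expect to be the main obstacle.} For $f\in\mathcal C$, the sandwich $a_nh\preceq\lambda^{-n}\mathcal L^nf\preceq b_nh$ with $a_n\le\ell(f)\le b_n$ yields
\[
-(b_n-a_n)h\preceq\lambda^{-n}\mathcal L^nf-\ell(f)h\preceq(b_n-a_n)h .
\]
Order-compatibility then gives $\|\lambda^{-n}Q^nf\|\le b_n-a_n\le b_n\bigl(1-e^{-\chi^{n-1}\Delta}\bigr)\le b_n\chi^{n-1}\Delta$.

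It remains to bound $b_n$ by a multiple of $\|f\|$, and to pass from $\mathcal C$ to $\mathbb V$ via the decomposition, without losing more than the constant allowed in the statement. The first attempt is to use $b_n\le b_1$ and $f\preceq\|f\|\mathbbm e$. The difficulty is getting exactly $\Delta\chi^{n-1}\lambda^n$ rather than $C\Delta\chi^{n-1}\lambda^n$. I would try normalising the norm so that $\|h\|=1$ and $\|\cdot\|$ is the order-unit norm with $\mathbbm e=h$. If that is not available, I would accept a harmless multiplicative constant, which is all that is used later in the paper.
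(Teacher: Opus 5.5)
You do not prove the literal bound $\|Q^n\|\le\chi^{n-1}\lambda^n\Delta$. That is not a gap in your argument: the literal bound is false (see below). Your proof is correct and proves the constant-carrying version, by a different route from the paper's.

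\textbf{Hypotheses and comparison with the paper.} Your argument is correct under the assumptions you make explicit: completeness, and $\|\cdot\|$ being the order-unit norm of some $\mathbbm{e}$. The order-unit hypothesis is genuinely needed. For the one-ray cone $\mathcal C=\{(x,0):x>0\}\subset\mathbb R^2$ with the Euclidean norm and $\mathcal L=\mathrm{diag}(1,2)$, one has $\Delta=0$, yet $\mathcal L^n$ is not of rank one. Boundedness of $\mathcal L$ and of $\ell$ comes from the order-unit property, namely $\|\mathcal Lf\|\le\|\mathcal L\mathbbm{e}\|\,\|f\|$ and $|\ell(f)|\le\ell(\mathbbm{e})\|f\|$, not from order-compatibility alone.

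The paper does not prove this statement; it quotes it. The proof of Theorem~\ref{thm:spectralgap}, which adapts the quoted argument, shows the intended route:
\begin{itemize}
\item $h$ is obtained exactly as in your Step~1;
\item $\ell$ is defined on the cone by $\ell_0(f)=\limsup_n\lambda^{-n}\|\mathcal L^nf\|$;
\item Lemma~\ref{lem:lsv} with $\rho=\ell_0$ gives $\lambda^{-n}\mathcal L^nf-\ell_0(f)h\to0$, which is what makes the limsup additive;
\item the same lemma is used again to bound $Q^n$, and the extension to $\mathbb V$ goes through $f=(3\|f\|\mathbbm{1}+f)-3\|f\|\mathbbm{1}$.
\end{itemize}
Your sandwich functional replaces this level-set device. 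Monotonicity of $a_n,b_n$ together with $b_n/a_n\le e^{\chi^{n-1}\Delta}$ gives existence of the limit, positivity and additivity of $\ell$ at once. The single order inequality $-(b_n-a_n)h\preceq\lambda^{-n}Q^nf\preceq(b_n-a_n)h$ then gives the remainder with an explicit constant: $\|Q^n\|\le 3\lambda^{-1}\beta(h,\mathcal L\mathbbm{e})\,\chi^{n-1}\lambda^n\Delta$ when $\|h\|=1$. This replaces the constant $K_0\|\ell\|$ of Theorem~\ref{thm:spectralgap}. Your route would adapt to that cocycle setting equally well, since monotonicity of $a_n,b_n$ only uses $\mathcal L_\omega\mu_\omega=\lambda_\omega\mu_{\theta\omega}$.

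\textbf{The constant.} Your hesitation is justified. You should not try to reach the literal bound, because it is false in general, even for the order-unit norm. Take $\mathbb V=\mathbb R^2$, $\mathcal C=[0,\infty)^2\setminus\{0\}$ and $\mathbbm{e}=(1,1)$, so the order-unit norm is $\|\cdot\|_\infty$. For $0<\mu,\epsilon<1$ let
\[
\mathcal L=\frac12\begin{pmatrix}1+\mu&(1-\mu)/\epsilon\\ \epsilon(1-\mu)&1+\mu\end{pmatrix}.
\]
\begin{itemize}
\item $\mathcal L(1,\epsilon)=(1,\epsilon)$ and $\mathcal L(1,-\epsilon)=\mu(1,-\epsilon)$, so $\lambda=1$.
\item Conjugating by $\mathrm{diag}(1,\epsilon)$, a Hilbert-metric isometry of $\mathcal C$, gives $\Delta=2\log\frac{1+\mu}{1-\mu}$ and $\chi=\mu$.
\item $Q^n=\mu^nP$, where $P$ is the projection onto $(1,-\epsilon)$ along $(1,\epsilon)$, and $\|P\|_\infty=\frac12(1+\epsilon^{-1})$.
\end{itemize}
Any decomposition as in the statement forces $h\ell=\lim_n\lambda^{-n}\mathcal L^n$, and hence forces $Q^n$. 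For $\mu=\frac12$ and $\epsilon=\frac1{100}$ one gets $\|Q^n\|_\infty=50.5\cdot2^{-n}$, against $\chi^{n-1}\lambda^n\Delta=4\log3\cdot2^{-n}$. So the correct statement carries a multiplicative constant depending on $h$ and the norm. That is what you prove, and it is all the paper uses.

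The normalisation $\mathbbm{e}=h$ you suggest is not always available, since $h$ need not be an order unit. Even when it is, your sandwich gives the factor $\chi^{n-1}\Delta$ only on $\mathcal C$. On all of $\mathbb V$, for $\|f\|_h\le1$, use $Q^nf=Q^n(h+f)=-Q^n(h-f)$ and $a_n(h+f)+a_n(h-f)\le2$. This yields $\|Q^n\|_h\le\lambda^n\bigl(e^{\chi^{n-1}\Delta}-1\bigr)$.
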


\subsection{Adapted cones for uniform hyperbolic random dynamical systems}

\label{sec:adacones}

Let $F:\Omega\times M\to\Omega\times M$ be a regular random dynamical system satisfying Hypothesis~\ref{hyp:h}. We construct projective cones, inspired by \cite[Section 4]{viana1997stochastic} and \cite{liu2024exponential}, tailored to the dynamical structure induced by $F$. These cones will later serve as the main tool for developing a relative thermodynamic formalism for $F$.

\begin{definition}[Leafwise cone of log-H\"{o}lder functions]
Given $\omega \in \Omega$ and $\gamma_{\omega} \in \mathscr{F}^s_\omega$ (see Definition \eqref{def:stableleaves}), let $D(a, \kappa, \gamma_\omega)$ be the collection of bounded measurable functions $ \rho_\omega = \rho (\omega, \cdot): \gamma_\omega \to \mathbb{R}$, satisfying:

\textbf{(D1)} $\rho_\omega(x) > 0$, for all $x \in \gamma_{\omega}$;

\textbf{(D2)} $|\log \rho_\omega(x) - \log \rho(\omega,y)| \le a d(x,y)^\kappa$, for all $x,y \in \gamma_{\omega}$.    
\end{definition}
We define $D_1(a,\kappa,\gamma_\omega)$ as the densities $\rho_\omega \in D(a,\kappa,\gamma_\omega)$ such that $\int_{\gamma_\omega} \rho_\omega \d m_{\omega}=1.$
\begin{lemma}[{\cite[Lemma 4.1]{liu2024exponential}}]
    $D(a, \kappa,\gamma_{\omega})$ is a closed convex cone. Moreover, the Hilbert metric in $D(a, \kappa,\gamma_{\omega})$ given as 
\begin{equation}\label{eq:leafhilbertmetric}
    \Theta^{a,\kappa}_{\gamma_{\omega}}(\rho^{(1)}_\omega,\rho^{(2)}_\omega) = \log \frac{\beta^{a,\kappa}_{\gamma_{\omega}}(\rho^{(1)}_\omega,\rho^{(2)}_\omega)}{\alpha^{a,\kappa}_{\gamma_{\omega}}(\rho^{(1)}_\omega,\rho^{(2)}_\omega)},
\end{equation}where
\begin{equation}
    \alpha^{a,\kappa}_{\gamma_{\omega}}(\rho^{(1)}_\omega,\rho^{(2)}_\omega) = \inf \left\{ \frac{\rho^{(2)}_\omega(x)}{\rho^{(1)}_\omega(x)}, \frac{ e^{a d(x,y)^\kappa} \rho^{(2)}_\omega(x) - \rho^{(2)}_\omega(y) }{ e^{a d(x,y)^\kappa} \rho^{(1)}_\omega(x) - \rho^{(1)}_\omega(y) } : x,y \in \gamma_{\omega}, x \neq y \right\},
\end{equation}
and
\begin{equation}\label{eq:leafhilbertmetric2}
    \beta^{a,\kappa}_{\gamma_{\omega}}(\rho^{(1)}_\omega,\rho^{(2)}_\omega) = \sup \left\{ \frac{\rho^{(2)}_\omega(x)}{\rho^{(1)}_\omega(x)}, \frac{ e^{a d(x,y)^\kappa} \rho^{(2)}_\omega(x) - \rho^{(2)}_\omega(y) }{ e^{a d(x,y)^\kappa} \rho^{(1)}_\omega(x) - \rho^{(1)}_\omega(y) } : x,y \in \gamma_{\omega}, x \neq y \right\}.
\end{equation}
 \end{lemma}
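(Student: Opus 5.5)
The plan is to verify the three cone axioms for $D(a,\kappa,\gamma_\omega)$ directly, viewing it as a subset of the space $\mathbb V$ of bounded measurable functions on $\gamma_\omega$ with the topology of pointwise convergence, and then to compute $\alpha$ and $\beta$ by unwinding the induced order.

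First, it is convenient to rewrite (D1)--(D2) as a family of linear inequalities. Since $\rho>0$, condition (D2) is equivalent to $\rho(y)\le e^{a d(x,y)^\kappa}\rho(x)$ for all $x,y\in\gamma_\omega$ (using symmetry of $d$ to get both directions of the logarithmic bound). So
$$
D(a,\kappa,\gamma_\omega)=\bigl\{\rho:\ \rho(x)>0,\ e^{a d(x,y)^\kappa}\rho(x)-\rho(y)\ge 0\ \ \forall x\neq y\bigr\}.
$$
Each constraint is linear and invariant under positive scaling, so the set is closed under sums and positive multiples; this gives convexity. Also $D\cap(-D)=\emptyset$ because of strict positivity. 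For closedness of $D\cup\{0\}$, I take a pointwise limit $\rho$ of $\rho_n\in D$. The nonstrict inequalities pass to the limit, giving $\rho\ge0$ and $\rho(y)\le e^{a d(x,y)^\kappa}\rho(x)$. If $\rho(x_0)=0$ for some $x_0$, then $\rho(y)\le e^{a\,\mathrm{diam}^\kappa}\rho(x_0)=0$ for every $y$, so $\rho\equiv0$. Otherwise $\rho>0$ everywhere and $\rho\in D$. This dichotomy, which comes from the multiplicative Harnack-type inequality, is the only slightly delicate point.

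Next, the order is characterised as follows. We have $\lambda\rho^{(1)}\preceq\rho^{(2)}$ iff $\rho^{(2)}-\lambda\rho^{(1)}\in D\cup\{0\}$. By the closedness argument above, this holds iff $\rho^{(2)}(x)-\lambda\rho^{(1)}(x)\ge0$ and
$$
e^{a d^\kappa}\bigl(\rho^{(2)}(x)-\lambda\rho^{(1)}(x)\bigr)-\bigl(\rho^{(2)}(y)-\lambda\rho^{(1)}(y)\bigr)\ge0
$$
for all $x\ne y$. The difference may vanish identically or be positive. The zero case is included, and "nonnegative plus the inequality" forces either strict positivity or identically zero, exactly as above. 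Since $\rho^{(1)}\in D$, the denominators $\rho^{(1)}(x)$ and $e^{ad^\kappa}\rho^{(1)}(x)-\rho^{(1)}(y)$ are nonnegative. Where a denominator is strictly positive, dividing gives the constraint $\lambda\le\rho^{(2)}(x)/\rho^{(1)}(x)$, respectively $\lambda\le\frac{e^{ad^\kappa}\rho^{(2)}(x)-\rho^{(2)}(y)}{e^{ad^\kappa}\rho^{(1)}(x)-\rho^{(1)}(y)}$.

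This is the main bookkeeping obstacle: a denominator may vanish. In that case the constraint reads $0\le e^{ad^\kappa}\rho^{(2)}(x)-\rho^{(2)}(y)$, which already holds because $\rho^{(2)}\in D$. The natural convention is then to interpret the quotient as $+\infty$ in the infimum and as $-\infty$, i.e.\ irrelevant, in the supremum. Taking the supremum of admissible $\lambda$ yields the stated formula for $\alpha$. Symmetrically, $\rho^{(2)}\preceq\mu\rho^{(1)}$ gives the lower bounds on $\mu$, hence the formula for $\beta$, and $\Theta=\log(\beta/\alpha)$ follows from the general definition recalled above.
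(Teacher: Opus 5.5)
Your verification of the cone axioms is correct, and so is your computation of $\alpha^{a,\kappa}_{\gamma_\omega}$. The overall strategy of unwinding the order into pointwise and pairwise linear constraints is the standard one behind the cited \cite[Lemma 4.1]{liu2024exponential}; the paper itself only gives the citation. The gap is the word ``symmetrically'' for $\beta$, because the vanishing-denominator case is not symmetric. For $\rho^{(2)}\preceq\mu\rho^{(1)}$ the pair constraint is
\[
\mu\bigl(e^{a d(x,y)^\kappa}\rho^{(1)}(x)-\rho^{(1)}(y)\bigr)\ \ge\ e^{a d(x,y)^\kappa}\rho^{(2)}(x)-\rho^{(2)}(y).
\]
When the left bracket vanishes, this reads $0\ge e^{a d(x,y)^\kappa}\rho^{(2)}(x)-\rho^{(2)}(y)$, which is not implied by $\rho^{(2)}\in D(a,\kappa,\gamma_\omega)$. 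If the right-hand side is positive, no $\mu$ is admissible and $\beta=+\infty$. Your convention of treating such quotients as $-\infty$ in the supremum therefore produces a wrong value.

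Concretely, take $\gamma=[0,L]$ with $d(x,y)=|x-y|$, $\kappa=1$, $\rho^{(1)}(t)=e^{at}$ and $\rho^{(2)}\equiv1$.
\begin{itemize}
\item For $x<y$ the denominator is $0$ and the numerator is $e^{a(y-x)}-1>0$.
\item The remaining quotients are $e^{-ax}\le 1$, and $1/(e^{ax}+e^{ay})<1/2$ for $x>y$.
\item So your recipe returns $\beta=1$ and a finite $\Theta$.
\item But $\log(\mu e^{at}-1)$ has derivative $a\mu e^{at}/(\mu e^{at}-1)>a$. Hence $\mu\rho^{(1)}-\rho^{(2)}\notin D(a,1,\gamma)$ for every $\mu>0$, and the true value is $\beta=+\infty$.
\end{itemize}

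The repair is to use a single convention in both formulas: $c/0=+\infty$ for $c>0$, and pairs giving $0/0$ are discarded. With it, your case analysis gives $\alpha$ exactly as stated, since a vanishing denominator imposes no restriction on $\lambda$, consistent with a $+\infty$ term in the infimum. It also gives $\beta$ exactly as stated, with $\beta=+\infty$ precisely when some pair has vanishing denominator and positive numerator. Vanishing denominators only occur for densities that saturate \textbf{(D2)}. For instance, if $\rho^{(1)}\in D(\alpha_0 a,\kappa,\gamma_\omega)$ with $\alpha_0<1$, then $e^{a d^\kappa}\rho^{(1)}(x)-\rho^{(1)}(y)\ge\bigl(e^{a d^\kappa}-e^{\alpha_0 a d^\kappa}\bigr)\rho^{(1)}(x)>0$ for $x\neq y$. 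So the later uses of the formula are unaffected, but the lemma as stated does need the correct convention.
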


\begin{definition}[Leafwise cone of positive functions]
Let $D_+(\gamma_{\omega})$ be the collection of bounded measurable functions $ \zeta_\omega = \zeta(\omega, \cdot): \gamma_{\omega} \to \mathbb{R}$ such that $\zeta_\omega(x) > 0$, for all $x \in \gamma_{\omega}$. The Hilbert metric in $D_+(\gamma_{\omega})$ given as
\begin{equation*}
    \Theta_{\gamma_{\omega}}^+(\zeta^{(1)}_\omega,\zeta^{(2)}_\omega) = \log \frac{\beta^{+}_{\gamma_{\omega}}(\zeta^{(1)}_\omega,\zeta^{(2)}_\omega)}{\alpha^{+}_{\gamma_{\omega}}(\zeta^{(1)}_\omega,\zeta^{(2)}_\omega)}
\end{equation*}
where
\begin{equation*}
    \alpha^{+}_{\gamma_{\omega}}(\zeta^{(1)}_\omega,\zeta^{(2)}_\omega) = \inf \left\{ \frac{\zeta^{(2)}_\omega(x)}{\zeta^{(1)}_\omega(x)} : x \in \gamma_{\omega} \right\}\ \text{and}\ 
    \beta^{+}_{\gamma_{\omega}}(\zeta^{(1)}_\omega,\zeta^{(2)}_\omega) = \sup \left\{ \frac{\zeta^{(2)}_\omega(x)}{\zeta^{(1)}_\omega(x)} : x \in \gamma_{\omega} \right\}.
\end{equation*}
\end{definition}

\begin{definition}[Adapted cones in $\operatorname{BM}(M)$]\label{def:cone} 
Fix $\omega\in\Omega$. Given $a,a_1,b,c>0$ and $\kappa,\kappa_1 \in (0,1]$ and $\nu \in (0,\beta]\subset [0,1]$, let $ C_\omega(b,c,\nu)$ be the set of bounded measurable functions $\varphi:M\to\mathbb{R}$ such that:

\begin{itemize}
\item[\textbf{(C1)}] For every $\gamma_{\omega}\in\mathscr{F}^s_\omega$ and every $\rho_\omega\in D_1(a,\kappa,\gamma_{\omega})$ 
one has
$$
\int_{\gamma_{\omega}}\varphi \rho_\omega >0.
$$

\item[\textbf{(C2)}] For every $\gamma_{\omega}\in\mathscr{F}^s_\omega$ and every $\rho_\omega,\varsigma_\omega\in D_1(a,\kappa,\gamma_{\omega})$ 
one has
$$
\frac{\int_{\gamma_{\omega}}\varphi\rho_\omega }
{\int_{\gamma_{\omega}}\varphi \varsigma_\omega}
\le
e^{b\Theta^{a,\kappa}_{\gamma_{\omega}}(\rho_\omega,\varsigma_\omega)}.
$$

\item[\textbf{(C3)}] For every nearby pair $(\tilde\gamma_{\omega},\gamma_{\omega})\in\mathscr{F}^s_\omega\times\mathscr{F}^s_\omega$ and every $\rho_\omega\in D_1(a_1,\kappa_1,\gamma_{\omega})$
the density $\tilde\rho_\omega = \rho\circ \mathrm{hol}_\omega^u(x) \mathrm{Jac}(\mathrm{hol}_\omega^u)(x)\in D(a_1,\kappa_1,\tilde\gamma_{\omega})$,
 satisfies 
 $$
\frac{\int_{\tilde\gamma_{\omega}}\varphi\tilde\rho_\omega}
{\int_{\gamma_{\omega}}\varphi\rho_\omega }
\le
e^{c d_u(\tilde\gamma_{\omega},\gamma_{\omega})^\nu}.
$$
\end{itemize}
\end{definition}

\begin{remark}
    A characterization of $\Theta_{\omega}^{b,c,\nu}$, the Hilbert metric associated to $C_{\omega}(b, c,\nu)$, along the lines of equations \ref{eq:leafhilbertmetric} and \ref{eq:leafhilbertmetric2}, is available but we omit it. Details can be found in \cite{liu2024exponential}, Lemma 4.4.
\end{remark}

The cone $C_\omega(b,c,\nu)$ controls observables through their averages against admissible densities on stable leaves, their variation with respect to the leafwise Hilbert metric, and their behaviour under unstable holonomy between nearby stable leaves. We now introduce the corresponding norm on $\operatorname{BM}(M):=\{f:M\to \mathbb R; \ f\text{ is a bounded and measurable function}\};$ with one term measuring each of these three effects.

\begin{definition}\label{def:normomega}
For $f \in \mathrm{BM}(M)$, let
\begin{equation*}
    \|f\|_\omega:= \|f\|_{\omega,a,\kappa}^{\sup_s} + \frac{1}{b} \|f\|_{\omega,a,\kappa}^{\Theta_s} + \frac{1}{c} \|f\|_{\omega,\nu}^{d_u},
\end{equation*}where
\begin{equation*}
     \|f\|_{\omega,a,\kappa}^{\sup_s} := \sup_{\gamma \in \mathscr{F}^s_{\omega}} \sup_{\rho_\omega \in D_1(a,\kappa,\gamma_{\omega})} \left|\int_{\gamma_{\omega}} f \rho_\omega \right|,
\end{equation*}
\begin{equation*}
    \|f\|_{\omega,a,\kappa}^{\Theta_s} := \sup_{\gamma \in \mathscr{F}^s_{\omega}} \sup_{\rho^{(1)}_\omega, \rho^{(2)}_\omega \in D_1(a,\kappa,\gamma_{\omega}) } \frac{\left|\int_{\gamma_{\omega}} f \rho^{(1)}_\omega  - \int_{\gamma_{\omega}} f \rho^{(2)}_\omega \right|}{\Theta_{\gamma_{\omega}}^{a,\kappa}(\rho^{(1)}_\omega,\rho^{(2)}_\omega)},
\end{equation*}
\begin{equation*}
    \|f\|_{\omega, \nu}^{d_u} := \sup_{\substack{ (\gamma_{\omega},\tilde\gamma_{\omega}) \in \mathscr{F}^s_{\omega} \times \mathscr{F}^s_{\omega} \\ \text{nearby pair}} } \sup_{\rho \in D_1(a_1,\kappa_1, \gamma_{\omega}) } \frac{\left|\int_{\gamma_{\omega}} f \rho_\omega  - \int_{\tilde\gamma_{\omega}} f \tilde\rho_\omega \right|}{d_u(\gamma_{\omega},\tilde\gamma_{\omega})^\nu}.
\end{equation*}
 
We also define the auxiliary semi-norms
$$\|f\|_{\omega,+} :=  \sup_{\gamma \in \mathscr{F}^s_{\omega}} \sup_{ \rho_\omega \in D_1(a,\kappa,\gamma_{\omega}) } \left|\int_{\gamma_{\omega}} f \rho_\omega\right|\ \text{and}\ \|f\|_{\omega,-} :=  \inf_{\gamma \in \mathscr{F}^s_{\omega}} \sup_{\rho_\omega \in D_1(a,\kappa,\gamma_{\omega}) }  \left|\int_{\gamma_{\omega}} f \rho_\omega \right|.$$
\end{definition}

The norm $\|\cdot\|_\omega$ induces an equivalence relation on $\operatorname{BM}(M)$ by identifying functions at zero $\|\cdot\|_\omega$-distance. We then obtain the Banach space $\mathbb V_\omega$ by completing the resulting quotient space.
\begin{lemma} \label{lem:5.1}Consider the set $\mathrm{BM}_\omega(M):= \{f\in \mathrm{BM}(M); \|f\|_\omega <\infty\}/\sim_\omega,$ where $f_1\sim_\omega f_2$ if $\|f_1-f_2\|_\omega = 0$. Then the map $\iota_\omega: \mathrm{BM}_\omega(M)\to (\mathcal C^\kappa(M))^*$, $\iota_\omega f (g) = \int_M g(x) f(x) m(\d x)$ 
satisfies 
$$\|\iota_\omega(f)\|_{(\mathcal C^\kappa(M))^*} \leq 4 (\lceil 1/a + 1 \rceil)  \|f\|_\omega $$
and it is injective.
\end{lemma}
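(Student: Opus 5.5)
The plan is to prove the bound by disintegrating the Riemannian volume along stable leaves, and then to deduce injectivity from it.

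\textbf{Reduction to leaf averages.} Fix $g\in\mathcal C^\kappa(M)$ and $f\in\mathrm{BM}(M)$ with $\|f\|_\omega<\infty$. Using the finite cover $\{B_M(\delta/4,x_i)\}_{i=1}^\ell$ of Definition~\ref{def:deltaandcover}, I will build a partition of unity, or more crudely a measurable partition of $M$ into pieces contained in rectangles $R_\delta(\omega,x_i)$. On each rectangle I apply Proposition~\ref{prop:disintegration}:
\[
\int_{R}fg\,\d m=\int_{W^u_\e(\omega,x_i)}\Big(\int_{W^s_\e(\omega,x)} f\,g\,H\,\d m_{W^s}\Big)\,\d\widetilde m_R(x).
\]
Each stable leaf piece will be cut into admissible leaves $\gamma_\omega\in\mathscr F^s_\omega$, whose lengths lie between $A(\e)/(4J^2)$ and $A(\e)$, with a bounded number of pieces per leaf. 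The task is then to bound $\big|\int_{\gamma_\omega} f\,gH\big|$ by a constant times $\|f\|_\omega\|g\|_{\mathcal C^\kappa}\,m_{\gamma_\omega}(\gamma_\omega)$.

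\textbf{Decomposing $gH$ into cone densities (main step).} The function $gH$ is not positive, so it is not a density in $D(a,\kappa,\gamma_\omega)$. I will write
\[
gH=(g+\lambda)H-\lambda H,\qquad \lambda=\lceil 1/a+1\rceil\,\|g\|_{\mathcal C^\kappa}\quad(\text{roughly}),
\]
so that both $(g+\lambda)H$ and $\lambda H$ are positive. I will check that they satisfy \textbf{(D2)} with constant $a$: $\log H$ is $\nu_0$-Hölder by Proposition~\ref{prop:disintegration}, and $\log(g+\lambda)$ has Hölder constant at most $[g]_\kappa/\lambda\le a$ when $\lambda\ge [g]_\kappa/a+\|g\|_\infty$. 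This is where the factor $\lceil 1/a+1\rceil$ comes from. If the Hölder constant of $H$ is too large relative to $a$, I would split the leaf further, or choose $a$ large relative to $a_0''$, which is presumably an admissible standing choice.

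Normalising each piece to lie in $D_1$ gives
\[
\Big|\int_{\gamma} f\,gH\Big|\le\big(\textstyle\int_\gamma (g+\lambda)H+\int_\gamma \lambda H\big)\,\|f\|^{\sup_s}_{\omega,a,\kappa}\le C\lambda\, m_\gamma(\gamma)\,\|f\|_\omega .
\]
Integrating over the transversal and summing over the rectangles yields the constant $4\lceil 1/a+1\rceil$, up to volume normalisation. The main obstacle is the bookkeeping of constants: tracking the bound on $H$ and the overlaps of rectangles so that exactly the factor $4$ appears. If that fails, I would accept the bound with some constant depending on the geometry.

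\textbf{Injectivity.} Since $\iota_\omega$ is defined on equivalence classes, it suffices to show that $\iota_\omega f=0$ forces $\|f\|_\omega=0$. Suppose $\int_M gf\,\d m=0$ for all $g\in\mathcal C^\kappa$. Then $f=0$ $m$-a.e., because $\mathcal C^\kappa$ is dense in $L^1$ and $f$ is bounded. However, the norm $\|\cdot\|_\omega$ integrates $f$ along individual stable leaves, which are $m$-null sets, so a.e.\ vanishing does not obviously give vanishing on every leaf. The resolution is that $\mathrm{BM}_\omega$ is presumably understood as classes where the leaf integrals are determined by continuity. Concretely, I will use \textbf{(C3)}-type control: $\|f\|^{d_u}_{\omega,\nu}<\infty$ makes $\gamma\mapsto\int_\gamma f\rho$ Hölder in the transverse direction along holonomies. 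Hence, if these leaf integrals vanish for $\widetilde m$-almost every leaf, which follows by localising $g$ as $g\to\rho\cdot\psi$ with $\psi$ a transverse bump via Proposition~\ref{prop:disintegration}, they vanish for every admissible leaf and density. Then all three seminorms of $f$ are zero, so $f\sim_\omega 0$. This transverse-continuity argument is the delicate point of the injectivity.
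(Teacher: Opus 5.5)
Your proposal follows the paper's proof: the same shift $g=\lambda(g_\lambda-1)$ with $g_\lambda=1+g/\lambda$ log-H\"older with constant $a/2$, disintegration of $m$ over finitely many stable-saturated rectangles with disjoint interiors (enlarging $a$ so that $H\in D(a/2,\kappa,\gamma)$), and the same injectivity argument. That argument uses $f=0$ $m$-a.e.\ on $\widetilde m$-a.e.\ leaf, extended to every leaf through the holonomy continuity given by $\|f\|^{d_u}_{\omega,\nu}<\infty$. Two simplifications: first, keep the leaf normalisers $\int_\gamma g_\lambda H$ instead of bounding them, since integrating them over the transversal gives back $\int_R g_\lambda\,\d m$, so you need no bound on $H$ and no overlap bookkeeping (your ``crude'' measurable partition works only if its pieces are unions of whole stable leaves); second, the a.e.-leaf vanishing follows directly from $f=0$ $m$-a.e.\ by disintegration, without localising $g$.
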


\begin{proof}

Let $g\in \mathcal C^\kappa(M)$. If $g=0$, there is nothing to prove. Otherwise, for $n\in\mathbb N$ define
$$
g_n(x):=\frac{g(x)+n\|g\|_{\mathcal C^\kappa(M)}}{n\|g\|_{\mathcal C^\kappa(M)}}
=1+\frac{g(x)}{n\|g\|_{\mathcal C^\kappa(M)}}.
$$
Observe that $\|g_n-1\|_{\mathcal C^\kappa(M)}\leq 1/n$. By the mean value theorem,
\begin{align*}
    \left|\log g_n(x)-\log g_n(y)\right|
    \leq \frac{1}{\inf_{z\in M}g_n(z)} |g_n(x)-g_n(y)|
    \leq \frac{1}{n-1}d(x,y)^\kappa .
\end{align*}
Take $n:=2\lceil 1/a+1\rceil$. Then $1/(n-1)\leq a/2$, which implies that $g_n\in D(a/2,\kappa,\gamma_\omega)$ for every $\gamma_\omega\in\mathscr F_\omega^s$.

Given $f\in \mathrm{BM}_\omega(M)$, we use the standard finite rectangle decomposition for random hyperbolic systems. By \cite[Section~3]{GundlachKifer1999} (see also \cite[beginning of Section~4.3]{liu2006smooth}), for $\mathbb P$-almost every $\omega\in \Omega$, there exist proper rectangles
$R_1(\omega),\ldots,R_{k(\omega)}(\omega)$ which cover $M$ up to an $m$-null set, have pairwise disjoint interiors, and are foliated by local stable leaves belonging to $\mathscr F_\omega^s$. By Proposition~\ref{prop:disintegration}, on each $R_i(\omega)$ we have
$$
m|_{R_i(\omega)}
=
H_i(\omega)\,m_{\gamma_i(\omega)}\,\widetilde m_{R_i(\omega)}(\d\gamma_i),
$$
where $\gamma_i(\omega)$ ranges over the stable leaves contained in $R_i(\omega)$ and $\widetilde m_{R_i(\omega)}$ is a probability measure. Increasing $a>0$ if necessary, we may assume that, for every such leaf,
$$
H_i(\omega)|_{\gamma_i(\omega)}
\in
D(a/2,\kappa,\gamma_i(\omega)).
$$
Since $g_n|_{\gamma_i(\omega)}\in D(a/2,\kappa,\gamma_i(\omega))$, the normalised density
$$
\frac{g_nH_i(\omega)}{\int_{\gamma_i(\omega)}g_nH_i(\omega)}\ \text{
lies in }D_1(a,\kappa,\gamma_i(\omega)).$$ Hence
\begin{align}
\left|\int_{R_i(\omega)} f g_n\,m(\d x)\right|
&\leq
\int
\left|
\int_{\gamma_i(\omega)}
f\,g_nH_i(\omega)\,m_{\gamma_i(\omega)}(\d x)
\right|
\widetilde m_{R_i(\omega)}(\d\gamma_i)\notag\\
&\leq
\|f\|_{\omega,a,\kappa}^{\sup_s}
\int
\int_{\gamma_i(\omega)}
g_nH_i(\omega)\,m_{\gamma_i(\omega)}(\d x)
\widetilde m_{R_i(\omega)}(\d\gamma_i)\notag\\
&=
\|f\|_{\omega,a,\kappa}^{\sup_s}
\int_{R_i(\omega)}g_n\,m(\d x).\label{eq:ineqri}
\end{align}
Since the rectangles cover $M$ up to an $m$-null set and have pairwise disjoint interiors, summing \eqref{eq:ineqri} over $i\in \{1,\ldots,k(\omega)\}$  we obtain
\begin{align}
\left|\int_M f g_n\,m(\d x)\right|
\leq
\sum_i\left|\int_{R_i(\omega)} f g_n\,m(\d x)\right|
\leq
\|f\|_{\omega,a,\kappa}^{\sup_s}
\int_M g_n\,m(\d x).    \label{eq:fg}
\end{align}
The same argument with $g_n\equiv 1$ implies
\begin{align}
\left|\int_M f\,m(\d x)\right|
\leq
\|f\|_{\omega,a,\kappa}^{\sup_s}.\label{eq:f1}
\end{align}
From \eqref{eq:fg} and \eqref{eq:f1} we obtain
\begin{align*}
   | \iota_\omega f (g)| &= \left|\int_M f(x) g(x) m(\d x)\right| \\
   &\leq  n \|g\|_{\mathcal C^\kappa(M)}\left|\int_M f(x) g_n(x) m(\d x) \right| +    n \|g\|_{\mathcal C^\kappa(M)}\left|\int_M f(x) m(\d x) \right|\\
   &\leq n\|g\|_{\mathcal C^\kappa(M)}\left(\int_M g_n\,m(\d x)+1\right)\|f\|_{\omega,a,\kappa}^{\sup_s}\leq (2n+1)\|g\|_{\mathcal C^\kappa(M)}\|f\|_\omega\\
   &=\left(4\left\lceil \frac1a+1\right\rceil+1\right)\|g\|_{\mathcal C^\kappa(M)}\|f\|_\omega.
\end{align*}

In the following we show injectivity. Let $[f]\in\mathrm{BM}_\omega(M)$ and assume $\iota_\omega([f])=0$. Then
$$
\int_M f\, g\,dm=0\ \text{for every }g\in\mathcal C^\kappa(M).
$$
Hence $f=0$ $m$-a.e.

Fix a local product structure rectangle $R_\omega$ such that every local stable manifold in $R_\omega$ belongs to
$\mathscr F^s_\omega$. Using Proposition~\ref{prop:disintegration} once again we have that
$$
m|_{R_\omega}=H_\omega\, m_{\gamma^s_\omega(x)}\,\widetilde m_{R_\omega}(\d x),
$$
where $\widetilde m_{R_\omega}$ is supported a local unstable base $\gamma^u_\omega$.
Since $f=0$ $m$-a.e., by disintegration we have $f=0$ $m_{\gamma^s_\omega(x)}$-a.e.\ on $\gamma^s_\omega(x)$
for $\widetilde m_{R_\omega}$-a.e.\ $x\in\gamma^u_\omega$. In particular, for $\widetilde m_{R_\omega}$-a.e.\ $x$ and every
$\rho\in D_1(a_1,\kappa_1,\gamma^s_\omega(x))$,
$$
\int_{\gamma^s_\omega(x)} f\,\rho=0.
$$

Now fix an arbitrary $x_0\in\gamma^u_\omega$ and an arbitrary
$\rho_0\in D_1(a_1,\kappa_1,\gamma^s_\omega(x_0))$.
Choose a sequence $(x_n)_n$ in the full $\widetilde m_{R_\omega}$-measure set above such that $x_n\to x_0$ as $n\to\infty$.
For each $n$, define $\rho_n\in D_1(a_1,\kappa_1,\gamma^s_\omega(x_n))$ by inverting the holonomy transport:
$$
\rho_n :=\rho_0 \circ \mathrm{hol}_{\gamma^s_\omega(x_n),\gamma^s_\omega(x_0)}^u\,
\mathrm{Jac}(\mathrm{hol}_{\gamma^s_\omega(x_n),\gamma^s_\omega(x_0)}^u).
$$
Then $
\int_{\gamma^s_\omega(x_n)} f\,\rho_n=0$ for every $n\in\mathbb N$.

Since $\bigl(\gamma^s_\omega(x_n),\gamma^s_\omega(x_0)\bigr)$ is a nearby pair for $n$ large, the definition of
$\|f\|_{\omega,\nu}^{d_u}$ yields
$$
\left|\int_{\gamma^s_\omega(x_0)} f\,\rho_0\right| = \left|\int_{\gamma^s_\omega(x_0)} f\,\rho_0-\int_{\gamma^s_\omega(x_n)} f\,\rho_n\right|
\le \|f\|_{\omega,\nu}^{d_u}\,d_u\bigl(\gamma^s_\omega(x_n),\gamma^s_\omega(x_0)\bigr)^\nu\xrightarrow[]{n\to\infty} 0.
$$
Since $x_0$ and $\rho_0$ were arbitrary, we obtain that $f=0$ $m_{\gamma}$-a.e.\ on every admissible stable leaf $\gamma\in\mathscr F_\omega^s$. Therefore
$\|f\|_{\omega,a,\kappa}^{\sup_s}=0$. Then automatically
$\|f\|_{\omega,a,\kappa}^{\Theta_s}=0$ and also
$\|f\|_{\omega,\nu}^{d_u}=0$. Hence $[f]=0$ in $\mathrm{BM}_\omega(M)$, and therefore $\iota_\omega$ is injective.
 
\end{proof}

\begin{definition}\label{def:VandC}
Let $(\mathbb{V}_\omega,\|\cdot\|_\omega)$ be the Banach space obtained as the completion of $\mathrm{BM}_\omega(M)$ with respect to $\|\cdot\|_\omega$ (with canonical map $\iota_\omega$ as in Lemma~\ref{lem:5.1}).

Define $\mathcal{C}_\omega(b,c,\nu)=\overline{\iota_\omega\bigl(\mathcal C(b,c,\nu)\bigr)}^{\|\cdot\|_\omega}\setminus \{0\}.$ By a slight abuse of notation, we write $\preceq_\omega$ for the partial order induced on $\mathcal{C}_\omega(b,c,\nu)$, and $\Theta_\omega^{b,c,\nu}$ for the associated Hilbert metric on $\mathcal{C}_\omega(b,c,\nu)$.

Whenever we say that $f\in\mathrm{BM}(M)$ lies in $\mathbb{V}_\omega$ (or , we mean that $\iota_\omega([f])\in\mathbb{V}_\omega$.
\end{definition}



\begin{proposition}\label{prop:disint}
Let $\kappa>0$, \(g\in\mathcal C^\kappa(M)\), and
\(\gamma_\omega\in\mathscr F_\omega^s\). Then the linear functional
\begin{align*}
    \Gamma_{\gamma_\omega}^g: f\in \mathrm{BM}(M)&\mapsto \int_{\gamma_\omega}gf \in   \mathbb R,
\end{align*}
is bounded with respect to \(\|\cdot\|_\omega\). Consequently, it induces
a well-defined bounded linear functional on \(\mathrm{BM}_\omega(M)\) and
extends uniquely to a bounded linear functional  $\Gamma_{\gamma_\omega}^g:\mathbb V_\omega\to\mathbb R.$
\end{proposition}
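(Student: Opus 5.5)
The plan is to bound $|\int_{\gamma_\omega} g f|$ by a multiple of $\|f\|^{\sup_s}_{\omega,a,\kappa}$, mimicking the first half of the proof of Lemma~\ref{lem:5.1} but on the single leaf $\gamma_\omega$ instead of on rectangles. Since $\|f\|^{\sup_s}_{\omega,a,\kappa}\le\|f\|_\omega$, this gives the required bound.

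First we make $g$ into an admissible density. If $g=0$ there is nothing to prove. Otherwise we take, as in Lemma~\ref{lem:5.1},
$$g_n=1+\frac{g}{n\|g\|_{\mathcal C^\kappa(M)}}\quad\text{with } n=2\lceil 1/a+1\rceil .$$
The mean value estimate there gives $g_n|_{\gamma_\omega}\in D(a/2,\kappa,\gamma_\omega)\subset D(a,\kappa,\gamma_\omega)$. Also $g_n>0$ and $1-1/n\le g_n\le 1+1/n$. The constant function $1$ lies in $D(a,\kappa,\gamma_\omega)$ trivially. Normalising, we set
$$\rho_n=\frac{g_n}{\int_{\gamma_\omega}g_n},\qquad \rho_0=\frac{1}{m_{\gamma_\omega}(\gamma_\omega)},$$
and both belong to $D_1(a,\kappa,\gamma_\omega)$. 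By the definition of $\mathscr F^s_\omega$, the leaf length satisfies $m_{\gamma_\omega}(\gamma_\omega)<A(\varepsilon)$, so $\int_{\gamma_\omega}g_n\le (1+1/n)A(\varepsilon)$.

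Next we decompose $g=n\|g\|_{\mathcal C^\kappa}(g_n-1)$. Then
$$\Big|\int_{\gamma_\omega} gf\Big|\le n\|g\|_{\mathcal C^\kappa}\Big(\int_{\gamma_\omega}g_n\,\Big|\int_{\gamma_\omega} f\rho_n\Big| + m_{\gamma_\omega}(\gamma_\omega)\,\Big|\int_{\gamma_\omega} f\rho_0\Big|\Big).$$
Each inner integral is bounded by $\|f\|^{\sup_s}_{\omega,a,\kappa}$. Hence
$$\Big|\Gamma^g_{\gamma_\omega}(f)\Big|\le (2n+1)A(\varepsilon)\|g\|_{\mathcal C^\kappa(M)}\|f\|_\omega .$$
The constant depends only on $a$ and $A(\varepsilon)$, and not on $f$ or $\omega$.

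Finally we pass to $\mathbb V_\omega$. If $\|f_1-f_2\|_\omega=0$, the bound gives $\Gamma^g_{\gamma_\omega}(f_1)=\Gamma^g_{\gamma_\omega}(f_2)$. So the functional descends to the quotient $\mathrm{BM}_\omega(M)$ with the same norm bound. A bounded linear functional on a normed space extends uniquely by continuity to its completion, and this completion is $\mathbb V_\omega$ by Definition~\ref{def:VandC}. The argument has no substantial obstacle. The only point needing care is that $\kappa$ in the statement must match the H\"older exponent of the leaf cone $D(a,\kappa,\cdot)$, so that $g_n$ is admissible. If the exponents differed, we would replace $g$ by its $\mathcal C^{\min}$ norm, using that $\gamma_\omega$ has bounded diameter.
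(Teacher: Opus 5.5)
Your proposal is correct and follows essentially the same route as the paper. The paper shifts $g$ additively to $K+g$ with $K=1+\|g\|_\infty+\|g\|_{\mathcal C^\kappa}/a$, checks that $(K+g)/\int_{\gamma_\omega}(K+g)$ and the constant density lie in $D_1(a,\kappa,\gamma_\omega)$, bounds $\int_{\gamma_\omega} gf=\int_{\gamma_\omega}(K+g)f-K\int_{\gamma_\omega} f$ by a multiple of $\|f\|^{\sup_s}_{\omega,a,\kappa}$, and then descends to $\mathrm{BM}_\omega(M)$ and extends to the completion exactly as you do; your decomposition $g=n\|g\|_{\mathcal C^\kappa}(g_n-1)$ is this same decomposition up to rescaling. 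One caveat concerns your closing aside: the fallback for mismatched exponents only works when the H\"older exponent of $g$ is at least the cone exponent $\kappa$, but you do not need it, since the statement and the paper's proof take the two exponents to be equal.
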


\begin{proof}
Set $K
    =
    1+\|g\|_{\infty}
    +\frac{\|g\|_{\mathcal C^\kappa}}{a}.$ Then \(K+g>0\). Moreover, for every \(x,y\in\gamma_\omega\),
\begin{align*}
    \frac{K+g(x)}{K+g(y)}
    &=
    \frac{K+g(y)+g(x)-g(y)}{K+g(y)}=
    1+\frac{g(x)-g(y)}{K+g(y)}\leq
    1+
    \frac{\|g\|_{\mathcal C^\kappa}d(x,y)^\kappa}
         {K-\|g\|_\infty}\\
    &\leq
    1+a d(x,y)^\kappa\leq
    e^{a d(x,y)^\kappa}.
\end{align*}
Hence
$$\frac{K+g}
         {\displaystyle\int_{\gamma_\omega}(K+g)}
    \in D_1(a,\kappa,\gamma_\omega)\ \text{and }
    \frac{1}{\displaystyle\int_{\gamma_\omega}1}
    \in D_1(a,\kappa,\gamma_\omega).$$

For every $f\in\mathrm{BM}(M)$, we can write
\begin{align}
    \Gamma_{\gamma_\omega}^g(f)
    &=
    \int_{\gamma_\omega}gf =
    \int_{\gamma_\omega}(K+g)f
    -
    K\int_{\gamma_\omega}f\nonumber\\
    &=
    \left(\int_{\gamma_\omega}(K+g)\right)
    \int_{\gamma_\omega}
    \frac{K+g}
         { \int_{\gamma_\omega}(K+g)}
    f   -
    K\left(\int_{\gamma_\omega}1\right)
    \int_{\gamma_\omega}
    \frac{1}
         { \int_{\gamma_\omega}1}
    f.\label{eq:gamma1}
\end{align}
Hence, by the definition of
\(\|\cdot\|_{\omega,a,\kappa}^{\sup_s}\),
\begin{align}
    \left|\Gamma_{\gamma_\omega}^g(f)\right|
    &\leq
    \left(\int_{\gamma_\omega}(K+g)\right)
    \left|
    \int_{\gamma_\omega}
    \frac{K+g}
         {\int_{\gamma_\omega}(K+g)}
    f
    \right|
    +
    K\left(\int_{\gamma_\omega}1\right)
    \left|
    \int_{\gamma_\omega}
    \frac{1}
         {\int_{\gamma_\omega}1}
    f
    \right|\nonumber\\
    &\leq
    \left(
        \int_{\gamma_\omega}(K+g)
        +
        K\int_{\gamma_\omega}1
    \right)
    \|f\|_{\omega,a,\kappa}^{\sup_s}\leq
    \left(
        \int_{\gamma_\omega}(K+g)
        +
        K\int_{\gamma_\omega}1
    \right)
    \|f\|_\omega. \label{eq:gamma2}
\end{align}
From \eqref{eq:gamma1} and \eqref{eq:gamma2} we obtain that \(\Gamma_{\gamma_\omega}^g\) is bounded with respect to
\(\|\cdot\|_\omega\).

Therefore, \(\Gamma_{\gamma_\omega}^g\) induces a well-defined bounded
linear functional on \(\mathrm{BM}_\omega(M)\). Since
\(\mathbb V_\omega\) is the completion of
\(\mathrm{BM}_\omega(M)\), this functional extends uniquely to a bounded
linear functional on \(\mathbb V_\omega\).
\end{proof}

\begin{definition}\label{def:mugamma}
Given $\eta_\omega \in \mathbb V_\omega$ and $g\in \mathcal C^\kappa(M)$ we define
$$\int_{\gamma_\omega} g \eta_\omega := \Gamma_{\gamma_\omega}^g(\eta_\omega),$$
where $\Gamma_{\gamma_\omega}^g\in \mathbb V_\omega^*$ is given by Proposition \ref{prop:disint}.
\end{definition}

In the following, we show that the cone $\mathcal C_\omega(b,c,\nu)$ is Archimedean.
\begin{lemma}\label{lem:archi}
  The cone  $\mathcal{C}_\omega(b,c,\nu)$ is a closed convex cone, and 
  $(\mathbb{V_\omega},\mathcal{C}_\omega(b,c,\nu))$ is Archimedean relative to $\mu_\mathbbm{1}:= \iota_\omega(\mathbbm 1) \in \mathcal{C}_\omega(b,c,\nu)$, where $\mathbbm 1(x) = 1$ for each $x\in M$. Moreover if $f\in \mathrm{BM}(M)$  and $\|f\|_\omega<\infty$ then $f\in \mathbb V_\omega$ (through the identification $\iota([f])$). 
\end{lemma}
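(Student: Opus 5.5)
The plan is to treat the three assertions separately: convexity and closedness of the cone, the Archimedean property relative to $\mu_{\mathbbm 1}$, and the membership statement.

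\textbf{Convexity and closedness.} Conditions \textbf{(C1)}--\textbf{(C3)} are each stable under sums and positive multiples, since (C2) and (C3) are ratio bounds of the form $\int\varphi\rho\le e^{(\cdot)}\int\varphi\varsigma$. These are preserved when one adds the inequalities, and (C1) guarantees positivity of the denominators. Hence $\mathcal C(b,c,\nu)$ is a convex cone. It follows that $\iota_\omega(\mathcal C(b,c,\nu))$ is convex, and so is its $\|\cdot\|_\omega$-closure.

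For $\mathcal C\cap(-\mathcal C)=\emptyset$, I would use that every functional $\varphi\mapsto\int_\gamma\varphi\rho$ with $\rho\in D_1(a,\kappa,\gamma)$ is $\|\cdot\|_\omega$-continuous, with bound $\|\cdot\|^{\sup_s}_{\omega,a,\kappa}$. Passing to limits therefore gives $\int_\gamma\varphi\rho\ge0$ on the closure.

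\begin{itemize}
\item If both $\varphi$ and $-\varphi$ lie in the closure, then all leaf integrals vanish.
\item Consequently $\|\varphi\|^{\sup_s}=0$, and hence the other two seminorms also vanish, as in the injectivity argument of Lemma~\ref{lem:5.1}.
\item So $\varphi=0$, which has been removed from the cone.
\end{itemize}

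Closedness of $\mathcal C\cup\{0\}$ is immediate, because it is the closure of $\iota_\omega(\mathcal C)\cup\{0\}$.

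\textbf{Archimedean property.} First, $\mathbbm 1\in\mathcal C$. Indeed, $\int_\gamma\rho=1>0$, the ratio in (C2) equals $1$, and in (C3) $\int_{\tilde\gamma}\tilde\rho=\int_\gamma\rho$ by the change of variables of Proposition~\ref{prop:holjac}, so that ratio also equals $1$.

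The key step is to show that for $f\in\mathrm{BM}(M)$ with $\|f\|_\omega<\infty$ and $\lambda$ large (of order $C\|f\|_\omega$), both $\lambda\mathbbm 1\pm f$ lie in $\mathcal C(b,c,\nu)$. I would verify each condition in turn.

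\begin{itemize}
\item \textbf{(C1).} We have $\int(\lambda\pm f)\rho\ge\lambda-\|f\|^{\sup_s}>0$ once $\lambda>\|f\|_\omega$.
\item \textbf{(C2).} The numerator satisfies
\[
\int(\lambda\pm f)\rho\le\int(\lambda\pm f)\varsigma+\|f\|^{\Theta_s}\Theta(\rho,\varsigma).
\]
Dividing by $\int(\lambda\pm f)\varsigma\ge\lambda-\|f\|_\omega$ gives a ratio at most
\[
1+\frac{b\|f\|_\omega\,\Theta}{\lambda-\|f\|_\omega}\le e^{b\Theta}
\]
when $\lambda\ge 2\|f\|_\omega$, using $\|f\|^{\Theta_s}\le b\|f\|_\omega$.
\item \textbf{(C3).} Similarly, the ratio is at most
\[
1+\frac{c\|f\|_\omega d_u^\nu}{\lambda-\|f\|_\omega}\le e^{c\,d_u^\nu}.
\]
\end{itemize}

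These steps show that every $f\in\mathrm{BM}_\omega(M)$ satisfies $-\lambda\mu_{\mathbbm 1}\preceq f\preceq\lambda\mu_{\mathbbm 1}$. For a general element of $\mathbb V_\omega$, I would approximate it by $f_n\in\mathrm{BM}_\omega$. The bound $\lambda_{f_n}=2\|f_n\|_\omega$ stays uniform, and closedness of the order passes the inequality to the limit.

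\textbf{Membership statement and main obstacle.} The last assertion is essentially tautological. If $\|f\|_\omega<\infty$, then $[f]\in\mathrm{BM}_\omega(M)$, and that space embeds in its completion $\mathbb V_\omega$.

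I expect the main technical point to be the (C3) verification for $\mathbbm 1$. One must check that the holonomy-transported density integrates consistently, i.e.\ $\int_{\tilde\gamma}\rho\circ\mathrm{hol}\,\mathrm{Jac}=\int_\gamma\rho$, which follows from the definition of $\mathrm{Jac}(\mathrm{hol}^u_\omega)$ as the Radon--Nikodym derivative. One also has to handle the normalisation mismatch between $D_1(a_1,\kappa_1)$ and $D_1(a,\kappa)$ in the seminorm bounds: the $d_u$ seminorm uses $a_1,\kappa_1$ densities, so the lower bound $\int_\gamma(\lambda\pm f)\rho\ge\lambda-\|f\|_\omega$ for those densities needs $D_1(a_1,\kappa_1)\subset D_1(a,\kappa)$. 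I would assume the parameter choice $a_1\le a$, $\kappa_1\ge\kappa$ to secure this inclusion.
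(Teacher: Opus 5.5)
Your proposal is correct and follows essentially the same route as the paper: verify \textbf{(C1)}--\textbf{(C3)} directly for $\lambda\mathbbm 1\pm f$, with $\lambda$ a fixed multiple of $\|f\|_\omega$, by bounding each condition with the corresponding seminorm. The paper uses $\lambda=3\|f\|_\omega$ where you use $2\|f\|_\omega$. Your extra checks fill in points the paper leaves implicit: pointedness and closedness of the cone, $\mathbbm 1\in\mathcal C$, the inclusion $D_1(a_1,\kappa_1)\subset D_1(a,\kappa)$ needed in the \textbf{(C3)} denominator, and the density argument extending the Archimedean bound from $\mathrm{BM}_\omega(M)$ to all of $\mathbb V_\omega$.
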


\begin{proof}

The cone $\mathcal C_\omega(b,c,\nu)$ is closed by construction. Let $f:M\to \mathbb{R}$ be a $BM(M)$ function. We show that by taking $\lambda = 3\opnorm{f}_\omega$ it follows that
    $$ -\lambda {\mathbbm 1} \preceq_\omega f\preceq_\omega \lambda {\mathbbm 1},  $$
which implies the lemma.

Recall that the above identity holds if and only if $\lambda\mathbbm{1}\pm f\in \mathcal C_\omega(b,c,\nu)$. We divide the remaining of the proof into three steps.

\begin{step}[1] We show that $\lambda \mathbbm{1}  \pm f$ satisfies $\mathbf{(C1)}$ of Definition \ref{def:cone}.
\end{step}

Observe that given $\gamma_{\omega} \in \mathscr{F}^s_{\omega}$ and $\rho_\omega \in D_1(a,\kappa,\gamma_{\omega})$. We obtain that
$$\int_{\gamma }(\lambda\mathbbm{1}\pm f)\rho_\omega  \geq \lambda - \left| \int_{\gamma } f\rho_\omega \right| \geq 2 \opnorm{f}_\omega\geq 0,$$
which concludes Step $1$.
\begin{step}[2] We show that $\lambda\mathbbm{1}\pm f$ satisfies $\mathbf{(C2)}$ of Definition \ref{def:cone}.
\end{step}
Let $\gamma_{\omega} \in \mathscr{F}^s_{\omega}$, and $\rho_\omega,\varsigma_\omega \in D_1(a,\kappa,\gamma_{\omega})$. By a direct computation we obtain that
\begin{align*}
\frac{\int_{\gamma_{\omega}} (\lambda\pm f) \rho_\omega }{  \int_{\gamma_{\omega}} (\lambda \pm f) \varsigma_\omega }&=\frac{\int_{\gamma_{\omega}} (1 \pm \frac{f}{\lambda}) \rho_\omega  }{  \int_{\gamma_{\omega}}  (1 \pm \frac{f}{\lambda}) \varsigma_\omega} = 1 \pm   \frac{\int_{\gamma_{\omega}} (1 \pm \frac{f}{\lambda})  ( \rho_\omega - \varsigma_\omega) }{  \int_{\gamma_{\omega}} (1 \pm \frac{f}{\lambda}) \varsigma_\omega }\\
&= 1 \pm\frac{1}{\lambda} \frac{\int_{\gamma_{\omega}}  f (\rho_\omega-\varsigma_\omega)  }{ \int_{\gamma_{\omega}} (1 \pm \frac{f}{\lambda}) \varsigma_\omega} \leq 1 + \frac{1}{\lambda }\frac{3}{2}\opnorm{f}_\omega  b \Theta_{\gamma_{\omega}}^{a,\kappa}(\rho_\omega,\varsigma_\omega)\\&\leq 1 +  b \Theta_{\gamma_{\omega}}^{a,\kappa}(\rho_\omega,\varsigma_\omega)\leq e^{ b \Theta_{\gamma_{\omega}}^{a,\kappa}(\rho_\omega,\varsigma_\omega)}.
\end{align*}
This concludes Step 2.
 
\begin{step}[3] We show that $\lambda\mathbbm{1}\pm f$ satisfies $\mathbf{(C3)}$ of Definition \ref{def:cone}.
\end{step}
Let $(\tilde\gamma_{\omega},\gamma_{\omega}) \in \mathscr{F}^s_{\omega} \times \mathscr{F}^s_{\omega}$ be a nearby pair, and $\rho_\omega \in D(a_1,\kappa,\gamma_{\omega})$.   

It follows that 
\begin{align*}
    \frac{\int_{\tilde\gamma_{\omega}}(\lambda \pm f) \tilde\rho_\omega}{\int_{\gamma_{\omega}} (\lambda \pm f) \rho_\omega} & =   1 +  \frac{\int_{\gamma_{\omega}} (1 \pm \frac{f}{\lambda})   \rho_\omega   - \int_{\tilde{\gamma}(\omega)} (1 \pm \frac{f}{\lambda})   \tilde{\rho}(\omega)  }{  \int_{\gamma_{\omega}} (1 \pm \frac{f}{\lambda})  \tilde{\rho}_\omega} \\
 &\leq   1 +  \frac{1}{\lambda} \frac{3}{2} \left|\int_{\gamma_{\omega}} f   \rho_\omega    - \int_{\tilde{\gamma}(\omega)} f   \tilde{\rho}_\omega\right|\\& \leq 1+ c d_{u}(\gamma_{\omega},\tilde{\gamma}_\omega)^{\nu}\leq e^{c d_u(\gamma,\tilde \gamma)^\nu}.
\end{align*}
This concludes Step 3. Combining Steps 1, 2 and 3 we obtain that $\lambda\mathbbm 1  \pm f \in \mathcal C(b,c,\nu)$, which concludes the proof of the theorem. Moreover, we obtain that $\lambda \mathbbm 1 + f \in \mathcal C_\omega(b,c,\nu)\subset \mathbb V_\omega$.

\end{proof}

Since $\mathcal C_\omega(b,c,\nu)$ is Archimedean with order unit $\mathbbm 1$, it induces a natural order norm on $\mathbb V_\omega$. We denote this norm by $\|\cdot\|_\omega^*$. In the next definition we introduce it explicitly, and then prove that it is equivalent to the norm $\|\cdot\|_\omega$ from Definition~\ref{def:normomega}.
\begin{definition}
Let $f\in \mathrm{BM}_\omega(M)$. Define
$$
\|f\|_\omega^{*}
:=\inf\Bigl\{\lambda>0:\ -\lambda \mathbbm 1\preceq_\omega f\preceq_\omega \lambda \mathbbm 1\Bigr\}.
$$
Observe that if $-g\preceq_\omega  f \preceq_\omega  g,$ by definition of $\|\cdot\|_{\omega}^*$ norm it follows that $\|f\|_\omega^*\leq \|g\|_\omega^*.$
\end{definition}

\begin{lemma}\label{lem:normequiv}
The following holds for every $f\in\mathrm{BM}_\omega(M)$:
$$
\frac13\,\|f\|_\omega \leq \|f\|_\omega^{*}\leq 3\,\|f\|_\omega.
$$
Hence $\|\cdot\|_\omega$ and $\|\cdot\|_\omega^{*}$ are equivalent, so the completion of $\mathrm{BM}_\omega(M)$ with
respect to $\|\cdot\|_\omega^{*}$ is canonically isomorphic to the Banach space $\mathbb V_\omega$ introduced in
Definition~\ref{def:VandC}. In particular, the induced closed cone $\mathcal C_\omega(b,c,\nu)\subset\mathbb V_\omega$
does not depend on whether it is defined using $\|\cdot\|_\omega$ or $\|\cdot\|_\omega^{*}$.

\end{lemma}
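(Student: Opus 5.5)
The two inequalities are proved separately. The upper bound $\|f\|_\omega^*\le 3\|f\|_\omega$ follows directly from the proof of Lemma~\ref{lem:archi}. The lower bound comes from testing the order relation $-\lambda\mathbbm 1\preceq_\omega f\preceq_\omega\lambda\mathbbm 1$ against each of the three components of $\|\cdot\|_\omega$.

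\emph{Upper bound.} In Lemma~\ref{lem:archi} we showed that, with $\lambda=3\|f\|_\omega$, both $\lambda\mathbbm 1\pm f$ satisfy \textbf{(C1)}--\textbf{(C3)}. Hence $-\lambda\mathbbm 1\preceq_\omega f\preceq_\omega\lambda\mathbbm 1$, and the definition of $\|\cdot\|^*_\omega$ gives $\|f\|_\omega^*\le 3\|f\|_\omega$. If $\|f\|_\omega=0$, we replace $\lambda$ by an arbitrary $\lambda>0$ and get $\|f\|_\omega^*=0$.

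\emph{Lower bound.} Fix $\lambda>\|f\|_\omega^*$. Then $g_\pm:=\lambda\mathbbm 1\pm f$ lie in $\mathcal C_\omega(b,c,\nu)\cup\{0\}$. Since the cone is the $\|\cdot\|_\omega$-closure of $\iota_\omega(\mathcal C(b,c,\nu))$, and the leaf integrals $\Gamma^\rho_{\gamma_\omega}$ are continuous (Proposition~\ref{prop:disint}), the non-strict versions of \textbf{(C1)}--\textbf{(C3)} hold for $g_\pm$. I then estimate each term of $\|f\|_\omega$.

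\begin{enumerate}[label=(\roman*)]
\item \textbf{(C1)} gives $\int_\gamma(\lambda\pm f)\rho\ge0$, so $|\int_\gamma f\rho|\le\lambda$. Hence $\|f\|^{\sup_s}_{\omega,a,\kappa}\le\lambda$.
\item Set $A_\pm(\rho):=\int_\gamma g_\pm\rho$. By (i), $A_\pm(\rho)\le 2\lambda$. Write $\int f\rho-\int f\varsigma = A_+(\rho)-A_+(\varsigma)$. Using \textbf{(C2)}, $A_+(\rho)-A_+(\varsigma)\le A_+(\varsigma)(e^{b\Theta}-1)$, and symmetrically with $A_-$ for the other sign. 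This bounds $|\int f\rho-\int f\varsigma|$ by $2\lambda(e^{b\Theta}-1)$.
\item Proceed as in (ii), using \textbf{(C3)} in place of \textbf{(C2)}. This gives $|\int_\gamma f\rho-\int_{\tilde\gamma}f\tilde\rho|\le 2\lambda(e^{cd_u^\nu}-1)$.
\end{enumerate}

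Adding $\lambda+\frac1b\cdot(\ldots)+\frac1c\cdot(\ldots)$ yields $\|f\|_\omega\le3\lambda$, provided (ii) and (iii) give bounds of order $b\lambda\Theta$ and $c\lambda d_u^\nu$, respectively.

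\emph{Main obstacle.} The difficulty is turning $e^{b\Theta}-1$ into a linear bound $\lesssim b\Theta$ uniformly in $\Theta$. For small $\Theta$ this is clear. For large $\Theta$, I would instead use the trivial bound $|\int f\rho-\int f\varsigma|\le2\lambda$, which divided by $\Theta$ is small. The constant ``$1$'' in front of $\frac1b\|f\|^{\Theta_s}$ is more delicate. I would exploit that $A_+(\varsigma)+A_-(\varsigma)=2\lambda$ and combine the two one-sided estimates, possibly using $\int f\rho-\int f\varsigma=\tfrac12[(A_+(\rho)-A_+(\varsigma))-(A_-(\rho)-A_-(\varsigma))]$. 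This yields the factor $\lambda$ rather than $2\lambda$, with $e^{x}-1\le x e^{x}$ handled via the leaf-diameter bound of $\Theta$ on normalised densities.

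If the sharp constant $3$ fails, any uniform constant still gives equivalence of the norms. Equivalence is what is used to identify the completions and to conclude that the closed cone $\mathcal C_\omega(b,c,\nu)$ is the same under either norm. That last step is then immediate, since equivalent norms have the same Cauchy sequences and closures.
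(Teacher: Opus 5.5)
Your outline is the paper's: the upper bound comes from Lemma~\ref{lem:archi}, and the lower bound bounds the three seminorms by $\lambda$, $b\lambda$ and $c\lambda$ whenever $\lambda\mathbbm 1\pm f$ lies in the cone. The gap is exactly at the step you flag, and the mechanism you propose for it does not work.

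First, there is no finite bound on $\Theta^{a,\kappa}_{\gamma_\omega}$ over $D_1(a,\kappa,\gamma_\omega)$. The supremum defining $\|f\|^{\Theta_s}_{\omega,a,\kappa}$ ranges over all of $D_1(a,\kappa,\gamma_\omega)$. Densities with $\rho(y)/\rho(x)$ close to $e^{a d(x,y)^\kappa}$ drive $\alpha^{a,\kappa}_{\gamma_\omega}(k,\rho)\to0$ against the constant density $k$, so $\Theta\to\infty$. Finite diameter holds only for the subcone $D(a',\kappa,\gamma_\omega)$ with $a'<a$, which is what Lemma~\ref{lem:densityimprovment} exploits. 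Second, even granting a diameter bound $\Delta$, the estimate $e^x-1\le xe^x$ would cost a factor $e^{b\Delta}$, not $1$. Third, the half-sum identity with $A_++A_-=2\lambda$ only gives $|D|\le\lambda(e^{b\Theta}-1)$, which exceeds $\lambda b\Theta$ whenever $\Theta>0$.

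So what your argument actually proves is $|D|\le2\lambda\min\{e^{b\Theta}-1,1\}\le4\lambda b\Theta$, and likewise for \textbf{(C3)}. This gives $\|f\|_\omega\le9\|f\|^*_\omega$. That suffices for the norm equivalence and for the statements about the completion and the cone. It does not give the inequality $\frac13\|f\|_\omega\le\|f\|_\omega^*$ as stated, and that constant is reused in Lemma~\ref{lem:norm1}.

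The missing idea is to use both one-sided consequences of \textbf{(C2)} at once. Set $D:=\int_{\gamma_\omega}f\rho-\int_{\gamma_\omega}f\varsigma$, $x:=b\,\Theta^{a,\kappa}_{\gamma_\omega}(\rho,\varsigma)$ and $u:=A_+(\varsigma)\in[0,2\lambda]$, so that $A_-(\varsigma)=2\lambda-u$. Then
\[
D=A_+(\rho)-A_+(\varsigma)\le(e^{x}-1)\,u,\qquad D=A_-(\varsigma)-A_-(\rho)\le(1-e^{-x})(2\lambda-u).
\]
The first bound increases in $u$ and the second decreases, and they cross at $u=2\lambda/(e^x+1)$. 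Hence $D\le2\lambda\tanh(x/2)\le\lambda x$ for every $x\ge0$, with no case split and no diameter bound. Replacing $f$ by $-f$ gives the same bound for $-D$, so $\frac1b\|f\|^{\Theta_s}_{\omega,a,\kappa}\le\lambda$.

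The identical computation with \textbf{(C3)} gives $\frac1c\|f\|^{d_u}_{\omega,\nu}\le\lambda$. Here one uses $\int_{\tilde\gamma_\omega}\tilde\rho_\omega=\int_{\gamma_\omega}\rho_\omega=1$, so that $A_++A_-=2\lambda$ on both leaves. Summing, $\|f\|_\omega\le3\lambda$.

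This is the ``standard'' combination the paper invokes. The sharp intermediate bound is $2\lambda\tanh(b\Theta/2)$ rather than the $\lambda\tanh(b\Theta/2)$ written there, but after $\tanh t\le t$ the conclusion $\lambda b\Theta$ is unchanged.
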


\begin{proof}
By the proof of Lemma~\ref{lem:archi}, one has $
\|f\|_\omega^{*}\leq 3\,\|f\|_\omega .$
It remains to show that $\|f\|_\omega\leq 3\,\|f\|_\omega^{*}$.
Fix $\lambda>0$ such that
$$
-\lambda \mathbbm 1\preceq_\omega f\preceq_\omega \lambda \mathbbm 1.
$$
Then $0 \preceq_\omega \lambda\mathbbm 1\pm f$, hence $\lambda\mathbbm 1\pm f\in\mathcal C_\omega(b,c,\nu)$.
We bound each of the three seminorms defining $\|\cdot\|_\omega$ by $\lambda$.

\begin{step}[1] We show that $
\|f\|_{\omega,a,\kappa}^{\sup_s}\leq \|\lambda\mathbbm 1\|_{\omega,a,\kappa}^{\sup_s}=\lambda.
$
\end{step}
For every $\gamma\in\mathscr F_\omega^s$ and every $\rho_\omega\in D_1(a,\kappa,\gamma_\omega)$, $
\int_{\gamma_\omega} \rho_\omega(\lambda\mathbbm 1\pm f)>0.
$
By definition of $\|\cdot\|_{\omega,a,\kappa}^{\sup_s}$ this implies
$$
\|f\|_{\omega,a,\kappa}^{\sup_s}\leq \|\lambda\mathbbm 1\|_{\omega,a,\kappa}^{\sup_s}=\lambda.
$$

\begin{step}[2] We show that $\|f\|_{\omega,a,\kappa}^{\Theta_s}\leq b\lambda$.
\end{step}
Fix $\gamma\in\mathscr F_\omega^s$ and $\rho_\omega^{(1)},\rho_\omega^{(2)}\in D_1(a,\kappa,\gamma_\omega)$.
Since $\lambda\mathbbm 1\pm f\in \mathcal C_\omega(b,c,\nu)$, we have
$$
\int_{\gamma_\omega} \lambda \rho_\omega^{(1)} \mp \int_{\gamma_\omega} f\rho_\omega^{(1)}
\leq e^{b\Theta_{\gamma_\omega}^{a,\kappa}(\rho_\omega^{(1)},\rho_\omega^{(2)})}
\left(\int_{\gamma_\omega} \lambda \rho_\omega^{(2)} \mp \int_{\gamma_\omega} f\rho_\omega^{(2)}\right),
$$
and
$$
\int_{\gamma_\omega} \lambda \rho_\omega^{(2)} \pm \int_{\gamma_\omega} f\rho_\omega^{(2)}
\leq e^{b\Theta_{\gamma_\omega}^{a,\kappa}(\rho_\omega^{(1)},\rho_\omega^{(2)})}
\left(\int_{\gamma_\omega} \lambda \rho_\omega^{(1)} \pm \int_{\gamma_\omega} f\rho_\omega^{(1)}\right).
$$
Combining these inequalities in the standard way gives
$$
\left|\int_{\gamma_\omega} f\rho_\omega^{(2)}-\int_{\gamma_\omega} f\rho_\omega^{(1)}\right|
\leq
\left(\int_{\gamma_\omega}\lambda\,\frac{\rho_\omega^{(1)}+\rho_\omega^{(2)}}{2}\right)
\tanh\left(\frac{b\Theta_{\gamma_\omega}^{a,\kappa}(\rho_\omega^{(1)},\rho_\omega^{(2)})}{2}\right).
$$
Using $\tanh(t)\leq t$ for $t\geq 0$, we obtain
$$
\left|\int_{\gamma_\omega} f\rho_\omega^{(2)}-\int_{\gamma_\omega} f\rho_\omega^{(1)}\right|
\leq \lambda\, b\Theta_{\gamma_\omega}^{a,\kappa}(\rho_\omega^{(1)},\rho_\omega^{(2)}).
$$
Therefore $\|f\|_{\omega,a,\kappa}^{\Theta_s}\leq b\lambda$.

\begin{step}[3] We show that $\|f\|_{\omega,\nu}^{d_u}\leq c\lambda$.
\end{step}

Let $(\widetilde\gamma_\omega,\gamma_\omega)$ be a nearby pair and let
$\rho_\omega\in D_1(a,\kappa,\gamma_\omega)$ with associated $\widetilde\rho_\omega$ on $\widetilde\gamma_\omega$.
Since $\lambda\mathbbm 1\pm f\in \mathcal C_\omega(b,c,\nu)$, we have
$$
\int_{\gamma_\omega} \lambda\rho_\omega \mp \int_{\gamma_\omega} f\rho_\omega
\leq e^{c\,d_u(\widetilde\gamma_\omega,\gamma_\omega)^\nu}
\left(\int_{\widetilde\gamma_\omega} \lambda\widetilde\rho_\omega \mp \int_{\widetilde\gamma_\omega} f\widetilde\rho_\omega\right),
$$
and
$$
\int_{\widetilde\gamma_\omega} \lambda\widetilde\rho_\omega \pm \int_{\widetilde\gamma_\omega} f\widetilde\rho_\omega
\leq e^{c\,d_u(\widetilde\gamma_\omega,\gamma_\omega)^\nu}
\left(\int_{\gamma_\omega} \lambda\rho_\omega \pm \int_{\gamma_\omega} f\rho_\omega\right).
$$
As in the previous step, this implies
$$
\left|\int_{\gamma_\omega} f\rho_\omega-\int_{\widetilde\gamma_\omega} f\widetilde\rho_\omega\right|
\leq \lambda\, c\, d_u(\widetilde\gamma_\omega,\gamma_\omega)^\nu,
$$
and hence $\|f\|_{\omega,\nu}^{d_u}\leq c\lambda$.

Putting the three steps together,
$$
\|f\|_\omega
=\|f\|_{\omega,a,\kappa}^{\sup_s}
+\frac1b\|f\|_{\omega,a,\kappa}^{\Theta_s}
+\frac1c\|f\|_{\omega,\nu}^{d_u}
\leq \lambda+\lambda+\lambda
=3\lambda.
$$
Taking the infimum over all $\lambda$ such that $-\lambda\mathbbm 1\preceq_\omega f\preceq_\omega \lambda \mathbbm 1$
yields
$$
\|f\|_\omega\leq 3\,\|f\|_\omega^{*}.
$$
Combining this with $\|f\|_\omega^{*}\leq 3\|f\|_\omega$ proves the claim.
\end{proof}

The following estimate will be used repeatedly. It shows that, on the cone $\mathcal C_\omega(b,c,\nu)$, the seminorm $\|\cdot\|_{\omega,+}$ controls the full norm $\|\cdot\|_\omega$, and hence the two quantities are equivalent there.
\begin{lemma}\label{lem:norm1}
    Given $f\in \mathcal C_\omega(b,c,\nu)$ then  $\|f \|_{\omega,+}\leq \opnorm{ f }_{\omega} \leq 3 \|f \|_{\omega,+}. $
\end{lemma}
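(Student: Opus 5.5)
The lower bound $\|f\|_{\omega,+}\le\|f\|_\omega$ is immediate, because $\|f\|_{\omega,+}$ is exactly the first summand $\|f\|_{\omega,a,\kappa}^{\sup_s}$ in Definition~\ref{def:normomega} and the other two summands are nonnegative. The upper bound splits into one estimate per remaining summand. I will first prove it for $f$ in the un-completed cone $\mathcal C(b,c,\nu)$, so that all integrals are honest integrals against densities in $D_1$, and then pass to the closure $\mathcal C_\omega(b,c,\nu)$ by continuity. Write $S:=\|f\|_{\omega,+}$. By \textbf{(C1)}, every quantity $\int_{\gamma_\omega} f\rho_\omega$ with $\rho_\omega\in D_1(a,\kappa,\gamma_\omega)$ lies in $(0,S]$.

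\emph{Step 1 (the $\Theta_s$ term).} Fix $\gamma_\omega\in\mathscr F^s_\omega$ and $\rho^{(1)}_\omega,\rho^{(2)}_\omega\in D_1(a,\kappa,\gamma_\omega)$, and set $\Theta:=\Theta^{a,\kappa}_{\gamma_\omega}(\rho^{(1)}_\omega,\rho^{(2)}_\omega)$ and $I_i:=\int_{\gamma_\omega} f\rho^{(i)}_\omega$. Assume without loss of generality $I_1\ge I_2$. Condition \textbf{(C2)} gives $I_1\le e^{b\Theta}I_2$, hence
$$
0\le I_1-I_2\le (1-e^{-b\Theta})I_1\le b\Theta\, S .
$$
Therefore $\frac1b\|f\|^{\Theta_s}_{\omega,a,\kappa}\le S$.

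\emph{Step 2 (the $d_u$ term).} Take a nearby pair $(\tilde\gamma_\omega,\gamma_\omega)$ and $\rho_\omega\in D_1(a_1,\kappa_1,\gamma_\omega)$ with transported density $\tilde\rho_\omega$. Set $J_1=\int_{\gamma_\omega} f\rho_\omega$ and $J_2=\int_{\tilde\gamma_\omega}f\tilde\rho_\omega$. Condition \textbf{(C3)} controls the ratio in one direction. Applying it with the roles of the two leaves exchanged (the pair is nearby in both orders, $\rho_\omega$ is recovered from $\tilde\rho_\omega$ via the inverse holonomy, and $d_u$ is symmetric) controls the other direction. As in Step 1, this yields $|J_1-J_2|\le c\,d_u^\nu\max\{J_1,J_2\}$.

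\emph{Main obstacle.} The difficulty is bounding $J_1,J_2$ by $S$, because $S$ is a supremum over $D_1(a,\kappa,\cdot)$, not over $D_1(a_1,\kappa_1,\cdot)$. Moreover $\tilde\rho_\omega$ is not normalised: its mass is $\int_{\gamma_\omega}\rho_\omega=1$ only after the change of variables, with $\tilde\rho_\omega\in D(a_1,\kappa_1,\tilde\gamma_\omega)$. My plan is to use the parameter choices that the paper must make anyway, namely $a_1\le a$ and $\kappa_1\ge\kappa$ (and presumably $\operatorname{diam}\gamma\le 1$). Then $D(a_1,\kappa_1,\gamma)\subset D(a,\kappa,\gamma)$, and the change of variables gives $\int_{\tilde\gamma_\omega}\tilde\rho_\omega=\int_{\gamma_\omega}\rho_\omega=1$. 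Consequently both $J_1,J_2\le S$, and $\frac1c\|f\|^{d_u}_{\omega,\nu}\le S$. If those inclusions are not available, I would instead absorb a fixed constant into the bound. The statement's constant $3$ indicates that the inclusion route is the intended one.

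\emph{Step 3 (closure).} Adding the three summands gives $\|f\|_\omega\le 3S$ on $\mathcal C(b,c,\nu)$. Both $\|\cdot\|_\omega$ and $\|\cdot\|_{\omega,+}$ are continuous in the $\|\cdot\|_\omega$ topology, the latter being dominated by the former. Hence the inequality extends to $\mathcal C_\omega(b,c,\nu)$, where the integrals are interpreted through Definition~\ref{def:mugamma}.
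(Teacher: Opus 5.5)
Your proposal is correct and follows essentially the paper's route. You reduce to the un-completed cone, note that the lower bound is trivial, and bound the $\Theta_s$ and $d_u$ summands by $b\|f\|_{\omega,+}$ and $c\|f\|_{\omega,+}$ via \textbf{(C2)} and \textbf{(C3)} in both directions, using the inclusion $D_1(a_1,\kappa_1,\cdot)\subset D_1(a,\kappa,\cdot)$ (from $a_1=\alpha_0 a<a$, $\kappa_1>\kappa$) to compare the $d_u$ integrals with $\|f\|_{\omega,+}$. The only difference is cosmetic: the paper reruns the $\tanh$ computation of Lemma~\ref{lem:normequiv} with $\lambda\mathbbm 1$ replaced by $f$, whereas your direct estimate $I_1-I_2\le(1-e^{-b\Theta})I_1\le b\Theta\,\|f\|_{\omega,+}$ is, if anything, cleaner.
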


\begin{proof}
It is enough to show the lemma for $f\in C_\omega(b,c,\nu) \subset \mathrm{BM}(M)$. The inequality $\|f\|_{\omega,+}\leq \|f\|_\omega$ is obvious. On the other hand, observe that repeating the same computations of Lemma \ref{lem:normequiv}, changing $\lambda$ by $f$, i.e. using $-f \preceq_\omega f \preceq_\omega f$, one obtains that
$$ \|f\|_{\omega,a,\kappa}^{\Theta_s}\leq b \|f\|_{\omega,a,\kappa}^{\sup_s} =  b \|f\|_{\omega,+} \ \text{and }\|f\|_{\omega,\nu}^{d_u}\leq c \|f\|_{\omega,a,\kappa}^{\sup_s}= c \|f\|_{\omega,+}.$$
Therefore $\|f\|_{\omega} \leq 3 \|f\|_{\omega,+}$.

\end{proof}

\section{The Random Perron-Frobenius operator}\label{sec:RPF}

In this section we introduce the random Perron--Frobenius operator associated with the potential $\varphi$. This operator is the main analytic tool used in the rest of the paper. Its action on the adapted cones constructed in the previous section will allow us to obtain a quenched spectral decomposition, from which we construct the random equilibrium state. The same spectral information will also be used to prove the exponential decay of correlations.

Recall that $F:\Omega\times M\to\Omega\times M$ is assumed to be a regular random dynamical system satisfying Hypothesis~\ref{hyp:h}, and $\phi\in L^{\infty}(\Omega;\mathcal C^{\beta}(M))$ (see Definition \ref{def:potential}). For $f\in\operatorname{BM}(M)$ and $\omega\in\Omega$, define the (random) Perron--Frobenius operator $\mathcal L_\omega:\operatorname{BM}(M)\to\operatorname{BM}(M)$ by
$$
\mathcal L_\omega f(x)=\bigl(e^{\phi_\omega}f\bigr)\circ T_\omega^{-1}(x),\  x\in M.
$$
In this section we choose parameters $a,b,c,\nu$ such that the cone $\mathcal C_\omega(b,c,\nu)$, see Definition~\ref{def:VandC}, is forward-invariant under $\mathcal L_\omega$, namely, $\mathcal L_\omega\bigl(\mathcal C_\omega(b,c,\nu)\bigr)\subset \mathcal C_{\theta\omega}(b,c,\nu).$
We start by studying how $\mathcal L_\omega$ acts on leafwise integrals: pulling back a stable leaf produces finitely many stable components, and the corresponding pulled-back densities remain admissible with an improved log-Hölder constant.

\begin{lemma}\label{lem:push_density}
Let $\gamma_{\theta\omega}\in\mathscr F_{\theta\omega}^s$ and let
$\rho_{\theta\omega}\in D(a,\kappa,\gamma_{\theta\omega})$, where
$0<\kappa\le \min\{\beta,\nu_0\}$ (defined in Section \ref{sec:geom}) and
$$
a>\frac{2\left(\|\phi\|_{\mathcal C^\kappa}+C_2\right)}{1-e^{-\lambda_0}},
$$
with $C_2$ as in Proposition~\ref{prop:holderjacs}.
Write
$$
\hat{\gamma}_{\omega}:=(T_\omega)^{-1}(\gamma_{\theta\omega})
=\bigcup_{i=1}^{Q_\omega(\gamma_{\theta\omega})}\gamma_{\omega}^{(i)},
\ 
\gamma_{\omega}^{(i)}\in\mathscr F_{\omega}^s,
$$
where the union is disjoint up to $m_{\hat{\gamma}_{\omega}}$-null sets. Define, for
$x\in\gamma_{\omega}^{(i)}$,
$$
\rho_{\omega}^{(i)}(x):=e^{\phi_\omega(x)-\phi^{J_s}(\omega,x)}
\,\rho_{\theta\omega}\!\left(T_\omega(x)\right),
\ 
\phi^{J_s}(\omega,x)=-\log J_s(\omega,x).
$$
Then $\rho_{\omega}^{(i)}\in D(\alpha_0 a,\kappa,\gamma_{\omega}^{(i)})$ and, for every
$f\in\mathrm{BM}(M)$,
$$
\int_{\gamma_{\theta\omega}}\mathcal L_{\omega} f(y)\,\rho_{\theta\omega}(y)\,\d m_{\gamma_{\theta\omega}}(y)
=
\sum_{i=1}^{Q_\omega(\gamma_{\theta\omega})}
\int_{\gamma_{\omega}^{(i)}} f(x)\,\rho_{\omega}^{(i)}(x)\,\d m_{\gamma_{\omega}^{(i)}}(x).
$$
Moreover, each $\rho_{\omega}^{(i)}$ belongs to
$D(\alpha_0 a,\kappa,\gamma_{\omega}^{(i)})$ with
$$
\alpha_0:=\frac{1+e^{-\lambda_0}}{2}\in(0,1).
$$
Finally there exists $N_*\in\mathbb N$ such that
$Q_\omega(\gamma_{\theta\omega})\le N_*$ for $\mathbb P$-almost every
$\omega\in\Omega$.
\end{lemma}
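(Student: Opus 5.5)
The proof has three parts: the change-of-variables identity, the log-Hölder estimate for the pulled-back densities, and the uniform bound on the number of components.

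\emph{Change of variables.} By \textbf{(H1)}, $T_\omega$ maps fibre stable manifolds at $\omega$ into fibre stable manifolds at $\theta\omega$, so $T_\omega^{-1}$ maps stable leaves to stable curves. Since $\sup_\omega\|T_\omega^{-1}\|_{\mathcal C^2}<\infty$, the curve $\hat\gamma_\omega=T_\omega^{-1}(\gamma_{\theta\omega})$ is a one-dimensional stable curve. Its length is at most $C_{10}\,m_{\gamma_{\theta\omega}}(\gamma_{\theta\omega})\le C_{10}A(\varepsilon)$, using the Jacobian bound of Proposition~\ref{prop:holderjacs}. Cut $\hat\gamma_\omega$ into consecutive connected pieces whose lengths lie in $\bigl(A(\varepsilon)/(4J^2),A(\varepsilon)\bigr)$; this is possible because the window has ratio $4J^2>2$. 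Each piece lies in a single $W^s_\varepsilon(\omega,x)$, so it belongs to $\mathscr F^s_\omega$.

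If $\hat\gamma_\omega$ is already shorter than $A(\varepsilon)/(4J^2)$, we instead use the contraction $e^{-\lambda_0}$ along $E^s$. This direction of estimate goes the wrong way for pulling back, so in that case I would enlarge the allowed lower bound, or simply note that $\hat\gamma_\omega$ is longer than $\gamma_{\theta\omega}$, because stable leaves expand under $T_\omega^{-1}$ by \textbf{(H1)}. The expansion argument suffices: $m(\hat\gamma_\omega)\ge e^{\lambda_0}m(\gamma_{\theta\omega})>A(\varepsilon)/(4J^2)$.

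The number of pieces is then at most $N_*:=\lceil 4J^2C_{10}\rceil+1$, uniformly in $\omega$. The integral identity follows from the substitution $y=T_\omega x$ on each piece. The one-dimensional Jacobian of $T_\omega|_{\hat\gamma_\omega}$ is exactly $J_s(\omega,x)=e^{-\phi^{J_s}(\omega,x)}$, and $\mathcal L_\omega f(T_\omega x)=e^{\phi_\omega(x)}f(x)$. Therefore
$$\int_{\gamma_{\theta\omega}}\mathcal L_\omega f\,\rho_{\theta\omega}\,\d m=\int_{\hat\gamma_\omega}e^{\phi_\omega}f\,(\rho_{\theta\omega}\circ T_\omega)\,J_s\,\d m,$$
which is the claimed sum.

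\emph{Log-Hölder estimate.} For $x,y$ in the same piece $\gamma^{(i)}_\omega$, the triangle inequality gives
$$|\log\rho^{(i)}_\omega(x)-\log\rho^{(i)}_\omega(y)|\le \|\phi\|_{\mathcal C^\kappa}d(x,y)^\kappa+C_2d(x,y)^{\nu_0}+a\,d(T_\omega x,T_\omega y)^\kappa.$$
The stable contraction in \textbf{(H1)} gives $d(T_\omega x,T_\omega y)\le e^{-\lambda_0}d(x,y)$; strictly this holds for leafwise distance, and we identify it with $d$ up to the usual small-scale comparison. Since $\kappa\le1$, this yields $d(T_\omega x,T_\omega y)^\kappa\le e^{-\lambda_0\kappa}d(x,y)^\kappa$.

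Next we need $d^{\nu_0}\le d^\kappa$. This holds for $d\le1$ when $\kappa\le\nu_0$, and we may assume it after rescaling the metric or taking $\varepsilon$ small. Collecting terms, the total coefficient is $(\|\phi\|_{\mathcal C^\kappa}+C_2)+a e^{-\lambda_0\kappa}$. The hypothesis on $a$ gives $\|\phi\|+C_2<a(1-e^{-\lambda_0})/2$, so the coefficient is at most $a\bigl[(1-e^{-\lambda_0})/2+e^{-\lambda_0\kappa}\bigr]$.

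\emph{Main obstacle.} This bound equals $\alpha_0a$ only when $\kappa=1$. For $\kappa<1$ the contraction factor is $e^{-\lambda_0\kappa}$ rather than $e^{-\lambda_0}$. I would handle this in one of two ways. The first is to interpret $\lambda_0$ in the lemma as the exponent $\lambda_0\kappa$, which amounts to replacing $\lambda_0$ by $\kappa\lambda_0$ both in the definition of $\alpha_0$ and in the lower bound on $a$. The second is to use an adapted norm as in Remark~\ref{rem:adaptednorm}. With either fix the identical computation gives exactly $\alpha_0a$. Positivity \textbf{(D1)} of $\rho^{(i)}_\omega$ is immediate.
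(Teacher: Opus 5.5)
Your argument follows the paper's proof in all three parts. You use the substitution $y=T_\omega x$ on $\hat\gamma_\omega$ with Jacobian $J_s=e^{-\phi^{J_s}}$. You bound $\log\rho^{(i)}_\omega$ by the triangle inequality, using Proposition~\ref{prop:holderjacs} and stable contraction. You count pieces via $\frac{A(\varepsilon)}{4J^2}Q_\omega\le m(\hat\gamma_\omega)\le C_{10}A(\varepsilon)$. The paper takes the decomposition of $\hat\gamma_\omega$ into elements of $\mathscr F^s_\omega$ for granted. Your cutting argument, which uses that $T_\omega^{-1}$ expands stable leaves so $\hat\gamma_\omega$ is never too short, is therefore an addition rather than a different route.

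The obstacle you flag is real, and the paper's own proof has the same error. The paper bounds $a\,d(T_\omega x,T_\omega y)^\kappa$ by $ae^{-\lambda_0}d(x,y)^\kappa$, but only $ae^{-\lambda_0\kappa}d(x,y)^\kappa$ follows. The paper always has $\kappa<1$ (indeed $\kappa\le\nu_0<\beta\le 1$).

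With $\alpha_0=(1+e^{-\lambda_0})/2$, the statement actually fails for small $\kappa$. Take a single linear hyperbolic toral automorphism with stable rate exactly $e^{-\lambda_0}$, and $\phi\equiv 0$, so $J_s$ is constant. Let $\rho_{\theta\omega}=\exp(a|s-s_0|^\kappa)$ in arclength $s$ on a straight stable segment. This density lies in $D(a,\kappa,\gamma_{\theta\omega})$, but its pull-back has log-H\"older constant exactly $ae^{-\lambda_0\kappa}$. That exceeds $\alpha_0 a$ once $e^{-\lambda_0\kappa}>(1+e^{-\lambda_0})/2$.

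Your first fix is the correct one: replace $\lambda_0$ by $\kappa\lambda_0$ both in $\alpha_0$ and in the lower bound on $a$. It costs nothing later, since the rest of the paper only uses $\alpha_0\in[1/2,1)$ and $a$ large. Drop the second fix. The adapted norm of Remark~\ref{rem:adaptednorm} only removes a multiplicative constant $C_0$. The loss from $e^{-\lambda_0}$ to $e^{-\lambda_0\kappa}$ comes from raising distances to the power $\kappa$, and no choice of norm changes it.
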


\begin{proof}
Observe that $\left.T_\omega\right|_{\hat{\gamma}_{\omega}}:\hat{\gamma}_{\omega}\to\gamma_{\theta\omega}$ is a diffeomorphism. Hence
\begin{align*}
\int_{\gamma_{\theta\omega}}\mathcal L_\omega f(y)\,\rho_{\theta\omega}(y)\,\d m_{\gamma_{\theta\omega}}(y)
&=\int_{\gamma_{\theta\omega}}
e^{\phi_\omega\circ \left(T_\omega\right)^{-1}(y)}
\,f \circ \left(T_\omega\right)^{-1}(y)\,\rho_{\theta\omega}(y)\,\d m_{\gamma_{\theta\omega}}(y)\\
&=\sum_{i=1}^{Q_\omega(\gamma_{\theta\omega})}
\int_{\gamma_{\omega}^{(i)}}
e^{\phi_\omega(x)}\,f(x)\,\rho_{\theta\omega}(T_\omega(x))\,J_s(\omega,x)\,\d m_{\gamma_{\omega}^{(i)}}(x)\\
&=\sum_{i=1}^{Q_\omega(\gamma_{\theta\omega})}
\int_{\gamma_{\omega}^{(i)}}
f(x)\left[e^{\phi_\omega(x)-\phi^{J_s}(\omega,x)}
\rho_{\theta\omega}(T_\omega(x))\right]\d m_{\gamma_{\omega}^{(i)}}(x),
\end{align*}
where $J_s(\omega,x)=e^{-\phi^{J_s}(\omega,x)}$. For each
$i\in \{1,\ldots, Q_\omega(\gamma_{\theta\omega})\}$ we define
$$
\rho_{\omega}^{(i)}(x)=e^{\phi_\omega(x)-\phi^{J_s}(\omega,x)}
\rho_{\theta\omega}(T_\omega(x)).
$$

Fix $x,y\in\gamma_{\omega}^{(i)}$. Then
$$
\frac{\rho_{\omega}^{(i)}(x)}{\rho_{\omega}^{(i)}(y)}
=e^{\phi_\omega(x)-\phi_\omega(y)}
e^{\phi^{J_s}(\omega,x)+\phi^{J_s}(\omega,y)}
\frac{\rho_{\theta\omega}(T_\omega(x))}{\rho_{\theta\omega}(T_\omega(y))}.
$$
From Proposition~\ref{prop:holderjacs} $\rho_{\theta\omega}\in D(a,\kappa,\gamma_{\theta\omega})$, and
$d(T_\omega(x),T_\omega(y))\le e^{-\lambda_0}d(x,y)$, we obtain
$$
\frac{\rho_{\omega}^{(i)}(x)}{\rho_{\omega}^{(i)}(y)}
\le e^{\bigl(\|\phi\|_{\mathcal C^\kappa}+C_2+a e^{-\lambda_0}\bigr)\,d(x,y)^\kappa}.
$$
Since $
a>\frac{2\left(\|\phi\|_{\mathcal C^\kappa}+C_2\right)}{1-e^{-\lambda_0}},$
we have $\|\phi\|_{\mathcal C^\kappa}+C_2+a e^{-\lambda_0}<\alpha_0 a$
with $\alpha_0=\frac{1+e^{-\lambda_0}}{2}$, and therefore $
\rho_{\omega}^{(i)}(x)
\le e^{\alpha_0 a\,d(x,y)^\kappa} \rho_{\omega}^{(i)}(y).$ This proves $\rho_{\omega}^{(i)}\in D(\alpha_0 a,\kappa,\gamma_{\omega}^{(i)})$.

In the following, we show that there exists $N_*\in\mathbb N$ such that
$Q_\omega(\gamma_{\theta\omega})\le N_*$ for $\mathbb P$-a.e. $\omega\in\Omega$.
From Definition~\ref{def:stableleaves},
$$
\frac{A(\varepsilon)}{4J^2}\,Q_\omega(\gamma_{\theta\omega})
\le \sum_{i=1}^{Q_\omega(\gamma_{\theta\omega})}
m_{\gamma_{\omega}^{(i)}}\left(\gamma_{\omega}^{(i)}\right)
= m_{\hat{\gamma}_{\omega}}\left(\hat{\gamma}_{\omega}\right).
$$
Using the unstable manifold change of variables and
$J_s(\omega,\cdot)\ge C_{10}^{-1}$ (see Proposition \ref{prop:holjac}),
$$
m_{\hat{\gamma}_{\omega}}\left(\hat{\gamma}_{\omega}\right)
=\int_{\gamma_{\theta\omega}}
\frac{1}{J_s\left(\omega,(T_\omega|\hat{\gamma}_{\omega})^{-1}(y)\right)}
\,\d m_{\gamma_{\theta\omega}}(y)
\le C_{10} m_{\gamma_{\theta\omega}}\left(\gamma_{\theta\omega}\right)
\le C_{10}A(\varepsilon).
$$
Combining the two inequalities gives $Q_\omega(\gamma_{\theta\omega})\le C_{10}4J^2,$
and taking $
N_*:=\left\lceil C_{10}4J^2\right\rceil$
the proof is completed.
\end{proof}

\begin{lemma}\label{lem:densityimprovment}
Let $a,\kappa,\gamma$ be as in Lemma \ref{lem:push_density}. Given
$\rho_{\theta\omega},\varsigma_{\theta\omega}\in D(a,\kappa,\gamma_{\theta\omega})$, it follows that there exists $0<\Lambda_1<1$ such that
$$
\Theta_{\gamma^{(i)}_{\omega}}^{a,\kappa}(\rho_\omega^{(i)}, \varsigma_\omega^{(i)})
\leq
\Lambda_1
\Theta_{\gamma_{\theta\omega}}^{a,\kappa}(\rho_{\theta\omega},\varsigma_{\theta\omega}),
$$
for each $i\in\{1,\dots,Q_\omega(\gamma_{\theta\omega})\}$.
\end{lemma}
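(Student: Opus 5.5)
The plan is to factor the map $\rho_{\theta\omega}\mapsto\rho^{(i)}_\omega$ through the smaller cone $D(\alpha_0 a,\kappa,\gamma^{(i)}_\omega)$ and then apply Birkhoff's theorem (Theorem~\ref{thm:birkhoff}) to the inclusion of that cone into $D(a,\kappa,\gamma^{(i)}_\omega)$.

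Consider the linear map $P_i:\rho_{\theta\omega}\mapsto \rho^{(i)}_\omega = e^{\phi_\omega-\phi^{J_s}(\omega,\cdot)}\,\rho_{\theta\omega}\circ T_\omega$ restricted to $\gamma^{(i)}_\omega$. It is linear on bounded measurable functions on $\gamma_{\theta\omega}$. By Lemma~\ref{lem:push_density} it maps $D(a,\kappa,\gamma_{\theta\omega})$ into $D(\alpha_0 a,\kappa,\gamma^{(i)}_\omega)\subset D(a,\kappa,\gamma^{(i)}_\omega)$. Theorem~\ref{thm:birkhoff} then gives
\[
\Theta^{a,\kappa}_{\gamma^{(i)}_\omega}(P_i\rho,P_i\varsigma)\le \tanh(\Delta/4)\,\Theta^{a,\kappa}_{\gamma_{\theta\omega}}(\rho,\varsigma),
\]
where $\Delta$ is the $\Theta^{a,\kappa}_{\gamma^{(i)}_\omega}$-diameter of the image. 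It therefore suffices to bound $\Delta$ by a constant $\Delta_0$ independent of $\omega$, $i$ and the leaves. Then $\Lambda_1:=\tanh(\Delta_0/4)<1$ works.

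The main step is to bound the $\Theta^{a,\kappa}$-diameter of $D(\alpha_0 a,\kappa,\gamma)$ inside $D(a,\kappa,\gamma)$ uniformly over admissible leaves $\gamma\in\mathscr F^s_\omega$. Because $\Theta$ is projective, we may take $\rho^{(1)},\rho^{(2)}\in D(\alpha_0 a,\kappa,\gamma)$ and estimate $\beta/\alpha$ using the explicit formulas \eqref{eq:leafhilbertmetric}--\eqref{eq:leafhilbertmetric2}. There are two kinds of terms.

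For the pointwise ratios $\rho^{(2)}(x)/\rho^{(1)}(x)$, the log-H\"older condition gives $\rho^{(j)}(x)/\rho^{(j)}(y)\le e^{\alpha_0 a\, d(x,y)^\kappa}\le e^{\alpha_0 a\,(\operatorname{diam}\gamma)^\kappa}$. The diameter of admissible leaves is uniformly bounded, since their length is at most $A(\varepsilon)$. Hence $\sup\rho^{(2)}/\rho^{(1)}$ over $\inf\rho^{(2)}/\rho^{(1)}$ is at most $e^{2\alpha_0 a\, (\operatorname{diam})^\kappa}$.

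For the difference quotients, write $t=a\,d(x,y)^\kappa$. The ratio $\rho^{(j)}(y)/\rho^{(j)}(x)$ lies in $[e^{-\alpha_0 t},e^{\alpha_0 t}]$. Hence
\[
e^{t}\rho^{(j)}(x)-\rho^{(j)}(y)\in \bigl[(e^{t}-e^{\alpha_0 t}),\,(e^{t}-e^{-\alpha_0 t})\bigr]\rho^{(j)}(x).
\]
The quotient of the two difference terms is therefore bounded above and below by $\frac{\rho^{(2)}(x)}{\rho^{(1)}(x)}$ times factors $\frac{e^{t}-e^{-\alpha_0 t}}{e^{t}-e^{\alpha_0 t}}$ and their reciprocals. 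This factor is bounded uniformly in $t>0$: as $t\to0$ it tends to $(1+\alpha_0)/(1-\alpha_0)$, as $t\to\infty$ it tends to $1$, and it is continuous in between.

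Combining the two bounds gives $\Delta_0\le 2\alpha_0 a(\operatorname{diam})^\kappa+2\log\frac{1+\alpha_0}{1-\alpha_0}+C$, which is finite and deterministic.

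The one delicate point I expect is the uniform control of the difference-quotient ratio near $t\to0$, that is, for nearby points. The elementary expansion $e^{t}-e^{\pm\alpha_0 t}\sim(1\mp\alpha_0)t$ handles it, and it is where $\alpha_0<1$ is essential. Everything else is routine bookkeeping with the constants fixed in Section~\ref{sec:geom}.
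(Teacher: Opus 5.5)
Your proposal is correct and takes the same route as the paper. Lemma \ref{lem:push_density} places the pulled-back densities in $D(\alpha_0 a,\kappa,\gamma^{(i)}_\omega)$; the explicit Hilbert-metric formulas, with $\tau_1=\frac{1-\alpha_0}{1+\alpha_0}$ and $\tau_2=\frac{1+\alpha_0}{1-\alpha_0}$ controlling the difference quotients, give a uniform $\Theta^{a,\kappa}$-diameter bound for that subcone; and Theorem \ref{thm:birkhoff} then gives $\Lambda_1=\tanh(\Delta_0/4)<1$. Your bound $2\alpha_0 a(\operatorname{diam}\gamma)^\kappa+2\log\frac{1+\alpha_0}{1-\alpha_0}$ is just a slightly sharper form of the paper's $4a+\log(\tau_2/\tau_1)$.
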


\begin{proof}
From Lemma \ref{lem:push_density} we have that
$\rho_\omega^{(i)}\in D(\alpha_0 a,\kappa,\gamma_{\omega}^{(i)})$ and
$\varsigma_\omega^{(i)}\in D(\alpha_0 a,\kappa,\gamma_{\omega}^{(i)})$.
From a direct computation using \eqref{eq:leafhilbertmetric}, one obtains that, for each
$i\in\{1,\ldots,Q_\omega(\gamma_{\theta\omega})\}$,
$$
\sup_{\rho_{\theta\omega},\varsigma_{\theta\omega}\in D(a,\kappa,\gamma_{\theta\omega})}
\Theta_{\gamma^{(i)}_{\omega}}^{a,\kappa}
(\rho_\omega^{(i)},\varsigma_\omega^{(i)})
\leq
4a+\log(\tau_2/\tau_1),
$$
where
$$
\tau_1:=\inf_{z>1}\frac{z-z^{\alpha_0}}{z-z^{-\alpha_0}}
=\frac{1-\alpha_0}{1+\alpha_0}
\ \text{and}\
\tau_2:=\sup_{z>1}\frac{z-z^{-\alpha_0}}{z-z^{\alpha_0}}
=\frac{1+\alpha_0}{1-\alpha_0}.
$$
The result then follows from Theorem \ref{thm:birkhoff}. For details, see
\cite[Lemma 4.2]{liu2024exponential}; see also \cite[Lemma 4.2]{viana1997stochastic}.
\end{proof}

The proposition below is technical, and its proof is lengthy. To avoid interrupting the flow of the text, we defer the proof to Appendix \ref{appendix:A}. The argument is based on \cite[Lemma 4.5]{liu2024exponential} (see also \cite[Proposition 4.4]{viana1997stochastic}).

As a consequence of Proposition \ref{prop:cone_invariance_random} we obtain the following result.
\begin{proposition}\label{prop:cone_invariance_random} Choose $\kappa,\nu\in (0,1]$ so that $\kappa+\nu < \nu_0$ where $\nu_0>0$ is given by Propositions \ref{prop:holderhol} and \ref{prop:disintegration}. Consider $\kappa_1\in(0,1)$ so that $\kappa + \nu < \kappa_1 \nu_0$.  Let $\Lambda_1$ be as in Lemma \ref{lem:densityimprovment}. For $a,b,c>0$ and choosing $a/2 \leq a_1 :=\alpha_0 a$, where $\alpha_0$ is given in Lemma \ref{lem:push_density}, there exists  $\lambda_2\in(0,1)$ for $\mathbb P$-almost every $\omega\in\Omega$,
$$
\mathcal L_\omega\Big( C_\omega(b,c,\nu)\Big)\subset \mathcal C_{\theta\omega}(\lambda_2 b,\lambda_2 c,\nu).
$$
\end{proposition}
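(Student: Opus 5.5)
The plan is to check the three cone conditions \textbf{(C1)}--\textbf{(C3)} for $\mathcal L_\omega\varphi$ at the fibre $\theta\omega$, each with improved constants $\lambda_2 b$, $\lambda_2 c$. Throughout, Lemma~\ref{lem:push_density} is the basic tool. For $\varphi\in C_\omega(b,c,\nu)$, $\gamma_{\theta\omega}\in\mathscr F^s_{\theta\omega}$ and $\rho_{\theta\omega}\in D_1(a,\kappa,\gamma_{\theta\omega})$ it gives
\[
\int_{\gamma_{\theta\omega}}\mathcal L_\omega\varphi\,\rho_{\theta\omega}=\sum_{i=1}^{Q}\int_{\gamma^{(i)}_\omega}\varphi\,\rho^{(i)}_\omega ,
\]
with $\rho^{(i)}_\omega\in D(\alpha_0 a,\kappa,\gamma^{(i)}_\omega)\subset D(a,\kappa,\gamma^{(i)}_\omega)$ and $Q\le N_*$.

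For \textbf{(C1)}, normalise each $\rho^{(i)}_\omega$. Each summand is then a positive multiple of $\int_{\gamma^{(i)}_\omega}\varphi\,\hat\rho^{(i)}_\omega>0$. Hence the sum is positive.

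For \textbf{(C2)}, take $\rho,\varsigma\in D_1(a,\kappa,\gamma_{\theta\omega})$. Write the ratio as a ratio of sums $\sum_i m_i(\rho)\int\varphi\hat\rho^{(i)}$, where $m_i(\rho)=\int_{\gamma^{(i)}}\rho^{(i)}$. Three bounds control it. First, on each piece, \textbf{(C2)} for $\varphi$ together with Lemma~\ref{lem:densityimprovment} gives a factor $e^{b\Lambda_1\Theta(\rho,\varsigma)}$. Second, the masses satisfy $m_i(\rho)/m_i(\varsigma)\le e^{\Theta^+}\le e^{\Theta^{a,\kappa}(\rho,\varsigma)}$. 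Here I use that the leaf Hilbert metric dominates the $D_+$ metric and that the normalisation is projective. Third, a mediant inequality combines the pieces. Altogether the ratio is at most $e^{(b\Lambda_1+1)\Theta}$, and $b\Lambda_1+1\le\lambda_2 b$ once $b$ is large, with $\lambda_2\in(\Lambda_1,1)$. One must be careful with the normalisation of $\hat\rho^{(i)}$: the projective invariance of $\Theta$ handles it.

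The main obstacle is \textbf{(C3)}. Take a nearby pair $(\tilde\gamma_{\theta\omega},\gamma_{\theta\omega})$ and $\rho\in D_1(a_1,\kappa_1,\gamma_{\theta\omega})$ with its transported $\tilde\rho$. Pulling back by $T_\omega^{-1}$ produces components $\tilde\gamma^{(i)}_\omega,\gamma^{(i)}_\omega$. Unstable holonomy commutes with the dynamics, so these can be paired into nearby pairs. By the last item of Proposition~\ref{prop:holderhol}, $d_u(\tilde\gamma^{(i)},\gamma^{(i)})\le e^{-\lambda}d_u(\tilde\gamma,\gamma)$. The edges, where the pieces do not match exactly, are handled as in \cite[Lemma 4.5]{liu2024exponential}: cut off a short segment of length $O(d_u^{\nu_0})$ and control it with the sup-bound via \textbf{(C1)}--\textbf{(C2)}.

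On matched pieces, the pulled-back density $\tilde\rho^{(i)}$ differs from the holonomy transport of $\rho^{(i)}$ by the factor
\[
e^{\phi_\omega\circ\mathrm{hol}-\phi_\omega}\,e^{\phi^{J_s}\circ\mathrm{hol}-\phi^{J_s}}\cdot \frac{\mathrm{Jac}\,\mathrm{hol}_{\theta\omega}\circ T_\omega}{\mathrm{Jac}\,\mathrm{hol}_\omega}.
\]
By Propositions~\ref{prop:holjac}, \ref{prop:holderjacs} and \ref{prop:holderhol}, this factor is $e^{O(d_u^{\min(\beta,\nu_0)})}$ in sup norm and is log-Hölder with small constant. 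Its mismatch is absorbed through \textbf{(C2)}: the Hilbert distance between the two densities is at most $C d_u^{\nu_0-\kappa}\le Cd_u^\nu$, using $\kappa+\nu<\nu_0$ and the exponent $\kappa_1$ for the $D(a_1,\kappa_1)$ regularity. This costs a factor $e^{bC d_u^\nu}$. Applying \textbf{(C3)} for $\varphi$ on each pair gives the factor $e^{c e^{-\lambda\nu}d_u^\nu}$. Summing with the mediant inequality yields the bound $e^{(ce^{-\lambda\nu}+C'b+C'')d_u^\nu}$. Choosing $c\gg b$ makes this at most $e^{\lambda_2 c\,d_u^\nu}$, with $\lambda_2=\max\{(1+\Lambda_1)/2,(1+e^{-\lambda\nu})/2\}$. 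The main effort goes into this density-comparison estimate and into the unmatched edge pieces. Since all constants are uniform in $\omega$, the conclusion holds for $\mathbb P$-a.e.\ $\omega$.
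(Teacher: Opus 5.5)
Your proposal follows the paper's proof in Appendix~A essentially step for step. \textbf{(C1)} comes from the branch decomposition of Lemma~\ref{lem:push_density}. \textbf{(C2)} uses the $\Lambda_1$-contraction plus the mass-ratio bound $e^{\Theta^+}\le e^{\Theta^{a,\kappa}}$ to get $e^{(b\Lambda_1+1)\Theta}$. \textbf{(C3)} uses holonomy-paired components whose $d_u$ contracts by $e^{-\lambda_0}$, absorbs the density mismatch through \textbf{(C2)} at cost $O(b\,d_u^\nu)$, and then takes $c\gg b$. The mismatch step is the paper's Claim~\ref{claim65}, which uses the same interpolation $\min\{d_u^{\kappa+\nu},d(x,y)^{\kappa+\nu}\}\le d_u^\nu d(x,y)^\kappa$.

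Your edge-cutting step is the only extra ingredient, and the paper does not need it. It pairs the components $\tilde\gamma^{(i)}_\omega,\gamma^{(i)}_\omega$ exactly: since unstable holonomy commutes with $T_\omega^{-1}$, the pieces of one preimage can be taken as holonomy images of the pieces of the other.
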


\begin{proposition}
    The operator $\mathcal{L}_\omega: (\mathbb V_\omega,\|\cdot\|_\omega^*) \to (\mathbb V_{\theta\omega},\|\cdot\|_{\theta\omega}^*)$ is bounded. The same holds true when replacing the norms $\|\cdot\|_\omega^*,\|\cdot\|_{\theta\omega}^*$ by   $\|\cdot\|_\omega, \|\cdot\|_{\theta\omega}$. 
\end{proposition}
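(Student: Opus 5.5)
We prove boundedness for the order norms $\|\cdot\|_\omega^*$ first, using only positivity of $\mathcal L_\omega$ on the cones and its action on the order unit. The equivalence of norms in Lemma~\ref{lem:normequiv} then gives the statement for $\|\cdot\|_\omega,\|\cdot\|_{\theta\omega}$ with an extra factor $9$.

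\textbf{Step 1: $\mathcal L_\omega$ is order-preserving.} Proposition~\ref{prop:cone_invariance_random} gives $\mathcal L_\omega(C_\omega(b,c,\nu))\subset C_{\theta\omega}(\lambda_2b,\lambda_2c,\nu)\subset C_{\theta\omega}(b,c,\nu)$. The last inclusion holds because, since $\lambda_2<1$, conditions \textbf{(C2)} and \textbf{(C3)} with constants $\lambda_2 b,\lambda_2 c$ imply the same conditions with $b,c$. Hence, for $f,g\in\mathrm{BM}(M)$ with $g-f\in C_\omega(b,c,\nu)\cup\{0\}$, linearity gives $\mathcal L_\omega g-\mathcal L_\omega f\in C_{\theta\omega}(b,c,\nu)\cup\{0\}$, i.e. $f\preceq_\omega g$ implies $\mathcal L_\omega f\preceq_{\theta\omega}\mathcal L_\omega g$.

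\textbf{Step 2: bound on the order unit.} Take $f\in\mathrm{BM}_\omega(M)$ and any $\lambda>0$ with $-\lambda\mathbbm 1\preceq_\omega f\preceq_\omega\lambda\mathbbm 1$. By Step 1,
$$-\lambda\mathcal L_\omega\mathbbm 1\preceq_{\theta\omega}\mathcal L_\omega f\preceq_{\theta\omega}\lambda\mathcal L_\omega\mathbbm 1.$$
By the remark after the definition of $\|\cdot\|^*$, this gives $\|\mathcal L_\omega f\|_{\theta\omega}^*\le\lambda\|\mathcal L_\omega\mathbbm 1\|_{\theta\omega}^*$. Taking the infimum over $\lambda$ yields $\|\mathcal L_\omega f\|^*_{\theta\omega}\le \|\mathcal L_\omega\mathbbm 1\|^*_{\theta\omega}\|f\|^*_\omega$.

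We then need $\|\mathcal L_\omega\mathbbm 1\|^*_{\theta\omega}<\infty$, ideally uniformly in $\omega$. Since $\mathcal L_\omega\mathbbm 1=e^{\phi_\omega}\circ T_\omega^{-1}$ lies in the cone, Lemma~\ref{lem:norm1} and Lemma~\ref{lem:normequiv} give
$$\|\mathcal L_\omega\mathbbm 1\|^*_{\theta\omega}\le 3\|\mathcal L_\omega\mathbbm 1\|_{\theta\omega}\le 9\|\mathcal L_\omega\mathbbm 1\|_{\theta\omega,+}.$$
For $\rho\in D_1(a,\kappa,\gamma)$ we have $\int_\gamma e^{\phi_\omega\circ T_\omega^{-1}}\rho\le e^{\|\phi\|_{L^\infty(\Omega;\mathcal C^\beta)}}$, which is uniform in $\omega$ almost surely.

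\textbf{Step 3: extension to the completion.} The estimate holds on the dense subspace $\mathrm{BM}_\omega(M)$. We must check that $\mathcal L_\omega$ is well defined on classes: if $\|f\|_\omega=0$ then $\|f\|^*_\omega=0$, so $\|\mathcal L_\omega f\|^*_{\theta\omega}=0$. It therefore extends uniquely by continuity to $\mathbb V_\omega\to\mathbb V_{\theta\omega}$ with the same operator bound.

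\textbf{Main obstacle.} The subtle point is Step 1 at the level of the closed cones $\mathcal C_\omega$ in the completion, as opposed to the concrete cones $C_\omega\subset\mathrm{BM}(M)$. The infimum defining $\|f\|^*_\omega$ involves $\lambda\mathbbm 1\pm f\in\mathcal C_\omega$, the closure, rather than $C_\omega$. To handle this, I would first use the density argument of Lemma~\ref{lem:archi}: for $f\in\mathrm{BM}_\omega(M)$, one can realise $\lambda\mathbbm 1\pm f$ with $\lambda=3\|f\|_\omega$ directly in $C_\omega$. This already yields the bound
$$\|\mathcal L_\omega f\|^*_{\theta\omega}\le 3\|\mathcal L_\omega\mathbbm 1\|^*_{\theta\omega}\|f\|_\omega$$
without passing to closures. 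That suffices for boundedness, and Step 3 then completes the argument.
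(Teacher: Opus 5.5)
Your proposal is correct and follows the paper's argument. As in the paper, positivity of $\mathcal L_\omega$ on the cones together with the order unit $\mathbbm 1$ gives $\|\mathcal L_\omega f\|^*_{\theta\omega}\le\|\mathcal L_\omega\mathbbm 1\|^*_{\theta\omega}\|f\|^*_\omega$, and Lemma~\ref{lem:normequiv} transfers this to $\|\cdot\|_\omega$. Your additional points fill in details the paper leaves implicit: finiteness of $\|\mathcal L_\omega\mathbbm 1\|^*_{\theta\omega}$ via Lemma~\ref{lem:norm1}, well-definedness on classes, and working with the concrete cone via Lemma~\ref{lem:archi}, so that you never need $\mathcal L_\omega$ to preserve the closed cone before continuity is known.
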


\begin{proof}
Since the operator $\mathcal L_\omega$ satisfies $ \mathcal L_\omega(\mathcal C_\omega(b,c,\nu)) \subset \mathcal C_{\theta\omega}(b,c,\nu) $ we have that if $\lambda \mathbbm 1 \preceq_\omega f  \preceq_\omega \lambda 1$, then  $$-\lambda \mathbbm 1\|\mathcal L_\omega \mathbbm 1\|_{\theta\omega}^*  \preceq_{\theta\omega} \lambda \mathcal L_\omega \mathbbm 1 \preceq_{\theta\omega}\mathcal L_\omega f \preceq_{\theta\omega} \lambda \mathcal L_\omega \mathbbm 1  \preceq_{\theta\omega} \lambda \| \mathcal L_\omega \mathbbm 1\|_{\theta\omega}^*.$$
Which implies that $\mathcal{L}_\omega: (\mathbb V_\omega,\|\cdot\|_\omega^*) \to (\mathbb V_{\theta\omega},\|\cdot\|_{\theta\omega}^*)$ from Lemma \ref{lem:normequiv} we obtain that $\mathcal{L}_\omega: (\mathbb V_\omega,\|\cdot\|_\omega) \to (\mathbb V_{\theta\omega},\|\cdot\|_{\theta\omega})$ is also bounded.

\end{proof}

The next proposition is also technical, so we postpone its proof to Appendix \ref{appendix:B}. The argument follows \cite[Lemma 4.6]{liu2024exponential} (see also \cite[Proposition 4.6]{viana1997stochastic} and \cite[Lemma 4.14]{liverani1995decay}).

\begin{lemma}\label{lem:finitediam}
   Let $\delta$ be as in Definition \ref{def:deltaandcover}, and let
$\e_u\in \left(0,\min\left\{\e_0,\frac{\delta}{4}\right\}\right)$.
Let $N:\Omega\to\mathbb N\cup \{\infty\}$ be the random variable given by Hypothesis \ref{hyp:h} for the pair $\left(\frac{\delta}{4},\e_u\right)$. Given $i\in \mathbb N$ define inductively
\begin{align}
 N_i(\omega) = \begin{cases}
       N(\omega),&\text{if }i=1\\
       N(\theta^{N_{i-1}(\omega)}\omega) + N_{i-1}(\omega),& \text{if }i\in\mathbb N\setminus\{1\}. 
   \end{cases}\label{eq:ni}   
\end{align}
   Then, for every $\omega \in \Omega$ and every $i\in\mathbb N$, it holds that $\mathcal L_{\omega}^{N_i(\omega)} C_{\omega}(b,c,\nu) \subset C_{\theta^{N_i(\omega)}\omega}(b, c,\nu)$. Moreover, there exist $n_0\in\mathbb N$, $K_4,K_5>0$ and  $D_2(\omega) = D_2(\lambda_2, a, b, c, N_{n_0}(\omega)) \in \mathbb{R}_{>0}$ such that
    \begin{equation*}
        \sup_{\varphi_1, \varphi_2 \in C_{\omega}(b,c,\nu)} \Theta_{\theta^{N_{n_0}(\omega)}\omega}^{b,c,\nu}(\mathcal L_{\omega}^{N_{n_0}(\omega)} \varphi_1, \mathcal L_{\omega}^{N_{n_0}(\omega)} \varphi_2) \le D_2(\omega) < \infty,
    \end{equation*}
where  $D_2(\omega) =K_4 + 2\log D_1(\omega),$ and
$$D_1(\omega) = K_5   \left(\frac{e^{2\|\phi\|_{L^\infty(\Omega,M)
}}}{\inf_{(\omega,x) \in \Omega \times M} \mathrm{m}(D_x T_\omega | _{E^s(\omega,x)}) }\right)^{N_{n_0}(\omega)}.$$
where $\mathrm{m}(A) := \inf_{|v|=1}\|Av\|$ denotes the co-norm of a linear map. Moreover,
$$\frac{\|\mathcal L_{\omega}^{N_{n_0}(\omega)}\varphi \|_{\theta^{N_{n_0}(\omega)}\omega,+}}{\|\mathcal L_{\omega}^{N_{n_0}(\omega)}\varphi\|_{\theta^{N_{n_0}(\omega)}\omega,-}} \leq D_1(\omega) \ \text{for every }\varphi \in  \mathcal C_{\omega}.   $$

\end{lemma}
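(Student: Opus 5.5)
The invariance part follows by iterating Proposition~\ref{prop:cone_invariance_random}. Since $\lambda_2<1$, the cone $\mathcal C_{\theta\omega}(\lambda_2 b,\lambda_2 c,\nu)$ is contained in $\mathcal C_{\theta\omega}(b,c,\nu)$. Hence $\mathcal L_\omega C_\omega(b,c,\nu)\subset C_{\theta\omega}(b,c,\nu)$ for $\mathbb P$-a.e.\ $\omega$. Composing $n$ times gives $\mathcal L^n_\omega C_\omega(b,c,\nu)\subset C_{\theta^n\omega}(\lambda_2 b,\lambda_2 c,\nu)$ for every deterministic $n\ge1$. Since $N_i(\omega)$ is just some finite integer for each fixed $\omega$ (with $\omega$ in the full-measure set where all the stopping times are finite), we can specialise to $n=N_i(\omega)$.

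For the finite-diameter statement, I would follow \cite[Lemma 4.6]{liu2024exponential} and \cite[Proposition 4.6]{viana1997stochastic}. Write $n=N_{n_0}(\omega)$ and $\omega'=\theta^n\omega$. The goal is to show that every $\psi=\mathcal L^n_\omega\varphi$ with $\varphi\in C_\omega(b,c,\nu)$ is comparable to a constant in the Hilbert metric. Concretely, I aim for
\[
\Theta^{b,c,\nu}_{\omega'}(\psi,\mathbbm 1)\le K_4/2+\log D_1(\omega),
\]
and then the triangle inequality gives $D_2$.

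The standard characterization of $\Theta$ (the analogue of \cite[Lemma 4.4]{liu2024exponential}) expresses $\alpha$ and $\beta$ through ratios of three kinds:
\begin{itemize}
\item leafwise averages $\int_\gamma\psi\rho$;
\item the (C2) expressions $e^{b\Theta}\int\psi\varsigma-\int\psi\rho$;
\item the (C3) expressions $e^{cd_u^\nu}\int\psi\rho-\int\psi\tilde\rho$.
\end{itemize}
Because $\psi$ lies in the strictly smaller cone with parameters $\lambda_2 b,\lambda_2 c$, the (C2) and (C3) numerators are bounded below by $(1-e^{-(1-\lambda_2)b\Theta})e^{b\Theta}\int\psi\varsigma$, and similarly for (C3). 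Hence every ratio is bounded above and below by constants depending only on $\lambda_2,b,c$, times the ratio $\sup_{\gamma,\rho}\int_\gamma\psi\rho\,/\,\inf_{\gamma,\rho}\int_\gamma\psi\rho$. This yields $K_4$ and reduces everything to the final claim $\|\psi\|_{\omega',+}\le D_1(\omega)\|\psi\|_{\omega',-}$.

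The main obstacle is this last comparison between leaves, and this is where (H2) enters. By Lemma~\ref{lem:push_density}, for a leaf $\gamma\in\mathscr F^s_{\omega'}$ we have $\int_\gamma\psi\rho=\sum_i\int_{\gamma^{(i)}}\varphi\rho^{(i)}$. The pieces $\gamma^{(i)}\subset T^{-n}_\omega\gamma$ are admissible stable leaves in the fibre of $\omega$, and the $\rho^{(i)}$ are admissible densities.

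\emph{Upper bound.} The total mass $\sum_i\int\rho^{(i)}$ is at most $(e^{\|\phi\|_\infty}/\inf \mathrm m(DT|_{E^s}))^{n}$ times a constant, by the change of variables along $E^s$. Together with (C2), this bounds $\int_\gamma\psi\rho$ by that quantity times $\sup_{\gamma'}\int_{\gamma'}\varphi\,\rho'$.

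\emph{Lower bound.} I use the mixing time. After $N_{n_0}$ steps, the preimage $T_\omega^{-n}\gamma$ is a long stable curve. Dually to (H2), for each of the finitely many $\delta/4$-balls $B(\delta/4,x_j)$ of Definition~\ref{def:deltaandcover}, some piece $\gamma^{(i)}$ forms a nearby pair with a reference leaf through that ball. I would establish this by applying (H2) to unstable discs of length $\varepsilon_u$ and using local product structure. If this is awkward, I would instead choose $n_0$ so that the concatenation of $n_0$ mixing windows produces the required density.

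Then (C3) transfers the integral from any leaf to this nearby piece at cost $e^{c\delta^\nu}$, and (C2) fixes the density at cost $e^{b\cdot\mathrm{const}}$. The resulting lower bound is $\int_\gamma\psi\rho\ge K^{-1}e^{-n\|\phi\|_\infty}\inf_\rho\sup_{\gamma'}\int_{\gamma'}\varphi\rho$, which again uses the Jacobian bound $J_s\ge C_{10}^{-1}$ and requires $n=N_{n_0}(\omega)$.

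Taking the ratio of the two bounds gives $D_1(\omega)=K_5(\cdots)^{N_{n_0}(\omega)}$, and then $D_2=K_4+2\log D_1$.
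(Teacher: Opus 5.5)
Your route is the paper's (Appendix~\ref{appendix:B}). Strict invariance with $\lambda_2<1$ reduces the projective diameter to $2\log\frac{1+\lambda_2}{1-\lambda_2}$ plus the logarithms of the $\sup/\inf$ ratios of leaf averages; your comparison with $\mathbbm 1$ plus the triangle inequality is equivalent to the paper's check that $\psi_2-t\psi_1$ stays in the cone. The sup is then controlled by the total branch mass. The inf is controlled by a single branch of $(T^n_\omega)^{-1}\gamma$ which (H2) and a bracket make nearby a leaf $\gamma^*$ of near-maximal average, followed by (C2), (C3), (C2). Bypassing the paper's Step~3 is fine if you compare throughout with $\sup_{\gamma'}\int_{\gamma'}\varphi\,k_{\gamma'}$, where $k_{\gamma'}$ is the normalised constant density. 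This works because pulled-back densities lie in $D_1(\alpha_0 a,\kappa,\cdot)$, hence at bounded $\Theta^{a,\kappa}$-distance from constants.

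The gap is in the Jacobian bookkeeping, where two errors happen to cancel.

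\emph{Upper bound.} No Jacobian appears here at all: the stable Jacobian cancels in the change of variables, so $\sum_i\int_{\gamma^{(i)}}\bar\rho^{(i)}=\int_\gamma\mathcal L^n_\omega\mathbbm 1\,\rho\le e^{n\|\phi\|_\infty}$. Your extra factor $\underline m_s^{-n}$, with $\underline m_s:=\inf\mathrm m(D_xT_\omega|_{E^s(\omega,x)})<1$, is superfluous though harmless.

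\emph{Lower bound.} Here you keep only one branch $\widetilde\gamma$, whose weight is $\int_{\widetilde\gamma}e^{S_n\phi_\omega}J_s^{n}\,\rho\circ T^n_\omega\,\d m$. By (H1), $J^n_s\le e^{-\lambda_0 n}$, so this weight is exponentially small in $n$ even for $\phi\equiv0$. A single branch therefore cannot give $\int_\gamma\psi\rho\ge K^{-1}e^{-n\|\phi\|_\infty}(\cdots)$ with $K$ independent of $n$; the per-step bound $J_s\ge C_{10}^{-1}$ you cite produces $C_{10}^{-n}$, not a constant. The correct estimate, as in the paper, combines $m(T^n_\omega\widetilde\gamma)\ge\underline m_s^{\,n}\,m(\widetilde\gamma)$, $m(\widetilde\gamma)\ge A(\varepsilon)/(4J^2)$ and $\rho\ge e^{-a\varepsilon^\kappa}/m(\gamma)$. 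This gives a branch mass at least $e^{-n\|\phi\|_\infty}e^{-a\varepsilon^\kappa}\underline m_s^{\,n}/(4J^2)$. Once the factor $\underline m_s^{\,n}$ is moved from the upper to the lower bound, the ratio is again $K_5\,(e^{2\|\phi\|_\infty}/\underline m_s)^{n}$.

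Your fallback of choosing $n_0$ to ``concatenate mixing windows'' is unnecessary, since every $N_{n_0}(\omega)$ is already a time at which images of $\varepsilon_u$-discs are $\delta/4$-dense. The role of $n_0$ is geometric, and $n=N_{n_0}(\omega)\ge n_0$ delivers two things:
\begin{itemize}
\item $e^{-\lambda_0 n}\varepsilon<\delta/2$, which keeps the pulled-back bracket point within $3\delta/4$ of the centre $x^*$ of $\gamma^*$. This is what produces the nearby pair.
\item $e^{\lambda_0 n}\varepsilon^*\ge 24\varepsilon$, which leaves room in $(T^n_\omega)^{-1}\gamma$ for an admissible sub-leaf around that point. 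You take that sub-leaf as one piece in Lemma~\ref{lem:push_density}.
\end{itemize}
Applying (H2) directly to $W^u_{\varepsilon_u}(\omega,x^*)$ also avoids your detour through reference leaves of a finite cover. That detour would cost an extra (C3) step and a check that $\gamma^*$ and the reference leaf form a nearby pair.
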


\subsection{Quenched spectral decomposition via projective cones}

The purpose of this subsection is to obtain a quenched spectral
decomposition for the random Perron--Frobenius cocycle
$$
\mathcal L_\omega:\mathbb V_\omega\to \mathbb V_{\theta\omega}.
$$
This decomposition will later provide the spectral framework for constructing
the random equilibrium measures in Section \ref{sec:eqstate} and for proving
the quenched decay estimates in Theorem \ref{thm:decay}.

The first step is to prove Theorem \ref{thm:spectralgap}. This theorem gives
a family of positive vectors $\mu_\omega\in\mathbb V_\omega$, dual
functionals $\ell_\omega\in\mathbb V_\omega^*$, random eigenvalues
$\lambda_\omega$, and a rank-one decomposition with a controlled remainder.
Before proving the theorem, we establish some technical lemmas needed to
locate the good return blocks where the projective-cone contraction can be
applied.

\begin{lemma} \label{lem:asfinite}
Let $N:\Omega\to \mathbb N\cup\{\infty\}$
be the stopping time from Hypothesis \ref{hyp:h}. If there exists $B\in\mathbb N$ such that $
\mathbb P[N\leq B]>0,$
then $\mathbb P[N<\infty]=1.$
\end{lemma}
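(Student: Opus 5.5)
The plan is to prove first that $N(\omega)<\infty$ whenever the orbit of $\omega$ visits the set $G:=\{N\le B\}$, and then to show that almost every orbit visits $G$.

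\emph{Monotonicity step.} I will show that for every $\omega$ and every $r\ge 0$,
\begin{equation}
N(\omega)\le r+N(\theta^r\omega).
\label{eq:monotoneN}
\end{equation}
Fix $x\in M$ and put $y:=T^r_\omega(x)$. By the definition of local unstable manifolds and the backward contraction in \textbf{(H1)}, applied iteratively with the adapted norm of Remark~\ref{rem:adaptednorm}, the set $T_\omega^r(W^u_\varepsilon(\omega,x))$ contains $W^u_\varepsilon(\theta^r\omega,y)$. Indeed, if $z\in W^u_\varepsilon(\theta^r\omega,y)$, then the backward iterates of $z$ and $y$ stay $\varepsilon$-close. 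Following them back $r$ further steps keeps them within $e^{-\lambda_0 k}\varepsilon\le\varepsilon$, using uniform contraction along unstable discs for $\varepsilon$ small. Hence $T^{-r}_{\theta^r\omega}z\in W^u_\varepsilon(\omega,x)$. It follows that
$$T^{r+n}_\omega(W^u_\varepsilon(\omega,x))\supset T^n_{\theta^r\omega}(W^u_\varepsilon(\theta^r\omega,y)).$$
With $n=N(\theta^r\omega)$, the right-hand side is $\delta$-dense in $M$. Since $x$ is arbitrary, this gives \eqref{eq:monotoneN}.

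\emph{Recurrence step.} By assumption $\mathbb P(G)>0$. The base $\theta$ is $\mathbb P$-ergodic and invertible, so Birkhoff's theorem (or Poincaré recurrence combined with ergodicity) shows that the set $\bigcup_{r\ge0}\theta^{-r}G$ is $\theta$-invariant up to null sets and has positive measure. Ergodicity then gives it full measure. Thus for $\mathbb P$-a.e.\ $\omega$ there is $r=r(\omega)\ge 0$ with $\theta^r\omega\in G$. For this $r$, \eqref{eq:monotoneN} yields $N(\omega)\le r(\omega)+B<\infty$, which is the claim.

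\emph{Main obstacle.} The delicate point is the inclusion $T_\omega^r(W^u_\varepsilon(\omega,x))\supset W^u_\varepsilon(\theta^r\omega,T^r_\omega x)$. This requires backward iterates of nearby points on an unstable disc to contract, and not merely the infinitesimal estimate on $E^u$. I would obtain it from the local unstable manifold theorem (Proposition~\ref{prop:localmanifolds}) by integrating the estimate $\|DT^{-1}v\|\le e^{-\lambda_0}\|v\|$ along curves inside the disc, which is possible since its tangent spaces lie in $E^u$. If only an inclusion up to a shrinking of the scale holds, a harmless fallback is to use density of the image of a smaller disc. In that case I would compare with $N$ defined for a smaller $\varepsilon$, which is allowed because \textbf{(H2)} holds for every sufficiently small $\varepsilon$.
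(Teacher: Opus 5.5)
Your proof is correct and follows the paper's argument: you prove $N(\omega)\le r+N(\theta^r\omega)$ because $T^r_\omega$ maps local unstable discs over local unstable discs, and then you use recurrence to reach $\{N\le B\}$; your appeal to ergodicity here is in fact the right justification, since Poincar\'e recurrence alone only controls points already in that set. The ``main obstacle'' you flag is not one: under Definition~\ref{def:localmanifolds}, $z\in W^u_\varepsilon(\theta^r\omega,y)$ already keeps all backward iterates within $\varepsilon$, in particular those at times $\ge r$, so $T^{-r}_{\theta^r\omega}z\in W^u_\varepsilon(\omega,x)$ follows from the definition alone, with no contraction estimate or change of scale needed.
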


\begin{proof}
Set $A:=\{\omega\in\Omega:N(\omega)\leq B\}.$ By assumption, $\mathbb P[A]>0$. We first show that
\begin{align}
    N(\omega)\le j+N(\theta^j\omega)<\infty\ \text{if }\theta^j\omega\in A.\label{eq:n}
\end{align}
Indeed, let $\gamma$ be any local unstable manifold of length $\varepsilon$ in the fibre over $\omega$ (see Definition \ref{def:localmanifolds}).
By the expansion of unstable cones, $T_\omega^j(\gamma)$ a local unstable manifold
fibre over $\theta^j\omega$, of length at least
$\varepsilon$. Hence $T_\omega^j(\gamma)$ contains a local unstable manifold $\widehat\gamma$ of
length $\varepsilon$. Since $\theta^j\omega\in A$, we have $N(\theta^j\omega)\leq B$, and therefore $T_{\theta^j\omega}^{N(\theta^j\omega)}(\widehat\gamma)$ is $\delta$-dense in $M$. As $
T_{\theta^j\omega}^{N(\theta^j\omega)}(\widehat\gamma)
\subset T_\omega^{j+N(\theta^j\omega)}(\gamma),$
it follows that $T_\omega^{j+N(\theta^j\omega)}(\gamma)$ is $\delta$-dense in $M$. Since this
holds for every local unstable manifold $\gamma$ of length $\varepsilon$, we obtain $N(\omega)\le j+N(\theta^j\omega)<\infty.$

The desired result therefore follows by combining \eqref{eq:n} with the Poincaré recurrence theorem. 
\end{proof}

\begin{lemma} \label{lem:Ni}
Let $i\in\mathbb N$, and recall the definition of $N_i$ from Lemma \ref{lem:finitediam}. Then:
\begin{itemize}
    \item[(a)] If there exists $B>0$ such that $\mathbb P[N\leq B]>0$, then there exists $B_i>0$ such that $\mathbb P[N_i\leq B_i]>0$.
    
    \item[(b)] Assume that there exists $c_0,K_0,\kappa_0>0$ such that
$$
\mathbb P\left[ N_n  \ge c_0 n\right]
\le K_0 e^{-\kappa_0 n}\ \text{for any }n\in\mathbb N.
$$
Given $i\in\mathbb N$, define inductively $N^{(i)}_k(\omega) =N^{(i)}_{k-1}(\omega) + N_{i}(\theta^{N^{(i)}_{k-1}(\omega)} \omega)$ and $N_1^{(i)} = N_{i}$. Then, there exist $c_i, K_i,\kappa_i>0$
such that
$$
\mathbb P\left[N^{(i)}_n \geq c_i n\right]
\leq K_i e^{-\kappa_i n}\ \text{for any }n\in\mathbb N.
$$

\end{itemize}

\end{lemma}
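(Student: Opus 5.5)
Both parts follow from one algebraic observation: the iterated stopping times concatenate. Writing $s_k:=N_{k}(\omega)$, the definition \eqref{eq:ni} gives $N_{k+j}(\omega)=N_k(\omega)+N_j(\theta^{N_k(\omega)}\omega)$ for all $k,j\ge1$. This follows by induction on $j$: the case $j=1$ is the definition, and the inductive step unwinds the recursion. Applying it with $k=N^{(i)}$-blocks shows that $N^{(i)}_n=N_{in}$. Indeed, $N^{(i)}_1=N_i$, and if $N^{(i)}_{n-1}=N_{i(n-1)}$, then
$$N^{(i)}_n=N_{i(n-1)}+N_i\bigl(\theta^{N_{i(n-1)}(\omega)}\omega\bigr)=N_{in}.$$
We will also use Lemma \ref{lem:asfinite}, which makes all these quantities finite $\mathbb P$-a.s., so the shifts are well defined.

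For (a), I would argue by induction on $i$ using the concatenation identity. First consider the set $A_i=\{N_i\le B_i\}$, which has positive measure by hypothesis (for $i=1$, take $B_1=B$). The difficulty is that $N_{i+1}=N_i+N\circ\theta^{N_i}$, and the shift $N_i(\omega)$ takes only finitely many values $j\le B_i$ on $A_i$. Hence $A_i=\bigcup_{j\le B_i}(A_i\cap\{N_i=j\})$, and some piece $E_j=\{N_i=j\}$ has positive measure. On $E_j$ we need $N(\theta^j\omega)\le B'$ with positive probability, i.e.\ $\mathbb P[E_j\cap\theta^{-j}\{N\le B'\}]>0$ for some $B'$. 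Since $N<\infty$ a.s., we have $\theta^{-j}\{N\le B'\}\uparrow$ a full-measure set as $B'\to\infty$. By continuity of measure there is therefore a $B'$ with $\mathbb P[E_j\cap\theta^{-j}\{N\le B'\}]>0$, and we may take $B_{i+1}=j+B'\le B_i+B'$. This argument uses only $\mathbb P[N<\infty]=1$, not ergodicity beyond Lemma \ref{lem:asfinite}.

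For (b), the identity $N^{(i)}_n=N_{in}$ does everything. We get
$$\mathbb P[N^{(i)}_n\ge c_0 i n]=\mathbb P[N_{in}\ge c_0(in)]\le K_0e^{-\kappa_0 i n},$$
so one may take $c_i=c_0 i$, $K_i=K_0$ and $\kappa_i=\kappa_0 i$.

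The main (mild) obstacle is verifying the concatenation identity carefully from the recursive definition \eqref{eq:ni}, together with the a.s.\ finiteness needed to make sense of $\theta^{N_k(\omega)}\omega$. The rest is the continuity-of-measure argument in (a).
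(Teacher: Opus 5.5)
Your proposal is correct and follows essentially the same route as the paper. For (a) you pick a positive-measure level set $\{N_i=j\}$ and use $\mathbb P[N<\infty]=1$, together with $\theta$-invariance, to find a bounded value of $N\circ\theta^j$ on a positive-measure subset; you do this via continuity of measure, where the paper picks an exact value $m_1$. For (b) you use $N^{(i)}_n=N_{in}$, which you verify through the concatenation identity more carefully than the paper, which simply asserts it.
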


\begin{proof}

We show $(a)$. Since  $\mathbb P[N\leq B] > 0$, from Lemma \ref{lem:asfinite} it follows that $\mathbb P[N <\infty] = 1$.  Let $i\in\mathbb N$, since $N_i(\omega)=N\left(\theta^{N_{i-1}(\omega)}\omega\right)+N_{i-1}(\omega).$ We prove the result by induction on $i$. For $i=1$ there is nothing to be done. Assume that there exists $m_{i-1}$ such that
$$ \mathbb P[N_{i-1}(\omega) = m_{i-1}] >0.$$
Since, $\theta$ is measurable, the map
$$N\circ \theta^{m_{i-1}}: \omega\in \{\omega\in \Omega: N_{i-1}(\omega)= m_{i-1} \} \mapsto N(\theta^{m_{i-1}}\omega)\in  \mathbb N \cup \{\infty\},$$
is measurable. Since, $\mathbb P[N<\infty] =1$ we have that there exists $m_1\in \mathbb N$ such that $$\mathbb P[\omega\in \Omega: N_{i-1}(\omega)= m_{i-1} \ \text{and }N(\theta^{m_{i-1}} \omega) = m_1 ]>0$$ 
In this way $\mathbb P[N_i = m_{i-1}+m_1]>0$, defining $B_i = m_{i-1}+m_1$ the proof is completed.

Item $(b)$ follows directly from the observation $N_k^{(i)} = N_{k i} $ for each $i,k\in\mathbb N$.

\end{proof}

\begin{lemma}\label{lem:good-blocks}
Assume Hypothesis \ref{hyp:h}. Let $n_0\in\mathbb N$ be as in Lemma
\ref{lem:finitediam}. Then there exist $B\in\mathbb N$, a measurable set
$A\subset\Omega$ with $\mathbb P(A)>0$, and a constant $D_2>0$ such that
$$A:=\{\omega\in\Omega:N_{n_0}(\omega)\le B\},$$
and, for every $\omega\in A$ and every $m\ge B$,
$$\mathcal L_\omega^m\bigl( \mathcal C_\omega(b,c,\nu))
\subset
\mathcal C_{\theta^m\omega}(b,c,\nu),$$
and
$$\sup_{\varphi_1,\varphi_2\in\mathcal C_\omega(b,c,\nu)}
\Theta_{\theta^m\omega}^{b,c,\nu}
 \left(
\mathcal L_\omega^m\varphi_1,
\mathcal L_\omega^m\varphi_2
\right)
\le D_2.$$
\end{lemma}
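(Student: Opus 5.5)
The proof combines three earlier results: the positivity of $A$ from Lemma \ref{lem:Ni}(a), the strict contraction of Proposition \ref{prop:cone_invariance_random} applied at every step, and the finite-diameter bound of Lemma \ref{lem:finitediam}, which is made uniform on $A$.

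\textbf{Choice of $B$ and $A$.} By Hypothesis \ref{hyp:h} and Lemma \ref{lem:asfinite}, $\mathbb P[N<\infty]=1$. Lemma \ref{lem:Ni}(a), applied with $i=n_0$, then gives some $B_{n_0}$ with $\mathbb P[N_{n_0}\le B_{n_0}]>0$. I set $B:=B_{n_0}$ and $A:=\{N_{n_0}\le B\}$. This set is measurable because $N$, and hence each $N_i$, is measurable, so $\mathbb P(A)>0$.

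\textbf{Uniform diameter at time $N_{n_0}(\omega)$.} For $\omega\in A$, Lemma \ref{lem:finitediam} bounds the diameter of $\mathcal L_\omega^{N_{n_0}(\omega)}\mathcal C_\omega$ in $\mathcal C_{\theta^{N_{n_0}(\omega)}\omega}$ by $D_2(\omega)=K_4+2\log D_1(\omega)$. Here $D_1(\omega)=K_5 Q^{N_{n_0}(\omega)}$ with
\[
Q:=\frac{e^{2\|\phi\|_{L^\infty}}}{\inf \mathrm m(DT|_{E^s})}\ge 1 .
\]
Since $N_{n_0}(\omega)\le B$ on $A$, we get $D_2(\omega)\le K_4+2\log K_5+2B\log Q=:D_2$, which is a deterministic constant.

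\textbf{Invariance for all $m\ge B$.} For any $\omega$ and any $k$, Proposition \ref{prop:cone_invariance_random} gives
\[
\mathcal L_{\theta^k\omega}\,\mathcal C_{\theta^k\omega}(b,c,\nu)\subset\mathcal C_{\theta^{k+1}\omega}(\lambda_2b,\lambda_2c,\nu)\subset \mathcal C_{\theta^{k+1}\omega}(b,c,\nu),
\]
using that the cones are monotone in $b$ and $c$ (as is clear from (C2) and (C3)). Composing these inclusions gives $\mathcal L_\omega^m\mathcal C_\omega\subset\mathcal C_{\theta^m\omega}$ for every $m$, and in particular for every $m\ge B$.

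\textbf{Diameter for $m\ge B$.} For $\omega\in A$ and $m\ge B\ge N_{n_0}(\omega)$, I write
\[
\mathcal L_\omega^m=\mathcal L_{\theta^{N_{n_0}(\omega)}\omega}^{\,m-N_{n_0}(\omega)}\circ\mathcal L_\omega^{N_{n_0}(\omega)} .
\]
The second factor maps $\mathcal C_\omega$ into a set of $\Theta$-diameter at most $D_2$. The first factor maps $\mathcal C_{\theta^{N_{n_0}(\omega)}\omega}$ into $\mathcal C_{\theta^m\omega}$. By Theorem \ref{thm:birkhoff}, or simply because a positive linear map between cones does not increase Hilbert distances, the first factor is $\Theta$-nonexpanding. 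Hence
\[
\Theta_{\theta^m\omega}^{b,c,\nu}\bigl(\mathcal L^m_\omega\varphi_1,\mathcal L^m_\omega\varphi_2\bigr)\le D_2 .
\]

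\textbf{Main obstacle.} The delicate point is the well-definedness of the Hilbert metric on the cones $\mathcal C_\omega$, which are closures in $\mathbb V_\omega$. One must check that the nonexpansion passes to these closures. This follows because $\mathcal L$ is bounded (the preceding proposition) and therefore preserves the closed order, so the order-preserving argument still applies. One must also check that the constants $K_4,K_5$ in Lemma \ref{lem:finitediam} are indeed independent of $\omega$, as stated there.
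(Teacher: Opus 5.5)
Your proof is correct and follows the paper's argument essentially step by step. You choose $B$ via Lemma~\ref{lem:Ni}(a) with $i=n_0$, bound $D_2(\omega)$ uniformly on $A$ because it depends on $\omega$ only through $N_{n_0}(\omega)\le B$, and split $\mathcal L_\omega^m=\mathcal L^{m-N_{n_0}(\omega)}_{\theta^{N_{n_0}(\omega)}\omega}\circ\mathcal L_\omega^{N_{n_0}(\omega)}$, handling the tail with cone invariance (Proposition~\ref{prop:cone_invariance_random}) and non-expansion of the Hilbert metric; your explicit $D_2=K_4+2\log K_5+2B\log Q$ merely makes precise the paper's remark that $D_2(\omega)$ is uniformly bounded on $A$.
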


\begin{proof}

From Hypothesis \ref{hyp:h} and Lemma \ref{lem:Ni} (a), applied with
$i=n_0$, there exists $B\in\mathbb N$ such that $
A:=\{\omega\in\Omega:N_{n_0}(\omega)\le B\}$
satisfies $\mathbb P(A)>0$.

For $\omega\in A$, Lemma \ref{lem:finitediam} gives
$$
\mathcal L_\omega^{N_{n_0}(\omega)}
\bigl(\mathcal C_\omega(b,c,\nu)\bigr)
\subset
\mathcal C_{\theta^{N_{n_0}(\omega)}\omega}(b,c,\nu)
$$
and a finite projective-diameter bound
$$
\sup_{\varphi_1,\varphi_2\in\mathcal C_\omega(b,c,\nu)}
\Theta_{\theta^{N_{n_0}(\omega)}\omega}^{b,c,\nu}
\bigl(
\mathcal L_\omega^{N_{n_0}(\omega)}\varphi_1,
\mathcal L_\omega^{N_{n_0}(\omega)}\varphi_2
\bigr)
\le D_2(\omega).
$$
Moreover, $D_2(\omega)$ depends on $\omega$ only through
$N_{n_0}(\omega)$. Since $N_{n_0}(\omega)\le B$ on $A$, this gives a uniform bound $D_2>0$ such that $
D_2(\omega)\le D_2$ for all $\omega\in A.$

Now let $\omega\in A$ and $m\ge B$. Write
$$
m=N_{n_0}(\omega)+q
\ \text{with }q\ge0.
$$
By Proposition \ref{prop:cone_invariance_random}, the remaining iterate $\mathcal L_{\theta^{N_{n_0}(\omega)}\omega}^{q}$ preserves the cone.  In this way, $
\mathcal L_\omega^m
\bigl(\mathcal C_\omega(b,c,\nu)\bigr)
\subset
\mathcal C_{\theta^m\omega}(b,c,\nu).$
Since cone-preserving linear maps do not increase the Hilbert projective
metric, the same finite-diameter estimate remains valid after applying
the final $q$ iterates. Hence
$$
\sup_{\varphi_1,\varphi_2\in\mathcal C_\omega(b,c,\nu)}
\Theta_{\theta^m\omega}^{b,c,\nu}
\bigl(
\mathcal L_\omega^m\varphi_1,
\mathcal L_\omega^m\varphi_2
\bigr)
\le D_2.
$$
\end{proof}

\begin{theorem}\label{thm:spectralgap}
Assume Hypothesis \ref{hyp:h}. Let $A$, $B$, and $D_2$ be given by Lemma
\ref{lem:good-blocks}, and set $\chi:=\tanh(D_2/4)\in(0,1).$ Then there exists a $\theta$-invariant set $\Omega_0\subset\Omega$, with
$\mathbb P(\Omega_0)=1$, such that for every $\omega\in\Omega_0$ there exist $\mu_\omega\in\mathbb V_\omega$, 
$\ell_\omega\in\mathbb V_\omega^*$, and $\lambda_\omega>0,$
satisfying
$$\mathcal L_\omega\mu_\omega=\lambda_\omega\mu_{\theta\omega},
\ 
\mathcal L_\omega^*\ell_{\theta\omega}=\lambda_\omega\ell_\omega,\ \text{and }
\|\mu_\omega\|_\omega=1,
\ 
\ell_\omega(\mu_\omega)=1.$$
Moreover, if $\{\tau_i(\omega)\}_{i\ge1}$ is any increasing sequence such that
\begin{align}
   \tau_0(\omega)=0,\ 
\tau_i(\omega)-\tau_{i-1}(\omega)\ge B,
\ 
\theta^{\tau_i(\omega)}\omega\in A,\ \text{and } r_n(\omega):=\#\{i\ge1:1\le \tau_i(\omega)\le n\}, \label{eq:tauser}
\end{align}
then, for every $n\ge1$ and every $f\in\mathbb V_\omega$,
$$\mathcal L_\omega^n f
=
\lambda_\omega^{(n)}\ell_\omega(f)\mu_{\theta^n\omega}
+
Q_\omega^n f,$$
where
$$\lambda_\omega^{(n)}
:=
\prod_{j=0}^{n-1}\lambda_{\theta^j\omega}, \ 
Q_\omega^n\mu_\omega=0, \ \ell_{\theta^n\omega}\circ Q_\omega^n=0,$$
and
\[
\|Q_\omega^n f\|_{\theta^n\omega}
\le
K_0
\|\ell_\omega\|_{\mathbb V_\omega^*}
\|f\|_\omega
\lambda_\omega^{(n)}
\chi^{\max\{r_n(\omega)-1,0\}}.
\]
\end{theorem}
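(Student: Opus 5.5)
We follow the standard Birkhoff-cone route for random cocycles, working with the order norm $\|\cdot\|^*$, which by Lemma \ref{lem:normequiv} is equivalent to $\|\cdot\|$ with constant $3$. By Lemma \ref{lem:asfinite}, Lemma \ref{lem:Ni}(a) and Poincaré recurrence plus ergodicity, there is a $\theta$-invariant full-measure set $\Omega_0$ on which every orbit visits $A$ infinitely often in the future and in the past. On $\Omega_0$ we can therefore always choose return sequences as in \eqref{eq:tauser}, including backward ones, with gaps at least $B$.

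\textbf{Step 1: construct $\mu_\omega$ by pulling back.} Fix $\omega\in\Omega_0$ and backward times $-s_k\to-\infty$ with $\theta^{-s_k}\omega\in A$ and consecutive gaps at least $B$. Set
$$h_k:=\mathcal L^{s_k}_{\theta^{-s_k}\omega}\mathbbm 1\in\mathcal C_\omega(b,c,\nu),$$
which lies in the cone by Proposition \ref{prop:cone_invariance_random}. Decompose $\mathcal L^{s_{k'}}_{\theta^{-s_{k'}}\omega}$ into the blocks between successive return times. Each block starts in $A$ and has length at least $B$, so by Lemma \ref{lem:good-blocks} it has projective diameter at most $D_2$. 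Theorem \ref{thm:birkhoff} then gives, for $k'>k$,
$$\Theta_\omega(h_k,h_{k'})\le D_2\chi^{k-1}.$$
Normalise by $\|h_k\|^*_\omega$ and apply Lemma \ref{lem:lsv} with $\rho=\|\cdot\|^*$. This shows the normalised $h_k$ form a Cauchy sequence in $\mathbb V_\omega$. Since the cone is closed, the limit $\tilde\mu_\omega$ lies in the cone; we rescale it to get $\|\mu_\omega\|_\omega=1$.

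The same contraction argument shows the limit is independent of the chosen sequence. Indeed, any two admissible sequences can be merged into a common refinement with gaps at least $B$; if two admissible times are too close, we keep only one of them. Moreover $\mathcal L_\omega\mu_\omega$ is projectively the limit obtained at $\theta\omega$. Hence $\mathcal L_\omega\mu_\omega=\lambda_\omega\mu_{\theta\omega}$, where we define $\lambda_\omega:=\|\mathcal L_\omega\mu_\omega\|_{\theta\omega}>0$. Positivity follows from (C1).

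\textbf{Step 2: construct $\ell_\omega$ by pushing forward.} For $f$ in the cone, let $\tau_i$ be forward returns and consider the ratio
$$\frac{\mathcal L^{\tau_i}_\omega f}{\lambda^{(\tau_i)}_\omega\,\mu_{\theta^{\tau_i}\omega}}.$$
Both numerator and denominator are images of cone elements under the same blocks, so their Hilbert distance contracts like $\chi^{i-1}$. Standard Birkhoff arguments then show that
$$\alpha\bigl(\mathcal L^{n}_\omega f,\lambda^{(n)}_\omega\mu_{\theta^n\omega}\bigr)\ \text{and}\ \beta\bigl(\mathcal L^{n}_\omega f,\lambda^{(n)}_\omega\mu_{\theta^n\omega}\bigr)$$
are respectively monotone increasing and decreasing in $n$, and they converge to a common value $\ell_\omega(f)$. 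Since the cone is Archimedean (Lemma \ref{lem:archi}), $\ell_\omega$ extends linearly to all of $\mathbb V_\omega$ via $f=(f+\lambda\mathbbm 1)-\lambda\mathbbm 1$. Boundedness of $\ell_\omega$ follows from $|\ell_\omega(f)|\le\|f\|^*_\omega\,\ell_\omega(\mathbbm 1)$. The identity $\ell_\omega(\mu_\omega)=1$ holds by construction, and the equivariance $\ell_{\theta\omega}\circ\mathcal L_\omega=\lambda_\omega\ell_\omega$ is immediate from the definition.

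\textbf{Step 3: the remainder estimate.} Define
$$Q^n_\omega f:=\mathcal L^n_\omega f-\lambda^{(n)}_\omega\ell_\omega(f)\mu_{\theta^n\omega}.$$
The identities $Q^n_\omega\mu_\omega=0$ and $\ell_{\theta^n\omega}\circ Q^n_\omega=0$ follow from Step 2. For $f$ in the cone, the sandwich
$$\alpha_n\lambda^{(n)}_\omega\mu_{\theta^n\omega}\preceq\mathcal L^n_\omega f\preceq\beta_n\lambda^{(n)}_\omega\mu_{\theta^n\omega},\qquad \alpha_n\le\ell_\omega(f)\le\beta_n,$$
gives
$$\|Q^n_\omega f\|^*\le(\beta_n-\alpha_n)\lambda^{(n)}_\omega.$$
Write $\beta_n-\alpha_n=\alpha_n\bigl(e^{\Theta}-1\bigr)$, with $\Theta\le D_2\chi^{r_n-1}$ after $r_n$ good blocks. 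When $r_n\le1$ we use instead the crude bound, which is controlled by the diameter $D_2$ or by $\ell_\omega$ directly. This yields $\lesssim\chi^{\max\{r_n-1,0\}}\ell_\omega(f)\lambda^{(n)}_\omega$. For a general $f$ we split $f=(f+3\|f\|\mathbbm 1)-3\|f\|\mathbbm 1$. Together with the norm equivalence of Lemma \ref{lem:normequiv}, this produces the constant $K_0\|\ell_\omega\|$.

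\textbf{Main obstacle.} The delicate point is that $\mathcal L_\omega$ is not contracting at every step, only on good blocks. The remainder must therefore be handled with blocks of non-uniform length. One also has to justify that the limits in Steps 1 and 2 are measurable in $\omega$ and independent of the chosen return sequences. The approach is to use the merge/refinement argument from Step 1 to prove independence, and to take limits along measurable hitting times of $A$ to obtain measurability.
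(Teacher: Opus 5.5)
Your route is correct in substance. For $\Omega_0$ and $\mu_\omega$ it is the same as the paper's: you pull back $\mathbbm 1$ along backward returns to $A$, split into blocks of length at least $B$ starting in $A$, and combine Lemma~\ref{lem:good-blocks}, Theorem~\ref{thm:birkhoff} and Lemma~\ref{lem:lsv}, with $\lambda_\omega:=\|\mathcal L_\omega\mu_\omega\|_{\theta\omega}$. Applying Lemma~\ref{lem:lsv} with the order norm $\|\cdot\|^*_\omega$, as you do, is the right choice, since that lemma needs a norm compatible with $\preceq_\omega$.

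The genuine difference is on the dual side.
\begin{itemize}
\item The paper sets $\ell_{0,\omega}(f):=\limsup_n\|\mathcal L^n_\omega f\|_{\theta^n\omega}/\lambda^{(n)}_\omega$. It uses interiority of $\mu_\omega$ for the growth bound \eqref{eq:growth-bound}, gets convergence from Lemma~\ref{lem:lsv} with $\rho=\ell_{0,\omega}$, extends via $3\|f\|_\omega\mathbbm 1+f$, renormalises, and bounds $Q^n_\omega$ again with Lemma~\ref{lem:lsv}.
\item You use the classical Birkhoff sandwich $\alpha_n\lambda^{(n)}_\omega\mu_{\theta^n\omega}\preceq_{\theta^n\omega}\mathcal L^n_\omega f\preceq_{\theta^n\omega}\beta_n\lambda^{(n)}_\omega\mu_{\theta^n\omega}$. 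Cone invariance of each $\mathcal L_{\theta^n\omega}$ and $\mathcal L_\omega\mu_\omega=\lambda_\omega\mu_{\theta\omega}$ make $\alpha_n$ nondecreasing and $\beta_n$ nonincreasing, and $\beta_n/\alpha_n=e^{\Theta}$ forces a common limit.
\end{itemize}
Your version gives several things for free: a true limit rather than a $\limsup$, positivity, additivity on the cone (by super- and subadditivity of $\alpha_n,\beta_n$), $\ell_\omega(\mu_\omega)=1$ without renormalising, and equivariance by an index shift. It also gives the remainder bound directly as $\|Q^n_\omega f\|^*_{\theta^n\omega}\le(\beta_n-\alpha_n)\lambda^{(n)}_\omega\|\mu_{\theta^n\omega}\|^*_{\theta^n\omega}$; keep the last factor, which is at most $3$. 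None of this uses interiority of $\mu_\omega$. The paper's route reuses Lemma~\ref{lem:lsv} throughout, but needs the growth bound and needs $\ell_{0,\omega}$ to be monotone, which holds only for the order norm.

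One step is not justified, and the paper has the same hole.
\begin{itemize}
\item Since $\tau_0=0$ and $\omega$ need not lie in $A$, the segment $[0,\tau_1]$ is not a good block. So $r_n$ returns give only $r_n-1$ complete good blocks, and the honest bound is $\Theta\le D_2\chi^{r_n-2}$ for $r_n\ge2$. This costs only a factor $\chi^{-1}$.
\item More seriously, before the first complete good block $\Theta_{\theta^n\omega}(\mathcal L^n_\omega f,\mathcal L^n_\omega\mu_\omega)$ can be infinite, so $\beta_n=\infty$ is possible. Your ``crude bound controlled by $D_2$ or by $\ell_\omega$'' is not available: $D_2$ only bounds images of good blocks, and $\ell_\omega$ gives no upper control on $\|\mathcal L^n_\omega f\|_{\theta^n\omega}$. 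The paper simply asserts \eqref{eq:forward-contraction} for all $n\ge1$ at this point.
\end{itemize}
What you can prove is an $\omega$-dependent bound. Using $\mathbbm 1\preceq_\omega C_\omega\mu_\omega$ with $C_\omega<\infty$ (the interiority the paper invokes for \eqref{eq:growth-bound}), you get $\|\mathcal L^n_\omega f\|^*_{\theta^n\omega}\le 3C_\omega\lambda^{(n)}_\omega\|f\|^*_\omega$ for all $n$. This covers the finitely many early $n$ with a measurable constant. That is all Theorem~\ref{thm:decay} needs downstream, but neither your argument nor the paper's gives a uniform $K_0$ for those $n$.
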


\begin{proof}

Since $\theta:\Omega\to\Omega$ is ergodic and $\mathbb P(A)>0$, by Poincar\'e recurrence there exists a $\theta$-invariant set $\Omega_0\subset\Omega$, with $\mathbb P(\Omega_0)=1$, such that every $\omega\in\Omega_0$ returns infinitely many times to $A$ both in the future and in the past. After passing to suitable subsequences, we may assume that these return times are separated by gaps at least $B$.

We divide the proof into four steps. We adapt the method of the proof \cite[Theorem D.8.]{YellowBook} to the random context (see also \cite{LSV1998}).

\begin{step}[1]
We first prove the backward contraction estimate. Fix $\omega\in\Omega_0$. Let $\{\kappa_i(\omega)\}_{i\in\mathbb N}$ be an increasing sequence such that
$$
\kappa_0(\omega)=0,\  \kappa_i(\omega)-\kappa_{i-1}(\omega)\ge B,\  \theta^{-\kappa_i(\omega)}\omega\in A,
$$
which exists from the Poincaré recurrence theorem, and define $$
s_n(\omega):=\#\{i:1\le \kappa_i(\omega)\le n\}.$$
We claim that for every $n\ge1$,
\begin{equation}\label{eq:backward-contraction}
\sup_{\varphi_1,\varphi_2\in \mathcal C_{\theta^{-n}\omega}(b,c,\nu)}
\Theta_\omega^{b,c,\nu}\big(\mathcal L_{\theta^{-n}\omega}^n\varphi_1,\mathcal L_{\theta^{-n}\omega}^n\varphi_2\big)
\le D_2\chi^{\max\{s_n(\omega)-1,0\}},\ \text{if } s_n(\omega)\geq 1.
\end{equation}
\end{step}

Indeed, if $s_n(\omega)=1$ the desired inequality follows from Lemma \ref{lem:good-blocks}. Assume now that $s_n(\omega)\ge2$. Decomposing $\mathcal L_{\theta^{-n}\omega}^n$ along the times $\kappa_i(\omega)$, we get
\begin{align*}
\mathcal L_{\theta^{-n}\omega}^n
&=
\mathcal L_{\theta^{-\kappa_1(\omega)}\omega}^{\kappa_1(\omega)}
\circ
\mathcal L_{\theta^{-\kappa_2(\omega)}\omega}^{\kappa_2(\omega)-\kappa_1(\omega)}
\circ\cdots\circ
\mathcal L_{\theta^{-\kappa_{s_n(\omega)}(\omega)}\omega}^{\kappa_{s_n(\omega)}(\omega)-\kappa_{s_n(\omega)-1}(\omega)}
\circ
\mathcal L_{\theta^{-n}\omega}^{n-\kappa_{s_n(\omega)}(\omega)}.
\end{align*}
For each $i\ge1$, since $\theta^{-\kappa_i(\omega)}\omega\in A$ and $\kappa_i(\omega)-\kappa_{i-1}(\omega)\ge B$, from Lemma \eqref{lem:good-blocks} the corresponding block has projective diameter at most $D_2$. The remaining pieces preserve the cone and therefore do not increase the Hilbert metric. Hence, by repeated application of Theorem \ref{thm:birkhoff},
$$
\Theta_\omega^{b,c,\nu}\big(\mathcal L_{\theta^{-n}\omega}^n\varphi_1,\mathcal L_{\theta^{-n}\omega}^n\varphi_2\big)
\le D_2\chi^{s_n(\omega)-1},
$$
which gives \eqref{eq:backward-contraction}.

\begin{step}[2]
We construct the equivariant family $(\mu_\omega)_{\omega\in\Omega}$.
\end{step}

Fix $\omega\in\Omega_0$. For $n\geq 1$, define
$$
\eta_{n,\omega}:=\frac{\mathcal L_{\theta^{-n}\omega}^n 1}{\|\mathcal L_{\theta^{-n}\omega}^n 1\|_\omega}\in \mathcal C_\omega(b,c,\nu),
$$
where, as in the previous sections, we identify the function $1$ with its class in $\mathbb V_{\theta^{-n}\omega}$.

We claim that $(\eta_{n,\omega})_{n\ge1}$ is Cauchy in $\|\cdot\|_\omega$. Fix $\varepsilon>0$. Since $s_n(\omega)\to\infty$, we may choose $n_0$ such that
$$
e^{D_2\chi^{s_n(\omega)-1}}-1<\varepsilon/3
\ \text{for every }n\ge n_0.
$$
If $n_1,n_2\ge n\ge n_0$, then by \eqref{eq:backward-contraction}, $
\Theta_\omega^{b,c,\nu}(\eta_{n_1,\omega},\eta_{n_2,\omega})
\le D_2\chi^{s_n(\omega)-1}.$ Therefore, by Lemma \ref{lem:lsv} and Lemma \ref{lem:normequiv},
$$
\|\eta_{n_1,\omega}-\eta_{n_2,\omega}\|_\omega
\le 3  \left(e^{\Theta_\omega^{b,c,\nu}(\eta_{n_1,\omega},\eta_{n_2,\omega})}-1\right)
\le 3\left( e^{D_2\chi^{s_n(\omega)-1}}-1\right)
<\varepsilon.
$$
Thus $(\eta_{n,\omega})_{n\ge1}$ is Cauchy. Define $\mu_\omega:=\lim_{n\to\infty}\eta_{n,\omega}\in \mathbb V_\omega.$
By construction, $\|\mu_\omega\|_\omega=1,$
and since the cone  $\mathcal C_\omega(b,c,\nu)$ is closed, $\mu_\omega\in \mathcal C_\omega(b,c,\nu).$
In fact, $\mu_\omega\in \operatorname{int}(\mathcal C_\omega(b,c,\nu))$, because for every sufficiently large $n$ the element $\eta_{n,\omega}$ already belongs to the image of a cone under a block of finite projective diameter.

Now observe that
$$
\mathcal L_\omega\eta_{n,\omega}
=
\frac{\mathcal L_{\theta^{-n}\omega}^{n+1}\mathbbm 1}{\|\mathcal L_{\theta^{-n}\omega}^n\mathbbm 1\|_\omega}
=
\frac{\|\mathcal L_{\theta^{-n}\omega}^{n+1}\mathbbm 1\|_{\theta\omega}}{\|\mathcal L_{\theta^{-n}\omega}^n\mathbbm 1\|_\omega}
\,
\eta_{n+1,\theta\omega}.
$$
Passing to the limit as $n\to\infty$, we obtain $\mathcal L_\omega\mu_\omega=\lambda_\omega\mu_{\theta\omega},$ where $
\lambda_\omega:=\|\mathcal L_\omega\mu_\omega\|_{\theta\omega}>0.$
The map $\omega\mapsto \mu_\omega$ is measurable, being the limit of the measurable maps $\omega\mapsto \eta_{n,\omega}$, and so is $\omega\mapsto \lambda_\omega$.

\begin{step}[3]
We construct the dual equivariant family $\{\ell_\omega\}_{\omega\in \Omega}$.
\end{step}

Fix $\omega\in\Omega_0$. Let $\{\tau_i(\omega)\}_{i\ge1}$ be an increasing sequence such that
$$
\tau_0(\omega)=0,\  \tau_i(\omega)-\tau_{i-1}(\omega)\ge B,\  \theta^{\tau_i(\omega)}\omega\in A,
$$
and define $r_n(\omega):=\#\{\tau_i(\omega):1\le \tau_i(\omega)\le n\}.$
Repeating the argument of Step 1, now in the forward direction, we obtain that for every $f\in \mathcal C_\omega(b,c,\nu)$ and every $n\ge1$,
\begin{equation}\label{eq:forward-contraction}
\Theta_{\theta^n\omega}^{b,c,\nu}\big(\mathcal L_\omega^n f,\mathcal L_\omega^n\mu_\omega\big)
\le D_2\chi^{\max\{r_n(\omega)-1,0\}}.
\end{equation}

Since $\mu_\omega\in \operatorname{int}(\mathcal C_\omega(b,c,\nu))$, there exists $C_\omega>0$ such that for every $f\in \mathbb V_\omega$,
$$
-C_\omega\|f\|_\omega\mu_\omega\preceq_\omega f\preceq_\omega C_\omega\|f\|_\omega\mu_\omega.
$$
Applying $\mathcal L_\omega^n$ and using $\mathcal L_\omega^n\mu_\omega=\lambda_\omega^{(n)}\mu_{\theta^n\omega}$, we obtain
\begin{equation}\label{eq:growth-bound}
\|\mathcal L_\omega^n f\|_{\theta^n\omega}\le C_\omega\lambda_\omega^{(n)}\|f\|_\omega
\ \text{for every }f\in \mathbb V_\omega.
\end{equation}

For $f\in \mathcal C_\omega(b,c,\nu)$, define
$$
\ell_{0,\omega}(f):=\limsup_{n\to\infty}\frac{\|\mathcal L_\omega^n f\|_{\theta^n\omega}}{\lambda_\omega^{(n)}}.
$$
By \eqref{eq:growth-bound}, this is finite. Moreover,
$$
\ell_{0,\theta^n\omega}(\mathcal L_\omega^n f)=\lambda_\omega^{(n)}\ell_{0,\omega}(f).
$$
Therefore the two elements
$$
\frac{1}{\lambda_\omega^{(n)}}\mathcal L_\omega^n f
\ \text{and}\ 
\ell_{0,\omega}(f)\mu_{\theta^n\omega}
$$
belong to the same level set of $\ell_{0,\theta^n\omega}$. Hence, by Lemma \ref{lem:lsv} (taking $\rho=\ell_0,\omega$ and $\|\cdot\|= \|\cdot\|_\omega^*$) and \eqref{eq:forward-contraction},
\begin{align}
\left\|\frac{1}{\lambda_\omega^{(n)}}\mathcal L_\omega^n f-\ell_{0,\omega}(f)\mu_{\theta^n\omega}\right\|_{\theta^n\omega}
&\le 3
\left(e^{\Theta_{\theta^n\omega}^{b,c,\nu}(\mathcal L_\omega^n f,\mathcal L_\omega^n\mu_\omega)}-1\right)
\min\left\{\frac{1}{\lambda_\omega^{(n)}}\|\mathcal L_\omega^n f\|_{\theta^n\omega},\ell_{0,\omega}(f)\right\}\nonumber\\
&\le 3
\left(e^{D_2\chi^{\max\{r_n(\omega)-1,0\}}}-1\right)
\min\left\{C_\omega\|f\|_\omega,\ell_{0,\omega}(f)\right\}\\
&\xrightarrow[n\to\infty]{} 0.
\label{eq:cone-asymptotic}
\end{align}

We now extend $\ell_{0,\omega}$ to all of $\mathbb V_\omega$. For $f\in \mathbb V_\omega$, define $$\ell_\omega(f):=\ell_{0,\omega}(3\|f\|_\omega \mathbbm 1+f)-\ell_{0,\omega}(3\|f\|_\omega \mathbbm 1).$$
From Lemma \ref{lem:normequiv} we obtain that $3\|f\|_\omega \mathbbm 1+f\in \mathcal C_\omega(b,c,\nu)$, this is well-defined. The convergence in \eqref{eq:cone-asymptotic}, applied to $3\|f\|_\omega \mathbbm 1+f$, $3\|g\|_\omega \mathbbm 1+g$, and $3\|f+g\|_\omega \mathbbm 1+f+g$, gives the additivity of $\ell_\omega$. Homogeneity is immediate from the definition, so $\ell_\omega$ is linear. Its boundedness follows from \eqref{eq:growth-bound}, hence $\ell_\omega\in \mathbb V_\omega^*$.

Applying \eqref{eq:cone-asymptotic} with $f=\mu_\omega$, and recalling that $\|\mu_\omega\|_\omega=1$, we obtain $\ell_\omega(\mu_\omega)=1,$
after normalising $\ell_\omega$ if necessary.

Finally, for every $f\in \mathbb V_\omega$, $\ell_{\theta\omega}(\mathcal L_\omega f)=\lambda_\omega\ell_\omega(f),$
that is, $\mathcal L_\omega^*\ell_{\theta\omega}=\lambda_\omega\ell_\omega.$
The measurability of $\omega\mapsto \ell_\omega$ follows from the construction.

\begin{step}[4]
We prove the spectral decomposition and the remainder estimate.
\end{step}

For $n\ge1$, define
$$
Q_\omega^n:\mathbb V_\omega\to \mathbb V_{\theta^n\omega},\ 
Q_\omega^n f:=\mathcal L_\omega^n f-\lambda_\omega^{(n)}\ell_\omega(f)\mu_{\theta^n\omega}.
$$
Then $Q_\omega^n\mu_\omega=0,$
and, since $\ell_{\theta^n\omega}(\mu_{\theta^n\omega})=1$, $
\ell_{\theta^n\omega}\circ Q_\omega^n=0.$

It is enough to estimate $Q_\omega^n$ on $\mathcal C_\omega(b,c,\nu)$, since every $f\in \mathbb V_\omega$ can be written as
$$
f=(3\|f\|_\omega \mathbbm 1+f)-3\|f\|_\omega \mathbbm 1,
$$
with both terms in $\mathcal C_\omega(b,c,\nu)$, and the general estimate then follows by linearity after enlarging the constant.

Thus let $f\in \mathcal C_\omega(b,c,\nu)$. Using $\mathcal L_\omega^n\mu_\omega=\lambda_\omega^{(n)}\mu_{\theta^n\omega}$, Lemma \ref{lem:lsv}, Lemma \ref{lem:normequiv} and \eqref{eq:forward-contraction}, we obtain
\begin{align*}
\|Q_\omega^n f\|_{\theta^n\omega}
&=
\|\mathcal L_\omega^n f-\ell_\omega(f)\mathcal L_\omega^n\mu_\omega\|_{\theta^n\omega}\\
&\le 3
\left(
e^{\Theta_{\theta^n\omega}^{b,c,\nu}(\mathcal L_\omega^n f,\mathcal L_\omega^n(\ell_\omega(f)\mu_\omega))}-1
\right)
\min\left\{\|\mathcal L_\omega^n f\|_{\theta^n\omega},|\ell_\omega(f)|\lambda_\omega^{(n)}\right\}\\
&\le 3
\left(
e^{D_2\chi^{\max\{r_n(\omega)-1,0\}}}-1
\right)
|\ell_\omega(f)|\lambda_\omega^{(n)}
\end{align*}
Choose $K>0$ such that $|e^x-1|\le Kx
\ \text{for every }x\in[0,D_2].$
Then
$$
\|Q_\omega^n f\|_{\theta^n\omega}
\le 3 KD_2\|\ell_\omega\|\lambda_\omega^{(n)}\chi^{\max\{r_n(\omega)-1,0\}}\|f\|_\omega.
$$
Therefore,
$$
\|Q_\omega^n\|_{\omega,\theta^n\omega}
\le K_0\|\ell_\omega\|\lambda_\omega^{(n)}\chi^{\max\{r_n(\omega)-1,0\}},
$$
where $K_0:=3 KD_2$. This concludes the proof.
\end{proof}

The estimate in Theorem \ref{thm:spectralgap} is expressed in terms of the number of good return times $r_n$ up to time $n$. The next five lemmas prepare the proof of Theorem \ref{thm:decay} by controlling the frequency of these returns. This allows us to replace the factor depending on $r_n$ by an exponential bound in $n$.

\begin{lemma}\label{lem:sparse-good-times}
Assume that there exist constants $K,\kappa,c>0$ and $n_0\in\mathbb N$ such that
$$
\mathbb P\left[N_n^{(n_0)}\geq cn\right]\leq K e^{-\kappa n}
$$
for every $n\in\mathbb N$, where $N_n^{(n_0)}=\sum_{i=1}^n \sigma_i,
\ 
\sigma_i=N_i^{(n_0)}-N_{i-1}^{(n_0)}\in\mathbb N.$
Define $E(\omega):=\left\{N_i^{(n_0)}(\omega):\sigma_i(\omega)\leq 2c\right\}_{i\in\mathbb N}.$
Then one can choose a $2c$-sparse subsequence increasing sequence $
\{\tau_i(\omega)\}_{i\in\mathbb N}  \subset E(\omega)$, $\tau_{j+1}(\omega)-\tau_j(\omega)\geq 2c,
$
such that, if $$
r_n(\omega):=\operatorname{Card}\left(\{\tau_j(\omega)\}_{j\in\mathbb N}\cap\{1,\dots,n\}\right),$$
then for every $
\beta<\frac{1}{2c\lceil 2c\rceil}$
there exist constants $K_\beta,\kappa_\beta>0$ such that $$\mathbb P\left[r_n\leq \beta n\right]\leq K_\beta e^{-\kappa_\beta n} \ \text{for every }n\in\mathbb N.$$
\end{lemma}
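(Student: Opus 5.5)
The plan is to show that, outside an exponentially small event, many of the increments $\sigma_i$ among the first $n'$ blocks are short, and then to thin these short blocks greedily to obtain $2c$-separation.

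\emph{Step 1: counting short blocks.} Fix $n$ and put $m:=\lfloor n/(2c)\rfloor$. If $N_m^{(n_0)}(\omega)\le cm$, then Markov-type counting gives
$$\#\{i\le m:\sigma_i>2c\}<\frac{cm}{2c}=\frac m2 .$$
Consequently at least $m/2$ indices $i\le m$ satisfy $\sigma_i\le 2c$. Moreover, every such index has $N_i^{(n_0)}\le N_m^{(n_0)}\le cm\le n/2\le n$. So on the event $\{N_m^{(n_0)}< cm\}$ the set $E(\omega)\cap\{1,\dots,n\}$ has at least $m/2$ elements. The complementary event has probability at most $Ke^{-\kappa m}\le K'e^{-\kappa n/(2c)}$ by hypothesis.

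\emph{Step 2: greedy $2c$-sparse subsequence.} Define $\tau_1(\omega)$ as the smallest element of $E(\omega)$ that is $\ge 2c$. Then define $\tau_{j+1}(\omega)$ as the smallest element of $E(\omega)$ that is $\ge\tau_j(\omega)+2c$. This choice does not depend on $n$, so $r_n$ is well defined for all $n$ at once. The elements of $E(\omega)$ are distinct positive integers, since $N_i^{(n_0)}$ is strictly increasing. Hence any window $[t,t+2c)$ contains at most $\lceil 2c\rceil$ of them. Each chosen $\tau_j$ therefore ``discards'' at most $\lceil 2c\rceil$ elements of $E\cap\{1,\dots,n\}$, and the initial window discards at most $\lceil 2c\rceil$ more. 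It follows that
$$r_n\ge \frac{\#(E\cap\{1,\dots,n\})}{\lceil 2c\rceil}-1\ge \frac{m}{2\lceil 2c\rceil}-1\ge \frac{n}{4c\lceil 2c\rceil}-2 .$$

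\emph{Step 3: matching the threshold on $\beta$.} The bound in Step 2 reaches the threshold $\beta<1/(4c\lceil 2c\rceil)$, not the stated $1/(2c\lceil 2c\rceil)$. To gain the missing factor $2$, I would refine the counting in Step 1 by using the threshold $c(1+\eta)m$ in place of $cm$ and choosing $m$ closer to $n/c$. The hypothesis gives exponential bounds at every level $\ge c$, since $\{N_m\ge c'm\}\subset\{N_m\ge cm\}$ for $c'\ge c$. Alternatively one can use slack in the greedy bound, since indices with $\sigma_i\le 2c$ that are forced into the same window are consecutive in $E$. If the constant still fails, the fallback is to note that only some $\beta>0$ with an exponential tail is used downstream, and then to check which precise constant the greedy argument yields.

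For $n$ large, $\{r_n\le\beta n\}\subset\{N_m^{(n_0)}\ge cm\}$, because $\beta n<n/(4c\lceil2c\rceil)-2$. For small $n$ the bound holds after enlarging $K_\beta$. This gives $\mathbb P[r_n\le\beta n]\le K_\beta e^{-\kappa_\beta n}$.

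The main obstacle is the bookkeeping in Step 3, namely obtaining exactly the threshold $\frac{1}{2c\lceil 2c\rceil}$. The exponential estimate itself follows directly from the hypothesis.
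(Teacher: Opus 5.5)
Your strategy matches the paper's. You bound the number of long blocks via $N_m^{(n_0)}\ge 2c\cdot\#\{i\le m:\sigma_i>2c\}$, then thin $E(\omega)$ greedily and use $\#(E\cap\{1,\dots,n\})\le\lceil 2c\rceil(r_n+1)$. Steps 1 and 2 are correct. Your single event $\{N_m^{(n_0)}<cm\}$ with $cm\le n$ also neatly covers the paper's two events $\{R_n<m\}$ and $\{G_m\le m/2\}$, both of which lie in $\{N_m^{(n_0)}\ge cm\}$ anyway.

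The gap is that, as written, you only prove the tail bound for $\beta<1/(4c\lceil 2c\rceil)$, and Step 3 does not recover the stated range.
\begin{itemize}
\item Raising the threshold to $c(1+\eta)m$ makes things worse on its own. It only guarantees more than $(1-\eta)m/2$ short blocks, and it forces $m\le n/(c(1+\eta))$ if those blocks are to end by time $n$. It works only in the limit $\eta\to0$, which is the fix below.
\item There is no slack in the greedy step. If every $\sigma_i=1$ and $2c\in\mathbb N$, each window $[\tau_j,\tau_j+2c)$ contains exactly $\lceil 2c\rceil$ points of $E$.
\item Falling back to a smaller $\beta$ proves a weaker statement than the lemma.
\end{itemize}

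The factor $2$ is lost solely in your choice $m=\lfloor n/(2c)\rfloor$. The only constraint you need is $cm\le n$, so that every short block among the first $m$ ends by time $n$. Take $m:=\lfloor n/c\rfloor$ instead; the paper takes $m=\lfloor\rho n\rfloor$ with any $\rho<1/c$.
\begin{itemize}
\item On $\{N_m^{(n_0)}<cm\}$, more than $m/2$ indices $i\le m$ have $\sigma_i\le 2c$, and each has $N_i^{(n_0)}\le N_m^{(n_0)}<cm\le n$.
\item Hence $\#(E\cap\{1,\dots,n\})>m/2\ge \frac{n}{2c}-\frac12$.
\item Your greedy bound then gives $r_n>\frac{n}{2c\lceil 2c\rceil}-\frac{1}{2\lceil 2c\rceil}-1$.
\item For $\beta<\frac{1}{2c\lceil 2c\rceil}$ this exceeds $\beta n$ once $n$ is large, so $\{r_n\le\beta n\}\subset\{N_m^{(n_0)}\ge cm\}$.
\item That event has probability at most $Ke^{-\kappa\lfloor n/c\rfloor}\le Ke^{\kappa}e^{-\kappa n/c}$. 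The finitely many small $n$ are absorbed into $K_\beta$.
\end{itemize}
With this one change your argument proves the lemma as stated and coincides with the paper's proof.
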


\begin{proof}
Define, for $m\in\mathbb N$,
$$
G_m(\omega):=\#\{1\leq i\leq m:\sigma_i(\omega)\leq 2c\}.
$$
Let $\omega\in \Omega$ such that  $G_m(\omega)\leq \frac{m}{2}$, then at least $\frac{m}{2}$ of the values $\sigma_1(\omega),\dots,\sigma_m(\omega)$ satisfy $\sigma_i(\omega)>2c$. Since $\sigma_i(\omega)\in\mathbb N$, this implies $\sigma_i(\omega)\geq 2c$ for each such $i$. Hence
$$
N_m^{(n_0)}=\sum_{i=1}^m \sigma_i
\geq 2c\cdot \#\{1\leq i\leq m:\sigma_i\geq 2c\}
\geq 2c\cdot \frac{m}{2}
=cm.
$$
Therefore $\left\{G_m\leq \frac{m}{2}\right\}\subset \left\{N_m^{(n_0)}\geq cm\right\},$
and so
\begin{align}
\mathbb P\left[G_m\leq \frac{m}{2}\right]\leq K e^{-\kappa m}\label{eq:gm}
\end{align}
for every $m\in\mathbb N$.

Now let $R_n(\omega):=\#\left\{i\geq 1:N_i^{(n_0)}(\omega)\leq n\right\}.$ Since $\{N_i^{(n_0)}\}_{i\in\mathbb N}$ is increasing, we have $
\{R_n<m\}=\{N_m^{(n_0)}>n\}.$
Fix $\rho<\frac{1}{c}$ and let $m=\lfloor \rho n\rfloor$. It follows that $cm = c \lfloor \rho n\rfloor \leq c \rho n \leq n$, hence $\left\{N_m^{(n_0)}>n\right\}\subset \left\{N_m^{(n_0)}\geq cm\right\},$
and therefore 
\begin{align}
    \mathbb P\left[R_n<m\right]\leq K e^{-\kappa m}.\label{eq:rn}
\end{align}

Define $
S_n(\omega):=\operatorname{Card}\left(E(\omega)\cap\{1,\dots,n\}\right).$
Then
$$
S_n=\#\left\{1\leq i\leq R_n:\sigma_i\leq 2c\right\}=G_{R_n}.
$$
Hence, on the event $\{R_n\geq m\}$, one has $S_n\geq G_m$. Consequently,
\begin{align}
    \left\{S_n\leq \frac{m}{2}\right\}\subset \{R_n<m\}\cup \left\{G_m\leq \frac{m}{2}\right\}. \label{eq:snevent}
\end{align}
Combining \eqref{eq:snevent} with \eqref{eq:gm} and \eqref{eq:rn} we obtain
$$
\mathbb P\left[S_n\leq \frac{m}{2}\right]
\leq
\mathbb P\left[R_n<m\right]+\mathbb P\left[G_m\leq \frac{m}{2}\right]
\leq
2K e^{-\kappa m}.
$$
Since $m=\lfloor \rho n\rfloor$, it follows that
\begin{align}
 \mathbb P\left[S_n\leq \frac{\rho}{2}n\right]\leq K_\rho e^{-\kappa_\rho n}   \label{eq:rho/2}
\end{align}
for appropriate $K_\rho,\kappa_\rho>0$, for any $\rho <1/c$. 

We now construct the sparse subsequence. For each $\omega$, define recursively
$$
\tau_i(\omega)= \begin{cases}
    \min E(\omega),\ \text{if }i=1\\
    \min\left(E(\omega)\cap [\tau_{i-1}(\omega)+2c,\infty)\right),\ \text{if }i>1\\
\end{cases}
$$
By construction, $\tau_{j+1}(\omega)-\tau_j(\omega)\geq 2c,$
so $\left(\tau_j(\omega)\right)_j$ is $2c$-sparse and contained in $E(\omega)$.

We claim that $S_n(\omega)\leq \lceil 2c\rceil\, r_n(\omega)$
for every $\omega$ and $n$. Indeed, by construction every point of $E(\omega)\cap\{1,\dots,n\}$ lies in one of the intervals
$$
[\tau_j(\omega),\tau_j(\omega)+\lceil 2c\rceil-1]
$$
with $\tau_j(\omega)\leq n$. Otherwise, if $x\in E(\omega)\cap\{1,\dots,n\}$ were not covered by these intervals, therefore $x\geq \tau_j(\omega)+\lceil 2c\rceil\geq \tau_j(\omega)+2c$
for the last selected point $\tau_j(\omega)\leq x$, contradicting the maximality of the construction. Since the selected points are $2c$-separated, these intervals are pairwise disjoint, and each contains at most $\lceil 2c\rceil$ integers. Which implies $S_n(\omega)\leq \lceil 2c\rceil\, r_n(\omega).$

Fix now $\beta<\frac{1}{2c\lceil 2c\rceil}$, and set $\alpha:=\lceil 2c\rceil\beta.$
Then $\alpha<\frac{1}{2c}$, and from $S_n\leq \lceil 2c\rceil r_n$ we obtain
$$
\{r_n\leq \beta n\}\subset \{S_n\leq \lceil 2c\rceil\beta n\}=\{S_n\leq \alpha n\}.
$$
From \eqref{eq:rho/2}, taking $\alpha = \rho/2$, we obtain that
$$
\mathbb P\left[r_n\leq \beta n\right]
\leq
\mathbb P\left[S_n\leq \alpha n\right]
\leq
K_\alpha e^{-\kappa_\alpha n}.
$$
Renaming the constants gives $\mathbb P\left[r_n\leq \beta n\right]\leq K_\beta e^{-\kappa_\beta n},$
which proves the lemma.
\end{proof}

\begin{lemma}\label{lem:r1}
Assume Hypothesis \ref{hyp:h}. Then there exist an increasing sequence of random variables $\{\tau_i:\Omega\to\mathbb N\}_{i\in\mathbb N}$ satisfying \eqref{eq:tauser} and $a>0$
one has
$$\mathbb P\left[\liminf_{n\to \infty} \frac{r_n}{n}>a \right] = 1$$
\end{lemma}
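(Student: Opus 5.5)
Under Hypothesis~\ref{hyp:h} alone we have no tail estimates, so the plan is to extract a positive asymptotic frequency of good returns from the ergodic theorem and then thin the return times greedily. By Lemma~\ref{lem:good-blocks} there are $B\in\mathbb N$ and the set $A=\{\omega:N_{n_0}(\omega)\le B\}$ with $\mathbb P(A)>0$. Since $\theta$ is ergodic, Birkhoff's theorem applied to $\mathbbm 1_A$ gives a full-measure set $\Omega_1$ (which we intersect with the $\theta$-invariant set $\Omega_0$ of Theorem~\ref{thm:spectralgap}) on which
$$
\lim_{n\to\infty}\frac1n\#\{1\le k\le n:\theta^k\omega\in A\}=\mathbb P(A)>0 .
$$

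Next we define the return times measurably and greedily. Set $\tau_0(\omega)=0$ and
$$
\tau_i(\omega):=\min\{k\ge \tau_{i-1}(\omega)+B:\theta^k\omega\in A\},\qquad i\ge1 .
$$
Each $\tau_i$ is finite on $\Omega_1$ because visits to $A$ occur infinitely often. Each $\tau_i$ is measurable, since $\{\tau_i=k\}$ is a finite Boolean combination of the events $\{\theta^j\omega\in A\}$ and $\{\tau_{i-1}=j\}$. By construction the sequence is increasing, $\tau_i-\tau_{i-1}\ge B$, and $\theta^{\tau_i}\omega\in A$, so \eqref{eq:tauser} holds. (On the null complement of $\Omega_1$ we may set the $\tau_i$ arbitrarily, for instance to the multiples $iB$; this does not affect the almost sure statement.)

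The key counting step is a covering argument of the same type as in the proof of Lemma~\ref{lem:sparse-good-times}. Every $k\in\{1,\dots,n\}$ with $\theta^k\omega\in A$ lies in one of the windows $[\tau_j,\tau_j+B-1]$ with $j\ge 0$ and $\tau_j\le n$. Indeed, if $k$ lay in no such window, take the largest $\tau_j\le k$; then $k\ge\tau_j+B$, and greediness forces $\tau_{j+1}\le k$, which contradicts the maximality of $j$. Each window contains at most $B$ integers, so
$$
\#\{1\le k\le n:\theta^k\omega\in A\}\le B\,(r_n(\omega)+1),
$$
where the $+1$ accounts for the window starting at $\tau_0=0$. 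Dividing by $n$ and letting $n\to\infty$ gives $\liminf_n r_n/n\ge \mathbb P(A)/B$ on $\Omega_1$. Choosing $a:=\mathbb P(A)/(2B)$ then yields $\mathbb P[\liminf_n r_n/n>a]=1$.

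The only delicate points are the measurability of the greedy $\tau_i$ and the correct handling of the initial window. The counting inequality itself is elementary, so the main obstacle is essentially bookkeeping rather than any substantive difficulty.
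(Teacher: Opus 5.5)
Your proposal is correct and follows essentially the same route as the paper: Birkhoff's theorem applied to $\mathbbm 1_A$, a greedy $B$-sparse thinning of the visit times to $A$, and the covering bound $S_n\le B\,r_n+B$. Starting the greedy selection from $\tau_0=0$, so that $\tau_1\ge B$, is if anything slightly more faithful to \eqref{eq:tauser} than the paper's choice $\tau_1=\min E(\omega)$, which need not satisfy $\tau_1-\tau_0\ge B$.
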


\begin{proof}
Let $B\geq 1$ such that $A := \{\omega \in \Omega; N_{n_0}(\omega) \leq B\}$ satisfies $\mathbb P[A]>0$, which exists from Lemma \ref{lem:Ni}. Set $\sigma_n(\omega)$ be the $n$-th time such that $\theta^{\sigma_n(\omega)}\in A$. Let 
\begin{align}S_n(\omega) := \#\left(\left\{1,\ldots, n\right\}\cap \{\sigma_i(\omega)\}_{i\in\mathbb N}\right) = \sum_{i=1}^{n}\mathbbm 1_A \circ \theta^i(\omega).\label{eq:sntheta}
\end{align}
From the Birkhoff ergodic theorem $\frac{1}{n}S_n(\omega)\xrightarrow{n\to\infty} \mathbb  P[A].$
Therefore $$\mathbb P\left[ \frac{S_i}{i} \geq  \frac{\mathbb P[A]}{2}\ \text{for every }i\geq n\right] \xrightarrow[]{n\to\infty} 1.$$

Once again we define $E(\omega):=\{\sigma_n(\omega)\}_{n\in\mathbb N}$ and define the $B$ sparse sequence 
$$
\tau_i(\omega):= \begin{cases}
    \min E(\omega),\ \text{if }i=1\\
    \min\left(E(\omega)\cap [\tau_{i-1}(\omega)+B,\infty)\right),\ \text{if }i>1\\
\end{cases}
$$
By construction, $\tau_{j+1}(\omega)-\tau_j(\omega)\geq B,$
so $\left(\tau_j(\omega)\right)_j$ is $B$-sparse and contained in $E(\omega)$. 

Let  $r_n(\omega) := \#\left(\{1,\ldots, n\}\cap \{\tau_i(\omega)\}_{i\in\mathbb N}\right).$
Observe that $S_n(\omega)\leq B\,r_n(\omega)+B.$
Indeed, the definition of the sequence $\{\tau_i\}_{i\in\mathbb N}$, between two consecutive selected times $\tau_j(\omega)$ and $\tau_{j+1}(\omega)$ there is no element of $E(\omega)$ in the interval $[\tau_j(\omega)+B,\tau_{j+1}(\omega)).$ Hence every element of $E(\omega)$ that is not selected and lies after $\tau_j(\omega)$ must belong to $[\tau_j(\omega),\tau_j(\omega)+B-1].$
Thus each selected time $\tau_j(\omega)$ accounts for at most $B$ elements of $E(\omega)$, and after the last selected time not exceeding $n$ there can be at most $B-1$ further elements of $E(\omega)$ before time $n$. This proves the observation

Consequently, $r_n(\omega)\geq (S_n(\omega)-B)/B.$
Let $a:=\mathbb P[A]/(4B)>0.$ If $S_n(\omega)\geq (\mathbb P[A]/2)n$, then $
r_n(\omega)\geq (\mathbb P[A]n/{2B})-1.$
Therefore, for all sufficiently large $n$, $\left\{S_n\geq \frac{p}{2}n\right\}\subset \{r_n>an\}.$ Hence
$$
\mathbb P\left[\liminf_{n\to\infty } \frac{r_n}{n}>a \right]\geq \mathbb P\left[\liminf_{n\to\infty}\frac{S_n}{n}\geq \frac{\mathbb P[A]}{2}\right]=1,
$$
which completes the proof.

\end{proof}

\begin{lemma}\label{lem:chi-rn}
Fix $\chi\in(0,1)$.  

\begin{enumerate}
\item Assume Hypothesis \ref{hyp:h}. Let $r_n:\Omega\to \mathbb R$ be defined as in Lemma \ref{lem:r1}, then there exist $\chi_H\in(0,1)$ and a measurable function
$K_H:\Omega\to [1,\infty)$ such that, for $\mathbb P$-almost every $\omega\in\Omega$ and every $n\ge1$,
$$
\max\left\{\chi^{r_n(\omega)}, \chi^{r_n(\theta^{-n} \omega)} \right\}\le K_H(\omega)\chi_H^n.
$$

\item Assume Hypothesis \ref{hyp:h'}. Let $r_n:\Omega\to \mathbb N$ be defined as in Lemma \ref{lem:sparse-good-times}, then for every $p\in[1,\infty)$, there exist
$\chi_p\in(0,1)$ and $K_p\in L^p(\Omega)$ such that, for $\mathbb P$-almost every
$\omega\in\Omega$ and every $n\ge1$,
$$
\max\left\{\chi^{r_n(\omega)},\chi^{r_n(\theta^{-n}\omega)} \right\}\le K_p(\omega)\chi_p^n.
$$

\end{enumerate}
\end{lemma}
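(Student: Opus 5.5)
For part (1) the plan is to convert the almost sure linear growth of $r_n$ from Lemma \ref{lem:r1} into a random constant. Lemma \ref{lem:r1} gives $a>0$ and a full-measure set on which $\liminf_n r_n(\omega)/n>a$. For such $\omega$, set
$$
n_1(\omega):=\inf\{m\in\mathbb N: r_n(\omega)\geq an\ \text{for all } n\geq m\},
$$
which is finite and measurable, being defined by countably many conditions on the measurable functions $r_n$. Put $\chi_H:=\chi^{a}$ and $K_H^+(\omega):=\chi^{-an_1(\omega)}$. For $n\geq n_1$ we have $\chi^{r_n}\le\chi^{an}=\chi_H^n$. For $n<n_1$ we use $\chi^{r_n}\le 1\le \chi^{-an_1}\chi_H^{n}$. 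Together these give $\chi^{r_n(\omega)}\le K_H^+(\omega)\chi_H^n$ for every $n\ge1$.

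The backward term $\chi^{r_n(\theta^{-n}\omega)}$ needs more care, because $r_n(\theta^{-n}\omega)$ counts returns of the orbit segment $\theta^{-n}\omega,\dots,\omega$. I would redo the construction of Lemma \ref{lem:r1} in backward time, using the Birkhoff theorem for $\theta^{-1}$. This gives $S_n^-(\omega)=\sum_{i=1}^n\mathbbm 1_A(\theta^{-i}\omega)$ with $S_n^-(\omega)/n\to\mathbb P[A]$. The visits of this segment to $A$ are exactly the times counted by $S_n^-(\omega)$, up to a shift by one index. The greedy $B$-sparse selection in Lemma \ref{lem:r1} loses at most a factor $B$ and an additive $B$ on any window. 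So $r_n(\theta^{-n}\omega)\geq (S_n^-(\omega)-1-B)/B$, and the same argument as above produces $K_H^-(\omega)$. The delicate point is that the forward greedy selection starting at $\theta^{-n}\omega$ depends on $n$. However, the counting bound $S\le Br+B$ from the proof of Lemma \ref{lem:r1} holds for any window, so this is harmless. Finally set $K_H:=\max\{K_H^+,K_H^-\}$.

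For part (2), Hypothesis \ref{hyp:h'} together with Lemma \ref{lem:Ni}(b) supplies the tail hypothesis of Lemma \ref{lem:sparse-good-times}. Hence for $\beta$ small, $\mathbb P[r_n\le\beta n]\le K_\beta e^{-\kappa_\beta n}$. Since $\theta$ preserves $\mathbb P$, the same bound holds for $r_n\circ\theta^{-n}$. Define
$$
K_p(\omega):=\sup_{n\ge1}\max\left\{\chi^{r_n(\omega)},\chi^{r_n(\theta^{-n}\omega)}\right\}\chi_p^{-n},
$$
with $\chi_p:=\max\{\chi^{\beta},e^{-\kappa_\beta/(2p)}\}\in(0,1)$. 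The claimed inequality then holds by definition, and what remains is to show $K_p\in L^p$. I would bound $K_p^p\le\sum_n \max\{\chi^{r_n},\chi^{r_n\circ\theta^{-n}}\}^p\chi_p^{-pn}$ and split each term according to whether $r_n>\beta n$ or $r_n\le\beta n$. On $\{r_n>\beta n\}$ the term is at most $(\chi^{\beta}/\chi_p)^{pn}\le1$. That bound is not summable, so I would instead use $\chi_p$ strictly larger than $\chi^\beta$, say $\chi_p:=\max\{\chi^{\beta/2},e^{-\kappa_\beta/(2p)}\}$, which gives geometric decay $(\chi^{\beta/2})^{pn}$. On $\{r_n\le\beta n\}$ the term is at most $\chi_p^{-pn}$ on a set of probability $\le 2K_\beta e^{-\kappa_\beta n}$, so its expectation is $\le 2K_\beta e^{-\kappa_\beta n/2}$. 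Summing over $n$ gives $\mathbb E[K_p^p]<\infty$.

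The main obstacle I expect is the backward term in part (1). There one must check that the forward-defined sparse sequence started at $\theta^{-n}\omega$ still has almost surely linearly many elements in $[1,n]$, uniformly enough to give a single measurable $K_H(\omega)$. The backward Birkhoff argument sketched above is what I would try first.
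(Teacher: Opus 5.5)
Your proposal is correct. Part (1) is essentially the paper's proof. The a.s. linear growth from Lemma~\ref{lem:r1} becomes a random constant via the first time after which $r_n\ge an$. Your $K_H^+=\chi^{-an_1}$ is a cruder but equally valid substitute for the paper's $\max_{n<N_H}\chi^{r_n}\chi_H^{-n}$. The backward term is handled as in the paper: apply Birkhoff's theorem for $\theta^{-1}$ to $S_n(\theta^{-n}\omega)=\sum_{j=0}^{n-1}\mathbbm 1_A(\theta^{-j}\omega)$, and use the counting bound $S_n\le Br_n+B$ at the point $\theta^{-n}\omega$. Take the smaller of the forward and backward values of $a$, so that a single $\chi_H$ serves both terms.

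In part (2) your bookkeeping differs from the paper's.

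\textbf{The paper's route.} It introduces the last bad time $N_\beta(\omega)=\sup\{n:r_n(\omega)\le\beta n\}$ and gets an exponential tail for it from a union bound. It then sets $K_p=\chi_p^{-N_\beta}+\chi_p^{-N'_\beta}$, where $N'_\beta$ is the analogous time for $r_n\circ\theta^{-n}$. The inequality follows from a two-case check ($n\le N_\beta$ versus $n>N_\beta$), and integrability follows from the tail of $N_\beta$.

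\textbf{Your route.} You take the optimal constant $\sup_n(\cdots)\chi_p^{-n}$, for which the inequality holds by definition. You then prove $\mathbb E[K_p^p]<\infty$ by bounding the supremum of $p$-th powers by the series and splitting each term on $\{r_n>\beta n\}$ versus $\{r_n\le\beta n\}$. For the maximum, use $\max\{x,y\}^p\le x^p+y^p$ and split each summand on its own event.

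\textbf{Comparison.} Both arguments use the same input: the bound $\mathbb P[r_n\le\beta n]\le K_\beta e^{-\kappa_\beta n}$ from Lemma~\ref{lem:sparse-good-times}, together with $\theta$-invariance of $\mathbb P$ for the backward term. Both admit essentially the same rates $\chi_p>\max\{\chi^\beta,e^{-\kappa_\beta/p}\}$; your specific choice with $\chi^{\beta/2}$ and $e^{-\kappa_\beta/(2p)}$ is only for convenience. Your version is slightly more direct. The paper's version exhibits $K_p$ explicitly as $\chi_p^{-N_\beta}$ with $N_\beta$ exponentially integrable, which carries a bit more structural information.
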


\begin{proof}
We start by proving (1). Assume first Hypothesis \ref{hyp:h}. By Lemma \ref{lem:r1}, there exists $a>0$ such that
$$
\mathbb P\left[\omega\in\Omega:\liminf_{n\to\infty}\frac{r_n(\omega)}{n}>a\right]=1.
$$
Hence, for $\mathbb P$-almost every $\omega$, the random time
$$
N_H(\omega):=\inf\left\{N\in\mathbb N:r_n(\omega)\ge an \text{ for every }n\ge N\right\}
$$
is finite. Set $\chi_H:=\chi^a$ and
$$
\widetilde{K}_H(\omega):=\max\left\{1,\max_{1\le n< N_H(\omega)}\chi^{r_n(\omega)}\chi_H^{-n}\right\}.
$$
Then $\widetilde{K}_H$ is measurable and finite almost surely. If $n<N_H(\omega)$, the estimate follows from the definition of $\widetilde{K}_H(\omega)$. If $n\ge N_H(\omega)$, then $r_n(\omega)\ge an$, and therefore
\begin{align}
\chi^{r_n(\omega)}
\le
\chi^{an}
=
\chi_H^n
\le
\widetilde{K}_H(\omega)\chi_H^n
\ \text{for every }n\in\mathbb N.
\label{eq:decor}
\end{align}

For the backward estimate, we use the same counting argument as in Lemma \ref{lem:r1}. Indeed, applying \eqref{eq:sntheta} with $\theta^{-n}\omega$ in place of $\omega$, we get
\begin{align*}
S_n(\theta^{-n}\omega)
&=
\sum_{i=1}^{n}\mathbbm 1_A(\theta^i(\theta^{-n}\omega))  =
\sum_{i=1}^{n}\mathbbm 1_A(\theta^{i-n}\omega)
=
\sum_{j=0}^{n-1}\mathbbm 1_A(\theta^{-j}\omega).
\end{align*}
Since $\theta^{-1}$ is also $\mathbb P$-ergodic, Birkhoff's theorem gives
$$
\frac1nS_n(\theta^{-n}\omega)
=
\frac1n\sum_{j=0}^{n-1}\mathbbm 1_A(\theta^{-j}\omega)
\longrightarrow
\mathbb P[A]
$$
for $\mathbb P$-almost every $\omega$. Hence, arguing exactly as in Lemma \ref{lem:r1}, and decreasing $a>0$ if necessary, we obtain $
r_n(\theta^{-n}\omega)\ge an$
for every sufficiently large $n$, for $\mathbb P$-almost every $\omega$.

Thus, for $\mathbb P$-almost every $\omega$, the random time
$$
N_H'(\omega):=
\min\left\{
N\in\mathbb N:
r_n(\theta^{-n}\omega)\ge an
\text{ for every }n\ge N
\right\}
$$
is finite. Define
$$
\widetilde{K}_H'(\omega)
:=
\max\left\{
1,
\max_{1\le n<N_H'(\omega)}
\chi^{r_n(\theta^{-n}\omega)}\chi_H^{-n}
\right\}.
$$
Then, as above,
\begin{align}
\chi^{r_n(\theta^{-n}\omega)}
\le
\widetilde K_H'(\omega)\chi_H^n
\ \text{for every }n\in\mathbb N.
\label{eq:decor1}
\end{align}
The proof is finished by setting
$$
K_H(\omega):=\widetilde K_H(\omega)+\widetilde K_H'(\omega)
$$
and using \eqref{eq:decor} and \eqref{eq:decor1}.

In the following we show (2). Now assume Hypothesis \ref{hyp:h'}. Then there exist $\beta,K_\beta,\kappa_\beta>0$ such that
$$
\mathbb P[r_n\le \beta n]\le K_\beta e^{-\kappa_\beta n}
\  \text{for every } n\in\mathbb N.
$$
Define the last bad time
$$
N_\beta(\omega):=\sup\{n\ge1:r_n(\omega)\le \beta n\}\in \mathbb N\cup\{0\}.
$$
By Borel--Cantelli, $N_\beta(\omega)<\infty$ for $\mathbb P$-almost every $\omega$.
Moreover, for every $j\ge1$,
$$
\{N_\beta\ge j\}\subset \bigcup_{m\ge j}\{r_m\le \beta m\},
$$
hence
$$
\mathbb P[N_\beta\ge j]
\le \sum_{m\ge j}\mathbb P[r_m\le \beta m]
\le \frac{K_\beta}{1-e^{-\kappa_\beta}}e^{-\kappa_\beta j}.
$$

Fix $p\in[1,\infty)$ and choose $
\chi_p\in\bigl(\max\{\chi^\beta,e^{-\kappa_\beta/p}\},1\bigr).$
Set $\widetilde{K}_p(\omega):=\chi_p^{-N_\beta(\omega)}.$ If $n\le N_\beta(\omega)$, then $
\chi^{r_n(\omega)}\le 1\le \chi_p^{-N_\beta(\omega)}\chi_p^n=\widetilde{K}_p(\omega)\chi_p^n.$
If $n>N_\beta(\omega)$, then necessarily $r_n(\omega)>\beta n$, so $
\chi^{r_n(\omega)}\le \chi^{\beta n}\le \chi_p^n\le \widetilde{K}_p(\omega)\chi_p^n.$
Thus 
\begin{align}
\chi^{r_n(\omega)}\le \widetilde{K}_p(\omega)\chi_p^n \ \text{for each }n\in\mathbb N.\label{eq:decor2}
\end{align}

Finally, since $\chi_p^{-p}e^{-\kappa_\beta}<1$, we obtain
\begin{align*}
\mathbb E\left[\widetilde{K}_p^p\right]
&=\mathbb E\big[\chi_p^{-pN_\beta}\big]\le 1+\sum_{j=1}^\infty \chi_p^{-pj}\,\mathbb P[N_\beta\ge j]\\
&\le 1+\frac{K_\beta}{1-e^{-\kappa_\beta}}
\sum_{j=1}^\infty (\chi_p^{-p}e^{-\kappa_\beta})^j<\infty.
\end{align*}
Hence $\widetilde{K}_p\in L^p(\Omega)$. 

Define the random variable
$$N_\beta'(\omega) := \sup\{n\geq 1: r_n(\theta^{-n}\omega)\leq \beta n\}  \in \mathbb N\cup\{0\}.$$
Since
$$ \mathbb P[\omega\in \Omega : r_n(\theta^{-n}\omega)\leq \beta n] =  \mathbb P[ r_n \leq \beta n] \leq K_\beta e^{-\kappa_\beta n}.$$
 The same computations apply verbatim implying that when defining $\widetilde{K}_p'(\omega) = \chi_p^{-N_\beta'(\omega)}$ we obtain that $\widetilde{K}_p'\in L^p(\Omega)$ and 
\begin{align}
    \chi^{r_n(\theta^{-n}\omega)} \leq \widetilde{K}_p'(\omega) \chi_p^{n}\ \text{for each }n\geq 0.\label{eq:decor3}
\end{align}
By setting $K_p = \widetilde{K}_p + \widetilde{K}_p'$ from equations \eqref{eq:decor2} and \eqref{eq:decor3} we obtain the desired result.

\end{proof}

\begin{theorem}\label{thm:decay}
Assume Hypothesis \ref{hyp:h}, and let $\mu_\omega$ and $\ell_\omega$ be as in Theorem \ref{thm:spectralgap}. Then there exist a measurable function $K:\Omega\to\mathbb R$ and a constant $\chi\in(0,1)$ such that, for every $f\in \mathbb V_\omega$ and $n\geq1$,
$$
\left\|\frac{1}{\lambda_\omega^{(n)}}\mathcal L_\omega^{n}f-\ell_\omega(f)\mu_{\theta^n\omega}\right\|_{\theta^n\omega}\leq K(\omega)\|\ell_\omega\|_{\mathbb V_\omega^*}\|f\|_\omega\chi^n.
$$
If, in addition, Hypothesis \ref{hyp:h'} holds, given $p\in [1,\infty)$ there exists $\chi_p \in (0,1)$ and $K_p\in L^p(\Omega)$ such that 
$$
\left\|\frac{1}{\lambda_\omega^{(n)}}\mathcal L_\omega^{n}f-\ell_\omega(f)\mu_{\theta^n\omega}\right\|_{\theta^n\omega}\leq K_p(\omega)\|\ell_\omega\|_{\mathbb V_\omega^*} \|f\|_\omega\chi_p^n.
$$
\end{theorem}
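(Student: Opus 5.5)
The plan is to combine the remainder estimate of Theorem \ref{thm:spectralgap} with the return-frequency control of Lemma \ref{lem:chi-rn}. Fix $\omega\in\Omega_0$ and $f\in\mathbb V_\omega$. The quantity to estimate is exactly $(\lambda_\omega^{(n)})^{-1}Q_\omega^n f$. Indeed, by Theorem \ref{thm:spectralgap},
$$
\frac{1}{\lambda_\omega^{(n)}}\mathcal L_\omega^n f-\ell_\omega(f)\mu_{\theta^n\omega}=\frac{1}{\lambda_\omega^{(n)}}Q_\omega^n f,
$$
and therefore
$$
\left\|\frac{1}{\lambda_\omega^{(n)}}\mathcal L_\omega^n f-\ell_\omega(f)\mu_{\theta^n\omega}\right\|_{\theta^n\omega}\le K_0\|\ell_\omega\|_{\mathbb V_\omega^*}\|f\|_\omega\,\chi^{\max\{r_n(\omega)-1,0\}},
$$
valid for any sequence $\{\tau_i(\omega)\}$ satisfying \eqref{eq:tauser}. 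So the whole task reduces to choosing admissible return times $\tau_i$ and converting $\chi^{r_n(\omega)}$ into an exponential in $n$.

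Under Hypothesis \ref{hyp:h}, I take the $\tau_i$ of Lemma \ref{lem:r1}. These are $B$-sparse visits to $A=\{N_{n_0}\le B\}$, and $A$ and $B$ are those of Lemma \ref{lem:good-blocks}, so they satisfy \eqref{eq:tauser}. Since $\chi^{\max\{r_n-1,0\}}\le \chi^{-1}\chi^{r_n}$, Lemma \ref{lem:chi-rn}(1) gives
$$
\chi^{\max\{r_n(\omega)-1,0\}}\le \chi^{-1}K_H(\omega)\chi_H^n.
$$
I then set $K(\omega):=K_0\chi^{-1}K_H(\omega)$ and replace the rate $\chi$ in the statement by $\chi_H$, which is a deterministic constant in $(0,1)$. $K$ is measurable because $K_H$ is.

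Under Hypothesis \ref{hyp:h'}, I first use Lemma \ref{lem:Ni}(b) with $i=n_0$. It turns the exponential tail of $N_k$ into a tail for $N_k^{(n_0)}=N_{kn_0}$ of the form $\mathbb P[N_k^{(n_0)}\ge c k]\le Ke^{-\kappa k}$. This is exactly the hypothesis of Lemma \ref{lem:sparse-good-times}, which then produces $2c$-sparse times $\tau_i\in E(\omega)$ with exponentially small probability of $r_n\le\beta n$. Lemma \ref{lem:chi-rn}(2) then gives
$$
\chi^{r_n}\le K_p(\omega)\chi_p^n, \qquad K_p\in L^p.
$$
Multiplying by the deterministic constant $K_0\chi^{-1}$ keeps $K_p$ in $L^p$, which gives the second estimate.

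The main obstacle is checking that the times from Lemma \ref{lem:sparse-good-times} are admissible in \eqref{eq:tauser}, since that lemma selects $N_i^{(n_0)}$-times with short gap $\sigma_i\le 2c$ rather than visits to the set $A$. I would handle this as follows. At such a time $t=N_{i-1}^{(n_0)}(\omega)$, the next block satisfies $N_{n_0}(\theta^{t}\omega)=\sigma_i\le 2c$. So $\theta^{t}\omega\in A$ once $A$ is defined with $B:=\lceil 2c\rceil$, which is allowed because Lemma \ref{lem:good-blocks} only needs $\mathbb P(A)>0$ and a uniform bound on $D_2(\omega)$ over $A$. Sparsity $2c\le B$ then has to be matched, if needed by redoing the selection with gap $B$, which changes only the constants in Lemma \ref{lem:sparse-good-times}. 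There is also an index shift between $N_i$ and $N_{i-1}$ when labelling the time, and I would absorb it into the $\chi^{-1}$ factor.
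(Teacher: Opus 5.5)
Your proposal is correct and follows the paper's own argument. You divide the remainder bound of Theorem \ref{thm:spectralgap} by $\lambda_\omega^{(n)}$, use $\chi^{\max\{r_n-1,0\}}\le\chi^{-1}\chi^{r_n}$, and apply Lemma \ref{lem:chi-rn}(1) or (2), which gives $K=\chi^{-1}K_0K_H$ and $\chi^{-1}K_0K_p\in L^p$.

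The paper never checks that the times produced by Lemma \ref{lem:sparse-good-times} are admissible in \eqref{eq:tauser}. Your repair of this is sound: take $B=\lceil 2c\rceil$ in Lemma \ref{lem:good-blocks}, which is legitimate because $\mu_\omega,\ell_\omega,\lambda_\omega$ do not depend on $B$, and use the start times $N^{(n_0)}_{i-1}$ of the short blocks, which only changes constants.
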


\begin{proof} From Theorem \ref{thm:spectralgap}, for every $f\in\mathbb V_\omega$ and $n\ge1$,
$$
\left\|\frac{1}{\lambda_\omega^{(n)}}\mathcal L_\omega^{n}f
-\ell_\omega(f)\mu_{\theta^n\omega}\right\|_{\theta^n\omega}
\le K_0\|\ell_\omega\|_{\mathbb V_\omega^*}\|f\|_\omega
\chi^{\max\{r_n(\omega)-1,0\}}.
$$
Since $
\chi^{\max\{r_n(\omega)-1,0\}}\le \chi^{-1}\chi^{r_n(\omega)},$
from Lemma \ref{lem:chi-rn} we obtain that under Hypothesis \ref{hyp:h}, there exists a measurable function $K_H:\Omega\to \mathbb R$ and  $\chi_H \in (0,1)$ such that
$$
\chi^{\max\{r_n(\omega)-1,0\}}
\le \chi^{-1}K_H(\omega)\chi_H^n,
$$
and therefore
$$
\left\|\frac{1}{\lambda_\omega^{(n)}}\mathcal L_\omega^{n}f
-\ell_\omega(f)\mu_{\theta^n\omega}\right\|_{\theta^n\omega}
\le   K(\omega)\|\ell_\omega\|_{\mathbb V_\omega^*}\|f\|_\omega\chi_H^n,
$$
where $K(\omega):=\chi^{-1}K_0K_H(\omega)$.

Under Hypothesis \ref{hyp:h'}, Lemma \ref{lem:chi-rn} yields, for every $p\in[1,\infty)$,
$$
\chi^{\max\{r_n(\omega)-1,0\}}
\le \chi^{-1}K_p(\omega)\chi_p^n,
$$
with $K_p\in L^p(\Omega)$. Hence
$$
\left\|\frac{1}{\lambda_\omega^{(n)}}\mathcal L_\omega^{n}f
-\ell_\omega(f)\mu_{\theta^n\omega}\right\|_{\theta^n\omega}
\le   K'_p(\omega)\|\ell_\omega\|_{\mathbb V_\omega^*}\|f\|_\omega\chi_p^n,
$$
where $ K'_p:=\chi^{-1}K_0K_p\in L^p(\Omega)$.\end{proof}

\subsection{The candidate measure \texorpdfstring{$\upsilon_\phi$}{upsphi}, its invariance and decay of correlations}

We now use the random eigenvectors constructed in Theorem \ref{thm:spectralgap} to construct a candidate for a $\mathbb P$-relative equilibrium state for the potential $\phi- \phi^{J^s}$. For each $\omega$, we set
$\upsilon_\omega(f)=\ell_\omega(f\mu_\omega)$
for H\"older observables $f$. The main point is to prove that this functional is positive and normalised, and hence that it extends to a Borel probability measure on $M$.

This positivity is not immediate from the spectral construction. Indeed, at this stage $\ell_\omega$ is only a dual eigenfunctional on the cone-adapted space, and $\mu_\omega$ is obtained as a random eigenvector. We therefore prove positivity indirectly, using the cone structure together with the exponential convergence obtained above.

Once the measures $\upsilon_\omega$ have been constructed, we verify that they are transported by the dynamics and that they inherit decay of correlations from the spectral decomposition. This completes the passage from the analytic Perron--Frobenius cocycle to the dynamical object used later in the variational principle.  

The rest of this subsection is devoted to turning the spectral objects obtained in Theorem \ref{thm:spectralgap} into a genuine random family of probability measures. We first prove a sequence of auxiliary lemmas showing that the relevant functionals are well defined, positive, and bounded on H\"older observables. This culminates in the construction of the fibre measures $\upsilon_\omega$. We then prove Proposition \ref{prop:exponentialdecayofcorrelation}, which establishes their equivariance under the random dynamics and derives quenched decay of correlations from the spectral decomposition.
\begin{lemma}\label{lem:L}
Let $\rho\in D_1(a,\kappa,\gamma_\omega)$ and $g\in \mathcal C^\beta(M)$, with $\kappa\le \beta$. Set
$$
\widetilde D:=\sup_{\omega\in\Omega}\sup_{\gamma_\omega\in\mathscr F_\omega^s}
m_{\gamma_\omega}(\gamma_\omega)<\infty\ \text{and }D_{\min}:=\inf_{\omega\in\Omega}\inf_{\gamma_\omega\in\mathscr F_\omega^s}
m_{\gamma_\omega}(\gamma_\omega)>0.
$$
Let $
C_{\beta,\kappa}:=\max\{1,\widetilde D^{\beta-\kappa}\}$ and define
\begin{align}
L:=
\frac{4}{a}
D_{\min}^{-1}e^{a\widetilde D^\kappa}
(a+C_{\beta,\kappa})
\|g\|_{\mathcal C^\beta}.
\label{eq:L}
\end{align}
Then
$$
\frac{1}{\int_{\gamma_\omega}\left(1+\rho g/L\right)}
\left(1+\frac{\rho g}{L}\right)
\in D_1(a/2,\kappa,\gamma_\omega).
$$
Moreover,
\begin{align}
\int_{\gamma_\omega}\left(1+\rho g/L\right)
\le \widetilde D+1.
\label{eq:d+1}
\end{align}
\end{lemma}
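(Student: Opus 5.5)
The plan is to verify three things for $h:=1+\rho g/L$: it is positive, it satisfies the log-H\"older condition (D2) with constant $a/2$, and its integral over $\gamma_\omega$ satisfies \eqref{eq:d+1}. Normalising then gives membership in $D_1(a/2,\kappa,\gamma_\omega)$, since (D1)--(D2) are invariant under positive scaling.

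\emph{Sup bound on $\rho$.} Since $\rho\in D_1(a,\kappa,\gamma_\omega)$ and $\int_{\gamma_\omega}\rho=1$, there is a point $y_0\in\gamma_\omega$ with $\rho(y_0)\le m_{\gamma_\omega}(\gamma_\omega)^{-1}\le D_{\min}^{-1}$. By (D2) and the fact that $d(x,y_0)\le \widetilde D$ along the one-dimensional leaf (intrinsic distance is at most the length), we get
$$
\sup_{\gamma_\omega}\rho\le D_{\min}^{-1}e^{a\widetilde D^\kappa}.
$$
Hence
$$
\left\|\frac{\rho g}{L}\right\|_\infty\le \frac{a}{4(a+C_{\beta,\kappa})}\le \frac14,
$$
so $h\ge 3/4>0$, which gives (D1). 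The same bound gives $\int_{\gamma_\omega} h\le m_{\gamma_\omega}(\gamma_\omega)+\frac14\le \widetilde D+1$, which is \eqref{eq:d+1}.

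\emph{H\"older seminorm of $\rho g$.} For $x,y\in\gamma_\omega$ we split
$$
|\rho(x)g(x)-\rho(y)g(y)|\le |\rho(x)-\rho(y)|\,\|g\|_\infty+\sup\rho\,|g(x)-g(y)|.
$$
For the first term, (D2) gives $|\rho(x)-\rho(y)|\le \sup\rho\,(e^{a d^\kappa}-1)$. Since $a d^\kappa\le a\widetilde D^\kappa$, this is at most $\sup\rho\, e^{a\widetilde D^\kappa}a d^\kappa$. Here one factor $e^{a\widetilde D^\kappa}$ may be duplicated; I would absorb it by bounding $\sup\rho$ more carefully, or accept a slightly different constant, checking at this point that \eqref{eq:L} suffices. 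For the second term, $|g(x)-g(y)|\le [g]_\beta d^\beta\le [g]_\beta \widetilde D^{\beta-\kappa}d^\kappa\le C_{\beta,\kappa}[g]_\beta d^\kappa$. Altogether $[\rho g/L]_\kappa\lesssim \frac{a}{4}$ times a factor of order one.

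\emph{Log-H\"older bound.} By the mean value theorem,
$$
|\log h(x)-\log h(y)|\le \frac{1}{\inf h}\left|\frac{\rho g}{L}(x)-\frac{\rho g}{L}(y)\right|\le \frac43\cdot\frac{a}{4}\cdot(\text{factor})\,d(x,y)^\kappa,
$$
and this should be at most $\frac a2 d(x,y)^\kappa$. This is the same mechanism as in Lemma \ref{lem:5.1} and Proposition \ref{prop:disint}.

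The main obstacle is purely the bookkeeping of constants: making sure that the exact $L$ in \eqref{eq:L} controls both $\|\rho g/L\|_\infty$ and $[\rho g/L]_\kappa$ simultaneously, in particular handling the factor $e^{a\widetilde D^\kappa}$ that appears in $e^{ad^\kappa}-1$. If needed, I would use $e^t-1\le te^t$ and note that the stated choice of $L$ carries precisely one such exponential, together with the factor $4$ that leaves room for the $4/3$ coming from $\inf h$.
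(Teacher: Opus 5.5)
Your overall plan (sup bound on $\rho$, positivity of $h$, mean value theorem for $\log h$) matches the paper's. There is, however, a genuine gap. The one step that carries the content of the lemma is showing that the specific $L$ in \eqref{eq:L} yields the constant $a/2$. You leave it open, and the remedy you suggest does not close it.

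Write $R:=D_{\min}^{-1}e^{a\widetilde D^\kappa}$ for your bound on $\sup\rho$. You estimate $|\rho(x)-\rho(y)|\le R\,(e^{a d(x,y)^\kappa}-1)$ and then use $e^t-1\le te^t$. This gives $|\rho(x)-\rho(y)|\le e^{a\widetilde D^\kappa}R\,a\,d(x,y)^\kappa$, which contains two exponential factors, whereas $L$ contains only the one inside $R$. The extra factor comes from $e^t-1$ itself, not from the size of $\sup\rho$.

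Carrying your constants through (with your $\inf h\ge 3/4$) gives a log-H\"older constant $\frac a3\cdot\frac{a e^{a\widetilde D^\kappa}+C_{\beta,\kappa}}{a+C_{\beta,\kappa}}$. This is at most $a/2$ only when $a\widetilde D^\kappa$ is small. Since $a$ is enlarged repeatedly in the paper after the leaf scale has been fixed, you cannot assume that. As written, your argument proves the lemma only with $L$ multiplied by an extra $e^{a\widetilde D^\kappa}$.

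The missing idea is to factor out the larger of the two values. If $\rho(y)\le\rho(x)$, then
$\rho(x)-\rho(y)=\rho(x)\bigl(1-\rho(y)/\rho(x)\bigr)\le R\bigl(1-e^{-a d(x,y)^\kappa}\bigr)\le R\,a\,d(x,y)^\kappa$, using $1-e^{-t}\le t$. Equivalently, apply $|e^u-e^v|\le\max\{e^u,e^v\}\,|u-v|$ to $\log\rho$. This gives $[\rho g]_\kappa\le R\,(a+C_{\beta,\kappa})\|g\|_{\mathcal C^\beta}$, i.e.\ $[\rho g/L]_\kappa\le a/4$. Your bound $\inf h\ge 3/4$ then gives the constant $a/3\le a/2$; the paper uses $\inf h\ge 1/2$ and lands exactly on $a/2$.

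A smaller slip concerns \eqref{eq:d+1}. The bound $\|\rho g/L\|_\infty\le 1/4$ yields $\int_{\gamma_\omega}\rho g/L\le m_{\gamma_\omega}(\gamma_\omega)/4$, not $\le 1/4$. So you only get $\int_{\gamma_\omega} h\le \tfrac54\widetilde D$, which gives \eqref{eq:d+1} only if $\widetilde D\le 4$. That holds for the short admissible leaves, whose length is below $A(\varepsilon)$; otherwise $L$ must be enlarged, which is also how the paper handles this point.
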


\begin{proof}
Since $\int_{\gamma_\omega}\rho\,dm_{\gamma_\omega}=1$, there exists
$x_0\in\gamma_\omega$ such that $
\rho(x_0)=1/{m_{\gamma_\omega}(\gamma_\omega)}.$
As $\rho$ is log-Hölder, for every $x\in\gamma_\omega$,
\begin{align}
\rho(x)
\le
e^{a d(x,x_0)^\kappa}\rho(x_0)
\le
e^{a\widetilde D^\kappa}D_{\min}^{-1}.
\label{eq:max}
\end{align}
Set $R:=e^{a\widetilde D^\kappa}D_{\min}^{-1}.$
Given $x,y\in\gamma_\omega$, assume without loss of generality that $\rho(y)\le\rho(x)$. Then
\begin{align}
|\rho(x)-\rho(y)|
&=
\rho(x)\left(1-\frac{\rho(y)}{\rho(x)}\right)  \le
R\left(1-e^{-a d(x,y)^\kappa}\right)  \le
Ra d(x,y)^\kappa .
\label{eq:holder}
\end{align}
By the definition of $L$, we have $L\ge 2R\|g\|_{\mathcal C^\beta}$. Hence
$$
1+\frac{\rho(x)g(x)}{L}
\ge
1-\frac{R\|g\|_\infty}{L}
\ge
\frac12
$$
for every $x\in\gamma_\omega$. Now, for $x,y\in\gamma_\omega$, using \eqref{eq:max}, \eqref{eq:holder}, and
$\kappa\le\beta$, we obtain
\begin{align*}
\left|
\log\left(1+\frac{\rho g}{L}(x)\right)
-
\log\left(1+\frac{\rho g}{L}(y)\right)
\right|
&\le
\frac{2}{L}
|g(x)\rho(x)-g(y)\rho(y)|
\\
&\le
\frac{2}{L}
\left(
|g(x)|\,|\rho(x)-\rho(y)|
+
\rho(y)|g(x)-g(y)|
\right)
\\
&\le
\frac{2}{L}
\left(
\|g\|_\infty Ra d(x,y)^\kappa
+
R\|g\|_{\mathcal C^\beta}C_{\beta,\kappa}d(x,y)^\kappa
\right)
\\
&\le
\frac{2R(a+C_{\beta,\kappa})\|g\|_{\mathcal C^\beta}}{L}
d(x,y)^\kappa\le
\frac a2 d(x,y)^\kappa.
\end{align*}
Therefore $1+\rho g/L$ is positive and belongs to
$D(a/2,\kappa,\gamma_\omega)$. Multiplication by the positive constant
$$
\left(\int_{\gamma_\omega}(1+\rho g/L)\,dm_{\gamma_\omega}\right)^{-1}
$$
does not change the log-Hölder constant, and it normalises the integral to $1$. Hence
the normalised density belongs to $D_1(a/2,\kappa,\gamma_\omega)$.

Finally,
$$
\int_{\gamma_\omega}\left(1+\frac{\rho g}{L}\right)\,dm_{\gamma_\omega}
=
m_{\gamma_\omega}(\gamma_\omega)
+
\frac{1}{L}\int_{\gamma_\omega}g\rho\,dm_{\gamma_\omega}
\le
\widetilde D+\frac{\|g\|_\infty}{L}.
$$
Since $L\ge \|g\|_\infty$ after increasing the constant in \eqref{eq:L} if necessary, we get
$$
\int_{\gamma_\omega}\left(1+\frac{\rho g}{L}\right)\,dm_{\gamma_\omega}
\le
\widetilde D+1.
$$
\end{proof}

\begin{lemma}\label{lem:6.17}
After enlarging $b,c>0$, if necessary, in the definition of the cone
$\mathcal C_\omega(b,c,\nu)$, the following holds.
Let $f\in  \mathcal C^\beta(M). $
Then there exists $L_3>0$ such that, whenever $K\ge L_3\|f\|_{\mathcal C^\beta},$
one has
$$
\left(1+\frac{f}{K}\right)\mathcal L_\omega \eta \in \mathcal C_{\theta\omega}(b,c,\nu)
$$
for every $\eta\in \mathcal C_\omega(b,c,\nu)$. In particular, one obtains that
$$
\left(1+\frac{f}{K}\right)\mathcal L^n_\omega \eta \in \mathcal C_{\theta^n\omega}(b,c,\nu)
$$
for any $n\in\mathbb N$.
\end{lemma}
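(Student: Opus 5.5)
Set $\psi:=\mathcal L_\omega\eta$. By Proposition~\ref{prop:cone_invariance_random}, $\psi\in\mathcal C_{\theta\omega}(\lambda_2 b,\lambda_2 c,\nu)$, so $\psi$ has slack $1-\lambda_2$ in \textbf{(C2)} and \textbf{(C3)}. The plan is to show that multiplying by $1+f/K$, with $K$ large, perturbs the quantities in \textbf{(C1)}--\textbf{(C3)} by a factor of order $\|f\|_{\mathcal C^\beta}/K$, which that slack absorbs. By density and closedness of the cone it suffices to take $\eta\in C_\omega(b,c,\nu)$, so that $\psi$ is a genuine bounded function. The key device is Lemma~\ref{lem:L}: for $\rho\in D_1(a,\kappa,\gamma)$ we write $(1+f/K)\rho=Z_\rho\,\hat\rho$, where $\hat\rho:=(1+\rho f/K')/Z_\rho\in D_1(a/2,\kappa,\gamma)$ after rescaling (equivalently one can apply the lemma directly to $1+f/K$ with $\rho$ absorbed). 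Here $Z_\rho>0$ is a normalising constant with $|Z_\rho-1|\lesssim \|f\|_{\mathcal C^\beta}/K$. Thus $\int_\gamma (1+f/K)\psi\rho=Z_\rho\int_\gamma\psi\hat\rho$, and \textbf{(C1)} follows at once from \textbf{(C1)} for $\psi$.

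For \textbf{(C2)}, take $\rho,\varsigma\in D_1(a,\kappa,\gamma)$. The ratio to bound is
\[
\frac{Z_\rho}{Z_\varsigma}\,\frac{\int\psi\hat\rho}{\int\psi\hat\varsigma}\le \frac{Z_\rho}{Z_\varsigma}\,e^{\lambda_2 b\,\Theta^{a,\kappa}_\gamma(\hat\rho,\hat\varsigma)}.
\]
Two estimates are needed. First, $\Theta^{a,\kappa}_\gamma(\hat\rho,\hat\varsigma)\le \Theta^{a,\kappa}_\gamma(\rho,\varsigma)$: multiplying densities by the same positive log-Hölder function $1+f/K$ (the $K$-version, not the $\rho$-dependent one) is a cone map $D(a/2,\kappa)\to D(a,\kappa)$, hence does not expand $\Theta$; this is where the $a/2$ in Lemma~\ref{lem:L} is used. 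Second, $\log(Z_\rho/Z_\varsigma)\le C(\|f\|_{\mathcal C^\beta}/K)\,\Theta^{a,\kappa}_\gamma(\rho,\varsigma)$. This follows because $Z_\rho=1+K^{-1}\int f\rho$, and $|\int f\rho-\int f\varsigma|\le C\|f\|_{\mathcal C^\beta}\Theta(\rho,\varsigma)$ holds by the argument of Lemma~\ref{lem:normequiv} Step 2 applied to the Hölder function $f$, since Proposition~\ref{prop:disint} gives $\|f\|_{\theta\omega}\lesssim\|f\|_{\mathcal C^\beta}$. Choosing $K\ge L_3\|f\|_{\mathcal C^\beta}$ with $C/L_3\le(1-\lambda_2)b$ then gives the bound $e^{b\Theta}$. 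The enlargement of $b$ ensures this is compatible with all other constraints.

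\textbf{(C3)} goes the same way, and this is the main obstacle. For a nearby pair, the transported density $\tilde\rho$ corresponds under holonomy to $(1+f\circ\mathrm{hol}/K)\tilde\rho$ versus $(1+f/K)\rho$. One must compare $Z_{\tilde\rho}$ with $Z_\rho$, that is, $|\int_{\tilde\gamma}f\tilde\rho-\int_\gamma f\rho|$. This is bounded using $|f(x)-f(\mathrm{hol}\,x)|\le\|f\|_{\mathcal C^\beta}d(x,\mathrm{hol}\,x)^\beta\lesssim\|f\|_{\mathcal C^\beta}d_u^\nu$, since $\nu\le\beta$. One must also check that $\hat{\tilde\rho}$ is again the holonomy transport of $\hat\rho$ up to a factor $(1+f/K)/(1+f\circ\mathrm{hol}/K)=e^{O(\|f\|d_u^\nu/K)}$. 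That factor is handled by sandwiching, using \textbf{(C1)}-positivity of $\psi$ on $\tilde\gamma$. The result is a ratio $\le e^{(\lambda_2c+C\|f\|_{\mathcal C^\beta}/K)d_u^\nu}\le e^{c\,d_u^\nu}$ for $L_3$ large (enlarging $c$ if the $a_1,\kappa_1$ class requires it). The final claim follows by applying this at time $n$ with $\eta$ replaced by $\mathcal L^{n-1}_\omega\eta\in\mathcal C_{\theta^{n-1}\omega}$.
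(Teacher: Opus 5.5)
There is a genuine gap at the first step, and it carries through \textbf{(C1)}--\textbf{(C3)}. Lemma~\ref{lem:L} is about the function $1+\rho g/L$, not the product $\rho(1+g/L)$. For an arbitrary test density $\rho\in D_1(a,\kappa,\gamma)$, the normalised product $\hat\rho=(1+f/K)\rho/Z_\rho$ satisfies (D2) only with constant $a+\|f\|_{\mathcal C^\beta}/K$. Moreover, $\hat\rho\notin D(a,\kappa,\gamma)$ as soon as $\rho(x)/\rho(y)=e^{a d(x,y)^\kappa}$ at a pair with $f(x)>f(y)$, and such saturating densities exist.

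So $\hat\rho$ lies in neither $D_1(a/2,\kappa,\gamma)$ nor $D_1(a,\kappa,\gamma)$. The cone inequalities for $\psi=\mathcal L_\omega\eta$ test only against $D_1(a,\kappa,\gamma)$ (or $D_1(a_1,\kappa_1,\gamma)$ in \textbf{(C3)}), so they give no direct control of $\int_\gamma\psi\hat\rho$. Concretely:
\begin{itemize}
\item \textbf{(C1)} does not ``follow at once'' from \textbf{(C1)} for $\psi$.
\item In \textbf{(C2)}, $\Theta^{a,\kappa}_\gamma(\hat\rho,\hat\varsigma)$ is not even defined. The cone-map property you invoke needs $\rho,\varsigma\in D(a/2,\kappa,\gamma)$, which arbitrary test densities are not.
\item The slack $1-\lambda_2$ from Proposition~\ref{prop:cone_invariance_random} cannot compensate. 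It is slack in $b$ and $c$, not in the size $a$ of the test class.
\item Cone elements need not be pointwise nonnegative, so no order argument gives positivity against $\hat\rho$ either.
\end{itemize}

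The missing idea is to use one step of the dynamics instead of treating $\mathcal L_\omega\eta$ as a black box. Move the factor $1+f/K$ onto the test density on $\gamma_{\theta\omega}$ and pull back with Lemma~\ref{lem:push_density}. For $K\ge \frac{2\alpha_0}{a(1-\alpha_0)}\|f\|_{\mathcal C^\beta}$ one has $\rho(1+f/K)\in D(\frac a2+\frac a{2\alpha_0},\kappa,\gamma_{\theta\omega})$. The contraction of log-H\"older constants by $\alpha_0$ then puts the normalised pulled-back densities in $D_1(\frac{\alpha_0 a}{2}+\frac a2,\kappa,\gamma^{(i)}_\omega)\subset D_1(a,\kappa,\gamma^{(i)}_\omega)$, where the cone conditions of $\eta$ itself apply. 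From there:
\begin{itemize}
\item \textbf{(C1)} is immediate.
\item \textbf{(C2)} follows by rerunning the Birkhoff argument of Lemma~\ref{lem:densityimprovment}: the smaller class has finite $\Theta^{a,\kappa}$-diameter, giving some $h<1$. Combined with the $e^{\Theta}$ bound on the ratio of normalising masses, this forces $b>1/(1-h)$.
\item \textbf{(C3)} reruns Appendix~\ref{appendix:A}.
\end{itemize}
This is why $b$ and $c$ must be enlarged. Your perturbative scheme could be salvaged only by first strengthening Proposition~\ref{prop:cone_invariance_random}, so that $\mathcal L_\omega\eta$ satisfies the cone inequalities for the enlarged test class $D(\frac a2+\frac a{2\alpha_0},\kappa)$. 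Proving that strengthening is exactly this pullback argument. Your reduction of the $n$-step claim to the one-step claim is fine.
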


\begin{proof}
Let $0<\alpha_0<1$ be as in Lemma \ref{lem:push_density} and  fix $K\ge \frac{2\alpha_0}{a(1-\alpha_0)}\|f\|_{\mathcal C^\beta}$. We divide the proof into three steps.

\begin{step}[1]
We show that $\left(1+\frac{f}{K}\right)\mathcal L_\omega\eta$ satisfies \textbf{(C1)} of
$\mathcal C_{\theta\omega}(b,c,\nu)$.
\end{step}

Let $\gamma_{\theta\omega}\in \mathscr F_{\theta\omega}^s$ and
$\rho\in D(a,\kappa,\gamma_{\theta\omega})$. First observe that
\begin{align*}
\left|\log\!\left(\rho(x)\left(1+\frac{f(x)}{K}\right)\right)
-\log\!\left(\rho(y)\left(1+\frac{f(y)}{K}\right)\right)\right|
&\le a\,d(x,y)^\kappa+\frac{\|f\|_{\mathcal C^\beta}}{K}d(x,y)^\kappa \\
&\le \left(a+\frac{\|f\|_{\mathcal C^\beta}}{K}\right)d(x,y)^\kappa \\
&\le \frac12\left(a+\frac{a}{\alpha_0}\right)d(x,y)^\kappa.
\end{align*}
Thus
$$
\hat\rho:=\rho\left(1+\frac{f}{K}\right)\in
D\left(\frac a2+\frac{a}{2\alpha_0},\kappa,\gamma_{\theta\omega}\right).
$$
By Lemma \ref{lem:push_density},
\begin{align}
\int_{\gamma_{\theta\omega}}\rho\left(1+\frac{f}{K}\right)\mathcal L_\omega\eta
&=\int_{\gamma_{\theta\omega}}\hat\rho\,\mathcal L_\omega\eta =\sum_i
\left(\int_{\gamma_\omega^{(i)}}\hat\rho^{(i)}\right)
\left(
\int_{\gamma_\omega^{(i)}}
\frac{\hat\rho^{(i)}}{\int_{\gamma_\omega^{(i)}}\hat\rho^{(i)}}\,\eta
\right),
\label{eq:hatrho-fixed}
\end{align}
where, for each $i$,
\begin{align}
\frac{\hat\rho^{(i)}}{\int_{\gamma_\omega^{(i)}}\hat\rho^{(i)}}
\in
D_1\left(\frac{\alpha_0 a}{2}+\frac a2,\kappa,\gamma_\omega^{(i)}\right)
\subset D(a,\kappa,\gamma_\omega^{(i)}).
\label{eq:inclusion-fixed}
\end{align}
Since $\eta\in \mathcal C_\omega(b,c,\nu)$, each term in \eqref{eq:hatrho-fixed}
is nonnegative. Therefore
$$
\int_{\gamma_{\theta\omega}}\rho\left(1+\frac{f}{K}\right)\mathcal L_\omega\eta \ge 0,
$$
which proves \textbf{(C1)}.

\begin{step}[2]
We show that for  $b>0$ is chosen sufficiently large  then $\left(1+\frac{f}{K}\right)\mathcal L_\omega\eta$ satisfies \textbf{(C2)} of
$\mathcal C_{\theta\omega}(b,c,\nu)$.
\end{step}

Let $\gamma_{\theta\omega}\in \mathscr F_{\theta\omega}^s$ and
$\rho,\varsigma\in D_1(a,\kappa,\gamma_{\theta\omega})$. By \eqref{eq:hatrho-fixed},
\begin{align}
\int_{\gamma_{\theta\omega}}\rho\left(1+\frac{f}{K}\right)\mathcal L_\omega\eta
&=
\sum_i
\left(\int_{\gamma_\omega^{(i)}}\hat\rho^{(i)}\right)
\left(
\int_{\gamma_\omega^{(i)}}
\frac{\hat\rho^{(i)}}{\int_{\gamma_\omega^{(i)}}\hat\rho^{(i)}}\,\eta
\right),
\end{align}
and similarly
\begin{align*}
\int_{\gamma_{\theta\omega}}\varsigma\left(1+\frac{f}{K}\right)\mathcal L_\omega\eta
&=
\sum_i
\left(\int_{\gamma_\omega^{(i)}}\hat\varsigma^{(i)}\right)
\left(
\int_{\gamma_\omega^{(i)}}
\frac{\hat\varsigma^{(i)}}{\int_{\gamma_\omega^{(i)}}\hat\varsigma^{(i)}}\,\eta
\right).
\end{align*}
For each $i$, $\hat\rho^{(i)},\hat\varsigma^{(i)}
\in D\left(\frac{\alpha_0 a}{2}+\frac a2,\kappa,\gamma_\omega^{(i)}\right).$
Since $\eta\in \mathcal C_\omega(b,c,\nu)$ and $
D_1\left(\frac{\alpha_0 a}{2}+\frac a2,\kappa,\gamma_\omega^{(i)}\right)$
has finite diameter in the Hilbert metric of $D_1(a,\kappa,\gamma_\omega^{(i)})$,
the same argument as in the proof of Lemma \ref{lem:densityimprovment} yields
a constant $0<h<1$ such that
\begin{align}
\int_{\gamma_\omega^{(i)}}
\frac{\hat\rho^{(i)}}{\int_{\gamma_\omega^{(i)}}\hat\rho^{(i)}}\,\eta
&\le
e^{b\Theta_{\gamma_\omega^{(i)}}^{a,\kappa}
\left(\hat\rho^{(i)},\hat\varsigma^{(i)}\right)}
\int_{\gamma_\omega^{(i)}}
\frac{\hat\varsigma^{(i)}}{\int_{\gamma_\omega^{(i)}}\hat\varsigma^{(i)}}\,\eta
\notag\\
&\le
e^{h b\,\Theta_{\gamma_{\theta\omega}}^{a,\kappa}(\rho,\varsigma)}
\int_{\gamma_\omega^{(i)}}
\frac{\hat\varsigma^{(i)}}{\int_{\gamma_\omega^{(i)}}\hat\varsigma^{(i)}}\,\eta.
\label{eq:h-fixed}
\end{align}
Using \eqref{eq:h-fixed} and \eqref{eq:ineqc22}, we obtain
\begin{align*}
\int_{\gamma_{\theta\omega}}\rho\left(1+\frac{f}{K}\right)\mathcal L_\omega\eta
&=
\sum_i
\left(\int_{\gamma_\omega^{(i)}}\hat\rho^{(i)}\right)
\left(
\int_{\gamma_\omega^{(i)}}
\frac{\hat\rho^{(i)}}{\int_{\gamma_\omega^{(i)}}\hat\rho^{(i)}}\,\eta
\right) \\
&\le
\sum_i
e^{h b\,\Theta_{\gamma_{\theta\omega}}^{a,\kappa}(\rho,\varsigma)}
\left(\int_{\gamma_\omega^{(i)}}\hat\rho^{(i)}\right)
\left(
\int_{\gamma_\omega^{(i)}}
\frac{\hat\varsigma^{(i)}}{\int_{\gamma_\omega^{(i)}}\hat\varsigma^{(i)}}\,\eta
\right) \\
&\le
e^{h b\,\Theta_{\gamma_{\theta\omega}}^{a,\kappa}(\rho,\varsigma)
+\Theta_{\gamma_{\theta\omega}}^{a,\kappa}(\rho,\varsigma)}
\sum_i
\left(\int_{\gamma_\omega^{(i)}}\hat\varsigma^{(i)}\right)
\left(
\int_{\gamma_\omega^{(i)}}
\frac{\hat\varsigma^{(i)}}{\int_{\gamma_\omega^{(i)}}\hat\varsigma^{(i)}}\,\eta
\right) \\
&\le
e^{(h b+1)\Theta_{\gamma_{\theta\omega}}^{a,\kappa}(\rho,\varsigma)}
\int_{\gamma_{\theta\omega}}
\varsigma\left(1+\frac{f}{K}\right)\mathcal L_\omega\eta.
\end{align*}
Therefore, if $b>\frac{1}{1-h},$
then $\left(1+\frac{f}{K}\right)\mathcal L_\omega\eta$ satisfies \textbf{(C2)}.

\begin{step}[3]
We show that if $c>0$ is chosen sufficiently large then $\left(1+\frac{f}{K}\right)\mathcal L_\omega\eta$ satisfies \textbf{(C3)} of
$\mathcal C_{\theta\omega}(b,c,\nu)$. Which implies the lemma.
\end{step}

The proof of this step uses the decomposition \eqref{eq:hatrho-fixed}, the regularity
improvement in \eqref{eq:inclusion-fixed}, and then repeats verbatim the computation
used to check condition \textbf{(C3)} in the proof of
Proposition \ref{prop:cone_invariance_random}. The only modifications are the same
minor adaptations already made in Step 2.
\end{proof}

From now on we enlarge the constants $b,c>0$ so the conclusions of Lemma \ref{lem:6.17} holds.

\begin{corollary} \label{cor:cone}
 Let $\mu_\omega \in \mathcal C_\omega (b,c,\nu)$ constructed in \ref{thm:spectralgap}, $g\in \mathcal C^\beta(M)$, $L_3>0$ as in Lemma \ref{lem:6.17}. Then,
 $$\left(1+ \frac{g}{L_3 \|g\|_{\mathcal C^\beta}}\right) \mu_\omega \in \mathcal C_\omega(b,c,\nu).$$
\end{corollary}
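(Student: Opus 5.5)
The plan is to obtain the corollary from Lemma~\ref{lem:6.17}, using the equivariance $\mathcal L_\omega\mu_\omega=\lambda_\omega\mu_{\theta\omega}$ from Theorem~\ref{thm:spectralgap} to write $\mu_\omega$ as an image of the operator. We may assume $g\neq 0$, since otherwise the statement is trivial because $\mu_\omega\in\mathcal C_\omega(b,c,\nu)$. We then set $K:=L_3\|g\|_{\mathcal C^\beta}$, so that the hypothesis $K\ge L_3\|g\|_{\mathcal C^\beta}$ of Lemma~\ref{lem:6.17} holds with equality.

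Next we apply the equivariance at the point $\theta^{-1}\omega$. Since $\Omega_0$ is $\theta$-invariant, $\theta^{-1}\omega\in\Omega_0$, and Theorem~\ref{thm:spectralgap} gives
$$
\mu_\omega=\lambda_{\theta^{-1}\omega}^{-1}\,\mathcal L_{\theta^{-1}\omega}\mu_{\theta^{-1}\omega},
\qquad \lambda_{\theta^{-1}\omega}>0,
$$
with $\eta:=\mu_{\theta^{-1}\omega}\in\mathcal C_{\theta^{-1}\omega}(b,c,\nu)$. Lemma~\ref{lem:6.17}, applied over the base point $\theta^{-1}\omega$, then yields
$$
\Bigl(1+\frac{g}{K}\Bigr)\mathcal L_{\theta^{-1}\omega}\eta\in\mathcal C_\omega(b,c,\nu).
$$
The cone is invariant under multiplication by the positive scalar $\lambda_{\theta^{-1}\omega}^{-1}$, so the claim follows.

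The main subtlety is that $\mu_{\theta^{-1}\omega}$ lies in the closure $\mathcal C_{\theta^{-1}\omega}(b,c,\nu)$ of $\iota(C(b,c,\nu))$ inside $\mathbb V$, not necessarily in the cone of bounded functions. We also need the product $(1+g/K)\,\cdot$ to make sense on $\mathbb V$. If Lemma~\ref{lem:6.17} is understood only for $\eta$ in the function cone, we would argue by approximation.

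For the approximation, take $\eta_k\in C_{\theta^{-1}\omega}(b,c,\nu)$ with $\eta_k\to\mu_{\theta^{-1}\omega}$ in $\|\cdot\|_{\theta^{-1}\omega}$. Since $\mathcal L_{\theta^{-1}\omega}$ is bounded, $\mathcal L_{\theta^{-1}\omega}\eta_k\to\lambda_{\theta^{-1}\omega}\mu_\omega$ in $\|\cdot\|_\omega$. To pass the multiplication to the limit, we would check that $h\mapsto(1+g/K)h$ is bounded on $(\mathrm{BM}_\omega(M),\|\cdot\|_\omega)$. The idea is that multiplying a density $\rho\in D_1(a,\kappa,\gamma)$ by $1+g/K$ produces, after normalisation, a density in a slightly larger class. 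This is exactly the computation in Step~1 of Lemma~\ref{lem:6.17} and in Lemma~\ref{lem:L}; the relevant Hilbert distances and holonomy terms are controlled by constants depending only on $\|g\|_{\mathcal C^\beta}$. This bound lets the product extend continuously to $\mathbb V_\omega$, which is also how it should be interpreted for $\mu_\omega$.

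Granting this, $(1+g/K)\mathcal L_{\theta^{-1}\omega}\eta_k\to\lambda_{\theta^{-1}\omega}(1+g/K)\mu_\omega$. Each term of the sequence lies in $\mathcal C_\omega(b,c,\nu)$, which is closed. The limit is nonzero, since by Lemma~\ref{lem:norm1} the multiplier is at least $1/2$ pointwise, so leafwise integrals of the limit stay positive. Hence the limit lies in the cone, and dividing by $\lambda_{\theta^{-1}\omega}$ concludes.

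I expect this continuity-of-multiplication step to be the only real obstacle.
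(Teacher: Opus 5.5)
Your main argument is exactly the paper's proof. The paper writes $\bigl(1+\frac{g}{L_3\|g\|_{\mathcal C^\beta}}\bigr)\mu_\omega=\lambda_{\theta^{-1}\omega}^{-1}\bigl(1+\frac{g}{L_3\|g\|_{\mathcal C^\beta}}\bigr)\mathcal L_{\theta^{-1}\omega}\mu_{\theta^{-1}\omega}$ and invokes Lemma~\ref{lem:6.17} over $\theta^{-1}\omega$. You place $\mu_{\theta^{-1}\omega}$ in the correct cone $\mathcal C_{\theta^{-1}\omega}(b,c,\nu)$, where the paper has a typo. The paper applies Lemma~\ref{lem:6.17} directly to $\eta=\mu_{\theta^{-1}\omega}$, since that lemma is stated for the closed cone $\mathcal C_\omega(b,c,\nu)\subset\mathbb V_\omega$, so no approximation appears.

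Your fallback for the closure issue rests on a step that the computations you cite do not justify. Boundedness of the bare multiplication $h\mapsto(1+g/K)h$ on $(\mathrm{BM}_\omega(M),\|\cdot\|_\omega)$ is not ``exactly'' Step~1 of Lemma~\ref{lem:6.17}. That step only places $\rho(1+g/K)$ in $D\bigl(\tfrac a2+\tfrac a{2\alpha_0},\kappa,\gamma\bigr)$, whose constant exceeds $a$. It is the pull-back under $\mathcal L$ that returns the densities to $D(a,\kappa,\cdot)$. Worse, the $d_u$-seminorm tests against $D_1(a_1,\kappa_1,\gamma)$ with $\kappa_1$ close to $1$, possibly above $\beta$. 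Then $g\rho$ is not even $\kappa_1$-H\"older, so it cannot be split into admissible test densities, and Lemma~\ref{lem:L} does not help there.

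The robust way is never to separate the multiplier from the operator. Set $A:=(1+g/K)\mathcal L_{\theta^{-1}\omega}$. By Lemma~\ref{lem:6.17} on functions and Lemma~\ref{lem:archi}, $-3\|\eta\|_{\theta^{-1}\omega}\mathbbm 1\preceq\eta\preceq3\|\eta\|_{\theta^{-1}\omega}\mathbbm 1$ gives $\|A\eta\|^*_\omega\le3\|\eta\|_{\theta^{-1}\omega}\|A\mathbbm 1\|^*_\omega$, exactly as in the proof that $\mathcal L_\omega$ is bounded. Hence $A$ extends continuously to $\mathbb V_{\theta^{-1}\omega}$ and maps $\mathcal C_{\theta^{-1}\omega}(b,c,\nu)\cup\{0\}$ into $\mathcal C_\omega(b,c,\nu)\cup\{0\}$. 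The product $(1+g/K)\mu_\omega$ is then understood as $\lambda_{\theta^{-1}\omega}^{-1}A\mu_{\theta^{-1}\omega}$, which is how the paper implicitly uses it afterwards (e.g.\ in Lemma~\ref{lem:nuprob}).

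Finally, the pointwise bound $1+g/K\ge\frac12$ used in your non-vanishing argument does not come from Lemma~\ref{lem:norm1}. It requires $L_3\ge2$, which you should impose by enlarging $L_3$.
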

\begin{proof}
Since $\mu_{\theta^{-1}\omega} \in \mathcal C_{\omega}(b,c,\nu)$ and
$$\left(1+ \frac{g}{L_3 \|g\|_{\mathcal C^\beta}}\right) \mu_\omega = \frac{1}{\lambda_{\theta^{-1}\omega}}\left(1+ \frac{g}{L_3 \|g\|_{\mathcal C^\beta}}\right) \mathcal L_{\theta^{-1} \omega} \mu_{\theta^{-1}\omega}$$
we obtain the desired result from Lemma \ref{lem:6.17}.    
\end{proof}

\begin{lemma}\label{lem:pos}
Let $f\in \mathcal C^\beta(M)$, and let $L_3>0$ be as in Lemma \ref{lem:6.17}. For each $n\in \mathbb N$, the formula
$$
\Psi_{n,f}(\eta):=
\ell_{\theta^n\omega}\!\left(\left(1+\frac{f}{L_3\|f\|_{\mathcal C^\beta}}\right)\mathcal L_\omega^n\eta\right)
$$
defines a continuous linear functional on $\mathbb V_\omega$. Moreover, $\Psi_{n,f}$ is positive with respect to the order relation $\preceq_\omega$, that is,
$$
\eta_1\preceq_\omega \eta_2,\ \text{then } 
\Psi_{n,f}(\eta_1)\le \Psi_{n,f}(\eta_2).
$$
Equivalently, $\Psi_{n,f}(\eta)\ge 0
\  \text{for every } \eta\in \mathcal C_\omega(b,c,\nu).$
\end{lemma}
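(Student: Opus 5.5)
The plan has three parts: linearity and continuity, positivity, and the equivalence of the two formulations.

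\emph{Linearity and continuity.} Write $\Psi_{n,f}=\ell_{\theta^n\omega}\circ M_f\circ\mathcal L_\omega^n$, where $M_f$ denotes multiplication by $1+f/(L_3\|f\|_{\mathcal C^\beta})$. Each factor is linear, so linearity is immediate. For continuity:
\begin{itemize}
\item $\mathcal L_\omega^n:\mathbb V_\omega\to\mathbb V_{\theta^n\omega}$ is bounded, by iterating the boundedness of $\mathcal L_{\theta^j\omega}$ proved after Proposition \ref{prop:cone_invariance_random};
\item $\ell_{\theta^n\omega}\in\mathbb V_{\theta^n\omega}^*$ by Theorem \ref{thm:spectralgap}.
\end{itemize}
It remains to see that $\eta\mapsto M_f\mathcal L_\omega^n\eta$ is a bounded map into $\mathbb V_{\theta^n\omega}$. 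For $\eta\in\mathbb V_\omega$, Lemma \ref{lem:normequiv} and the Archimedean property give $-\lambda\mathbbm 1\preceq_\omega\eta\preceq_\omega\lambda\mathbbm 1$ with $\lambda=3\|\eta\|_\omega$. Hence $\eta=(\lambda\mathbbm 1+\eta)-\lambda\mathbbm 1$ is a difference of two cone elements whose norms are bounded by a multiple of $\|\eta\|_\omega$.

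For a cone element $\zeta$, Lemma \ref{lem:6.17} gives $M_f\mathcal L_\omega^n\zeta\in\mathcal C_{\theta^n\omega}(b,c,\nu)$. Since $0\le 1+f/(L_3\|f\|)\le 2$, we also have
\[
0\preceq M_f\mathcal L_\omega^n\zeta\preceq 2\mathcal L_\omega^n\zeta,
\]
because $(1-f/(L_3\|f\|))\mathcal L^n_\omega\zeta$ lies in the cone by Lemma \ref{lem:6.17} applied to $-f$. The order-norm monotonicity of $\|\cdot\|^*$ then yields $\|M_f\mathcal L^n_\omega\zeta\|^*\le 2\|\mathcal L^n_\omega\zeta\|^*$, and Lemma \ref{lem:normequiv} converts this back to $\|\cdot\|_{\theta^n\omega}$. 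This bound holds first on $\mathrm{BM}_\omega(M)$ and then extends by density to $\mathbb V_\omega$, provided the multiplication is well defined on the completion. I expect that extension, which means checking that the order relations pass to limits, to be a technical point. It is handled by the closedness of the cones.

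\emph{Positivity.} I will reduce positivity to showing that $\ell_{\theta^n\omega}$ is nonnegative on $\mathcal C_{\theta^n\omega}(b,c,\nu)$. Once that holds, for $\eta\in\mathcal C_\omega$ Lemma \ref{lem:6.17} puts $M_f\mathcal L^n_\omega\eta$ in the cone, so $\Psi_{n,f}(\eta)\ge0$.

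To prove $\ell_{\theta^n\omega}\ge0$ on the cone, recall from Step 3 of Theorem \ref{thm:spectralgap} that on the cone $\ell$ agrees with
\[
\ell_{0}(g)=\limsup_k \frac{\|\mathcal L^k g\|}{\lambda^{(k)}}\ \ge 0 .
\]
Indeed, for $g$ in the cone, \eqref{eq:cone-asymptotic} shows that $\lambda^{(k)-1}\mathcal L^k g-\ell_0(g)\mu\to0$. Applying $\ell_{\theta^{k}\cdot}$ and using the equivariance $\ell\circ\mathcal L=\lambda\ell$, one gets $\ell(g)=\ell_0(g)\,\ell(\mu)=\ell_0(g)$. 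This identification after the normalisation $\ell_\omega(\mu_\omega)=1$ is the main obstacle. It must be checked that $\ell_\omega$, defined via the shift by $3\|g\|\mathbbm 1$, coincides with $\ell_{0,\omega}$ on cone elements. I would verify this using additivity of $\ell_0$ on the cone, which follows from \eqref{eq:cone-asymptotic}: $\ell_0(3\|g\|\mathbbm 1+g)=\ell_0(3\|g\|\mathbbm 1)+\ell_0(g)$ when $g$ is itself in the cone.

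\emph{Equivalence of the two formulations.} Finally, the order formulation follows from linearity: $\eta_1\preceq\eta_2$ means $\eta_2-\eta_1$ lies in the cone or equals $0$, so $\Psi_{n,f}(\eta_2)-\Psi_{n,f}(\eta_1)=\Psi_{n,f}(\eta_2-\eta_1)\ge0$.
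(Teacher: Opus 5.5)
Your proposal is correct, but it is organised differently from the paper's proof.

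\textbf{Continuity.} The paper proves positivity first. It invokes that $\ell_{\theta^n\omega}$ is nonnegative on $\mathcal C_{\theta^n\omega}(b,c,\nu)$, together with Lemma~\ref{lem:6.17}. It then gets continuity for free from the order unit: for $\eta\in\mathrm{BM}_\omega(M)$, the relation $-3\|\eta\|_\omega\mathbbm 1\preceq_\omega\eta\preceq_\omega 3\|\eta\|_\omega\mathbbm 1$ and monotonicity give $|\Psi_{n,f}(\eta)|\le 3\Psi_{n,f}(\mathbbm 1)\|\eta\|_\omega$, after which $\Psi_{n,f}$ is extended by density. You instead prove the stronger statement that $\eta\mapsto(1+f/(L_3\|f\|_{\mathcal C^\beta}))\mathcal L^n_\omega\eta$ is a bounded operator into $\mathbb V_{\theta^n\omega}$, and then compose with $\ell_{\theta^n\omega}$. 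The paper's route is shorter. Yours shows that the displayed formula is itself meaningful and continuous on all of $\mathbb V_\omega$, which the paper tacitly uses later (when it applies $\Psi_{1,f}$ to $\mu_{\theta^{-1}\omega}$, and in Corollary~\ref{cor:cone}).

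Your two-sided sandwich via Lemma~\ref{lem:6.17} applied to $-f$ is not actually needed. Applying Lemma~\ref{lem:6.17} to $\lambda\mathbbm 1\pm\eta$ with $\lambda=3\|\eta\|_\omega$ already gives $-\lambda G\preceq_{\theta^n\omega}(1+f/(L_3\|f\|_{\mathcal C^\beta}))\mathcal L^n_\omega\eta\preceq_{\theta^n\omega}\lambda G$, where $G:=(1+f/(L_3\|f\|_{\mathcal C^\beta}))\mathcal L^n_\omega\mathbbm 1$. This yields the operator bound directly.

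\textbf{Positivity of $\ell$.} You also justify the positivity of $\ell_{\theta^n\omega}$ on the cone, which the paper only asserts. Your additivity argument is the right one. Apply \eqref{eq:cone-asymptotic} to $g$, to $3\|g\|_\omega\mathbbm 1$, and to their sum, and use $\|\mu_{\theta^n\omega}\|_{\theta^n\omega}=1$; this gives $\ell_\omega=\ell_{0,\omega}\ge0$ on $\mathcal C_\omega(b,c,\nu)$. Moreover $\ell_{0,\omega}(\mu_\omega)=1$ already, so no renormalisation is needed.

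Your first heuristic for this identification, applying $\ell_{\theta^k\omega}$ to the asymptotic relation, is weaker. It would additionally need control of $\|\ell_{\theta^k\omega}\|_{\mathbb V_{\theta^k\omega}^*}$ along the orbit, for instance by restricting to return times to a set where this norm is bounded. Rely on the additivity version.
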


\begin{proof}
We first define $\Psi_{n,f}$ on the dense subspace $BM_\omega(M)\subset \mathbb V_\omega$.

Let $\eta\in \mathcal C_\omega(b,c,\nu)$. By Lemma \ref{lem:6.17},
$$
\left(1+\frac{f}{L_3\|f\|_{\mathcal C^\beta}}\right)\mathcal L_\omega^n\eta
\in \mathcal C_{\theta^n\omega}(b,c,\nu).
$$
Since $\ell_{\theta^n\omega}$ is positive on $\mathcal C_{\theta^n\omega}(b,c,\nu)$, it follows that $\Psi_{n,f}(\eta)\ge 0.$
Thus $\Psi_{n,f}$ is positive on the cone, and therefore order-preserving with respect to $\preceq_\omega$.

We now prove boundedness. Let $\eta\in BM_\omega(M)$. By Lemma \ref{lem:archi},
$$
-3\|\eta\|_\omega\,\mathbbm 1 \preceq_\omega \eta \preceq_\omega 3\|\eta\|_\omega\,\mathbbm 1.
$$
Since $\Psi_{n,f}$ is order-preserving, we obtain
$$
-3\|\eta\|_\omega\,\Psi_{n,f}(\mathbbm 1)
\le
\Psi_{n,f}(\eta)
\le
3\|\eta\|_\omega\,\Psi_{n,f}(\mathbbm 1).
$$
Hence
$ |\Psi_{n,f}(\eta)|
\le 3\,\Psi_{n,f}(\mathbbm 1)\,\|\eta\|_\omega.$
So $\Psi_{n,f}$ is bounded on $BM_\omega(M)$, and therefore extends uniquely to a continuous linear functional on $\mathbb V_\omega$. This proves the lemma.
\end{proof}


\begin{lemma}\label{lem:C}
    Consider $f \in \mathcal C^{\beta}(M)$, then there exists $C_0>0$ such that $$\|f\|_\omega\leq C_0 \|f\|_{\mathcal C^\beta}\ \text{for every }\omega\in \Omega.$$  

\end{lemma}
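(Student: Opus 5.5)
The plan is to bound each of the three seminorms in Definition~\ref{def:normomega} separately by a constant multiple of $\|f\|_{\mathcal C^\beta}$, with constants depending only on $a,\kappa,a_1,\kappa_1,\nu,b,c$ and the uniform geometric constants of Section~\ref{sec:geom}. None of these constants may depend on $\omega$.

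\textbf{The $\sup_s$ term.} For $\gamma_\omega\in\mathscr F^s_\omega$ and $\rho\in D_1(a,\kappa,\gamma_\omega)$ we have $|\int_{\gamma_\omega} f\rho|\le \|f\|_\infty\int_{\gamma_\omega}\rho=\|f\|_\infty$. Hence $\|f\|^{\sup_s}_{\omega,a,\kappa}\le\|f\|_{\mathcal C^\beta}$.

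\textbf{The $\Theta_s$ term.} Fix $\rho^{(1)},\rho^{(2)}\in D_1(a,\kappa,\gamma_\omega)$ and write $\Theta=\Theta^{a,\kappa}_{\gamma_\omega}(\rho^{(1)},\rho^{(2)})$. The first step is to bound the sup-distance of the densities. Because both densities are normalised, the definitions of $\alpha,\beta$ give $\alpha\le 1\le\beta$ and $\alpha\rho^{(1)}\le\rho^{(2)}\le\beta\rho^{(1)}$ pointwise. It follows that
\[
|\rho^{(2)}-\rho^{(1)}|\le(\beta-\alpha)\rho^{(1)}\le (e^{\Theta}-1)\rho^{(1)}.
\]
This gives $|\int f(\rho^{(1)}-\rho^{(2)})|\le \|f\|_\infty(e^\Theta-1)$. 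When $\Theta\le1$ this is at most $e\|f\|_\infty\Theta$. When $\Theta>1$ we use instead the trivial bound $2\|f\|_\infty\le 2\|f\|_\infty\Theta$. In both cases $\|f\|^{\Theta_s}_{\omega,a,\kappa}\le e\|f\|_{\mathcal C^\beta}$.

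\textbf{The $d_u$ term.} This is where I expect the main work, and where the H\"older regularity of $f$ is used. Take a nearby pair $(\tilde\gamma_\omega,\gamma_\omega)$, a density $\rho\in D_1(a_1,\kappa_1,\gamma_\omega)$, and the transported density $\tilde\rho=\rho\circ\mathrm{hol}^u_\omega\,\mathrm{Jac}(\mathrm{hol}^u_\omega)$. By the change of variables in Proposition~\ref{prop:holjac},
\[
\int_{\tilde\gamma_\omega} f\tilde\rho=\int_{\gamma_\omega} f\circ(\mathrm{hol}^u_\omega)^{-1}\,\rho .
\]
Therefore
\[
\Bigl|\int_{\gamma_\omega} f\rho-\int_{\tilde\gamma_\omega} f\tilde\rho\Bigr|\le [f]_\beta\sup_{y\in\gamma_\omega} d\bigl(y,(\mathrm{hol}^u_\omega)^{-1}y\bigr)^\beta .
\]
Since $y$ and $(\mathrm{hol}^u_\omega)^{-1}y$ lie on the same local unstable manifold, their distance is controlled by $d_u(\tilde\gamma_\omega,\gamma_\omega)$ up to a uniform constant $C$. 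The reason is that local unstable manifolds are uniformly $\mathcal C^2$ discs (Proposition~\ref{prop:localmanifolds}) and holonomy distances are uniformly comparable, by Proposition~\ref{prop:holderhol} and the H\"older continuity of leaves. Since $\nu\le\beta$ and all distances are bounded by a uniform constant $\varepsilon$, we get
\[
d(y,(\mathrm{hol}^u_\omega)^{-1}y)^\beta\le C^\beta\varepsilon^{\beta-\nu}d_u^\nu .
\]
The delicate point is matching this with the paper's definition of $d_u$. If $d_u$ is defined as the supremum of the holonomy displacement, the bound is immediate. Otherwise I would invoke the uniform comparability from \cite[Section 3]{liu2024exponential}, whose constants are uniform in $\omega$.

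\textbf{Conclusion.} Summing the three bounds with the weights $1,1/b,1/c$ gives
\[
\|f\|_\omega\le\bigl(1+e/b+C^\beta\varepsilon^{\beta-\nu}/c\bigr)\|f\|_{\mathcal C^\beta}=:C_0\|f\|_{\mathcal C^\beta}.
\]
All constants involved are uniform $\mathbb P$-almost surely, so $C_0$ does not depend on $\omega$.
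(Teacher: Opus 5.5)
Your proposal is correct and follows the same route as the paper. Like the paper, you bound the three seminorms separately: $\int\rho=1$ handles the $\sup_s$ term, the pointwise ratio bounds between $\rho^{(1)}$ and $\rho^{(2)}$ handle the $\Theta_s$ term, and the unstable-holonomy change of variables plus $\beta$-H\"older continuity of $f$ handle the $d_u$ term. The differences are minor improvements. Your case split $\Theta\le 1$ versus $\Theta>1$ replaces the paper's appeal to the finite $D_+$-diameter $\Delta$ of $D(a,\kappa,\gamma_\omega)$, giving the constant $e$ instead of $e^{\Delta}$. You also get the holonomy direction right ($f\circ(\mathrm{hol}^u_\omega)^{-1}$), and you absorb the exponent gap $\beta-\nu$ through the uniform bound $\varepsilon^{\beta-\nu}$, where the paper tacitly assumes distances are at most $1$.
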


\begin{proof}
Let $f\in \mathcal C^\beta(M)$. Recall that
\begin{equation*}
    \|f\|_\omega:= \|f\|_{\omega,a,\kappa}^{\sup_s} + \frac{1}{b} \|f\|_{\omega,a,\kappa}^{\Theta_s} + \frac{1}{c} \|f\|_{\omega,\nu}^{d_u}.
\end{equation*}
Then
\begin{equation*}
     \|f\|_{\omega,a,\kappa}^{\sup_s} = \sup_{\gamma \in \mathscr{F}^s_{\omega}} \sup_{\rho_\omega \in D_1(a,\kappa,\gamma_{\omega})} \left|\int_{\gamma_{\omega}} f \rho_\omega \right| \leq \|f\|_{\mathcal C^\beta}.
\end{equation*}
Moreover,
\begin{align*}
\|f\|_{\omega,a,\kappa}^{\Theta_s}
&\leq \|f\|_{\mathcal C^\beta}
\sup_{\gamma \in \mathscr{F}^s_{\omega}}
\sup_{\rho^{(1)}_\omega, \rho^{(2)}_\omega \in D_1(a,\kappa,\gamma_{\omega}) }
\frac{\int_{\gamma_{\omega}} \left| \rho^{(1)}_\omega  -   \rho^{(2)}_\omega \right|}{\Theta_{\gamma_{\omega}}^{a,\kappa}(\rho^{(1)}_\omega,\rho^{(2)}_\omega)}\\
&\leq \|f\|_{\mathcal C^\beta}
\sup_{\gamma \in \mathscr{F}^s_{\omega}}
\sup_{\rho^{(1)}_\omega, \rho^{(2)}_\omega \in D_1(a,\kappa,\gamma_{\omega}) }
\frac{\int_{\gamma_{\omega}} \rho^{(1)}_\omega\left|1 - \frac{\rho^{(2)}_\omega}{\rho^{(1)}_\omega}  \right|}{\Theta_{\gamma_{\omega}}^{a,\kappa}(\rho^{(1)}_\omega,\rho^{(2)}_\omega)}\\
&\leq \|f\|_{\mathcal C^\beta}
\sup_{\gamma \in \mathscr{F}^s_{\omega}}
\sup_{\rho^{(1)}_\omega, \rho^{(2)}_\omega \in D_1(a,\kappa,\gamma_{\omega}) }
\frac{\max\{e^{\Theta^+_{\gamma_\omega}(\rho_\omega^{(1)},\rho_\omega^{(2)})} -1 , 1 - e^{-\Theta^+_{\gamma_\omega}(\rho_\omega^{(1)},\rho_\omega^{(2)})}  \}}{\Theta_{\gamma_{\omega}}^{a,\kappa}(\rho^{(1)}_\omega,\rho^{(2)}_\omega)}\\
& \leq \|f\|_{\mathcal C^\beta}
\sup_{\gamma \in \mathscr{F}^s_{\omega}}
\sup_{\rho^{(1)}_\omega, \rho^{(2)}_\omega \in D_1(a,\kappa,\gamma_{\omega}) }
e^\Delta\frac{\Theta^+_{\gamma_\omega}(\rho^{(1)}_\omega,\rho^{(2)}_\omega)}{\Theta_{\gamma_{\omega}}^{a,\kappa}(\rho^{(1)}_\omega,\rho^{(2)}_\omega)}
\leq e^{\Delta} \|f\|_{\mathcal C^\beta}.
\end{align*}
The last inequality follows from the mean value theorem and from the fact that $D(a,\kappa,\gamma_\omega)$ has finite diameter, say $\Delta>0$, when viewed as a subset of $D_+(\gamma_\omega)$ endowed with its Hilbert metric.

Finally,
\begin{align*}
\|f\|_{\omega, \nu}^{d_u}
&= \sup_{\substack{ (\gamma_{\omega},\tilde\gamma_{\omega}) \in \mathscr{F}^s_{\omega} \times \mathscr{F}^s_{\omega} \\ \text{nearby pair}} }
\sup_{\rho \in D_1(a_1,\kappa_1, \gamma_{\omega}) }
\frac{\left|\int_{\gamma_{\omega}} f \rho_\omega  - \int_{\tilde\gamma_{\omega}} f \tilde\rho_\omega \right|}{d_u(\gamma_{\omega},\tilde\gamma_{\omega})^\nu}\\
&=\sup_{\substack{ (\gamma_{\omega},\tilde\gamma_{\omega}) \in \mathscr{F}^s_{\omega} \times \mathscr{F}^s_{\omega} \\ \text{nearby pair}} }
\sup_{\rho \in D_1(a_1,\kappa_1, \gamma_{\omega}) }
\frac{\left|\int_{\gamma_{\omega}} f \rho_\omega  - \int_{\tilde\gamma_{\omega}} f \rho_\omega \circ \mathrm{hol}_\omega^u\, \mathrm{Jac}(\mathrm{hol}_\omega^u) \right|}{d_u(\gamma_{\omega},\tilde\gamma_{\omega})^\nu}\\
&=\sup_{\substack{ (\gamma_{\omega},\tilde\gamma_{\omega}) \in \mathscr{F}^s_{\omega} \times \mathscr{F}^s_{\omega} \\ \text{nearby pair}} }
\sup_{\rho \in D_1(a_1,\kappa_1, \gamma_{\omega}) }
\frac{\left|\int_{\gamma_{\omega}} (f - f\circ \mathrm{hol}_\omega^u) \rho \right|}{d_u(\gamma_{\omega},\tilde\gamma_{\omega})^\nu}\\
&= \sup_{\substack{ (\gamma_{\omega},\tilde\gamma_{\omega}) \in \mathscr{F}^s_{\omega} \times \mathscr{F}^s_{\omega} \\ \text{nearby pair}} }
\sup_{\rho \in D_1(a_1,\kappa_1, \gamma_{\omega}) }
\|f\|_{\mathcal{C}^\beta} \frac{d(\gamma_\omega,\widetilde{\gamma}_\omega)^\beta}{d(\gamma_\omega,\widetilde{\gamma}_\omega)^\nu}
\leq \|f\|_{\mathcal C^\beta}.
\end{align*}
We may therefore take $C_0 = 1 + e^\Delta/b + 1/c$.

\end{proof}

\begin{lemma}\label{lem:positivemu}
Let $\mu_\omega$ be as in Theorem~\ref{thm:spectralgap}, and let
$\gamma_\omega\in\mathscr F_\omega^s$. Then the map
\begin{align*}
    \mu_{\gamma_\omega}: g\in \mathcal C^\kappa(\gamma_\omega)&\mapsto \int_{\gamma_\omega}g\,\mu_\omega \in \mathbb R,
\end{align*}
is a well-defined positive linear functional (see Definition \ref{def:mugamma}). Moreover,
$$
\left|\mu_{\gamma_\omega}(g)\right|
\leq
\mu_{\gamma_\omega}(1)\|g\|_\infty
$$
for every $g\in\mathcal C^\kappa(\gamma_\omega)$. Consequently,
$\mu_{\gamma_\omega}$ extends uniquely to a positive bounded linear
functional on $\mathcal C^0(\gamma_\omega)$. By the Riesz representation
theorem, it can be identified with a finite positive Borel measure on
$\gamma_\omega$.
\end{lemma}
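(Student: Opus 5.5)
Well-definedness and linearity come from Proposition \ref{prop:disint} and Definition \ref{def:mugamma}. For $g\in\mathcal C^\kappa(M)$, and for $g\in\mathcal C^\kappa(\gamma_\omega)$ after a $\kappa$-H\"older extension to $M$, the value $\int_{\gamma_\omega}g\,\mu_\omega=\Gamma^g_{\gamma_\omega}(\mu_\omega)$ is the image of $\mu_\omega\in\mathbb V_\omega$ under a bounded linear functional. It is linear in $g$ because $g\mapsto\Gamma^g_{\gamma_\omega}(f)$ is linear for $f\in\mathrm{BM}(M)$, and this linearity passes to the completion by density. Independence of the chosen extension also follows by density: for $f\in \mathrm{BM}(M)$ the integral only sees $g|_{\gamma_\omega}$, and both sides are continuous in $f$ with respect to $\|\cdot\|_\omega$.

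The core of the argument is positivity in the form $|\mu_{\gamma_\omega}(g)|\le \mu_{\gamma_\omega}(1)\|g\|_\infty$. By linearity it is enough to show $\int_{\gamma_\omega}(\|g\|_\infty \pm g)\mu_\omega\ge 0$. I first treat the case of strictly positive $h\in\mathcal C^\kappa$. Take $K$ large and write $h=K(1+h'/K)$ with $h' = h-K$. Lemma \ref{lem:6.17} and Corollary \ref{cor:cone} then give $(1+h'/K)\mu_\omega\in\mathcal C_\omega(b,c,\nu)$; I use these with $\kappa$ in place of $\beta$ where needed, which is harmless since $\kappa\le\beta$ and the constants only depend on H\"older norms. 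That alone does not fix the sign of $h$, however.

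A cleaner route is to test directly. If $h>0$ and $\log h$ is $\kappa$-H\"older with constant at most $a$, then $h/\int_{\gamma_\omega} h\in D_1(a,\kappa,\gamma_\omega)$. For $f$ in $C_\omega(b,c,\nu)$, condition \textbf{(C1)} gives $\int_{\gamma_\omega} h f>0$. Passing to $\|\cdot\|_\omega$-limits with the bounded functional $\Gamma^h_{\gamma_\omega}$ yields $\int_{\gamma_\omega} h\,\mu_\omega\ge0$, since $\mu_\omega$ lies in the closure of the cone. For a general $g$, I apply this to $h_t=\|g\|_\infty+t\pm g$ with $t>0$ large. The log-H\"older constant of $h_t$ is at most $[g]_\kappa/t$, which falls below $a$ once $t$ is large. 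This is the step I expect to be the real obstacle: the requirement that the H\"older constant be $\le a$ forces the additive constant $t$ to depend on $[g]_\kappa$, so I cannot simply let $t\to0$.

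To get around this I would use Lemma \ref{lem:L}, or equivalently an approximation argument. Approximate $\|g\|_\infty\pm g$ uniformly by functions $\psi_n\ge -1/n$ that are smooth along the leaf. For each $\psi_n$ I would split $\psi_n+2/n$ into a sum of positive functions whose log-H\"older constants are at most $a$. For instance, cover the one-dimensional leaf $\gamma_\omega$ by short arcs and use a partition of unity made of positive log-Lipschitz bumps with uniformly bounded log constant, each scaled by a local value. Note that a bounded logarithmic Lipschitz constant does not require positive lower bounds to be uniform, so plateau-type bumps $e^{-a\,\mathrm{dist}}$ work. Each piece is then tested with \textbf{(C1)}, giving $\int (\psi_n+2/n)\mu_\omega\ge0$. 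To make the limit $n\to\infty$ legitimate I need a uniform bound $|\mu_{\gamma_\omega}(\psi)|\le C\|\psi\|_\infty$ for $\psi$ a positive combination of such bumps. This follows from positivity itself, $0\le\mu_{\gamma_\omega}(\psi)\le\|\psi\|_\infty\,\mu_{\gamma_\omega}(\text{dominating bump sum})$, with the dominating bump sum comparable to $1$. This gives the inequality with $\mu_{\gamma_\omega}(1)$.

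Finally, density of $\mathcal C^\kappa(\gamma_\omega)$ in $\mathcal C^0(\gamma_\omega)$ together with the bound gives a unique bounded positive extension, and the Riesz representation theorem on the compact arc $\gamma_\omega$ identifies it with a finite positive Borel measure of total mass $\mu_{\gamma_\omega}(1)$.
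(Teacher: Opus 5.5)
\textbf{There is a genuine gap: the proposed bump decomposition cannot work.}

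Your preliminary steps are sound. For $h\in D(a,\kappa,\gamma_\omega)$, condition \textbf{(C1)} together with $|\int_{\gamma_\omega}hf|\le(\int_{\gamma_\omega}h)\|f\|_{\omega,+}$ gives $\int_{\gamma_\omega}h\,\mu_\omega\ge0$. You also correctly see that this only yields $\mu_{\gamma_\omega}(\|g\|_\infty\pm g)\ge -t\,\mu_{\gamma_\omega}(1)$ with $t$ of order $[g]_\kappa/a$. The proposed way around this fails for three reasons.
\begin{enumerate}
\item Since $D(a,\kappa,\gamma_\omega)$ is a convex cone, any positive combination of functions whose logarithms are $\kappa$-H\"older with constant at most $a$ has the same property. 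So testing the bumps one at a time with \textbf{(C1)} gives nothing beyond testing their sum.
\item Every such function satisfies $\inf_{\gamma_\omega}\psi\ge e^{-a\operatorname{diam}(\gamma_\omega)^\kappa}\sup_{\gamma_\omega}\psi$. So your remark that lower bounds need not be uniform is false on a leaf of bounded length, and the bumps $e^{-a\,d(\cdot,I_j)^\kappa}$ are not localised. In particular, $\psi_n+2/n$ cannot be split into such pieces once its logarithmic oscillation exceeds $a\operatorname{diam}(\gamma_\omega)^\kappa$. This happens for large $n$ whenever $\|g\|_\infty\pm g$ has a zero and is not identically zero.
\item The limiting step is circular. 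A priori $\mu_{\gamma_\omega}$ is continuous only for the $\mathcal C^\kappa$ norm (Proposition \ref{prop:disint}), and sup-norm control on $\mathcal C^\kappa(\gamma_\omega)$ is exactly what is to be proved.
\end{enumerate}

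\textbf{Cone membership alone cannot give positivity.} No argument that uses only $\mu_\omega\in\mathcal C_\omega(b,c,\nu)$ can succeed, because the cone contains elements that are not positive on leaves. By the computation in Lemma \ref{lem:archi}, $\mathbbm 1-f\in C_\omega(b,c,\nu)$ whenever $\|f\|_\omega\le 1/3$. Take a bump $f$ equal to $2$ on $B_{r/2}(p)$, supported in $B_r(p)$, with Lipschitz constant $O(1/r)$. One checks that $\|f\|_\omega=O(r^{1-\nu})$, since the three seminorms are $O(r)$, $O(r)$ and $O(r^{1-\nu})$. Yet for a leaf $\gamma\in\mathscr F^s_\omega$ through $p$, $\int_{\gamma}g(\mathbbm 1-f)<0$ for any nonnegative $g\not\equiv0$ supported in $\gamma\cap B_{r/2}(p)$.

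\textbf{The missing idea.} Use how $\mu_\omega$ was built in Theorem \ref{thm:spectralgap}. It is the $\|\cdot\|_\omega$-limit of $\eta_{n,\omega}=\mathcal L^n_{\theta^{-n}\omega}\mathbbm 1/\|\mathcal L^n_{\theta^{-n}\omega}\mathbbm 1\|_\omega$, and each $\eta_{n,\omega}$ is a pointwise nonnegative function. Hence for $g\ge0$ you get $\Gamma^g_{\gamma_\omega}(\eta_{n,\omega})=\int_{\gamma_\omega}g\,\eta_{n,\omega}\,\mathrm{d}m_{\gamma_\omega}\ge0$. Continuity of $\Gamma^g_{\gamma_\omega}$ on $\mathbb V_\omega$ then gives $\mu_{\gamma_\omega}(g)\ge0$. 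This is what the paper does, phrased as monotonicity of $\rho\mapsto\int_{\gamma_\omega}\rho\,\mu_\omega$ on $D(a,\kappa,\gamma_\omega)$ and combined with Lemma \ref{lem:L}. Applying it to $\|g\|_\infty\pm g$ gives the bound, after which your density and Riesz steps go through.
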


\begin{proof}
From Proposition \ref{prop:disint} and Lemma \eqref{lem:C} we obtain that $\mu_{\gamma_\omega} \in (\mathcal C^k(M))^*$.  We now show that $\mu_{\gamma_\omega}$ is positive in the cone $D(a,\kappa,\gamma_\omega)$.
Let $\rho,\rho'\in D(a,\kappa,\gamma_\omega)$ be such that
$\rho(x)\ge \rho'(x)$ for every $x\in \gamma_\omega$. Then
$$
\int_{\gamma_\omega}\rho\,\mu_\omega
=
\lim_{n\to\infty}
\int_{\gamma_\omega}
\rho\,
\frac{\mathcal L_{\theta^{-n}\omega}^n\mathbbm 1}
{\|\mathcal L_{\theta^{-n}\omega}^n\mathbbm 1\|_\omega}
\ge
\lim_{n\to\infty}
\int_{\gamma_\omega}
\rho'\,
\frac{\mathcal L_{\theta^{-n}\omega}^n\mathbbm 1}
{\|\mathcal L_{\theta^{-n}\omega}^n\mathbbm 1\|_\omega}
=
\int_{\gamma_\omega}\rho'\,\mu_\omega.
$$
Thus the map $\rho\mapsto \int_{\gamma_\omega}\rho\,\mu_\omega$
is preserves the pointwise order on $D(a,\kappa,\gamma_\omega)$.

Now let $f\in \mathcal C^\kappa(\gamma_\omega)$ be such that $f\ge 0$. By
Lemma \ref{lem:L}, there exists a constant $C>0$ such that
$$
1+\frac{f}{C\|f\|_{\mathcal C^\kappa}}
\in D(a/2,\kappa,\gamma_\omega)\subset D(a,\kappa,\gamma_\omega).
$$
Since
$$
1+\frac{f}{C\|f\|_{\mathcal C^\kappa}}\ge 1
\  \text{pointwise on } \gamma_\omega,
$$
the monotonicity established above yields
$$
\int_{\gamma_\omega}
\left(1+\frac{f}{C\|f\|_{\mathcal C^\kappa}}\right)\mu_\omega
\ge
\int_{\gamma_\omega}\mu_\omega.
$$
Hence
$$
\int_{\gamma_\omega} f\,\mu_\omega \ge 0.
$$
Therefore $\mu_{\gamma_\omega}$ is positive on $\mathcal C^\kappa(\gamma_\omega)$.

Moreover, if $f\ge 0$, then $0\le f\le \|f\|_\infty \  \text{on } \gamma_\omega,$
and another application of monotonicity gives
$$
0\le \int_{\gamma_\omega} f\,\mu_\omega
\le
\|f\|_\infty \int_{\gamma_\omega}\mu_\omega.
$$
By linearity, for arbitrary $f\in \mathcal C^\kappa(\gamma_\omega)$ we may write
$f=f^+-f^-$ and obtain
$$
\left|\int_{\gamma_\omega} f\,\mu_\omega\right|
\le
2\|f\|_\infty \int_{\gamma_\omega}\mu_\omega.
$$
Thus $\mu_{\gamma_\omega}$ is bounded with respect to the supremum norm on
$\mathcal C^\kappa(\gamma_\omega)$.

Since $\mathcal C^\kappa(\gamma_\omega)$ is dense in $\mathcal C^0(\gamma_\omega)$,
$\mu_{\gamma_\omega}$ extends uniquely to a bounded positive linear functional on
$\mathcal C^0(\gamma_\omega)$. The last claim then follows from the Riesz
representation theorem.
\end{proof}

With the above technical lemmas in hand we can show that $\mu_\omega,\ell_\omega$ and $\upsilon_\omega$ are measures on $M$.
\begin{lemma}\label{lem:nuprob}
Let $\mu_\omega$ and $\ell_\omega$ as in Theorem \ref{thm:spectralgap}. Define the operator $\upsilon_{\omega}: f \in \mathcal C^{\beta}(M) \mapsto \ell_\omega(f \mu_\omega) \in\mathbb{R}$ is positive, linear and extends to a probability measure  on $M$. 
\end{lemma}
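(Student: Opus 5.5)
Linearity of $\upsilon_\omega$ is immediate: multiplication by $f$ and $\ell_\omega$ are both linear. To make $f\mu_\omega$ meaningful in $\mathbb V_\omega$ for $f\in\mathcal C^\beta(M)$, we use Corollary \ref{cor:cone}. It gives $(1+f/(L_3\|f\|_{\mathcal C^\beta}))\mu_\omega\in\mathcal C_\omega(b,c,\nu)\subset\mathbb V_\omega$. Hence
\[
f\mu_\omega = L_3\|f\|_{\mathcal C^\beta}\Bigl[\Bigl(1+\tfrac{f}{L_3\|f\|_{\mathcal C^\beta}}\Bigr)\mu_\omega-\mu_\omega\Bigr]\in\mathbb V_\omega ,
\]
and $\upsilon_\omega(f)=\ell_\omega(f\mu_\omega)$ is well defined. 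Normalisation follows from Theorem \ref{thm:spectralgap}: $\upsilon_\omega(\mathbbm 1)=\ell_\omega(\mu_\omega)=1$.

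For positivity we will use Lemma \ref{lem:pos} and the equivariance of $\mu$. First we check that $\ell_\omega$ is nonnegative on the cone. For $\eta\in\mathcal C_\omega(b,c,\nu)$, the construction gives $\ell_\omega(\eta)=\ell_{0,\omega}(\eta)=\limsup_n\|\mathcal L^n_\omega\eta\|/\lambda^{(n)}_\omega\ge0$. We also have $\ell_{0,\omega}(3\|\eta\|\mathbbm 1+\eta)=\ell_{0,\omega}(3\|\eta\|\mathbbm 1)+\ell_{0,\omega}(\eta)$ by additivity on the cone, which follows from \eqref{eq:cone-asymptotic}. Now fix $f\ge0$ in $\mathcal C^\beta(M)$ and set $K=L_3\|f\|_{\mathcal C^\beta}$. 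Write $\mu_\omega=\mathcal L_{\theta^{-1}\omega}\mu_{\theta^{-1}\omega}/\lambda_{\theta^{-1}\omega}$. Then
\[
\ell_\omega\bigl((1+f/K)\mu_\omega\bigr)=\lambda_{\theta^{-1}\omega}^{-1}\Psi_{1,f}(\mu_{\theta^{-1}\omega})\ge 0 .
\]
This alone only gives $\upsilon_\omega(f)\ge -K$, which is not enough. To get $\upsilon_\omega(f)\ge0$ we instead apply the same inequality to $-f$ scaled, or, more cleanly, use the stronger fact that $f\mu_\omega$ itself lies in the closed cone when $f\ge0$. Heuristically, $f\,\mathcal L^n_{\theta^{-n}\omega}\mathbbm 1=\mathcal L^n_{\theta^{-n}\omega}(f\circ T^n_{\theta^{-n}\omega})$, and its leafwise integrals against densities in $D_1(a,\kappa,\gamma)$ are nonnegative. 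This is (C1) in closure form.

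Here is the concrete route I would try first. Approximate via $\eta_{n,\omega}$ and use Lemma \ref{lem:6.17} with $f_\epsilon=f+\epsilon$. Then $(f+\epsilon)\mu_\omega = (f+\epsilon)\,\cdot$ (a cone element), and the leafwise positivity in Lemma \ref{lem:positivemu} shows $\int_\gamma (f+\epsilon)\rho\,\mu_\omega\ge0$. Checking (C2)--(C3) for $(f+\epsilon)\mu_\omega$ is exactly Lemma \ref{lem:6.17} after rescaling, because $f+\epsilon$ is comparable to $1+g/K$ with $g=(f+\epsilon)/\epsilon'-1$ once $f$ is near a constant. To reach a general $f\ge0$ we use a partition-of-unity style decomposition. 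Alternatively, and more simply, I would prove monotonicity directly: $\upsilon_\omega(f)=\lim_n\ell_\omega(f\eta_{n,\omega})$ by continuity of $\ell_\omega$. Then use the density/leaf representation from the proof of Lemma \ref{lem:5.1}, together with the positivity of the leaf measures $\mu_{\gamma_\omega}$ from Lemma \ref{lem:positivemu}, to show that $\ell_\omega(g\mu_\omega)\le \|g\|_\infty\ell_\omega(\mu_\omega)$ and $\ell_\omega(g\mu_\omega)\ge0$ for $g\ge0$.

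Once positivity and $\upsilon_\omega(\mathbbm 1)=1$ hold, we get $|\upsilon_\omega(f)|\le\|f\|_\infty$ on $\mathcal C^\beta(M)$. By density of $\mathcal C^\beta(M)$ in $\mathcal C^0(M)$, $\upsilon_\omega$ extends to a positive normalised functional on $\mathcal C^0(M)$, and Riesz representation yields the probability measure. The main obstacle is the positivity step: showing that $g\ge0$ implies $g\mu_\omega\succeq_\omega0$ (or at least that $\ell_\omega(g\mu_\omega)\ge0$). This requires verifying (C2)/(C3) for multiplied cone elements, which Lemma \ref{lem:6.17} handles only for multipliers close to $1$. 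I expect to bridge this by writing $g=\sum_j g_j$ with each $g_j\ge0$ of the form $t_j(1+h_j/K)$ on small supports, or by a limiting argument $g+t$ with $t\to\infty$ and rescaling, using linearity to subtract the constant part.
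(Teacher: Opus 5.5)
Your treatment of well-definedness (via Corollary \ref{cor:cone}), linearity, $\upsilon_\omega(\mathbbm 1)=\ell_\omega(\mu_\omega)=1$ and the final Riesz step matches the paper. However, positivity, which is the heart of the lemma, is not proved, and none of the routes you sketch closes it.

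(i) It is false that $f\mu_\omega$ lies in the closed cone for every $f\ge 0$. Elements of $\mathcal C_\omega(b,c,\nu)$ satisfy the closed form of (C3), because the leaf functionals are continuous (Proposition \ref{prop:disint}). That condition is invariant under $f\mapsto tf$, so for a fixed $c$ it bounds the relative variation of $f$ across nearby stable leaves. An $f\ge0$ that vanishes on $\gamma$ but has $\int_{\tilde\gamma}f\tilde\rho\,\mu_\omega>0$ on a nearby $\tilde\gamma$ violates it. This is exactly why Lemma \ref{lem:6.17} only allows multipliers $1+h/K$ with $K\ge L_3\|h\|_{\mathcal C^\beta}$.

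(ii) The partition-of-unity idea cannot work. Every admissible multiplier $t(1+h/K)$ satisfies $\|t(1+h/K)-t\|_{\mathcal C^\beta}\le t/L_3$, and this property survives finite positive combinations. Such sums therefore never produce a general $f\ge0$, let alone pieces with small support. The variant with $g+t$, $t\to\infty$, only yields $\upsilon_\omega(g)\ge -t$, as you noted yourself.

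(iii) The direct monotonicity route has no mechanism. The leaf disintegration in Lemma \ref{lem:5.1} represents Lebesgue measure $m$, not $\ell_\omega$. At this stage $\ell_\omega$ is only an abstract element of $\mathbb V_\omega^*$ built from $\limsup_n\|\mathcal L^n_\omega\cdot\|/\lambda^{(n)}_\omega$, and nothing you cite transfers leafwise positivity of $\mu_\gamma$ to $\ell_\omega$. Continuity of multiplication by $f$ on $\mathbb V_\omega$, which you need for $\ell_\omega(f\eta_{n,\omega})\to\ell_\omega(f\mu_\omega)$, is not established either.

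The missing idea is to route your correct observation (leaf integrals of $f\mu$ are nonnegative) through the spectral convergence rather than through cone membership.
\begin{itemize}
\item Since $\mathcal L_\omega(fg)=(f\circ T_\omega^{-1})\mathcal L_\omega g$, one has $\frac{1}{\lambda_\omega^{(n)}}\mathcal L_\omega^n(f\mu_\omega)=(f\circ(T_\omega^n)^{-1})\mu_{\theta^n\omega}$.
\item Theorem \ref{thm:decay} (or just Theorem \ref{thm:spectralgap}, since $r_n(\omega)\to\infty$) then gives $\|(f\circ(T_\omega^n)^{-1})\mu_{\theta^n\omega}-\ell_\omega(f\mu_\omega)\mu_{\theta^n\omega}\|_{\theta^n\omega}\to0$.
\item Pick $\gamma^{(n)}\in\mathscr F^s_{\theta^n\omega}$ and $\rho^{(n)}\in D_1(a,\kappa,\gamma^{(n)})$ with $\Gamma_n(\mu_{\theta^n\omega})\ge 1/4$, where $\Gamma_n(g):=\int_{\gamma^{(n)}}\rho^{(n)}g$. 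This is possible because $1=\|\mu_{\theta^n\omega}\|_{\theta^n\omega}\le 3\|\mu_{\theta^n\omega}\|_{\theta^n\omega,+}$ by Lemma \ref{lem:norm1}.
\item Each $\Gamma_n$ has norm at most $1$ on $\mathbb V_{\theta^n\omega}$.
\item For $f\ge0$, the function $\rho^{(n)}\,(f\circ(T_\omega^n)^{-1})$ is nonnegative and H\"older on $\gamma^{(n)}$, so $\Gamma_n\bigl((f\circ(T_\omega^n)^{-1})\mu_{\theta^n\omega}\bigr)\ge0$ by Lemma \ref{lem:positivemu}.
\end{itemize}
Dividing by $\Gamma_n(\mu_{\theta^n\omega})\ge 1/4$ shows that $\ell_\omega(f\mu_\omega)$ is a limit of nonnegative numbers. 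That is the positivity you need, and the rest of your argument then goes through.
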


\begin{proof}
Given $f\in\mathcal C^\beta(M)$, Lemma \ref{cor:cone} yields a constant $L_3>0$ such that
$$
\left(1+\frac{f}{L_3\|f\|_{\mathcal C^\beta}}\right)\mu_\omega\in\mathcal C_\omega (b,c,\nu).
$$
Consequently,
$$
f\mu_\omega
=L_3\|f\|_{\mathcal C^\beta}\left[\left(1+\frac{f}{L_3\|f\|_{\mathcal C^\beta}}\right)\mu_\omega-\mu_\omega\right]\in\mathbb V_\omega.
$$
Therefore,
\begin{align*}
|\upsilon_\omega(f)|&=|\ell_\omega(f\mu_\omega)|\le L_3\|f\|_{\mathcal C^\beta}\left|\ell_\omega\left(\left(1+\frac{f}{L_3\|f\|_{\mathcal C^\beta}}\right)\mu_\omega\right)\right|
+L_3\|f\|_{\mathcal C^\beta}|\ell_\omega(\mu_\omega)|\\
&\le L_3\|f\|_{\mathcal C^\beta}\left(\left|\ell_\omega\left(\left(1+\frac{f}{L_3\|f\|_{\mathcal C^\beta}}\right)\mathcal L_{\theta^{-1}\omega}\mu_{\theta^{-1}\omega}\right)\right|+1\right)\\
&\le L_3\|f\|_{\mathcal C^\beta}\left(\|\Psi_{1,f}\|+1\right)\le C_1\|f\|_{\mathcal C^\beta},
\end{align*}
where $\Psi_{1,f}:\mathbb V_{\theta^{-1}\omega}\to\mathbb R$ is the linear functional defined by
$$
\Psi_{1,f}(\eta)
=\ell_\omega\left(\left(1+\frac{f}{L_3\|f\|_{\mathcal C^\beta}}\right)\mathcal L_{\theta^{-1}\omega}\eta\right).
$$
By Lemma \ref{lem:pos}, $\Psi_{1,f}$ is bounded. Hence the estimate follows after absorbing $\|\Psi_{1,f}\|+1$ into the constant $C_1$. Therefore $\upsilon_\omega \in (\mathcal C^\beta(M))^*$.

We now assume that $f(x)\geq 0$ for every $x\in M$. From Lemma \ref{lem:norm1} we have that
$$1=\|\mu_\omega\|_\omega\leq 3\|\mu_\omega\|_{\omega,a,\kappa}^{\sup_s}.$$ 
For each $n\in \mathbb N$, choose
$\gamma_{\theta^n\omega}^{(n)}\in \mathscr F^s_{\theta^n\omega}$ and
$\rho_{\theta^n\omega}^{(n)}\in D_1(a,\kappa,\gamma_{\theta^n\omega}^{(n)})$
such that
\begin{align}
\frac{1}{4} \leq \int_{\gamma_{\theta^n\omega}^{(n)}}
\rho_{\theta^n\omega}^{(n)}\mu_{\theta^n\omega}.\label{eq:14}
\end{align}
Define the operator $\Gamma_n\in \mathbb V_{\theta^n\omega}^*$ by
\[
\Gamma_n(g):=\int_{\gamma_{\theta^n\omega}^{(n)}}\rho_{\theta^n\omega}^{(n)}g.
\]
Since $f\geq 0$, from Lemma \ref{lem:positivemu} we obtain that
\begin{equation}
    \Gamma_n\bigl(\mathcal L_\omega^{n}(f\mu_\omega)\bigr)\geq 0
    \  \text{for every }n\in \mathbb N.
    \label{eq:gammaln}
\end{equation}

From Theorem \ref{thm:decay}, 
there exists a measurable
function $K:\Omega\to \mathbb R$ such that for every $n\in \mathbb N$,
$$\left\|
\frac{1}{\lambda_\omega^{(n)}}\mathcal L_\omega^{n}(f\mu_\omega)
-\ell_\omega(f\mu_\omega)\mu_{\theta^n\omega}
\right\|_{\theta^n\omega}
\leq K(\omega)\|\ell_\omega\|_{\mathbb V_\omega^*}
\|f\mu_\omega\|_{\mathbb V_\omega}\chi^n.$$
Therefore,
\begin{equation}
\left|
\frac{1}{\lambda_\omega^{(n)}}\Gamma_n\bigl(\mathcal L_\omega^{n}(f\mu_\omega)\bigr)
-\ell_\omega(f\mu_\omega)\Gamma_n(\mu_{\theta^n\omega})
\right|
\leq
K(\omega)\|\ell_\omega\|_{\mathbb V_\omega^*}
\|f\mu_\omega\|_{\mathbb V_\omega}\chi^n.
\label{eq:gammaln1}
\end{equation}

Combining \eqref{eq:14}, \eqref{eq:gammaln} and \eqref{eq:gammaln1}, we obtain
\begin{align*}
\left|
\frac{1}{\lambda_\omega^{(n)}}\frac{\Gamma_n\bigl(\mathcal L_\omega^{n}(f\mu_\omega)\bigr)}{\Gamma_n (\mu_{\theta^n \omega}} 
-\ell_\omega(f\mu_\omega)
\right|
&\leq\frac{
K(\omega)}{\Gamma_n (\mu_{\theta^n \omega})}\|\ell_\omega\|_{\mathbb V_\omega^*}
\|f\mu_\omega\|_{\mathbb V_\omega}\chi^n.\\
&\leq 4 
K(\omega)\|\ell_\omega\|_{\mathbb V_\omega^*}
\|f\mu_\omega\|_{\mathbb V_\omega}\chi^n.
\end{align*}
In this way
\[
\upsilon_\omega(f)=\ell_\omega(f\mu_\omega)
=\lim_{n\to\infty}
\frac{1}{\lambda_\omega^{(n)}}
\frac{\Gamma_n\bigl(\mathcal L_\omega^{n}(f\mu_\omega)\bigr)}
{\Gamma_n(\mu_{\theta^n\omega})}
\geq 0
\]
for every $f\in \mathcal C^\beta(M)$ with $f\geq 0$.

Finally, observe that $\upsilon_\omega(1)=1$. Since $\upsilon_\omega$ is positive on $\mathcal C^\beta(M)$ under the pointwise partial order,
it follows that
$$|\upsilon_\omega(f)|\leq \|f\|_\infty
\  \text{for every }f\in \mathcal C^\beta(M).$$
By the Riesz representation theorem, $\upsilon_\omega$ extends to a probability
measure on $M$.
\end{proof}

\begin{proposition} \label{prop:exponentialdecayofcorrelation}
Assume Hypothesis \ref{hyp:h}. Let $\mu_\omega,\ell_\omega$ be as in Theorem \ref{thm:spectralgap}, and let $\upsilon_\omega$ be the probability measure defined in Lemma \ref{lem:nuprob}. Then, for each $\omega\in\Omega$, $
(T_\omega)_*\upsilon_\omega=\upsilon_{\theta\omega}.$
Define $
\upsilon_\phi(\d\omega,\d x)=\upsilon_\omega(\d x)\,\mathbb P(\d\omega).$
Then $\upsilon_\phi$ exhibits quenched decay of correlations for Hölder observables: for every $f,g\in \mathcal C^\beta(M)$ there exist $0<\Lambda<1$ and a measurable function $C:\Omega\to \mathbb R$ such that
\begin{equation}
\left|
\int_M f\circ T_\omega^n \cdot g\,\d\upsilon_\omega
-
\int_M f\,\d\upsilon_{\theta^n\omega}\int_M g\,\d\upsilon_\omega
\right|
\le
C(\omega)\Lambda^n \|f\|_{\mathcal C^\beta}\|g\|_{\mathcal C^\beta}.\label{eq:decay1}
\end{equation}
and 
\begin{equation}
\left|
\int_M f\circ T_{\theta^{-n}\omega}^n \cdot g\,\d\upsilon_{\theta^{-n}\omega}
-
\int_M f\,\d\upsilon_{\omega}\int_M g\,\d\upsilon_{\theta^{-n}\omega}
\right|
\le
C(\omega)\Lambda^n \|f\|_{\mathcal C^\beta}\|g\|_{\mathcal C^\beta}.\label{eq:decaypullback1}
\end{equation}

If, in addition, Hypothesis \ref{hyp:h'} holds, then given $p\in[1,\infty)$ there exist $0<\Lambda_p<1$ and $C_p\in L^p(\Omega)$ such that, for every $f,g\in \mathcal C^\beta(M)$,
\begin{equation}
\left|
\int_M f\circ T_\omega^n \cdot g\,\d\upsilon_\omega
-
\int_M f\,\d\upsilon_{\theta^n\omega}\int_M g\,\d\upsilon_\omega
\right|
\le
C_p(\omega)\Lambda_p^n \|f\|_{\mathcal C^\beta}\|g\|_{\mathcal C^\beta}.\label{eq:decay2}
\end{equation}
and
\begin{equation}
\left|
\int_M f\circ T_{\theta^{-n}\omega}^n \cdot g\,\d\upsilon_{\theta^{-n}\omega}
-
\int_M f\,\d\upsilon_{\omega}\int_M g\,\d\upsilon_{\theta^{-n}\omega}
\right|
\le
C_p(\omega)\Lambda_p^n \|f\|_{\mathcal C^\beta}\|g\|_{\mathcal C^\beta}. \label{eq:decaypullback2}
\end{equation}

\end{proposition}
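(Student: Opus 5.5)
The plan is to derive both claims, equivariance and decay of correlations, from the duality relation between $\mathcal L_\omega$ and composition with $T_\omega$, combined with the spectral decomposition of Theorem~\ref{thm:decay}. The key algebraic identity is
$$
\mathcal L_\omega\bigl((f\circ T_\omega)\,\eta\bigr)=f\,\mathcal L_\omega\eta,
$$
valid for bounded measurable $\eta$ and $f\in\mathcal C^\beta(M)$ directly from $\mathcal L_\omega f=(e^{\phi_\omega}f)\circ T_\omega^{-1}$. It extends to $\eta\in\mathbb V_\omega$ by density and continuity of multiplication by Hölder functions, which is the content of Corollary~\ref{cor:cone} and Lemma~\ref{lem:nuprob}. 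Iterating gives $\mathcal L_\omega^n((f\circ T_\omega^n)\eta)=f\,\mathcal L^n_\omega\eta$.

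\textbf{Equivariance.} Take $f\in\mathcal C^\beta(M)$ and compute
$$
\upsilon_\omega(f\circ T_\omega)=\ell_\omega\bigl((f\circ T_\omega)\mu_\omega\bigr)=\lambda_\omega^{-1}\ell_{\theta\omega}\bigl(\mathcal L_\omega((f\circ T_\omega)\mu_\omega)\bigr)=\lambda_\omega^{-1}\ell_{\theta\omega}(f\lambda_\omega\mu_{\theta\omega})=\upsilon_{\theta\omega}(f).
$$
Here the second equality uses $\mathcal L_\omega^*\ell_{\theta\omega}=\lambda_\omega\ell_\omega$ and the third uses $\mathcal L_\omega\mu_\omega=\lambda_\omega\mu_{\theta\omega}$. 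Since $f\circ T_\omega$ is Hölder (with a constant depending on $\|T_\omega\|_{\mathcal C^1}$, which is uniformly bounded), both sides are defined. Density of $\mathcal C^\beta$ in $\mathcal C^0$ together with Riesz representation then gives $(T_\omega)_*\upsilon_\omega=\upsilon_{\theta\omega}$. Iterating yields $\upsilon_\omega(f\circ T^n_\omega)=\upsilon_{\theta^n\omega}(f)$.

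\textbf{Forward decay.} Write
$$
\int f\circ T^n_\omega\, g\,\d\upsilon_\omega=\ell_\omega\bigl((f\circ T_\omega^n) g\mu_\omega\bigr)=(\lambda^{(n)}_\omega)^{-1}\ell_{\theta^n\omega}\bigl(f\,\mathcal L^n_\omega(g\mu_\omega)\bigr).
$$
Now apply Theorem~\ref{thm:decay} with input $g\mu_\omega\in\mathbb V_\omega$. This gives
$$
(\lambda^{(n)}_\omega)^{-1}\mathcal L^n_\omega(g\mu_\omega)=\upsilon_\omega(g)\mu_{\theta^n\omega}+R_n,\qquad \|R_n\|_{\theta^n\omega}\le K(\omega)\|\ell_\omega\|\,\|g\mu_\omega\|_\omega\chi^n.
$$
The main term produces $\upsilon_\omega(g)\,\ell_{\theta^n\omega}(f\mu_{\theta^n\omega})=\upsilon_\omega(g)\upsilon_{\theta^n\omega}(f)$. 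What remains is to bound $|\ell_{\theta^n\omega}(fR_n)|$, and this is the main obstacle, because it needs an estimate of the functional $\eta\mapsto\ell_{\theta^n\omega}(f\eta)$ that is uniform in $n$, or at least grows subexponentially.

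For this I would use positivity in the manner of Lemma~\ref{lem:pos}. Set $K_f=L_3\|f\|_{\mathcal C^\beta}$. Decompose
$$
f\eta=K_f\bigl[(1+f/K_f)\eta-\eta\bigr]
$$
and split $\eta=(3\|\eta\|\mathbbm 1+\eta)-3\|\eta\|\mathbbm 1$ into cone elements. Order-preservation then gives
$$
|\ell_{\theta^n\omega}(f\eta)|\lesssim \|f\|_{\mathcal C^\beta}\|\eta\|_{\theta^n\omega}\bigl(\ell_{\theta^n\omega}((1+f/K_f)\mathbbm 1)+\ell_{\theta^n\omega}(\mathbbm 1)\bigr)\lesssim\|f\|_{\mathcal C^\beta}\|\eta\|_{\theta^n\omega}\|\ell_{\theta^n\omega}\|.
$$
This uses Lemma~\ref{lem:C} for $\|\mathbbm 1\|$. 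The factor $\|\ell_{\theta^n\omega}\|$ is a tempered random variable. Moreover, $\|g\mu_\omega\|_\omega\lesssim \|g\|_{\mathcal C^\beta}\|\mathcal L_{\theta^{-1}\omega}\|$, by Corollary~\ref{cor:cone} and Lemma~\ref{lem:norm1}. To avoid the dependence on $\theta^n\omega$, I would instead normalise so that $\ell_{\theta^n\omega}(\mathbbm 1)$ is controlled through $\ell(\mu)=1$ and the bound $\mu_{\theta^n\omega}\succeq D_1^{-1}\mathbbm 1$ type comparisons available on good return times. If that fails, I would accept $\|\ell_{\theta^n\omega}\|$ being tempered, via Birkhoff/Kingman since $\log^+\|\ell_\omega\|$ is integrable, and absorb its subexponential growth by slightly enlarging $\Lambda$ above $\chi$. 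Under Hypothesis~\ref{hyp:h'}, the same computation with Theorem~\ref{thm:decay}'s $K_p\in L^p$ gives $C_p\in L^p$, after Hölder's inequality with a slightly larger $p$.

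\textbf{Backward decay.} For \eqref{eq:decaypullback1} the same computation applies with base point $\theta^{-n}\omega$:
$$
\int f\circ T^n_{\theta^{-n}\omega}\, g\,\d\upsilon_{\theta^{-n}\omega}=(\lambda^{(n)}_{\theta^{-n}\omega})^{-1}\ell_\omega\bigl(f\,\mathcal L^n_{\theta^{-n}\omega}(g\mu_{\theta^{-n}\omega})\bigr).
$$
The remainder is now controlled by the backward contraction \eqref{eq:backward-contraction} and the $\chi^{r_n(\theta^{-n}\omega)}$ bound of Lemma~\ref{lem:chi-rn}. The functional estimate is then only needed for the fixed $\ell_\omega$, so it is simpler than in the forward case.
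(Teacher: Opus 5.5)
Your equivariance computation matches the paper's. The decay part, however, has a genuine gap exactly where you locate the main obstacle.

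You write the correlation as $\upsilon_\omega(g)\upsilon_{\theta^n\omega}(f)+\ell_{\theta^n\omega}(fR_n)$ and bound $R_n$ in norm via Theorem~\ref{thm:decay}. This costs you two dual norms. The first is $\|\ell_\omega\|_{\mathbb V_\omega^*}$, which is already built into Theorem~\ref{thm:decay}. The second is $\|\ell_{\theta^n\omega}\|_{\mathbb V_{\theta^n\omega}^*}$, needed to turn a small norm into a small value of $\ell_{\theta^n\omega}(f\,\cdot)$. Nothing available controls these beyond measurability:
\begin{itemize}
\item Your temperedness fallback presupposes that $\log^+\|\ell_\omega\|$ is integrable. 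This is never proved and is not automatic under Hypothesis~\ref{hyp:h}.
\item For $C_p\in L^p$ under Hypothesis~\ref{hyp:h'} you would need moments of $\|\ell_\omega\|$ of all orders. The only quantitative information on how deep $\mu_\omega$ sits in the cone is Lemma~\ref{lem:mu-minus-lp}, which gives $\|\mu_\omega\|_{\omega,-}^{-1}\le D_p(\omega)$ with merely $\log D_p\in L^p$. H\"older's inequality with a larger exponent does not fix this.
\item The ``good return time'' normalisation only controls $\ell_{\theta^n\omega}(\mathbbm 1)$ at those times, and you do not carry it through for general $n$.
\end{itemize}
Your intermediate bound $|\ell_{\theta^n\omega}(f\eta)|\lesssim\|f\|_{\mathcal C^\beta}\|\eta\|_{\theta^n\omega}\|\ell_{\theta^n\omega}\|$ is also not justified as argued. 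Lemma~\ref{lem:6.17} puts $(1+f/K)\eta$ in the cone only for $\eta$ in the image of $\mathcal L_{\theta^{n-1}\omega}$. Multiplying by $1+f/K$ pushes the log-H\"older constant of the test densities above $a$, and this is recovered only through the factor $\alpha_0$ of Lemma~\ref{lem:push_density}. For a general cone element such as $3\|\eta\|\mathbbm 1+\eta$ the argument fails. This point can be patched, since $R_n$ lies in that image, but the norm of $\ell$ remains. Your backward sketch has the same defect: converting a small remainder norm into a small value of $\ell_\omega(f\,\cdot)$ still costs $\|\ell_\omega\|$, which has no moment bound.

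The missing idea is to never take norms of $\ell$. Instead, compare two cone elements projectively, and let positivity together with the fact that $\upsilon_\omega$ is a probability supply the a priori bound.
\begin{itemize}
\item Reduce to $\upsilon_\omega(g)=0$ and $f=1+\bar f/(L_3\|\bar f\|_{\mathcal C^\beta})$. Write $g=\tfrac{L_3\|g\|_{\mathcal C^\beta}}{2}(\bar g_+-\bar g_-)$ with $\bar g_\pm=1\pm g/(L_3\|g\|_{\mathcal C^\beta})$.
\item By Corollary~\ref{cor:cone}, $h_\pm:=(\lambda_\omega^{(n-1)})^{-1}\mathcal L_\omega^{n-1}(\bar g_\pm\mu_\omega)$ are cone elements with $\ell_{\theta^{n-1}\omega}(h_\pm)=1$. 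By \eqref{eq:forward-contraction}, $\Theta(h_+,h_-)\le D_2\chi^{\max\{r_{n-1}(\omega)-1,0\}}$.
\item The functional $\Psi(\eta):=\ell_{\theta^n\omega}\bigl(f\,\lambda_{\theta^{n-1}\omega}^{-1}\mathcal L_{\theta^{n-1}\omega}\eta\bigr)$ is positive on the cone by Lemma~\ref{lem:pos}.
\item Since $\ell_{\theta^{n-1}\omega}$ is positive and takes the same value on $h_\pm$, the optimal constants in $\alpha h_+\preceq h_-\preceq\beta h_+$ satisfy $e^{-\Theta}\le\alpha\le1\le\beta\le e^{\Theta}$. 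Hence $\Psi(h_-)/\Psi(h_+)\in[e^{-\Theta},e^{\Theta}]$.
\item Finally, $|\Psi(h_+)|=|\upsilon_\omega((f\circ T_\omega^n)\bar g_+)|\le4$ because $\upsilon_\omega$ is a probability measure (Lemma~\ref{lem:nuprob}). This gives $|\upsilon_\omega((f\circ T_\omega^n)g)|\le2L_3\|g\|_{\mathcal C^\beta}(e^{\Theta}-1)$.
\end{itemize}
The only random factor is now the one coming from $\chi^{r_{n-1}(\omega)}$ in Lemma~\ref{lem:chi-rn}. This yields a measurable $C$ under Hypothesis~\ref{hyp:h} and $C_p\in L^p$ directly under Hypothesis~\ref{hyp:h'}. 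General $f,g$ follow by linearity. The backward estimates follow from the same argument with base point $\theta^{-n}\omega$ and $r_{n-1}(\theta^{-n}\omega)$.
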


\begin{proof}
Let $f\in \mathcal C^\beta(M)$. Then
\begin{align*}
\upsilon_\omega(f\circ T_\omega)
&=\ell_\omega(f\circ T_\omega \cdot \mu_\omega) =\frac{1}{\lambda_\omega}\mathcal L_\omega^*\ell_{\theta\omega}(f\circ T_\omega \cdot \mu_\omega) =\ell_{\theta\omega}\left(
\frac{1}{\lambda_\omega}\mathcal L_\omega(f\circ T_\omega \cdot \mu_\omega)
\right) \\
&=\ell_{\theta\omega}\left(
\frac{1}{\lambda_\omega}f\,\mathcal L_\omega\mu_\omega
\right) =\ell_{\theta\omega}(f\mu_{\theta\omega}) =\upsilon_{\theta\omega}(f).
\end{align*}
Hence $(T_\omega)_*\upsilon_\omega=\upsilon_{\theta\omega}$. We divide the remaining of the proof into three steps.

\begin{step}[1]
Assume first that $g\in \mathcal C^\beta(M)$ satisfies $\upsilon_\omega(g)=0$, and that
\begin{equation}\label{eq:fform-clean-2}
f=1+\frac{\bar f}{L_3\|\bar f\|_{\mathcal C^\beta}}
\end{equation}
for some $\bar f\in \mathcal C^\beta(M)$, where $L_3$ is the constant from Lemma \ref{lem:6.17}. We prove that \eqref{eq:decay1} and \eqref{eq:decay2} decay estimates in this case.
\end{step}

Since $\upsilon_\omega(g)=0$, we have that
\begin{align}
\left|
\upsilon_\omega(f\circ T_\omega^n \cdot g)
-
\upsilon_{\theta^n\omega}(f)\upsilon_\omega(g)
\right|&=
\left|
\upsilon_\omega(f\circ T_\omega^n \cdot g)
\right| =
\left|
\ell_\omega(f\circ T_\omega^n g\,\mu_\omega)
\right| \notag\\
&=
\left|
\ell_{\theta^n\omega}\left(
f\,\frac{1}{\lambda_\omega^{(n)}}\mathcal L_\omega^n(g\mu_\omega)
\right)
\right|.
\label{eq:basic1-clean-2}
\end{align}

Define the auxiliary functions
$$
\bar g_\pm:=1\pm \frac{g}{L_3\|g\|_{\mathcal C^\beta}}.
$$
Then
\begin{align}
\ell_{\theta^n\omega}\left(
f\,\frac{1}{\lambda_\omega^{(n)}}\mathcal L_\omega^n(g\mu_\omega)
\right)
=&
\frac{L_3\|g\|_{\mathcal C^\beta}}{2}
\ell_{\theta^n\omega}\left(
f\,\frac{1}{\lambda_\omega^{(n)}}\mathcal L_\omega^n(\bar g_+\mu_\omega)
\right) \notag\\
 &-
\frac{L_3\|g\|_{\mathcal C^\beta}}{2}
\ell_{\theta^n\omega}\left(
f\,\frac{1}{\lambda_\omega^{(n)}}\mathcal L_\omega^n(\bar g_-\mu_\omega)
\right).
\label{eq:basic2-clean-2}
\end{align}

For each $n\ge2$, define
$$
\Psi_f^{(n)}:\mathbb V_{\theta^{n-1}\omega}\to \mathbb R,
\ 
\Psi_f^{(n)}(\eta):=
\ell_{\theta^n\omega}\left(
f\,\frac{1}{\lambda_{\theta^{n-1}\omega}}
\mathcal L_{\theta^{n-1}\omega}\eta
\right).
$$
From Lemma \ref{lem:pos} and \eqref{eq:fform-clean-2}, the functional $\Psi_f^{(n)}$ is positive.

From Corollary \ref{cor:cone}, $\bar{g}_\pm \mu \in \mathcal C_\omega(b,c,\nu)$, therefore $\mathcal L_\omega^{n-1}\bar{g}_\pm \mu \in \mathcal C_{\theta^{n-1}}\omega(b,c,\nu)$. Since $\upsilon_\omega(g)=0$, it follows that
\begin{equation}
\ell_{\theta^{n-1} \omega}\left( \frac{1}{\lambda^{(n-1)}_\omega} \mathcal L_\omega^{n-1}\left( \bar g_\pm\mu_\omega\right)\right)
=
1\pm \frac{\upsilon_\omega(g)}{L_3\|g\|_{\mathcal C^\beta}}
=
1.
\label{eq:equal1-clean-2}
\end{equation}
Moreover,
\begin{align}
\left|\Psi_f^{(n)}\left(\frac{1}{\lambda^{(n-1)}_\omega} \mathcal L_\omega^{n-1}\left( \bar g_\pm\mu_\omega\right)\right)\right|
&=
\left|
\ell_{\theta^n\omega}\left(
f\,\frac{1}{\lambda_\omega^{(n)}}\mathcal L_\omega^n(\bar g_\pm\mu_\omega)
\right)
\right| \notag\\
&=
\left|
\upsilon_\omega((f\circ T_\omega^n)\bar g_\pm)
\right| \notag\\
&\le
\|f\|_\infty \|\bar g_\pm\|_\infty \notag\\
&\le
\left\|1+\frac{\bar f}{L_3\|\bar f\|_{\mathcal C^\beta}}\right\|_\infty
\left\|1+\frac{g}{L_3\|g\|_{\mathcal C^\beta}}\right\|_\infty
\le 4.
\label{eq:equal4-clean-2}
\end{align}

Using \eqref{eq:equal1-clean-2}, \eqref{eq:equal4-clean-2},
Theorem \ref{thm:spectralgap}, and equation \eqref{eq:forward-contraction} (inside the proof of Theorem \ref{thm:spectralgap}), we obtain
\begin{align}
I_n(\omega)
&:=
\left|
\ell_{\theta^n\omega}\left(
f\,\frac{1}{\lambda_\omega^{(n)}}\mathcal L_\omega^n(\bar g_+\mu_\omega)
\right)
-
\ell_{\theta^n\omega}\left(
f\,\frac{1}{\lambda_\omega^{(n)}}\mathcal L_\omega^n(\bar g_-\mu_\omega)
\right)
\right| \notag\\
&=
\left|
1-
\frac{\Psi_f^{(n)}(\frac{1}{\lambda^{(n-1)}_\omega} \mathcal L_\omega^{n-1}\left( \bar g_-\mu_\omega\right))}{\Psi_f^{(n)}(\frac{1}{\lambda^{(n-1)}_\omega} \mathcal L_\omega^{n-1}\left( \bar g_+\mu_\omega\right))}
\right|
\,|\Psi_f^{(n)}(\frac{1}{\lambda^{(n-1)}_\omega} \mathcal L_\omega^{n-1}\left( \bar g_+\mu_\omega\right))| \notag\\
&\le
4\left(
e^{\Theta_{\theta^{n-1}\omega}^{b,c,\nu}( \mathcal L_\omega^{n-1}\left( \bar g_+\mu_\omega\right),\mathcal L_\omega^{n-1}\left( \bar g_-\mu_\omega\right))}-1
\right) \notag\\
&\le
4\left(
e^{D_2\chi^{\max\{r_{n-1}(\omega)-1,0\}}}-1
\right) \le
4e^{D_2}D_2\,\chi^{\max\{r_{n-1}(\omega)-1,0\}}.
\label{eq:basic3-clean-2}
\end{align}

From Lemma \ref{lem:chi-rn}, if Hypothesis \ref{hyp:h} holds, then there exist a measurable function $K_H:\Omega\to \mathbb R$ and $\chi_H\in(0,1)$ such that $
\chi^{r_n(\omega)}\le K_H(\omega)\chi_H^n.$ Therefore
\begin{equation}
\chi^{\max\{r_{n-1}(\omega)-1,0\}}
\le
\chi^{-1}\chi^{r_{n-1}(\omega)}\le
\chi^{-1}K_H(\omega)\chi_H^{n-1} \le
\widetilde K_H(\omega)\Lambda^n,
\label{eq:des1-clean-2}
\end{equation}
where $\Lambda:=\chi_H$ and $\widetilde K_H(\omega):=\chi^{-1}\chi_H^{-1}K_H(\omega).$

If Hypothesis \ref{hyp:h'} holds, then for every $p\in[1,\infty)$ there exist
$K_{H,p}\in L^p(\Omega)$ and $\chi_{H,p}\in(0,1)$ such that
$\chi^{r_n(\omega)}\le K_{H,p}(\omega)\chi_{H,p}^n.$
 Hence
\begin{align}
\chi^{\max\{r_{n-1}(\omega)-1,0\}} \leq
\chi^{-1}\chi^{r_{n-1}(\omega)} \le
\chi^{-1}K_{H,p}(\omega)\chi_{H,p}^{n-1} \le
\widetilde K_{H,p}(\omega)\Lambda_p^n,
\label{eq:des2-clean-2}
\end{align}
where $
\Lambda_p:=\chi_{H,p}$ and $\widetilde K_{H,p}(\omega):=\chi^{-1}\chi_{H,p}^{-1}K_{H,p}(\omega)\in L^p(\Omega).$

Combining \eqref{eq:basic1-clean-2}, \eqref{eq:basic2-clean-2},
\eqref{eq:basic3-clean-2}, and \eqref{eq:des1-clean-2}, if
Hypothesis \ref{hyp:h} holds, then
\begin{align}
\left|
\upsilon_\omega(f\circ T_\omega^n \cdot g)
-
\upsilon_{\theta^n\omega}(f)\upsilon_\omega(g)
\right|
&\le
2L_3e^{D_2}D_2\,\widetilde K_H(\omega)\Lambda^n
\|g\|_{\mathcal C^\beta}.
\label{eq:ineq111-clean-2}
\end{align}
Similarly, if Hypothesis \ref{hyp:h'} holds, then for every $p\in[1,\infty)$,
\begin{align}
\left|
\upsilon_\omega(f\circ T_\omega^n \cdot g)
-
\upsilon_{\theta^n\omega}(f)\upsilon_\omega(g)
\right|
&\le
2L_3e^{D_2}D_2\,\widetilde K_{H,p}(\omega)\Lambda_p^n
\|g\|_{\mathcal C^\beta}.
\label{eq:ineq112-clean-2}
\end{align}

\begin{step}[2]
We show that \eqref{eq:decay1} and \eqref{eq:decay2} holds for a general $f\in\mathcal C^\beta(M)$ and still assuming
$\upsilon_\omega(g)=0$.
\end{step}

Let now $f\in \mathcal C^\beta(M)$ be arbitrary. If $f=0$, there is nothing to prove. Otherwise, define
$$
\widetilde f:=1+\frac{f}{L_3\|f\|_{\mathcal C^\beta}}.
$$
Then $f=L_3\|f\|_{\mathcal C^\beta}(\widetilde f-1).$
Since $\upsilon_\omega(g)=0$, we have
$$\upsilon_\omega(f\circ T_\omega^n \cdot g)=\upsilon_\omega \left(L_3\|f\|_{\mathcal C^\beta}(\widetilde f-1) \circ T_\omega^n \cdot g\right) 
=
L_3\|f\|_{\mathcal C^\beta}\,
\upsilon_\omega(\widetilde f\circ T_\omega^n \cdot g).$$
Applying Step 1 to $\widetilde f$ gives, under Hypothesis \ref{hyp:h},
$$
\left|
\upsilon_\omega(f\circ T_\omega^n \cdot g)
-
\upsilon_{\theta^n\omega}(f)\upsilon_\omega(g)
\right|
\le
C_1(\omega)\Lambda^n
\|f\|_{\mathcal C^\beta}\|g\|_{\mathcal C^\beta},
$$
for a suitable measurable function $C_1:\Omega\to\mathbb R$. Likewise, under
Hypothesis \ref{hyp:h'}, for every $p\in[1,\infty)$,
$$
\left|
\upsilon_\omega(f\circ T_\omega^n \cdot g)
-
\upsilon_{\theta^n\omega}(f)\upsilon_\omega(g)
\right|
\le
C_{1,p}(\omega)\Lambda_p^n
\|f\|_{\mathcal C^\beta}\|g\|_{\mathcal C^\beta},
$$
with $C_{1,p}\in L^p(\Omega)$.

\begin{step}[3]
We show \eqref{eq:decay1} and \eqref{eq:decay1} for general $f,g\in \mathcal C^\beta(M)$.
\end{step}

Let $f,g\in \mathcal C^\beta(M)$ and define $
\widetilde g:=g-\upsilon_\omega(g).$
Then $\upsilon_\omega(\widetilde g)=0$ and $
\|\widetilde g\|_{\mathcal C^\beta}
\le
2\|g\|_{\mathcal C^\beta}.
$
Moreover,
\begin{align*}
\upsilon_\omega(f\circ T_\omega^n \cdot g)
-
\upsilon_{\theta^n\omega}(f)\upsilon_\omega(g)
&=
\upsilon_\omega(f\circ T_\omega^n \cdot \widetilde g).
\end{align*}
Applying Step 2 to the pair $(f,\widetilde g)$, we obtain under
Hypothesis \ref{hyp:h},
$$
\left|
\upsilon_\omega(f\circ T_\omega^n \cdot g)
-
\upsilon_{\theta^n\omega}(f)\upsilon_\omega(g)
\right|
\le
C(\omega)\Lambda^n
\|f\|_{\mathcal C^\beta}\|g\|_{\mathcal C^\beta},
$$
for a suitable measurable function $C:\Omega\to\mathbb R$.

Under Hypothesis \ref{hyp:h'}, for every $p\in[1,\infty)$,
$$
\left|
\upsilon_\omega(f\circ T_\omega^n \cdot g)
-
\upsilon_{\theta^n\omega}(f)\upsilon_\omega(g)
\right|
\le
C_p(\omega)\Lambda_p^n
\|f\|_{\mathcal C^\beta}\|g\|_{\mathcal C^\beta},
$$
for some $C_p\in L^p(\Omega)$.

\begin{step}[4]
We conclude the proof of the lemma.
\end{step}

To finish the proof, it remains to show that for arbitrary $f,g\in\mathcal C^\beta(M)$, equations \eqref{eq:decaypullback1} and \eqref{eq:decaypullback2} hold. We repeat the computations of Steps 1--3 with $\omega$ replaced everywhere by $\theta^{-n}\omega$. More precisely, let $(g_\omega)_{\omega\in\Omega}$ be a measurable family such that
$$
\sup_{\omega\in\Omega}\|g_\omega\|_{\mathcal C^\beta}<\infty
\text{ and }
\upsilon_\omega(g_\omega)=0
$$
for $\mathbb P$-almost every $\omega\in\Omega$. Assume first that
$$
f=1+\frac{\bar f}{L_3\|\bar f\|_{\mathcal C^\beta}}
$$
for some $\bar f\in\mathcal C^\beta(M)$. Exactly as in Steps 1, with $\theta^{-n}\omega$ in place of $\omega$ and $g_{\theta^{-n}\omega}$ in place of $g$, we obtain
\begin{align}
\left|
\upsilon_{\theta^{-n}\omega}(f\circ T_{\theta^{-n}\omega}^n\cdot g_{\theta^{-n}\omega})
-
\upsilon_{\omega}(f)\upsilon_{\theta^{-n}\omega}(g_{\theta^{-n}\omega})
\right|
&\le
2L_3e^{D_2}D_2\,\chi^{\max\{r_{n-1}(\theta^{-n}\omega)-1,0\}}
\|g_{\theta^{-n}\omega}\|_{\mathcal C^\beta}.
\label{eq:pullback-basic}
\end{align}
Since $r_{n-1}(\theta^{-n}\omega)=r_{n-1}\bigl(\theta^{-(n-1)}(\theta^{-1}\omega)\bigr),$
Lemma \ref{lem:chi-rn} (1), applied with $n-1$ and $\theta^{-1}\omega$ in place of $\omega$, yields
\begin{align}
\chi^{\max\{r_{n-1}(\theta^{-n}\omega)-1,0\}}
\le
\chi^{-1}\chi^{r_{n-1}(\theta^{-n}\omega)}
\le
\chi^{-1}\chi_H^{-1}K_H(\theta^{-1}\omega)\chi_H^n.
\label{eq:decayineqbasic}
\end{align}
Combining \eqref{eq:pullback-basic} and \eqref{eq:decayineqbasic}, and using that $\upsilon_{\theta^{-n}\omega}(g_{\theta^{-n}\omega})=0$, we get under Hypothesis \ref{hyp:h}
$$
\left|
\upsilon_{\theta^{-n}\omega}(f\circ T_{\theta^{-n}\omega}^n\cdot g_{\theta^{-n}\omega})
\right|
\le
\widetilde C_H(\omega)\Lambda^n\|g_{\theta^{-n}\omega}\|_{\mathcal C^\beta},
$$
where $
\Lambda:=\chi_H,$ and $\widetilde C_H(\omega):=2L_3e^{D_2}D_2\,\chi^{-1}\chi_H^{-1}K_H(\theta^{-1}\omega).$
Likewise, if Hypothesis \ref{hyp:h'} holds, then Lemma \ref{lem:chi-rn}(2), again applied with $n-1$ and $\theta^{-1}\omega$, yields
\begin{align}
\chi^{\max\{r_{n-1}(\theta^{-n}\omega)-1,0\}}
\le
\chi^{-1}\chi_p^{-1}K_p(\theta^{-1}\omega)\chi_p^n.
\label{eq:decayineqbasic2}
\end{align}
Combining \eqref{eq:pullback-basic} and \eqref{eq:decayineqbasic2}, we obtain for every $p\in[1,\infty)$
$$
\left|
\upsilon_{\theta^{-n}\omega}(f\circ T_{\theta^{-n}\omega}^n\cdot g_{\theta^{-n}\omega})
\right|
\le
\widetilde C_p(\omega)\Lambda_p^n\|g_{\theta^{-n}\omega}\|_{\mathcal C^\beta},
$$
where $\Lambda_p:=\chi_p,\ \widetilde C_p(\omega):=2L_3e^{D_2}D_2\,\chi^{-1}\chi_p^{-1}K_p(\theta^{-1}\omega)\in L^p(\Omega).
$ Here, we have used that $\theta$ preserves $\mathbb P$. 

Repeating Step 2 with $\theta^{-n}\omega$ in place of $\omega$, we remove the special normalisation on $f$ and obtain the same bounds with an additional factor $\|f\|_{\mathcal C^\beta}$. Now, given $g\in\mathcal C^\beta(M)$, define $g_\omega:=g-\upsilon_\omega(g).$ Then $\upsilon_\omega(g_\omega)=0$ for $\mathbb P$-almost every $\omega$, and $
\|g_\omega\|_{\mathcal C^\beta}\le \|g\|_{\mathcal C^\beta}+|\upsilon_\omega(g)|\le 2\|g\|_{\mathcal C^\beta}.$ Hence $$
\sup_{\omega\in\Omega}\|g_\omega\|_{\mathcal C^\beta}\le 2\|g\|_{\mathcal C^\beta}.$$
Repeating Step 3 with $\theta^{-n}\omega$ in place of $\omega$, we conclude that \eqref{eq:decaypullback1} holds under Hypothesis \ref{hyp:h}, and that \eqref{eq:decaypullback2} holds under Hypothesis \ref{hyp:h'}. This concludes the proof.

\end{proof}

With the above result in hand, we can show that $\upsilon_\phi$ is $F$-ergodic.
\begin{corollary}
    The measure $\upsilon_\phi = \upsilon_\omega(\d x) \mathbb P(\d \omega)$ is ergodic. \label{cor:ergodic}
\end{corollary}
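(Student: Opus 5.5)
The plan is to derive ergodicity of $\upsilon_\phi$ from the quenched decay of correlations in Proposition~\ref{prop:exponentialdecayofcorrelation}, combined with ergodicity of $(\theta,\mathbb P)$. The first step is a reduction. Let $E\subset\Omega\times M$ be an $F$-invariant Borel set and write $E_\omega=\{x:(\omega,x)\in E\}$. Invariance gives $T_\omega^{-1}E_{\theta\omega}=E_\omega$, and since $(T_\omega)_*\upsilon_\omega=\upsilon_{\theta\omega}$, the function $\omega\mapsto\upsilon_\omega(E_\omega)$ is $\theta$-invariant. By ergodicity of $\theta$ it equals a constant $t\in[0,1]$ $\mathbb P$-a.s. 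It then suffices to show $t\in\{0,1\}$. Equivalently, it suffices to show $\upsilon_\omega(E_\omega)^2=\upsilon_\omega(E_\omega)$ for a.e. $\omega$.

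The second step applies the pullback estimate \eqref{eq:decaypullback1}. Its constant $C(\omega)$ is attached to the target fibre, which is convenient because the invariant set satisfies $E_{\theta^{-n}\omega}=(T^n_{\theta^{-n}\omega})^{-1}E_\omega$. Fix $\omega$ and approximate $\mathbbm 1_{E_\omega}$ in $L^1(\upsilon_\omega)$ by $f\in\mathcal C^\beta(M)$ with $0\le f\le1$, using regularity of $\upsilon_\omega$. Then
$$\upsilon_{\theta^{-n}\omega}(E_{\theta^{-n}\omega}\cap\cdot)=\upsilon_{\theta^{-n}\omega}(\mathbbm 1_{E_\omega}\circ T^n_{\theta^{-n}\omega}\cdot).$$
Replacing $\mathbbm 1_{E_\omega}$ by $f$ costs at most $\|\mathbbm 1_{E_\omega}-f\|_{L^1(\upsilon_\omega)}$, by invariance of the fibre measures. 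The remaining problem is that the test function $g$ on the source fibre $\theta^{-n}\omega$ must also approximate $\mathbbm 1_{E_{\theta^{-n}\omega}}$, and that fibre changes with $n$. This makes a direct fibrewise argument awkward.

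The third step therefore uses a cleaner route at the level of the skew product, through $L^2(\upsilon_\phi)$. Take $\Phi(\omega,x)=\psi(\omega)f(x)$ and $\Psi(\omega,x)=\psi'(\omega)g(x)$ with $\psi,\psi'$ bounded measurable and $f,g\in\mathcal C^\beta(M)$. Then
$$\int\Phi\circ F^n\,\Psi\,\d\upsilon_\phi=\int\psi(\theta^n\omega)\psi'(\omega)\,\upsilon_\omega(f\circ T^n_\omega\, g)\,\d\mathbb P.$$
By \eqref{eq:decay1}, $\upsilon_\omega(f\circ T^n_\omega g)=\upsilon_{\theta^n\omega}(f)\upsilon_\omega(g)+O(C(\omega)\Lambda^n)$, and since $C$ is a.s. finite the error tends to $0$ by dominated convergence (after truncating $C$). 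Averaging over $n$ (Cesàro) and using ergodicity of $\theta$ applied to the bounded function $\omega\mapsto\psi(\omega)\upsilon_\omega(f)$, we get
$$\frac1N\sum_{n<N}\int\Phi\circ F^n\,\Psi\,\d\upsilon_\phi\longrightarrow\int\Phi\,\d\upsilon_\phi\int\Psi\,\d\upsilon_\phi.$$
Measurability of $\omega\mapsto\upsilon_\omega(f)$ follows from the construction. Finite linear combinations of such products are dense in $L^2(\upsilon_\phi)$, because $\mathcal C^\beta(M)$ is dense in $C(M)$ and $\Omega\times M$ is compact metric. The Cesàro mixing property then extends to all of $L^2$. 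Taking $\Phi=\Psi=\mathbbm 1_E$ gives $\upsilon_\phi(E)=\upsilon_\phi(E)^2$, which is the ergodicity of $\upsilon_\phi$.

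The main obstacle is the density/extension step together with the measurability of the constant. The error $\int|\psi\psi'|\,C(\omega)\Lambda^n\,\d\mathbb P$ requires $C\in L^1$, which is only guaranteed under Hypothesis~\ref{hyp:h'}. Under Hypothesis~\ref{hyp:h} alone, I would handle it by truncation: restrict to $\{C\le K\}$, and bound the complement by $\|\psi\psi'\|_\infty\|f\|_\infty\|g\|_\infty\,\mathbb P[C>K]$, which tends to $0$ as $K\to\infty$.
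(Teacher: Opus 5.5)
Your Step~3 is essentially the paper's proof: Ces\`aro-averaged quenched decay \eqref{eq:decay1} on a dense class of observables (you use products $\psi(\omega)f(x)$, the paper uses $H_1,H_2\in L^\infty(\Omega;\mathcal C^\beta(M))$), then ergodicity of $\theta$ via the mean ergodic theorem, extension of the Ces\`aro identity to $L^2(\upsilon_\phi)$ by density, and finally $\Phi=\Psi=\mathbbm 1_E$. Your concern that $C\in L^1$ is needed is unfounded: the fibrewise error is also bounded by $2\|\psi\|_\infty\|\psi'\|_\infty\|f\|_\infty\|g\|_\infty$ and tends to $0$ pointwise, so dominated convergence applies directly under Hypothesis~\ref{hyp:h}, exactly as in the paper; your truncation argument is an equivalent way of doing this.
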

\begin{proof}
    We show that $\upsilon_{\phi}$ is ergodic. Let $H_1,H_2\in L^{\infty}(\Omega,\mathcal C^\beta(M))$. Define
$h_1,h_2\in L^\infty(\Omega)$ by
$h_1(\omega)=\int_M H_1(\omega,x)\,\d\upsilon_\omega(x)$ and
$h_2(\omega)=\int_M H_2(\omega,x)\,\d\upsilon_\omega(x)$.
For every $i\geq 0$, invariance of the disintegration gives
\begin{align}
\int_{\Omega\times M} H_1\circ F^i\, H_2\, \d \upsilon_\phi
&=
\mathbb E\left[
\int_M H_1(\theta^i\omega,T_\omega^i(x))H_2(\omega,x)\upsilon_\omega(\d x)
\right].
\label{eq:integralh1h2}
\end{align}
By Proposition \ref{prop:exponentialdecayofcorrelation}, for $\mathbb P$-a.e. $\omega\in\Omega$,
\begin{align}
&\left|
\int_M H_1(\theta^i\omega,T_\omega^i(x))H_2(\omega,x)\,\upsilon_\omega(\d x)
-
h_1(\theta^i\omega)h_2(\omega)
\right|
\nonumber\\
&\leq
C(\omega)e^{-ai}
\|H_1\|_{L^\infty(\Omega,\mathcal C^\beta(M))}
\|H_2\|_{L^\infty(\Omega,\mathcal C^\beta(M))}
\xrightarrow[]{i\to\infty}0
\label{eq:quenched-decay-h1h2}
\end{align}
Since 
$$ \left|
\int_M H_1(\theta^i\omega,T_\omega^i(x))H_2(\omega,x)\,\upsilon_\omega(\d x)
-
h_1(\theta^i\omega)h_2(\omega)
\right| \leq 2\|H_1\|_{L^\infty(\Omega\times M)}\|H_2\|_{L^\infty(\Omega\times M)},$$ the dominated convergence theorem applies. Hence, from \eqref{eq:integralh1h2} and \eqref{eq:quenched-decay-h1h2},
\begin{align*}
\int_{\Omega\times M} H_1\circ F^i\,H_2\,\d\upsilon_\phi
-
\mathbb E[h_1\circ\theta^i\cdot h_2]
\xrightarrow[]{i\to\infty} 0.
\end{align*}
The above equation implies that,
\begin{align*}
\lim_{n\to\infty}
\frac1n\sum_{i=0}^{n-1}
\int_{\Omega\times M} H_1\circ F^i\,H_2\,\d\upsilon_\phi
&=
\lim_{n\to\infty}
\frac1n\sum_{i=0}^{n-1}
\mathbb E[h_1\circ\theta^i\cdot h_2] =
\mathbb E[h_1]\mathbb E[h_2] \\
&=
\left(\int_{\Omega\times M} H_1\,\d\upsilon_\phi\right)
\left(\int_{\Omega\times M} H_2\,\d\upsilon_\phi\right),
\end{align*}
where the second equality follows from the ergodicity of $\theta$ and the mean ergodic theorem.

Since $L^\infty(\Omega,\mathcal C^\beta(M))$ is dense in $L^2(\Omega\times M,\upsilon_\phi)$, the same Cesàro correlation identity holds for all $H_1,H_2\in L^2(\Omega\times M, \upsilon_\phi)$. Now let $A\subset\Omega\times M$ be an $F$-invariant measurable set. Taking $H_1=H_2=\mathbbm 1_A$, we get
\[
\upsilon_\phi(A)
=
\lim_{n\to\infty}
\frac1n\sum_{i=0}^{n-1}
\int_{\Omega\times M} \mathbbm 1_A\circ F^i\,\mathbbm 1_A\,\d\upsilon_\phi
=
\upsilon_\phi(A)^2.
\]
Thus $\upsilon_\phi(A)\in\{0,1\}$. Hence $F$ is ergodic with respect to $\upsilon_\phi$.
\end{proof}

\section{Existence of Equilibrium States}
\label{sec:eqstate} 
The purpose of this section is to prove that the measure $\upsilon_\phi$ constructed in Lemma \ref{lem:nuprob} is a $\mathbb P$-relative equilibrium state for the potential
$
\bar\phi:=\phi-\phi^{J^s}.
$
The appearance of the correction term $\phi^{J^s}$ comes from the change of variables along stable manifolds: the transfer operator is defined with weight $\phi$, whereas the variational principle is expressed with respect to the ambient dynamics $F$.

The proof has two main steps. First, we identify the spectral objects $\mu_\omega$ and $\ell_\omega$ as measures and record their conditional structure inside rectangles. This gives the invariant measure
$
\upsilon_\phi(\d\omega,\d x)=\upsilon_\omega(\d x)\mathbb P(\d\omega)
$
a form compatible with the local product structure. Second, we use this description to establish a weak Gibbs estimate for Bowen balls associated with the potential $\bar\phi$. This estimate allows us to compare the relative entropy contribution with the exponential growth of the normalising factors $\lambda_\omega^{(n)}$, and hence to prove that
$
h_{\upsilon_\phi}(F\mid\mathbb P)+\int \bar\phi\,\d\upsilon_\phi
=
P_{\mathrm{top}}(F,\bar\phi\mid\mathbb P).
$

\subsection{Properties of  \texorpdfstring{$\mu_\omega$}{muome} and  \texorpdfstring{$\ell_\omega$}{ellome} as measures}
\label{sec:muell}
In this section, we relate the spectral objects obtained in Theorem \ref{thm:spectralgap} to measures on the phase space. The first step is to describe the conditional structure of $\mu_\omega$ inside rectangles.

\begin{lemma}\label{lem:stablemu}\label{lem:dec}
Let $\mu_\omega$ be defined as in Theorem \ref{thm:spectralgap}. Then, for
$\mathbb P$-almost every $\omega\in\Omega$, there exists a $\alpha$-$\log$
Hölder function $H_\omega$ such that, for every $x\in M$,
\[
\mu_\omega(\d z \cap R_\delta(\omega,x))
=
H_\omega(y^u)
\prod_{i=1}^{\infty}
\frac{
e^{\phi_{\theta^{-i}\omega}\circ (T_{\theta^{-i}\omega}^i)^{-1}(y^u)}
}{
e^{\phi_{\theta^{-i}\omega}\circ (T_{\theta^{-i}\omega}^i)^{-1}(y^s)}
}
\,m_{\gamma^u_{(\omega,y^s)}}(\d y^u)
\,\mu_{\gamma^s_{(\omega,x)}}(\d y^s),
\]
where $\mu_{\gamma^s_{(\omega,x)}}$ is the unique measure such that
\[
\mu_{\gamma^s_{(\omega,x)}}(f)
:=
\lim_{n\to\infty}
\int_{\gamma^s_{(\omega,x)}}
f
\frac{\mathcal L^n_{\theta^{-n}\omega}\mathbbm 1}
{\|\mathcal L^n_{\theta^{-n}\omega}\mathbbm 1\|_\omega}
\,\d m_{\gamma^s_{(\omega,x)}}
=
\Gamma_{\gamma^s_{(\omega,x)}}(f\mu_\omega)
\]
for every $f\in\mathcal C^\beta(M)$, with
$\Gamma_{\gamma^s_{(\omega,x)}}\in\mathbb V_\omega^*$ given by
\[
\Gamma_{\gamma^s_{(\omega,x)}}(f)
=
\int_{\gamma^s_{(\omega,x)}} f\,\d m_{\gamma^s_{(\omega,x)}}.
\]
Here $R_\delta(\omega,x)$ is a rectangle in the sense of Definition
\ref{def:rectangles}.
\end{lemma}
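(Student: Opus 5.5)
The plan is to compute $\mu_\omega$ restricted to the rectangle as a limit of the approximants $\eta_{n,\omega}=\mathcal L^n_{\theta^{-n}\omega}\mathbbm 1/\|\mathcal L^n_{\theta^{-n}\omega}\mathbbm 1\|_\omega$ from Step 2 of the proof of Theorem \ref{thm:spectralgap}. These are genuine functions, so the disintegration of Proposition \ref{prop:disintegration} can be applied to them. Explicitly, $\mathcal L^n_{\theta^{-n}\omega}\mathbbm 1(z)=\exp\bigl(\sum_{i=1}^n\phi_{\theta^{-i}\omega}\circ (T^i_{\theta^{-i}\omega})^{-1}(z)\bigr)$. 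For $\psi\in\mathcal C^\beta(M)$ supported near $R_\delta(\omega,x)$, I will write $\int_{R}\psi\,\eta_{n,\omega}\,\d m$ via Proposition \ref{prop:disintegration}, but with the roles of the foliations exchanged: the rectangle is foliated by unstable leaves $\gamma^u_{(\omega,y^s)}$, $y^s\in\gamma^s_{(\omega,x)}$. The quotient measure on the stable base is then carried by $\gamma^s_{(\omega,x)}$, and the conditional densities are $\log$-Hölder along unstable leaves. This is the "analogous statement" allowed at the end of Proposition \ref{prop:holderhol}.

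First, I move every unstable leaf to the base leaf. For $y^u\in\gamma^u_{(\omega,y^s)}$, I factor
\[
\mathcal L^n_{\theta^{-n}\omega}\mathbbm 1(y^u)=\mathcal L^n_{\theta^{-n}\omega}\mathbbm 1(y^s)\prod_{i=1}^n\frac{e^{\phi_{\theta^{-i}\omega}\circ (T^i_{\theta^{-i}\omega})^{-1}(y^u)}}{e^{\phi_{\theta^{-i}\omega}\circ (T^i_{\theta^{-i}\omega})^{-1}(y^s)}}.
\]
By the backward contraction along unstable leaves (third item of Proposition \ref{prop:holderhol}, unstable version) and the uniform $\beta$-Hölder bound on $\phi$, the $i$-th factor differs from $1$ by $O(e^{-\lambda\beta i}d(y^u,y^s)^\beta)$. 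Hence the finite products converge uniformly to the infinite product in the statement, and the limit is $\log$-Hölder in $y^u$. The remaining factor $\mathcal L^n\mathbbm 1(y^s)$ together with the quotient measure, carried over to $\gamma^s_{(\omega,x)}$ through the stable holonomy, produces a density along the stable base. I then integrate first over $y^u$: the function $y^s\mapsto\int_{\gamma^u_{(\omega,y^s)}}\psi\,H\prod_{i\le n}(\cdots)\,\d m_{\gamma^u}$ is Hölder on $\gamma^s_{(\omega,x)}$, with Hölder constant uniform in $n$. This uses the Hölder continuity of the unstable leaves in $y^s$ from Proposition \ref{prop:localmanifolds}.

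The resulting identity has the form $\int_R\psi\,\eta_{n,\omega}\,\d m=\int_{\gamma^s_{(\omega,x)}}\Psi_n\,\eta_{n,\omega}\,\d m_{\gamma^s}$, where $\Psi_n\to\Psi$ uniformly in $\mathcal C^\kappa$. By Proposition \ref{prop:disint} together with the convergence $\eta_{n,\omega}\to\mu_\omega$ in $\mathbb V_\omega$, the right-hand side converges to $\Gamma_{\gamma^s_{(\omega,x)}}(\Psi\mu_\omega)$. By Lemma \ref{lem:positivemu}, this equals $\int\Psi\,\d\mu_{\gamma^s_{(\omega,x)}}$ for a positive measure $\mu_{\gamma^s_{(\omega,x)}}$. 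The density of $\mathcal C^\beta$ in $\mathcal C^0$, combined with the same positivity and Riesz argument, identifies $\mu_\omega|_R$ with the product formula. Uniqueness of $\mu_{\gamma^s}$ follows from Riesz as well. The function $H_\omega$ collects the disintegration density and the holonomy Jacobian; both are $\log$-Hölder with exponent $\alpha=\nu_0$ by Propositions \ref{prop:holderhol} and \ref{prop:disintegration}.

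The main obstacle is the bookkeeping required to turn the quotient measure and the factor $\mathcal L^n\mathbbm 1(y^s)$ into an integral against $\eta_{n,\omega}$ restricted to the single leaf $\gamma^s_{(\omega,x)}$. This step needs the stable-holonomy Jacobian between unstable transversals. I will also have to check that the test functions $\Psi_n$ have Hölder norms bounded uniformly in $n$, so that the convergence $\Gamma^{\Psi_n}(\eta_n)\to\Gamma^{\Psi}(\mu_\omega)$ is justified by the bound in Proposition \ref{prop:disint}.
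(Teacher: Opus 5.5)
Your proposal is correct and follows essentially the same route as the paper. Both arguments disintegrate Lebesgue measure on $R_\delta(\omega,x)$ with base $\gamma^s_{(\omega,x)}$ and fibres the unstable leaves, factor $\mathcal L^n_{\theta^{-n}\omega}\mathbbm 1(y^u)=\mathcal L^n_{\theta^{-n}\omega}\mathbbm 1(y^s)$ times a finite product of ratios, show that the leaf test functions (the paper's $G_{\omega,n}(g)$, your $\Psi_n$) converge in a H\"older norm on $\gamma^s_{(\omega,x)}$, and pass to the limit using $\eta_{n,\omega}\to\mu_\omega$ in $\mathbb V_\omega$ together with Proposition~\ref{prop:disint} and Lemma~\ref{lem:positivemu}. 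The only differences are minor. The paper absorbs the quotient density into a single $\log$-H\"older $H_\omega$ instead of tracking holonomy Jacobians. Your use of $\mathcal C^\kappa$ convergence, rather than the paper's claimed $\mathcal C^\beta$ convergence, is slightly more careful, since the foliation regularity only gives exponent $\nu_0<\beta$.
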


\begin{proof}
Since for $\mathbb P$-a.e. $\omega\in\Omega$ the foliations
$x\mapsto W^{u/s}_\varepsilon(x)$ are $\mathcal C^\beta$, there exists a
log-$\alpha$-Hölder function
$H_\omega:R_\delta(\omega,x)\to\mathbb R$ such that, for every
$\mathcal C^\beta$ function $g:M\to\mathbb R$ supported on
$R_\delta(\omega,x)$,
\begin{align*}
\int_M g(z)\,m(\d z)
&=
\int_{R_\delta(\omega,x)} g(z)\,m(\d z) \\
&=
\int_{\gamma^s_{(\omega,x)}}
\int_{\gamma^u_{(\omega,y^s)}}
g(y^u)H_\omega(y^u)
\,m_{\gamma^u_{(\omega,y^s)}}(\d y^u)
\,m_{\gamma^s_{(\omega,x)}}(\d y^s).
\end{align*}

By construction of $\mu_\omega$,
\begin{align*}
\int_M g(z)\,\d\mu_\omega(z)
&=
\lim_{n\to\infty}
\int_M
g(z)
\frac{\mathcal L^n_{\theta^{-n}\omega}\mathbbm 1(z)}
{\|\mathcal L^n_{\theta^{-n}\omega}\mathbbm 1\|_\omega}
\,m(\d z) \\
&=
\lim_{n\to\infty}
\int_{\gamma^s_{(\omega,x)}}
\int_{\gamma^u_{(\omega,y^s)}}
g(y^u)H_\omega(y^u)
\frac{\mathcal L^n_{\theta^{-n}\omega}\mathbbm 1(y^u)}
{\|\mathcal L^n_{\theta^{-n}\omega}\mathbbm 1\|_\omega}
\,m_{\gamma^u_{(\omega,y^s)}}(\d y^u)
\,m_{\gamma^s_{(\omega,x)}}(\d y^s) \\
&=
\lim_{n\to\infty}
\int_{\gamma^s_{(\omega,x)}}
G_{\omega,n}(g)(y^s)
\frac{\mathcal L^n_{\theta^{-n}\omega}\mathbbm 1(y^s)}
{\|\mathcal L^n_{\theta^{-n}\omega}\mathbbm 1\|_\omega}
\,m_{\gamma^s_{(\omega,x)}}(\d y^s),
\end{align*}
where
\[
G_{\omega,n}(g)(y^s)
=
\int_{\gamma^u_{(\omega,y^s)}}
g(y^u)H_\omega(y^u)
\frac{
e^{S_n\phi_{\theta^{-n}\omega}\circ (T_{\theta^{-n}\omega}^n)^{-1}(y^u)}
}{
e^{S_n\phi_{\theta^{-n}\omega}\circ (T_{\theta^{-n}\omega}^n)^{-1}(y^s)}
}
\,m_{\gamma^u_{(\omega,y^s)}}(\d y^u).
\]

Since $y^u$ and $y^s$ lie on the same unstable leaf, their backward
iterates converge exponentially. Using the uniform Hölder regularity of
$g$, $H_\omega$, and $\phi_{\theta^{-i}\omega}$, we obtain
\[
G_{\omega,n}(g)\xrightarrow[n\to\infty]{}
G_\omega(g)
\ \text{in }\mathcal C^\beta(\gamma^s_{(\omega,x)}),
\]
where
\[
G_\omega(g)(y^s)
=
\int_{\gamma^u_{(\omega,y^s)}}
g(y^u)H_\omega(y^u)
\prod_{i=1}^{\infty}
\frac{
e^{\phi_{\theta^{-i}\omega}\circ (T_{\theta^{-i}\omega}^i)^{-1}(y^u)}
}{
e^{\phi_{\theta^{-i}\omega}\circ (T_{\theta^{-i}\omega}^i)^{-1}(y^s)}
}
\,m_{\gamma^u_{(\omega,y^s)}}(\d y^u).
\]

Therefore, by Theorem \ref{thm:spectralgap},
\begin{align*}
\int_M g(z)\,\d\mu_\omega(z)
&=
\Gamma_{\gamma^s_{(\omega,x)}}
\left(G_\omega(g)\mu_\omega\right) =
\int_{\gamma^s_{(\omega,x)}} G_\omega(g)(y^s)
\,\mu_{\gamma^s_{(\omega,x)}}(\d y^s) \\
&=
\int_{\gamma^s_{(\omega,x)}}
\int_{\gamma^u_{(\omega,y^s)}}
g(y^u)H_\omega(y^u)
\prod_{i=1}^{\infty}
\frac{
e^{\phi_{\theta^{-i}\omega}\circ (T_{\theta^{-i}\omega}^i)^{-1}(y^u)}
}{
e^{\phi_{\theta^{-i}\omega}\circ (T_{\theta^{-i}\omega}^i)^{-1}(y^s)}
}
\,m_{\gamma^u_{(\omega,y^s)}}(\d y^u)
\,\mu_{\gamma^s_{(\omega,x)}}(\d y^s).
\end{align*}
Since this holds for every $\mathcal C^\beta$ function $g$ supported on
$R_\delta(\omega,x)$, the claimed disintegration formula follows.
\end{proof}

The following theorem allows us to derive a Margulis-type relation for the disintegration of $\mu_\omega$ along the stable leaves $\mathscr F^s$.

\begin{lemma}\label{lem:stablemargulis}
Let $\mu_{\gamma^s_{(\omega,x)}}$ be as in Lemma \ref{lem:dec}, then for every $f\in \mathcal C^\beta(M)$
$$\mu_{\gamma^s_{(\theta \omega,T_\omega(x) )}}(f)= \frac{1}{\lambda_\omega }\sum_{i} \mu_{\gamma^{s}_i(\omega,x)}( e^{\phi_\omega - \phi_{\omega}^{J^s}} (f\circ T_\omega))$$

\end{lemma}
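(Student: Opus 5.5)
The plan is to combine the definition of the leaf measures $\mu_{\gamma^s}$ as limits of leafwise integrals (Lemma~\ref{lem:dec}), the change of variables of Lemma~\ref{lem:push_density}, and the eigenrelation $\mathcal L_\omega\mu_\omega=\lambda_\omega\mu_{\theta\omega}$ from Theorem~\ref{thm:spectralgap}.

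Fix $f\in\mathcal C^\beta(M)$ and write $\gamma:=\gamma^s_{(\theta\omega,T_\omega x)}\in\mathscr F^s_{\theta\omega}$. Decompose $T_\omega^{-1}(\gamma)=\bigcup_i\gamma^s_i(\omega,x)$ into finitely many components in $\mathscr F^s_\omega$; by Lemma~\ref{lem:push_density} there are at most $N_*$ of them, and the union is disjoint up to null sets. By Lemma~\ref{lem:dec}, $\mu_\gamma(f)=\Gamma_\gamma(f\mu_{\theta\omega})$, where $\Gamma_\gamma$ is the bounded functional of Proposition~\ref{prop:disint}. Using $\mu_{\theta\omega}=\lambda_\omega^{-1}\mathcal L_\omega\mu_\omega$, this gives
\[
\mu_\gamma(f)=\frac{1}{\lambda_\omega}\Gamma_\gamma\bigl(f\,\mathcal L_\omega\mu_\omega\bigr).
\]

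Next, note that $f\,\mathcal L_\omega\eta=\mathcal L_\omega\bigl((f\circ T_\omega)\eta\bigr)$ pointwise for every $\eta\in\mathrm{BM}(M)$. Lemma~\ref{lem:push_density} with $\rho\equiv 1$ then yields the identity
\[
\int_\gamma f\,\mathcal L_\omega\eta=\sum_i\int_{\gamma^s_i(\omega,x)} e^{\phi_\omega-\phi^{J_s}_\omega}(f\circ T_\omega)\,\eta
\]
for bounded measurable $\eta$. Both sides are bounded linear functionals in $\eta$ with respect to $\|\cdot\|_\omega$. For the left side this follows from boundedness of $\mathcal L_\omega:\mathbb V_\omega\to\mathbb V_{\theta\omega}$ and Proposition~\ref{prop:disint}. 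For the right side, each weight $g_i:=e^{\phi_\omega-\phi^{J_s}_\omega}(f\circ T_\omega)$ is Hölder of exponent $\kappa\le\min\{\beta,\nu_0\}$ by Proposition~\ref{prop:holderjacs}, so Proposition~\ref{prop:disint} applies again. By density of $\mathrm{BM}_\omega(M)$ in $\mathbb V_\omega$, the identity therefore extends to all $\eta\in\mathbb V_\omega$. Taking $\eta=\mu_\omega$ gives
\[
\mu_\gamma(f)=\frac{1}{\lambda_\omega}\sum_i\Gamma_{\gamma^s_i}(g_i\mu_\omega).
\]

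It remains to identify $\Gamma_{\gamma^s_i}(g_i\mu_\omega)$ with $\mu_{\gamma^s_i(\omega,x)}(g_i)$. This is the representation in Lemma~\ref{lem:dec}, except that there it is stated for $f\in\mathcal C^\beta(M)$, whereas $g_i$ is only $\kappa$-Hölder and is defined on the leaf. I would handle this in one of two ways. The first option is to use Lemma~\ref{lem:positivemu}: it shows that $h\mapsto\Gamma_{\gamma}(h\mu_\omega)$ is a positive functional bounded in sup norm on $\mathcal C^\kappa(\gamma)$, so it is represented by a measure that agrees with $\mu_\gamma$. The second option is to approximate $g_i$ uniformly by $\mathcal C^\beta$ functions and pass to the limit using the sup-norm bound $|\mu_\gamma(h)|\le 2\|h\|_\infty\mu_\gamma(1)$.

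The main obstacle I expect is exactly this step: the extension from $\mathrm{BM}$ to the completion $\mathbb V_\omega$, together with the identification of the measure $\mu_{\gamma^s_i}$ on leaf-Hölder test functions, including checking that the components $\gamma^s_i$ are admissible leaves. An alternative that avoids the density argument is to work directly with the approximants $\eta_n=\mathcal L^n_{\theta^{-n}\omega}\mathbbm 1/\|\mathcal L^n_{\theta^{-n}\omega}\mathbbm 1\|_\omega$. These satisfy the identity pointwise, and the normalisation ratio $\|\mathcal L^{n+1}\mathbbm 1\|_{\theta\omega}/\|\mathcal L^n\mathbbm 1\|_\omega\to\lambda_\omega$, as in Step 2 of Theorem~\ref{thm:spectralgap}. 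Letting $n\to\infty$ on both sides then gives the claim.
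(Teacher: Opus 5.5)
Your proposal is correct and follows the same route as the paper. The paper also writes $\mu_\gamma(f)=\Gamma_\gamma(f\mu_{\theta\omega})$, uses $\mathcal L_\omega\mu_\omega=\lambda_\omega\mu_{\theta\omega}$ and $f\,\mathcal L_\omega\eta=\mathcal L_\omega((f\circ T_\omega)\eta)$, and computes $\mathcal L_\omega^*\Gamma_\gamma$ through the change of variables of Lemma~\ref{lem:push_density}. Your version keeps $f$ in the test functional instead of forming $(f\circ T_\omega)\mu_\omega$ inside $\mathbb V_\omega$, and it makes explicit two points the paper skips: the extension of the identity from $\mathrm{BM}_\omega(M)$ to $\mathbb V_\omega$ by boundedness and density, and the identification of $\Gamma_{\gamma_i}(g_i\mu_\omega)$ with $\mu_{\gamma_i}(g_i)$ for the merely $\kappa$-H\"older weight $g_i$ via Lemma~\ref{lem:positivemu}.
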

\begin{proof} From a direct computation
 \begin{align*}
    \mu_{\gamma^s_{(\theta \omega,T_\omega(x) )}} (f) &= \Gamma_{\gamma_{(\theta, T_\omega(x)}} \left(f \cdot \mu_{\theta \omega}\right) = \frac{1}{\lambda_\omega}  \Gamma_{\gamma_{(\theta, T_\omega(x)}} \left(f \cdot \mathcal L_\omega \mu_{\omega}\right) = \frac{1}{\lambda_\omega}  \Gamma_{\gamma_{(\theta, T_\omega(x))}} \mathcal L_\omega \left(f\circ T_\omega \cdot \mu_{\omega}\right) \\
     &= \frac{1}{\lambda_\omega}\mathcal L_\omega^*(  \Gamma_{\gamma_{(\theta, T_\omega(x)}} ) \left(f\circ T_\omega \cdot \mu_{\omega}\right)
 \end{align*}

Given $g\in \mathcal C^\alpha(M),$
\begin{align*}
 \mathcal L_\omega^*(  \Gamma_{\gamma^s_{(\theta\omega, T_\omega(x))}})  g &=  \Gamma_{\gamma_{(\theta, T_\omega(x))}}  (\mathcal L_\omega g) = \int_{\gamma^s_{(\theta\omega, T_\omega(x))}} e^{\phi_\omega\circ T_\omega^{-1} } g\circ T_\omega^{-1} \d m_{\gamma^s_{(\omega,T_\omega(x))}}  \\
 &= \sum_{i} \int_{\gamma_i(\omega,x_i)} e^{\phi_\omega - \phi^{J^s}} g \d m^{s}_{\gamma_i(\omega,x_i)} = \sum_{i} \Gamma_{\gamma_i(\omega,x_i)}( e^{\phi_\omega - \phi_{\omega}^{J^s}} g).
\end{align*}
Therefore
$$\lambda_\omega \mu_{\gamma^s_{(\theta \omega,T_\omega(x) )}}(f)= \sum_{i} \mu_{\gamma^{s}_i(\omega,x)}( e^{\phi_\omega - \phi_{\omega}^{J^s}} (f\circ T_\omega)).$$

\end{proof}

We next prove the corresponding statements for $\ell_\omega$. The point is that $\ell_\omega$ is initially obtained as an element of the dual space $\mathbb V_\omega^*$, so it is not a priori clear that it is represented by a measure on $M$. We first prove this positivity and representation property, and then record a local disintegration formula for $\ell_\omega$ inside rectangles.
\begin{lemma}\label{lem:eqmeasure}
    Let $\ell_\omega$ be as in Theorem \ref{thm:spectralgap}. Then $\ell_\omega$ can be identified with a positive measure on $M$.
\end{lemma}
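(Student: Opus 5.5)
The plan is to show that $\ell_\omega$, restricted to the continuous functions, is a positive linear functional on $\mathcal C^\beta(M)$ that is bounded in the supremum norm. The Riesz representation theorem then identifies it with a positive finite Borel measure on $M$. The natural route is the same device used for $\upsilon_\omega$ in Lemma~\ref{lem:nuprob}. There positivity of a functional of the form $f\mapsto\ell_\omega(f\,\cdot)$ was obtained by writing $\ell_\omega$ as a normalised limit of leafwise integrals of $\mathcal L^n_\omega(\cdot)$. Here we apply that idea with $\mu_\omega$ replaced by the function $\mathbbm 1$.

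\emph{Step 1 (well-definedness on H\"older functions).} By Lemma~\ref{lem:C}, every $f\in\mathcal C^\beta(M)$ lies in $\mathbb V_\omega$ with $\|f\|_\omega\le C_0\|f\|_{\mathcal C^\beta}$. Hence $f\mapsto \ell_\omega(f)$ is a bounded linear functional on $\mathcal C^\beta(M)$.

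\emph{Step 2 (positivity).} Let $f\ge 0$ be in $\mathcal C^\beta(M)$. Apply Theorem~\ref{thm:decay} (or directly \eqref{eq:cone-asymptotic}) to get
$$
\Big\|\tfrac{1}{\lambda^{(n)}_\omega}\mathcal L^n_\omega f-\ell_\omega(f)\mu_{\theta^n\omega}\Big\|_{\theta^n\omega}\to 0 .
$$
As in the proof of Lemma~\ref{lem:nuprob}, choose leaves $\gamma^{(n)}\in\mathscr F^s_{\theta^n\omega}$ and densities $\rho^{(n)}\in D_1(a,\kappa,\gamma^{(n)})$ with $\Gamma_n(\mu_{\theta^n\omega}):=\int_{\gamma^{(n)}}\rho^{(n)}\mu_{\theta^n\omega}\ge 1/4$, using Lemma~\ref{lem:norm1} and $\|\mu_{\theta^n\omega}\|=1$. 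Then
$$
\ell_\omega(f)=\lim_{n\to\infty}\frac{\Gamma_n(\mathcal L^n_\omega f)}{\lambda^{(n)}_\omega\,\Gamma_n(\mu_{\theta^n\omega})}.
$$
By Lemma~\ref{lem:push_density} iterated $n$ times, $\Gamma_n(\mathcal L^n_\omega f)$ is a finite sum of integrals of $f$ against strictly positive densities on pulled-back stable leaves. Hence it is $\ge 0$ whenever $f\ge 0$ pointwise, and so $\ell_\omega(f)\ge0$. Since $\lambda_\omega^{(n)}>0$, the limit is nonnegative.

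\emph{Step 3 (sup-norm bound and Riesz).} Positivity gives $|\ell_\omega(f)|\le \ell_\omega(\mathbbm 1)\|f\|_\infty$: apply positivity to $\|f\|_\infty\pm f\ge0$. So $\ell_\omega$ extends by density of $\mathcal C^\beta(M)$ in $\mathcal C^0(M)$ to a positive bounded functional. Riesz then yields a finite positive Borel measure, again denoted $\ell_\omega$, with total mass $\ell_\omega(\mathbbm 1)$. This mass is positive because $\mathbbm 1\in\operatorname{int}\mathcal C_\omega$ and $\ell_\omega(\mu_\omega)=1$. Finally, this measure is consistent with the original functional on all of $\mathbb V_\omega$, because $\iota_\omega$ is injective (Lemma~\ref{lem:5.1}).

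The main obstacle is the first limit identity in Step 2. The convergence is in $\|\cdot\|_{\theta^n\omega}$, while the functionals $\Gamma_n$ vary with $n$. One must check that $|\Gamma_n(h)|\le \|h\|_{\theta^n\omega,a,\kappa}^{\sup_s}\le\|h\|_{\theta^n\omega}$ uniformly in $n$. This holds by the definition of the $\sup_s$ seminorm, since $\rho^{(n)}$ is a normalised admissible density. One must also keep the lower bound $\Gamma_n(\mu_{\theta^n\omega})\ge 1/4$ uniform in $n$, which requires care in the choice of the leaves $\gamma^{(n)}$.
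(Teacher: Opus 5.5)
Your proposal is correct and follows essentially the same route as the paper: continuity on $\mathcal C^\beta(M)$ via Lemma~\ref{lem:C}, then positivity by pairing the convergence $\frac{1}{\lambda_\omega^{(n)}}\mathcal L_\omega^n f-\ell_\omega(f)\mu_{\theta^n\omega}\to 0$ with the leafwise functionals $\Gamma_n$ satisfying $\Gamma_n(\mu_{\theta^n\omega})\ge 1/4$, exactly as in Lemma~\ref{lem:nuprob}, and finally the sup-norm bound and Riesz--Markov. Drop your closing remark that the measure agrees with $\ell_\omega$ on all of $\mathbb V_\omega$ by injectivity of $\iota_\omega$: the lemma does not need it, and it does not follow, since elements of the completion such as $\mu_\omega$ are not continuous functions on which a measure can act.
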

\begin{proof}
We first note that $\ell_\omega$ is a continuous linear functional in $\mathcal C^\beta(M)$. Indeed, by Lemma \ref{lem:norm1}, for every $f\in \mathcal C^\beta$,
\begin{equation}
|\ell_\omega(f)|
\le \|\ell_\omega\|_{\mathbb V_\omega^*}\|f\|_\omega
\le C\|\ell_\omega\|_{\mathbb V_\omega^*}\|f\|_{\mathcal C^\beta}.
\end{equation}

Moreover, by Theorem \ref{thm:decay} and Lemma \ref{lem:C}, there exists a measurable function $K:\Omega\to \mathbb R$ such that, for every $n\in \mathbb N$ and every $f\in \mathcal C^\beta(M)$,
\begin{equation}
\left\|
\frac{1}{\lambda_\omega^{(n)}}\mathcal L_\omega^n f
-\ell_\omega(f)\mu_{\theta^n\omega}
\right\|
\le K(\omega)\|\ell_\omega\|_{\mathbb V_\omega^*}\|f\|_\omega\chi^n
\le CK(\omega)\|\ell_\omega\|_{\mathbb V_\omega^*}\|f\|_{\mathcal C^\beta}\chi^n.
\label{eq:Kdec}
\end{equation}

We now prove that $\ell_\omega$ is positive. The argument is the same as in the proof of Lemma \ref{lem:nuprob}. From Lemma \ref{lem:norm1}, $1=\|\mu_\omega\|_\omega\le 3\|\mu_\omega\|_{\omega,a,\kappa}^{\sup_s}.$ Hence, for each $n\in \mathbb N$, we can choose
$\gamma_{\theta^n\omega}^{(n)}\in \mathscr F^s_{\theta^n\omega}$ and
$\rho_{\theta^n\omega}^{(n)}\in D_1(a,\kappa,\gamma_{\theta^n\omega}^{(n)})$
such that
\begin{equation}
\int_{\gamma_{\theta^n\omega}^{(n)}}
\rho_{\theta^n\omega}^{(n)}\mu_{\theta^n\omega}
\ge \frac14.
\label{eq:141}
\end{equation}

Define $\Gamma_n\in \mathbb V_{\theta^n\omega}^*$ as 
$\Gamma_n(g):=\int_{\gamma_{\theta^n\omega}^{(n)}}\rho_{\theta^n\omega}^{(n)}g.$
If $f\ge0$, then $\mathcal L_\omega^n f\ge0$, and since
$\rho_{\theta^n\omega}^{(n)}\ge0$, we obtain
\begin{equation}
\Gamma_n(\mathcal L_\omega^n f)\ge0
\  \text{for every } n\in \mathbb N.
\end{equation}

Applying $\Gamma_n$ to \eqref{eq:Kdec}, and using \eqref{eq:141}, we get
\begin{align*}
\left|
\frac{1}{\lambda_\omega^{(n)}}
\frac{\Gamma_n(\mathcal L_\omega^n f)}{\Gamma_n(\mu_{\theta^n\omega})}
-\ell_\omega(f)
\right|
&\le
\frac{CK(\omega)\|\ell_\omega\|_{\mathbb V_\omega^*}\|f\|_{\mathcal C^\beta}\chi^n}
{\Gamma_n(\mu_{\theta^n\omega})}\\
&\le
4CK(\omega)\|\ell_\omega\|_{\mathbb V_\omega^*}\|f\|_{\mathcal C^\beta}\chi^n.
\end{align*}
Since the first term inside the absolute value is non-negative for every $n$, and the right-hand side tends to $0$ as $n\to\infty$, it follows that
\begin{equation}
\ell_\omega(f)\geq 0\  \text{whenever }f\geq 0.
\end{equation}
Therefore $\ell_\omega$ is positive on $\mathcal C^\beta(M)$ with respect to the pointwise order.

Finally, for any real-valued $f\in \mathcal C^\beta(M)$,
\begin{equation}
-\|f\|_\infty\,\mathbbm 1 \leq f\le \|f\|_\infty\,\mathbbm 1.
\end{equation}
By positivity, $|\ell_\omega(f)|\le \ell_\omega(1)\|f\|_\infty.$
Thus $\ell_\omega$ is continuous in $\mathcal C^\beta(M)$ with respect to the supremum norm, hence it can be extended to a positive linear functional in $\mathcal C^0(M)$. From the Riesz--Markov theorem, this extension is represented by a finite positive Borel measure on $M$.
\end{proof}

\begin{lemma}
Let $\ell_\omega$ be as in Theorem \ref{thm:spectralgap}. Then, for every $x\in M$, there exists a set $\Omega_x\subset \Omega$ with $\mathbb P(\Omega_x)=1$ such that, for every $\omega\in \Omega_x$, there exist an $\alpha$-$\log$ Hölder function
$$
A_\omega:R_\delta(\omega,x)\to (0,\infty)
$$
and a probability measure $\ell^u_{\gamma^u_{(\omega,x)}}$ on $\gamma^u_{(\omega,x)}$ such that
$$
\ell_\omega(v)
=
\int_{\gamma^u_{(\omega,x)}}
\Gamma_{\gamma^s_{(\omega,y)}}\bigl((A_\omega|_{\gamma^s_{(\omega,y)}})v\bigr)\,
\ell^u_{\gamma^u_{(\omega,x)}}(\mathrm d y)
\  \text{for every } v\in \mathcal C^\beta(M),
$$
where, for each $y\in \gamma^u_{(\omega,x)}$, the functional
$\Gamma_{\gamma^s_{(\omega,y)}}\in \mathbb V_\omega^*$ is given by
$$
\Gamma_{\gamma^s_{(\omega,y)}}(f)
:=
\int_{\gamma^s_{(\omega,y)}} f\,\mathrm d m_{\gamma^s_{(\omega,y)}}.
$$
\label{lem:dec1}
\end{lemma}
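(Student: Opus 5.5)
The plan is to mirror the proof of Lemma \ref{lem:dec}, with the roles of the stable and unstable directions exchanged: $\ell_\omega$ is built from forward iterates, which contract stable leaves. Fix $x$ and the rectangle $R_\delta(\omega,x)$.

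\textbf{Step 1: disintegrate $m$ in the stable direction.} Proposition~\ref{prop:disintegration} writes the Riemannian volume on $R_\delta(\omega,x)$ as
\[
m|_{R}=H(\omega)\,m_{\gamma^s_{(\omega,y)}}\,\widetilde m_R(\d y),
\]
with $y$ ranging over the unstable base $\gamma^u_{(\omega,x)}$ and $H(\omega)$ log-H\"older along stable leaves. Combined with Lemma~\ref{lem:eqmeasure}, which identifies $\ell_\omega$ with a finite positive Borel measure, this gives the setting for everything below.

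\textbf{Step 2: express $\ell_\omega$ as a limit of leaf integrals.} From \eqref{eq:cone-asymptotic} and Theorem~\ref{thm:decay}, for $v\in\mathcal C^\beta(M)$ supported in $R$ we have
\[
\ell_\omega(v)=\lim_{n}\frac{1}{\lambda_\omega^{(n)}}\,\Gamma\bigl(\mathcal L_\omega^n v\bigr),
\]
for any fixed positive normalising functional $\Gamma$ with $\Gamma(\mu_{\theta^n\omega})$ bounded below, as in the proof of Lemma~\ref{lem:eqmeasure}. Pulling $\Gamma$ back by Lemma~\ref{lem:push_density} gives
\[
\Gamma(\mathcal L^n_\omega v)=\sum_j \int_{\gamma_j} v\,\rho^{(n)}_j .
\]
Each $\gamma_j$ is a stable leaf at time $\omega$, and the density is $\rho^{(n)}_j=e^{S_n(\phi-\phi^{J_s})}\rho\circ T^n_\omega$.

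\textbf{Step 3: separate the leafwise shape from the mass.} Intersect the pulled-back pieces with $R$; each piece lies in some $\gamma^s_{(\omega,y)}$. By Lemma~\ref{lem:densityimprovment}, the normalised densities $\rho^{(n)}_j/\int\rho^{(n)}_j$ contract in the Hilbert metric. Transporting them by unstable holonomy (Proposition~\ref{prop:holderhol}) to a reference leaf, they converge uniformly, at a rate $\Lambda_1^n$, to a limiting log-H\"older profile. Define $A_\omega$ on each $\gamma^s_{(\omega,y)}$ as this limiting normalised density. The candidate explicit form is the infinite product
\[
\prod_{k\ge0} e^{(\phi-\phi^{J_s})\circ F^k(\omega,z)-(\phi-\phi^{J_s})\circ F^k(\omega,z_0)},
\]
which converges because forward orbits along a stable leaf approach exponentially fast. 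Log-H\"older regularity with exponent $\alpha\le\min\{\kappa,\nu_0\}$ then follows from the bounds in Proposition~\ref{prop:holderjacs}.

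\textbf{Step 4: build the transverse measure.} The remaining total masses $\int\rho^{(n)}_j/\lambda_\omega^{(n)}$, assigned to the points $y\in\gamma^u_{(\omega,x)}$, define finite measures $\ell^{u,n}$ on $\gamma^u_{(\omega,x)}$. Their total mass is bounded by $\ell_\omega$ evaluated on a bump equal to $1$ on $R$, so by weak-$*$ compactness a subsequence converges to some $\ell^u$. Then normalise $\ell^u$ to a probability measure and absorb the constant into $A_\omega$. Finally, pass to the limit using that
\[
y\mapsto \Gamma_{\gamma^s_{(\omega,y)}}(A_\omega v)
\]
is continuous in $y$, by the Hausdorff continuity of the stable leaves in Proposition~\ref{prop:localmanifolds}. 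The full-measure set $\Omega_x$ consists of the $\omega$ in $\Omega_0$ for which the convergence in Theorem~\ref{thm:decay} holds.

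\textbf{Main obstacle.} I expect the hardest point to be Step 4. The pulled-back leaves $\gamma_j$ do not exactly coincide with whole leaves $\gamma^s_{(\omega,y)}$ of the rectangle: there are boundary pieces, and the leaves only fill the rectangle in a weak sense. To handle this, I would try taking $\Gamma$ to be integration against $m$ on a rectangle at time $\theta^n\omega$ via Proposition~\ref{prop:disintegration}. Its preimage is then a union of full stable leaves of $R$, up to boundary strips whose $\ell_\omega$-mass is $O(e^{-\lambda n})$. That mass tends to zero, which would give uniqueness of the limit $\ell^u$ and hence the claimed formula.
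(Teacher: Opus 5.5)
Your overall route is the paper's: you represent $\ell_\omega$ as a normalised limit of $\Gamma(\mathcal L^n_\omega v)/\lambda^{(n)}_\omega$, disintegrate along the stable plaques of $R=R_\delta(\omega,x)$, and extract $A_\omega$ and $\ell^u_{\gamma^u_{(\omega,x)}}$ from uniform convergence of leafwise profiles plus weak-$*$ compactness of transverse masses. Your Steps 3--4 in fact spell out the ``compactness argument'' that the paper only invokes. Restricting to $v$ supported in $R$ is also the right reading, since the right-hand side only sees $v|_R$. Two things need repair.

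\textbf{Normalisation in Step 2.} The limit in Step 2 is missing the division by $\Gamma(\mu_{\theta^n\omega})$, and for a fixed $\Gamma$ that quantity need not converge. The paper takes $\Gamma=m_{\theta^n\omega}$, Lebesgue measure, with $\|m_\eta\|_{\mathbb V_\eta^*}\le m(1)$ uniformly in $\eta$. It then restricts to times $n_k$ with $\theta^{n_k}\omega\in M_c=\{c^{-1}\le\mu_\eta(1)\le c\}$, so that $\ell_\omega(v)=\lim_k \mu_{\theta^{n_k}\omega}(1)^{-1}(\lambda^{(n_k)}_\omega)^{-1}\int_M\mathcal L^{n_k}_\omega v\,\d m$.

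\textbf{The step you flag as the main obstacle.} This is the real gap, and your proposed resolution does not close it. An $O(e^{-\lambda n})$ bound on the mass of the boundary strips would essentially require the transverse part of $\ell_\omega$ to be Lipschitz with respect to unstable length. That holds in the SRB case, where $\ell_\omega$ is essentially Lebesgue, but not for a general H\"older potential. Moreover, what enters the limit is the mass of these $n$-dependent strips under the $n$-th approximating measure, not under $\ell_\omega$, so weak convergence alone does not control it. What you would actually need is a non-atomicity estimate for the leaf measures, and nothing in the paper supplies one.

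The obstacle disappears if you take $\Gamma$ to be Lebesgue on all of $M$, as the paper does. Since $(T^n_\omega)^{-1}M=M$, for $v$ supported in $R$ one has $\int_M\mathcal L^n_\omega v\,\d m=\int_R v\,e^{S_n\phi_\omega}|\det DT^n_\omega|\,\d m$. Proposition~\ref{prop:disintegration} then splits this into integrals over full plaques $\gamma^s_{(\omega,y)}\cap R$ against $\widetilde m_R(\d y)$, with no partial pieces at all.

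With this choice the leafwise profile becomes $e^{S_n\phi_\omega}|\det DT^n_\omega|\,H(\omega)$ rather than $e^{S_n\bar\phi}$. Computing the stable conditionals of $(T^n_\omega)_*m$ in two ways shows that, along each plaque, it is proportional to $e^{S_n\bar\phi}\cdot(H'\circ T^n_\omega)$. Here $H'$ is the stable density of Proposition~\ref{prop:disintegration} in the fibre $\theta^n\omega$. The factor $H'\circ T^n_\omega$ flattens out by the log-H\"older bound in that proposition. So your infinite-product description of $A_\omega$ and the rest of Steps 3--4 go through, with transverse measures absolutely continuous with respect to $\widetilde m_R$ and of uniformly bounded mass along $n_k$.
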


\begin{proof}
For each $\omega\in \Omega$, let $m_\omega:\mathbb V_\omega\to\mathbb R$ be the linear functional induced by the measure $m$, that is,
$$
m_\omega(f):=\int_M f(x)\,m(\mathrm d x)
$$
for $f\in \mathrm{BM}(M)$. By the definition of the norm $\|\cdot\|_{\omega,a,\kappa}^{\sup_s}$, for every $f\in \mathcal C^\beta(M)$ we have
$$
|m_\omega(f)|
\le \|f\|_{\omega,a,\kappa}^{\sup_s}\,m(1)
\le m(1)\|f\|_\omega.
$$
Therefore $m_\omega$ extends uniquely to an element of $\mathbb V_\omega^*$, and $\|m_\omega\|_{\mathbb V_\omega^*}\le m(1)$ uniformly in $\omega\in\Omega$.

Now fix $\omega\in \Omega$. By Theorem \ref{thm:decay}, there exists a measurable function $K:\Omega\to\mathbb R$ such that, for every $f\in \mathcal C^\beta(M)$,
$$
\left\|
\frac{1}{\lambda_\omega^{(n)}}\mathcal L_\omega^n f
-\ell_\omega(f)\mu_{\theta^n\omega}
\right\|_{\theta^n\omega}
\le
K(\omega)\|\ell_\omega\|_{\mathbb V_\omega^*}\|f\|_\omega\chi^n.
$$
Applying the functional $m_{\theta^n\omega}\in \mathbb V_{\theta^n\omega}^*$ to the term inside the norm, and using $\|m_{\theta^n\omega}\|_{\mathbb V_{\theta^n\omega}^*}\le m(1)$, we obtain
$$
\left|
\int_M \frac{1}{\lambda_\omega^{(n)}}\mathcal L_\omega^n f(x)\,m(\mathrm d x)
-\ell_\omega(f)\mu_{\theta^n\omega}(1)
\right|
\le
m(1)K(\omega)\|\ell_\omega\|_{\mathbb V_\omega^*}\|f\|_\omega\chi^n.
$$
Since $\mu_{\theta^n\omega}\in \operatorname{int}\bigl(\mathcal C_{\theta^n\omega}(b,c,\nu)\bigr)$, we have $\mu_{\theta^n\omega}(1)>0$
for $\mathbb P$-almost every $\omega$. Hence, for every such $\omega$,
$$
\left|
\ell_\omega(f)
-
\frac{1}{\mu_{\theta^n\omega}(1)}
\int_M \frac{1}{\lambda_\omega^{(n)}}\mathcal L_\omega^n f(x)\,m(\mathrm d x)
\right|
\le
\frac{m(1)K(\omega)}{\mu_{\theta^n\omega}(1)}
\|\ell_\omega\|_{\mathbb V_\omega^*}\|f\|_\omega\chi^n.
$$
Therefore
\begin{equation}
\ell_\omega(f)
=
\lim_{n\to\infty}
\frac{1}{\mu_{\theta^n\omega}(1)}
\int_M \frac{1}{\lambda_\omega^{(n)}}\mathcal L_\omega^n f(x)\,m(\mathrm d x)
\  \text{for every } f\in \mathcal C^\beta(M).
\label{eq:limlw}
\end{equation}

Since the map $\eta\mapsto \mu_\eta(1)$ is measurable and strictly positive on a full-measure set, there exists $c>1$ such that
$$
M_c:=\{\eta\in \Omega: c^{-1}\le \mu_\eta(1)\le c\}
$$
has positive $\mathbb P$-measure. Let $\Omega_c\subset \Omega$ be the full-measure set of points whose $\theta$-orbit visits $M_c$ infinitely many times. Fix $x\in M$, and let $\Omega_x^0$ be the full-measure set given by Lemma \ref{lem:dec}.

Fix $\omega\in \Omega_c$. Choose a strictly increasing sequence $n_k=n_k(\omega)$ such that
$$
\theta^{n_k}\omega\in M_c
\  \text{for every } k\ge1.
$$
For $v\in \mathbb V_\omega$, define
$$
F_k(v):=
\frac{1}{\mu_{\theta^{n_k}\omega}(1)}
\int_M \frac{1}{\lambda_\omega^{(n_k)}}\mathcal L_\omega^{n_k}v(x)\,m(\mathrm d x).
$$
Since $c^{-1}\le \mu_{\theta^{n_k}\omega}(1)\le c$, the sequence $(F_k)_k$ satisfies the same uniform bounds used in the proof of Lemma \ref{lem:dec}. Hence the compactness argument from that proof applies to $(F_k)_k$: after passing to a subsequence, there exist an $\alpha$-$\log$ Hölder function
$$
A_\omega:R_\delta(\omega,x)\to (0,\infty)
$$
and a probability measure $\ell^u_{\gamma^u_{(\omega,x)}}$ on $\gamma^u_{(\omega,x)}$ (by the correct normalization) such that
$$
\lim_{k\to\infty} F_k(v)
=
\int_{\gamma^u_{(\omega,x)}}
\Gamma_{\gamma^s_{(\omega,y)}}\bigl((A_\omega|_{\gamma^s_{(\omega,y)}})v\bigr)\,
\ell^u_{\gamma^u_{(\omega,x)}}(\mathrm d y)
\  \text{for every } v\in \mathbb V_\omega.
$$
On the other hand, \eqref{eq:limlw} shows that
$$
\lim_{k\to\infty} F_k(f)=\ell_\omega(f)
\  \text{for every } f\in \mathcal C^\beta(M).
$$
Since both sides define continuous linear functionals on $\mathbb V_\omega$, this identity extends by density from $\mathcal C^\beta(M)$ to all $v\in \mathbb V_\omega$. Therefore
$$
\ell_\omega(v)
=
\int_{\gamma^u_{(\omega,x)}}
\Gamma_{\gamma^s_{(\omega,y)}}\bigl((A_\omega|_{\gamma^s_{(\omega,y)}})v\bigr)\,
\ell^u_{\gamma^u_{(\omega,x)}}(\mathrm d y)
\  \text{for every } v\in \mathbb V_\omega.
$$
This proves the lemma.
\end{proof}

Before proving the weak Gibbs estimate, we need two auxiliary bounds. The first one concerns the random normalising factors $\lambda_\omega$ coming from the spectral decomposition. Since these factors will later appear in the exponential weights of Bowen balls, we record that $\log\lambda_\omega$ is integrable and that its integral is controlled by the size of the potential.

\begin{proposition}
The map $\omega \mapsto \log \lambda_\omega$ belongs to $L^1(\Omega,\mathbb P)$. Moreover, there exists $K>0$ such that
\begin{align}
-(K+1)\int_\Omega \|\phi_\omega\|_{\mathcal C^\beta}\,\mathbb P(d\omega)
&\le
\int_\Omega \log \lambda_\omega\,\mathbb P(d\omega)\nonumber\\
&\le
(K+1)\int_\Omega \|\phi_\omega\|_{\mathcal C^\beta}\,\mathbb P(d\omega).
\label{eq:inequalitylambda}
\end{align}
\end{proposition}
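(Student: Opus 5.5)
The plan is to obtain two-sided pointwise bounds on $\lambda_\omega$ of the form $e^{-(K+1)\|\phi_\omega\|_{\mathcal C^\beta}}\,c_-\le \lambda_\omega\le e^{(K+1)\|\phi_\omega\|_{\mathcal C^\beta}}\,c_+$, take logarithms and integrate. Since $\phi\in L^\infty(\Omega;\mathcal C^\beta(M))$, this gives integrability of $\log\lambda_\omega$ immediately. Any additive constants coming from the geometry (such as $\log C_{10}$, $\log N_*$ or the ratios $\|\cdot\|_{\omega,+}/\|\cdot\|_{\omega}$) will have to be absorbed into $K\int\|\phi_\omega\|_{\mathcal C^\beta}$. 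If $\int\|\phi_\omega\|_{\mathcal C^\beta}$ is too small for that, I would state the inequality with an additive constant instead. I expect that normalisation issue to be the delicate point rather than the estimates themselves.

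The first step is to express $\lambda_\omega$ through leafwise integrals. Since $\mathcal L_\omega\mu_\omega=\lambda_\omega\mu_{\theta\omega}$ and, by Lemma \ref{lem:norm1}, the seminorm $\|\cdot\|_{\theta\omega,+}$ is equivalent (up to the factor $3$) to $\|\cdot\|_{\theta\omega}$ on the cone, I get
$$
\tfrac13\,\|\mathcal L_\omega\mu_\omega\|_{\theta\omega,+}\le\lambda_\omega\,\|\mu_{\theta\omega}\|_{\theta\omega}=\lambda_\omega\le 3\,\|\mathcal L_\omega\mu_\omega\|_{\theta\omega,+}.
$$
For any $\gamma_{\theta\omega}\in\mathscr F^s_{\theta\omega}$ and $\rho\in D_1(a,\kappa,\gamma_{\theta\omega})$, Lemma \ref{lem:push_density} gives
$$
\int_{\gamma_{\theta\omega}}\rho\,\mathcal L_\omega\mu_\omega=\sum_i\Bigl(\int\rho^{(i)}\Bigr)\int\frac{\rho^{(i)}}{\int\rho^{(i)}}\,\mu_\omega.
$$
The densities $\rho^{(i)}=e^{\phi_\omega+\log J_s}\rho\circ T_\omega$ have total mass between $e^{-\|\phi_\omega\|_\infty}C_{10}^{-2}$ and $e^{\|\phi_\omega\|_\infty}C_{10}^{2}$ by Proposition \ref{prop:holderjacs}, and there are at most $N_*$ components. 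Hence
$$
\|\mathcal L_\omega\mu_\omega\|_{\theta\omega,+}\le N_*C_{10}^2e^{\|\phi_\omega\|_\infty}\|\mu_\omega\|_{\omega,+}\le N_*C_{10}^2e^{\|\phi_\omega\|_\infty}.
$$

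The second step is the lower bound, which needs a uniform lower control of $\int\rho'\mu_\omega$ over all admissible leaves. I would use the cone property \textbf{(C2)}, the bounded Hilbert diameter of $D_1(a/2,\kappa,\gamma)$ inside $D_1(a,\kappa,\gamma)$, and \textbf{(C3)} together with the covering of $M$ by finitely many nearby stable leaves. Together these give a uniform ratio $\|\mu_\omega\|_{\omega,+}/\|\mu_\omega\|_{\omega,-}\le D$. The point is that $\mu_\omega=\lambda^{-1}\mathcal L\mu_{\theta^{-1}\omega}$ lies in $\mathcal C_\omega(\lambda_2 b,\lambda_2 c,\nu)$ by Proposition \ref{prop:cone_invariance_random}, and a cone of that type has a comparability bound independent of $\omega$. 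Lemma \ref{lem:finitediam} provides such a bound in the form $D_1(\omega)$; I would try to obtain a one-step uniform version by the same route. This yields
$$
\lambda_\omega\ge \tfrac13 C_{10}^{-2}e^{-\|\phi_\omega\|_\infty}D^{-1}.
$$

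The main obstacle is this uniform $+/-$ comparability of leaf integrals without invoking the mixing time $N(\omega)$. If a uniform one-step comparability cannot be obtained, I would fall back on telescoping. Write $\log\lambda^{(n)}_\omega=\log\|\mathcal L^n_\omega\mu_\omega\|$, which is controlled within $\pm n\|\phi\|_\infty$ up to the constants $D_1(\omega)$ evaluated at good return times. Dividing by $n$, using the Birkhoff ergodic theorem along the returns to $A$, and noting that $\log\lambda_\omega$ is bounded above gives integrability. The bound \eqref{eq:inequalitylambda} then follows, with the constant $K$ absorbing $\log(3N_*C_{10}^2D)$.
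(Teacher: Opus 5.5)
Your upper bound is fine. In fact the pulled-back masses sum to $\int_{\gamma_{\theta\omega}}\rho\,\mathcal L_\omega\mathbbm 1\le e^{\|\phi_\omega\|_\infty}$, so $\lambda_\omega\le 3e^{\|\phi_\omega\|_\infty}$ without the factor $N_*C_{10}^2$.

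The lower bound, however, rests on a claim that is false under Hypothesis \ref{hyp:h}. Membership of $\mu_\omega$ in $\mathcal C_\omega(\lambda_2 b,\lambda_2 c,\nu)$ does not give a bound $\|\mu_\omega\|_{\omega,+}/\|\mu_\omega\|_{\omega,-}\le D$ that is uniform in $\omega$. Condition \textbf{(C2)} only compares densities on a single leaf. Condition \textbf{(C3)} only compares leaves joined by an unstable holonomy of size at most $\varepsilon$. How many such steps you need to reach every admissible leaf from a given one is governed by the fibrewise mixing time in \textbf{(H2)}, which is random and unbounded. This is exactly why Lemma \ref{lem:finitediam} only gives $D_1(\omega)$, exponential in $N_{n_0}(\omega)$. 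It is also why Lemma \ref{lem:mu-minus-lp} needs Hypothesis \ref{hyp:h'} and still produces only a random $D_p(\omega)$. There is no one-step uniform version. Your fallback does not repair this as written: it still has $K$ absorbing $\log(3N_*C_{10}^2D)$, so it misses the mechanism that makes fixed constants irrelevant.

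The missing idea is to work with $n$ iterates of $\mathbbm 1$, as the paper does. Set $s_n:=\sum_{i<n}\|\phi_{\theta^i\omega}\|_{\mathcal C^\beta}$. Since $\mathcal L_\omega^n\mathbbm 1=e^{S_n\phi_\omega\circ(T_\omega^n)^{-1}}$ lies pointwise in $[e^{-s_n},e^{s_n}]$, every leaf integral against a density in $D_1(a,\kappa,\gamma)$ lies in the same interval. Lemma \ref{lem:norm1} then gives $\tfrac13e^{-s_n}\le\|\mathcal L_\omega^n\mathbbm 1\|_{\theta^n\omega}\le 3e^{s_n}$, with no comparison between different leaves at all. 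By Theorem \ref{thm:spectralgap}, $\|\mathcal L_\omega^n\mathbbm 1\|_{\theta^n\omega}/\lambda_\omega^{(n)}\to\ell_\omega(\mathbbm 1)\in(0,\infty)$. So after taking $\frac1n\log$, both this single non-uniform constant and the factor $3$ disappear, and Birkhoff gives \eqref{eq:inequalitylambda}. One fixed random constant per $\omega$ is all the comparability needed, never a uniform one.

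Your pointwise upper bound is still useful here. It makes $(\log\lambda_\omega)^+$ integrable, so Birkhoff applies to $\log\lambda_\omega$, and the lower bound on the averages then forces $\log\lambda_\omega\in L^1$. The paper's proof passes over this point.

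Finally, the normalisation issue you flagged is not a real obstacle. As stated, $K$ may depend on the fixed $\phi$, so additive constants can be absorbed whenever $\int\|\phi_\omega\|_{\mathcal C^\beta}\,\d\mathbb P>0$. If $\phi=0$ a.e., then $\mathcal L_\omega\mathbbm 1=\mathbbm 1$, so $\lambda_\omega=\|\mathbbm 1\|_{\theta\omega}/\|\mathbbm 1\|_\omega$. This is a bounded coboundary, so its logarithm has zero integral.
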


\begin{proof}
From Lemma \ref{lem:archi} and Theorem \ref{thm:spectralgap}, for $\mathbb P$-almost every $\omega\in\Omega$,
$$
\lim_{n\to\infty}\frac{1}{n}\log \|\mathcal L_\omega^n \mathbbm 1\|_{\theta^n\omega}
=
\lim_{n\to\infty}\frac{1}{n}\log \lambda_\omega^{(n)}
=
\int_\Omega \log \lambda_\omega\,\mathbb P(d\omega).
$$

Fix $n\in\mathbb N$, $\gamma_{\theta^n\omega}\in \mathscr F^s_{\theta^n\omega}$ and
$\rho_{\theta^n\omega}\in D_1(a,\kappa,\gamma_{\theta^n\omega})$. Since
$$
\mathcal L_\omega^n \mathbbm 1(x)
=
\exp\bigl(S_n\phi_\omega((T_\omega^n)^{-1}x)\bigr),
$$
for every $x,y\in \gamma_{\theta^n\omega}$ we have
\begin{align}
\bigl|S_n\phi_\omega((T_\omega^n)^{-1}x)-S_n\phi_\omega((T_\omega^n)^{-1}y)\bigr|
&\leq
\sum_{i=0}^{n-1}
\|\phi_{\theta^i\omega}\|_{\mathcal C^\beta}
\,d\bigl(T_\omega^i((T_\omega^n)^{-1}x),T_\omega^i((T_\omega^n)^{-1}y)\bigr)^\beta \nonumber\\
&\le
\Bigl(\sup_{u,v\in M} d(u,v)^\beta\Bigr)
\sum_{i=0}^{n-1}\|\phi_{\theta^i\omega}\|_{\mathcal C^\beta}.
\label{eq:Holder1}
\end{align}
Set $K:=\sup_{u,v\in M} d(u,v)^\beta <\infty.$
Then \eqref{eq:Holder1} implies
$$
\sup_{x\in\gamma_{\theta^n\omega}} \mathcal L_\omega^n \mathbbm 1(x)
\le
e^{K\sum_{i=0}^{n-1}\|\phi_{\theta^i\omega}\|_{\mathcal C^\beta}}
\inf_{x\in\gamma_{\theta^n\omega}} \mathcal L_\omega^n \mathbbm 1(x).
$$
On the other hand, for every $z\in M$,
$$
e^{-\sum_{i=0}^{n-1}\|\phi_{\theta^i\omega}\|_\infty}
\le
\mathcal L_\omega^n \mathbbm 1(z)
\le
e^{\sum_{i=0}^{n-1}\|\phi_{\theta^i\omega}\|_\infty}
\le
e^{\sum_{i=0}^{n-1}\|\phi_{\theta^i\omega}\|_{\mathcal C^\beta}}.
$$
Therefore,
\begin{equation}
e^{-(K+1)\sum_{i=0}^{n-1}\|\phi_{\theta^i\omega}\|_{\mathcal C^\beta}}
\le
\int_{\gamma_{\theta^n\omega}}
\rho_{\theta^n\omega}\,\mathcal L_\omega^n \mathbbm 1\,\d m_{\gamma_{\theta^n\omega}}
\le
e^{(K+1)\sum_{i=0}^{n-1}\|\phi_{\theta^i\omega}\|_{\mathcal C^\beta}}.
\label{eq:Holder2}
\end{equation}
Since $\gamma_{\theta^n\omega}$ and $\rho_{\theta^n\omega}$ were arbitrary, $\mathcal L_\omega^n\mathbbm 1 \in \mathcal C_{\theta^n \omega}(b,c,\nu)$ and Lemma \eqref{lem:norm1} we obtain
$$
\frac{1}{3}e^{-(K+1)\sum_{i=0}^{n-1}\|\phi_{\theta^i\omega}\|_{\mathcal C^\beta}}
\le 
\|\mathcal L_\omega^n \mathbbm 1\|_{\theta^n\omega}
\le
3 e^{(K+1)\sum_{i=0}^{n-1}\|\phi_{\theta^i\omega}\|_{\mathcal C^\beta}}.
$$
Taking logarithms, dividing by $n$, and letting $n\to\infty$, it follows from Birkhoff's ergodic theorem that
\begin{align*}
-(K+1)\int_\Omega \|\phi_\omega\|_{\mathcal C^\beta}\,\mathbb P(d\omega)
&\le
\lim_{n\to\infty}\frac{1}{n}\log \|\mathcal L_\omega^n \mathbbm 1\|_{\theta^n\omega}=
\int_\Omega \log \lambda_\omega\,\mathbb P(d\omega)\\
&\le
(K+1)\int_\Omega \|\phi_\omega\|_{\mathcal C^\beta}\,\mathbb P(d\omega).
\end{align*}
This proves \eqref{eq:inequalitylambda}, and in particular $\omega\mapsto \log\lambda_\omega$ lies in $L^1(\Omega,\mathbb P)$.
\end{proof}

\begin{lemma}\label{lem:mu-minus-lp}
Let $\mu_\omega$ be as in Theorem \ref{thm:spectralgap}. Assume Hypothesis \ref{hyp:h'}. Then, for every $p\in[1,\infty)$, there exists a measurable function $D_p:\Omega\to[1,\infty)$ such that $\log D_p\in L^p(\Omega,\mathbb P),$
and, for $\mathbb P$-almost every $\omega\in\Omega$,
\begin{equation}
\frac{1}{D_p(\omega)}
\le
\|\mu_\omega\|_{\omega,-}
\le
\|\mu_\omega\|_{\omega,+}
\le 1.
\label{eq:desireq}
\end{equation}
In particular,
\begin{equation}
\frac{1}{3}\le \|\mu_\omega\|_{\omega,+}\le 1.
\label{eq:muplus-bounds}
\end{equation}
\end{lemma}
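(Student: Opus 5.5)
Since $\mu_\omega\in\mathcal C_\omega(b,c,\nu)$ and $\|\mu_\omega\|_\omega=1$, Lemma~\ref{lem:norm1} gives $\|\mu_\omega\|_{\omega,+}\le\|\mu_\omega\|_\omega=1$ and $1=\|\mu_\omega\|_\omega\le 3\|\mu_\omega\|_{\omega,+}$. This proves \eqref{eq:muplus-bounds}, and the middle inequality in \eqref{eq:desireq} is trivial. The real content is therefore the lower bound on $\|\mu_\omega\|_{\omega,-}$.

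For the lower bound I use the pullback construction $\mu_\omega=\lim_n \mathcal L^n_{\theta^{-n}\omega}\mathbbm 1/\|\mathcal L^n_{\theta^{-n}\omega}\mathbbm 1\|_\omega$ together with the last assertion of Lemma~\ref{lem:finitediam}. Set $\omega':=\theta^{-N}\omega$ with $N$ a block length ending at $\omega$. Then for every $\varphi\in\mathcal C_{\omega'}$ we have $\|\mathcal L^{N}_{\omega'}\varphi\|_{\omega,+}\le D_1(\omega')\|\mathcal L^{N}_{\omega'}\varphi\|_{\omega,-}$. I apply this with $\varphi=\mu_{\omega'}$ and use $\mathcal L^N_{\omega'}\mu_{\omega'}=\lambda^{(N)}_{\omega'}\mu_\omega$. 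Both seminorms are positively homogeneous, so the factor $\lambda^{(N)}_{\omega'}$ cancels and
$$
\|\mu_\omega\|_{\omega,-}\ge D_1(\omega')^{-1}\|\mu_\omega\|_{\omega,+}\ge \tfrac13 D_1(\omega')^{-1}.
$$
This suggests setting $D_p(\omega):=3D_1(\omega')$, which gives
$$
\log D_p(\omega)=\log 3+\log K_5+N\log\Big(\frac{e^{2\|\phi\|_\infty}}{\inf \mathrm m(DT|_{E^s})}\Big).
$$
Thus $\log D_p$ is affine in the block length $N$, and the task reduces to choosing a backward block length $N(\omega)$ ending exactly at $\omega$ with $N\in L^p$.

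This choice is the main obstacle. $N_{n_0}$ is a forward stopping time, so a block starting at $\theta^{-k}\omega$ generally does not end at $\omega$. I would handle this by relaxing the constraint: take $k(\omega)$ to be the smallest $k$ for which $N_{n_0}(\theta^{-k}\omega)\le k$. Lemma~\ref{lem:good-blocks} shows that running further iterates after a block only preserves the cone. It should likewise show that the ratio bound of Lemma~\ref{lem:finitediam} persists, at the cost of the $e^{2\|\phi\|}/\mathrm m$ factor per extra step. This needs checking: the plus/minus comparison is obtained from (C2)--(C3) and the mixing covering, and it should survive further cone-preserving iterates with at most that per-step loss. Granting it, we get $\log D_p\lesssim 1+k(\omega)$.

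It remains to show $k\in L^p$. We have $\{k>j\}\subset\{N_{n_0}(\theta^{-j}\omega)>j\}$, and by $\theta$-invariance of $\mathbb P$ this has probability $\mathbb P[N_{n_0}>j]$. By Hypothesis~\ref{hyp:h'} and Lemma~\ref{lem:Ni}(b), $\mathbb P[N_{n_0}\ge c_{n_0}]\le K e^{-\kappa}$, and more usefully the tail of $N_{n_0}=N^{(n_0)}_1$ is controlled through $N_{m n_0}$ for large $m$. I expect a small argument here to produce an exponential tail for $N_{n_0}$ itself, namely $\mathbb P[N_{n_0}>j]\le\mathbb P[N_{\lceil j/c\rceil}\ge j]$ once $\lceil j/c\rceil\ge n_0$, since $N_i$ is increasing in $i$. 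Then $\mathbb P[k>j]\le Ie^{-\iota j/c}$, so every moment of $k$ is finite and $\log D_p\in L^p$ for all $p$. Measurability of $D_p$ follows since $k$ is a hitting time built from measurable $N_{n_0}$.
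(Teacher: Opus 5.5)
Your proof follows the paper's proof. Your $k(\omega)$ is, up to $\le$ versus $<$, the paper's $R(\omega)=\min\{n\ge 1: N_{n_0}(\theta^{-n}\omega)<n\}$. The paper likewise splits $\mathcal L^{R}_{\theta^{-R}\omega}=\mathcal L^{R-s}_{\theta^{s-R}\omega}\circ\mathcal L^{s}_{\theta^{-R}\omega}$ with $s=N_{n_0}(\theta^{-R}\omega)$. It applies the ratio bound of Lemma~\ref{lem:finitediam} to $\mu_{\theta^{-R}\omega}$ over the first block, cancels $\lambda^{(R)}_{\theta^{-R}\omega}$ through the cocycle relation, and sets $D_p=3\widetilde K^{R}$.

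The step you leave as ``granted'' is the one place that needs an argument. It does not require re-running the mixing argument: it is enough that each seminorm separately changes by at most an exponential factor per step on the cone.
\begin{itemize}
\item The upper bound $\|\mathcal L^m\psi\|_{+}\le e^{m\|\phi\|_\infty}\|\psi\|_{+}$ is Step~2 of Appendix~\ref{appendix:B}.
\item For the lower bound, pull an admissible leaf $\gamma'$ and a density $\rho'\in D_1(a,\kappa,\gamma')$ back by Lemma~\ref{lem:push_density}. This gives admissible pieces $\gamma^{(j)}$ with normalised densities $\rho^{(j)}\in D_1(\alpha_0a,\kappa,\gamma^{(j)})$.
\item On each piece, compare $\rho^{(j)}$ with the constant density using \textbf{(C2)}; their Hilbert distance is bounded by the diameter estimate of Lemma~\ref{lem:densityimprovment}. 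Then compare the constant density with the leafwise supremum using Step~3 of Appendix~\ref{appendix:B}. This gives $\int_{\gamma^{(j)}}\psi\rho^{(j)}\ge c_0\|\psi\|_{-}$ with $c_0=\tfrac12 e^{-a\varepsilon^\kappa-bK_0}$, independent of $\omega$ and $m$.
\item The branch weights sum to $\int_{\gamma'}\mathcal L^m\mathbbm 1\,\rho'\ge e^{-m\|\phi\|_\infty}$, so $\|\mathcal L^m\psi\|_{-}\ge c_0e^{-m\|\phi\|_\infty}\|\psi\|_{-}$.
\end{itemize}
The paper records these as rough bounds; the fixed constant $c_0$ is harmless and only changes $\widetilde K$. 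Over the $m=k-s\le k$ extra iterates the ratio therefore grows by at most $c_0^{-1}e^{2m\|\phi\|_\infty}$, and $\log D_p\lesssim 1+k$ follows. The factor $\mathrm m(DT|_{E^s})^{-1}$ you anticipated enters only through the mixing block, not through these extra steps.

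There is also a slip in your tail estimate. Take $m=\lfloor j/c\rfloor$, as the paper does, not $\lceil j/c\rceil$. You need $cm\le j$ to get $\{N_m\ge j\}\subset\{N_m\ge cm\}$; with the ceiling $cm\ge j$, and the inclusion runs the wrong way. With the floor, for $j$ large enough that $\lfloor j/c\rfloor\ge n_0$,
\[
\mathbb P[k>j]\le\mathbb P[N_{n_0}>j]\le\mathbb P[N_{\lfloor j/c\rfloor}\ge c\lfloor j/c\rfloor]\le Ie^{\iota}e^{-\iota j/c},
\]
and the rest of your argument goes through.
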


\begin{proof}
Define $R(\omega):=\min\{n\geq 1;\ N_{n_0}(\theta^{-n}\omega)<n\}.$
We claim that $R$ has exponential tail. Indeed, if $R(\omega)>n$, then  $N_{n_0}(\theta^{-n}\omega)\geq n.$ Hence, by $\theta$-invariance of $\mathbb P$,
\begin{align}
   \mathbb P[R>n]
   \leq \mathbb P[N_{n_0}\geq n].
   \label{eq:R:tail1}
\end{align}
Let $c>0$ be as in Hypothesis \ref{hyp:h'}, and set $m_n:=\lfloor n/c\rfloor$. For $n$ large enough we have $m_n\geq n_0$, and since $i\mapsto N_i(\omega)$ is increasing,
\begin{align}
   \mathbb P[N_{n_0}\geq n]
   \leq \mathbb P[N_{m_n}\geq n]
   \leq \mathbb P[N_{m_n}\geq c\,m_n]
   \leq K e^{-\kappa m_n}
   \leq K e^{\kappa} e^{-(\kappa/c)n}.
   \label{eq:R:tail2}
\end{align}
Combining \eqref{eq:R:tail1} and \eqref{eq:R:tail2}, we conclude that $R$ has exponential tail. In particular,
\begin{align}
    R\in L^p(\Omega,P)\  \text{for every }p\in[1,\infty).
    \label{eq:R:Lp}
\end{align}

Let $\mu_\omega$ be as in Theorem \ref{thm:spectralgap}. Set $
\eta:=\theta^{-R(\omega)}\omega,$  $s(\omega):=N_{n_0}(\eta)=N_{n_0}(\theta^{-R(\omega)}\omega).$
By definition of $R(\omega)$,
\begin{align}
   s(\omega)<R(\omega).
   \label{eq:R:s<R}
\end{align}
Moreover,
\begin{align}
   \mathcal L_\eta^{R(\omega)}
   =
   \mathcal L_{\theta^{s(\omega)}\eta}^{\,R(\omega)-s(\omega)}
   \circ
   \mathcal L_\eta^{s(\omega)}.
   \label{eq:R:split}
\end{align}

From Lemma \ref{lem:finitediam} at the fibre $\eta$ and time $s(\omega)$. Since $s(\omega)=N_{n_0}(\eta)$, we obtain that
\begin{align}
 \frac{\|\mathcal L_\eta^{s(\omega)} \mu_\eta \|_{\theta^{s(\omega)}\eta,+}}
 {\|\mathcal L_\eta^{s(\omega)} \mu_\eta \|_{\theta^{s(\omega)}\eta,-}}
 \leq K_5 \left(\frac{e^{2\|\phi\|_{L^\infty(\Omega\times M)}}}
 { \inf_{(\omega,x)\in \Omega\times M} m(\left.D_x T_\omega\right|_{E^{s}(\omega,x)})}\right)^{s(\omega)}.
 \label{eq:R:goodtime}
\end{align}

For the remaining $R(\omega)-s(\omega)$ iterates, we use the rough bounds
\begin{align}
\|\mathcal L_{\theta^{s(\omega)}\eta}^{R(\omega)-s(\omega)}\psi\|_{\omega,+}
&\leq e^{2\|\phi\|_{L^\infty(\Omega\times M)}(R(\omega)-s(\omega))}
\|\psi\|_{\theta^{s(\omega)}\eta,+},
\label{eq:R:rough+}\\
\|\mathcal L_{\theta^{s(\omega)}\eta}^{R(\omega)-s(\omega)}\psi\|_{\omega,-}
&\geq e^{-\|\phi\|_{L^\infty(\Omega\times M)}(R(\omega)-s(\omega))}
\|\psi\|_{\theta^{s(\omega)}\eta,-},
\label{eq:R:rough-}
\end{align}
valid for every $\psi\in \mathcal C_{\theta^{s(\omega)}\eta}(b,c,\nu)$. Applying \eqref{eq:R:rough+}--\eqref{eq:R:rough-} to $\psi=\mathcal L_\eta^{s(\omega)}\mu_\eta$ and using \eqref{eq:R:split}, we obtain
\begin{align}
\frac{\|\mathcal L_\eta^{R(\omega)}\mu_\eta\|_{\omega,+}}
{\|\mathcal L_\eta^{R(\omega)}\mu_\eta\|_{\omega,-}}
&\leq
e^{3\|\phi\|_{L^\infty(\Omega\times M)}(R(\omega)-s(\omega))}
\frac{\|\mathcal L_\eta^{s(\omega)} \mu_\eta \|_{\theta^{s(\omega)}\eta,+}}
{\|\mathcal L_\eta^{s(\omega)} \mu_\eta \|_{\theta^{s(\omega)}\eta,-}}.
\label{eq:R:ratio1}
\end{align}
Combining \eqref{eq:R:ratio1} with \eqref{eq:R:goodtime}, and using \eqref{eq:R:s<R}, we find a constant $\widetilde K>1$ such that
\begin{align}
\frac{\|\mathcal L_{\theta^{-R(\omega)}\omega}^{R(\omega)}\mu_{\theta^{-R(\omega)}\omega}\|_{\omega,+}}
{\|\mathcal L_{\theta^{-R(\omega)}\omega}^{R(\omega)}\mu_{\theta^{-R(\omega)}\omega}\|_{\omega,-}}
\leq \widetilde K^{R(\omega)}.
\label{eq:R:ratio2}
\end{align}

The cocycle relation $
\mathcal L_{\theta^{-R(\omega)}\omega}^{R(\omega)}\mu_{\theta^{-R(\omega)}\omega}
=
\lambda_{\theta^{-R(\omega)}\omega}^{(R(\omega))}\mu_\omega,$ and
equation \eqref{eq:R:ratio2} yields
\begin{align}
    \frac{\|\mu_\omega\|_{\omega,+ }}{\|\mu_\omega\|_{\omega,-}}
    \leq \widetilde K^{R(\omega)}.
    \label{eq:R:ratio3}
\end{align}

On the other hand, since $\mu_\omega\in \mathcal C_\omega(b,c,\nu)$ and $\|\mu_\omega\|_\omega=1$, Lemma \ref{lem:norm1} gives
\begin{align}
    \frac{1}{3}\leq \|\mu_\omega\|_{\omega,+}\leq 1.
    \label{eq:R:muplus}
\end{align}
Combining \eqref{eq:R:ratio3} with \eqref{eq:R:muplus}, we obtain that  $1\geq \|\mu_{\omega}\|_{\omega,-} \geq \frac{1}{3}\widetilde K^{-R(\omega)}.$ Defining $D_p(\omega)=3\widetilde K^{R(\omega)}$, from \eqref{eq:R:Lp} the proof is finished.

\end{proof}

\subsection{Weak Gibbs property and Equilibrium States}
\label{sec:wgibbs}

We now prove a weak Gibbs estimate for $\upsilon_\phi$. The argument uses the local disintegrations of $\mu_\omega$ and $\ell_\omega$, the integrability of $\log\lambda_\omega$, and distortion estimates for the corrected potential $\bar\phi=\phi-\phi^{J^s}$ from Section \ref{sec:muell}. These estimates are the main input in the variational argument.

The proof from the proposition below borrows from \cite[Lemma 6.6]{StadlbauerSuzukiVarandas2021CMP}, where the weak Gibbs property was established in the setting of non-uniformly expanding random maps without critical points. We adapt these ideas to our context of uniformly hyperbolic random diffeomorphisms.

\begin{proposition}[Weak Gibbs property]\label{thm:wgibbs}
Assume that Hypothesis \ref{hyp:h} holds true. Let $\varepsilon>0$ be sufficiently small. Then there exist $K_\varepsilon\in L^1(\Omega,\mathbb P)$ and measurable functions $c_\varepsilon,C_\varepsilon:\Omega\to(0,\infty)$ such that, for $\mathbb P$-a.e. $\omega\in\Omega$, there exists a strictly increasing sequence $\{n_k(\omega)\}_{k\in\mathbb N}$ with the following property: for every $k\in\mathbb N$ and every $x\in M$,
\begin{align*}
c_\varepsilon(\omega)e^{-K_\varepsilon(\theta^{-n_k(\omega)}\omega)}
&\le
\frac{\upsilon_\omega(B_\omega^{n_k(\omega)}(x,\varepsilon))}
{[\lambda_{\theta^{-n_k(\omega)}\omega}^{(n_k(\omega))}]^{-1}
\exp\left(S_{n_k(\omega)}\bar\phi_{\theta^{-n_k(\omega)}\omega}\left((T_{\theta^{-n_k(\omega)}\omega}^{n_k(\omega)})^{-1}x\right)\right)}
\\
&\le
C_\varepsilon(\omega)e^{K_\varepsilon(\theta^{-n_k(\omega)}\omega)},
\end{align*}
where
$$
B_\omega^n(x,\varepsilon)
:=
\left\{
y\in M:
d\left((T_{\theta^{-j}\omega}^{j})^{-1}x,(T_{\theta^{-j}\omega}^{j})^{-1}y\right)\le\varepsilon
\ \text{for every }0\le j\le n-1
\right\}.
$$
\end{proposition}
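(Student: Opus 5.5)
The plan is to express $\upsilon_\omega$ of a backward Bowen ball through the transfer cocycle, pulling the ball back to the fibre $\eta:=\theta^{-n}\omega$, where it becomes an $\varepsilon$-neighbourhood of a point. The first step uses the equivariance $\upsilon_\omega(f)=\ell_\omega(f\mu_\omega)$ together with the duality $\mathcal L_\eta^{*(n)}\ell_\omega=\lambda_\eta^{(n)}\ell_\eta$ and $\mathcal L^n_\eta\mu_\eta=\lambda^{(n)}_\eta\mu_\omega$. For $f=\mathbbm 1_{B^n_\omega(x,\varepsilon)}$ (approximated by H\"older bump functions, then passing to the limit via the Riesz representations in Lemmas~\ref{lem:eqmeasure} and~\ref{lem:nuprob}), one writes
\[
\upsilon_\omega(f)=\frac{1}{\lambda_\eta^{(n)}}\ell_\omega\bigl(f\,\mathcal L^n_\eta\mu_\eta\bigr)=\frac{1}{\lambda^{(n)}_\eta}\ell_\omega\Bigl(\mathcal L^n_\eta\bigl((f\circ T^n_\eta)\mu_\eta\bigr)\Bigr).
\]
Set $y:=(T^n_\eta)^{-1}x$. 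The set $(T^n_\eta)^{-1}B^n_\omega(x,\varepsilon)$ is the forward Bowen ball $\{z:d(T^j_\eta z,T^j_\eta y)\le\varepsilon,\ 1\le j\le n\}$. By uniform hyperbolicity (H1) and local product structure (Proposition~\ref{prop:productstructure}), this set is, up to constants, a rectangle $R(\eta,y)$. Its stable size is of order $\varepsilon$ and its unstable size is exponentially small. The images under $T^n_\eta$ of its unstable slices are unstable plaques of size about $\varepsilon$ through $T^n_\eta$ of stable-nearby points.

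The second step estimates the integrand using the local disintegrations. One applies Lemma~\ref{lem:dec} to $\mu_\eta$ on the rectangle, and Lemma~\ref{lem:dec1} to $\ell_\omega$ on a rectangle around $x$, with densities $H$ and $A$ that are log-H\"older and bounded above and below. Along unstable plaques one uses bounded distortion of $S_n\phi$ and of $S_n\phi^{J_s}$ for backward-contracted points. This uses Proposition~\ref{prop:holderjacs} and the uniform H\"older bound on $\phi$, which give a constant $e^{C\varepsilon^\beta}$ independent of $n$. The unstable Jacobian and the factor $e^{S_n\phi}$ from $\mathcal L^n_\eta$ then combine exactly into $\exp(S_n\bar\phi_\eta(y))$: the $-\phi^{J_s}$ correction appears as in Lemma~\ref{lem:push_density}, when leafwise integrals are transported along stable leaves by Lemma~\ref{lem:stablemargulis}. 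What remains are the masses of $\varepsilon$-size sets. These are the measures $\mu_{\gamma^s}$ of a stable $\varepsilon$-piece in the fibre $\eta$, and the $\ell^u$-measure of an unstable $\varepsilon$-plaque in the fibre $\omega$.

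The third step, which I expect to be the main obstacle, is to bound these two masses from above and below by random constants. The $\ell$-side mass depends only on $\omega$, so it supplies $c_\varepsilon(\omega),C_\varepsilon(\omega)$. One takes the infimum and supremum over the finite cover $\{B_M(\delta/4,x_i)\}$; positivity comes from the fact that $\ell_\omega$ charges every open set, by the $\delta$-density in (H2). The $\mu_\eta$-stable-leaf mass is controlled by $\|\mu_\eta\|_{\eta,\pm}$. This is the reason the sequence $n_k$ is needed under (H) only. I would choose $n_k(\omega)$ as return times of $\theta^{-n}\omega$ to a set $G=\{\eta: D(\eta)\le D_*\}$ of positive measure, where $D(\eta)$ bounds $\|\mu_\eta\|_{\eta,+}/\|\mu_\eta\|_{\eta,-}$. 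Such a $D$ is finite a.s. by the argument of Lemma~\ref{lem:mu-minus-lp}, using Lemma~\ref{lem:finitediam} and Lemma~\ref{lem:asfinite} in place of the exponential tail. Poincar\'e recurrence for $\theta^{-1}$ gives infinitely many such $n_k$.

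Finally, all remaining fibre-dependent constants at $\eta$ are absorbed into $K_\varepsilon(\eta)$. These include the distortion of $\phi_\eta$ on the first step, $\log$ of the density bounds, and $\log D_*$. Each is bounded by a multiple of $\|\phi_\eta\|_{\mathcal C^\beta}$ plus a constant, so $K_\varepsilon\in L^1$, since $\phi\in L^\infty(\Omega;\mathcal C^\beta)$. The proof would end by checking that the approximation of the indicator by H\"older functions loses only a change $\varepsilon\to\varepsilon/2$ or $2\varepsilon$ in the radius of the ball, which is harmless.
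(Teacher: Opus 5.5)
Your proposal follows essentially the same route as the paper. Both sandwich the backward Bowen ball between iterated rectangles, which is equivalent to your forward Bowen ball at $\theta^{-n}\omega$ of stable size $\varepsilon$. Both write $\upsilon_\omega=\ell_\omega(\cdot\,\mu_\omega)$, disintegrate via Lemmas~\ref{lem:dec} and~\ref{lem:dec1}, and pull the exponentially short stable pieces back $n$ steps to obtain $[\lambda^{(n)}_{\theta^{-n}\omega}]^{-1}e^{S_n\bar\phi}$ times the $\mu_{\theta^{-n}\omega}$-mass of an $\varepsilon$-stable piece; that mass is bounded along return times of $\theta^{-n}\omega$ to a positive-measure good set (your $\{D\le D_*\}$ plays the role of the paper's $\{L>C\}$), and the $\ell^u$-masses of unstable $\varepsilon$-plaques are absorbed into $c_\varepsilon(\omega),C_\varepsilon(\omega)$. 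One correction: no unstable Jacobian enters. The weight $e^{S_n\bar\phi}$, with $\bar\phi=\phi-\phi^{J^s}$, comes only from the stable-leaf change of variables in Lemmas~\ref{lem:push_density} and~\ref{lem:stablemargulis}, because the transversal measure $\ell^u$ lives in the fibre $\omega$ and is never transported.
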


\begin{proof}
Since the angle between $\gamma_{(\omega,x)}^s$ and $\gamma_{(\omega,x)}^u$ is bounded away from $0$ for every $x\in M$ and for $\mathbb P$-a.e. $\omega\in\Omega$, for every $\varepsilon>0$ sufficiently small there exist constants $K_1,K_2>0$ such that
$$
R_{\varepsilon_1}(\omega,x)\subset B(x,\varepsilon)\subset R_{\varepsilon_2}(\omega,x)
$$
for every $x\in M$ and for $\mathbb P$-a.e. $\omega\in\Omega$, where $\varepsilon_1:=K_1\varepsilon$ and $\varepsilon_2:=K_2\varepsilon$.

For $i\in\{1,2\}$, define
$$
R_{\varepsilon_i}^{n}(\omega,x):=
\left\{
y\in M:
\begin{aligned}
&(T_{\theta^{-j}\omega}^{j})^{-1}(y)\in
R_{\varepsilon_i}\left(\theta^{-j}\omega,(T_{\theta^{-j}\omega}^{j})^{-1}(x)\right),\\
&\text{for every }0\le j\le n-1
\end{aligned}
\right\}.
$$
Then $
R_{\varepsilon_1}^{n}(\omega,x)\subset B_\omega^n(x,\varepsilon)\subset R_{\varepsilon_2}^{n}(\omega,x).$

Fix $i\in\{1,2\}$ and let $z\in R_{\varepsilon_i}^n(\omega,x)$. Set
$$
\gamma_\omega^s:=\gamma_\delta^s(z,\omega)\cap R_{\varepsilon_i}^{n}(\omega,x).
$$
Then $\gamma_\omega^s$ fully crosses the rectangle $R_{\varepsilon_i}^{n}(\omega,x)$. Moreover, by construction,
$$
(T_{\theta^{-n}\omega}^{n})^{-1}\gamma_\omega^s
\subset
R_{\varepsilon_i}\left(\theta^{-n}\omega,(T_{\theta^{-n}\omega}^{n})^{-1}(x)\right),
$$
and this stable segment has length $\varepsilon_i$. Therefore,
\begin{align}
\int_{\gamma_{(\omega,z)}^s}\mathbbm 1_{R_{\varepsilon_i}^{n}(\omega,x)}\mu_\omega
&=
\int_{\gamma_\omega^s}\mu_\omega \nonumber=
\frac{1}{\lambda_{\theta^{-n}\omega}^{(n)}}\int_{\gamma_\omega^s}\mathcal L_{\theta^{-n}\omega}^{n}\mu_{\theta^{-n}\omega} \nonumber\\
&=
\frac{1}{\lambda_{\theta^{-n}\omega}^{(n)}}\int_{(T_{\theta^{-n}\omega}^{n})^{-1}\gamma_\omega^s}
e^{S_n\bar\phi_{\theta^{-n}\omega}\circ(T_{\theta^{-n}\omega}^{n})^{-1}}\mu_{\theta^{-n}\omega}. \label{eq:wgibbs-1}
\end{align}

Since $\bar\phi_\omega$ is uniformly Hölder, from the definition of $R_{\varepsilon_2}^n(\omega,x)$ there exists $K_\varepsilon\in L^\infty(\Omega,\mathbb P)$,
\begin{align}
\left|
S_n\bar\phi_{\theta^{-n}\omega}\left((T_{\theta^{-n}\omega}^{n})^{-1}(x)\right)
-
S_n\bar\phi_{\theta^{-n}\omega}\left((T_{\theta^{-n}\omega}^{n})^{-1}(y)\right)
\right|
\le
K_\varepsilon(\theta^{-n}\omega). \label{eq:wgibbs-2}
\end{align}
Combining \eqref{eq:wgibbs-1} and \eqref{eq:wgibbs-2}, we obtain
\begin{align}
e^{-K_\varepsilon(\theta^{-n}\omega)}
\int_{(T_{\theta^{-n}\omega}^{n})^{-1}\gamma_\omega^s}\mu_{\theta^{-n}\omega}
&\le
\frac{\int_{\gamma_{(\omega,z)}^s}\mathbbm 1_{R_{\varepsilon_i}^{n}(\omega,x)}\mu_\omega}
{[\lambda_{\theta^{-n}\omega}^{(n)}]^{-1}
\exp\left(S_n\bar\phi_{\theta^{-n}\omega}\left((T_{\theta^{-n}\omega}^{n})^{-1}(x)\right)\right)}
\nonumber\\
&\le
e^{K_\varepsilon(\theta^{-n}\omega)}
\int_{(T_{\theta^{-n}\omega}^{n})^{-1}\gamma_\omega^s}\mu_{\theta^{-n}\omega}.\label{eq:jar1}
\end{align}

Since $\mu_\omega\in\mathrm{Int}(\mathcal C_\omega)$, we have
$$
L(\omega):=
\inf\left\{
\int_{\gamma^s}\mu_\omega:
\gamma^s\ \text{is a stable leaf of length }\varepsilon_i
\right\}
>0.
$$
Hence there exists $C>0$ such that $
\mathbb P[L>C]>0.$ On the other hand, since $\|\mu_\omega\|_\omega=1$ and $\varepsilon<\delta/2$,
$$
\sup\left\{
\int_{\gamma^s}\mu_\omega:
\gamma^s\ \text{is a stable leaf of length }\varepsilon_i
\right\}
\le 1.
$$

By ergodicity of $\theta$, for $\mathbb P$-a.e. $\omega\in\Omega$ there exists a strictly increasing sequence $\{n_k(\omega)\}_{k\in\mathbb N}$ such that
\begin{align}
L(\theta^{-n_k(\omega)}\omega)>C
\ \text{for every }k\in\mathbb N. \label{eq:jar2}
\end{align}

Now fix such an $\omega$. From Lemmas \ref{lem:dec}, \ref{lem:dec1} and since $\upsilon_\omega(f) = \ell_\omega(f \mu_\omega)$ for any $f\in \mathcal C^\beta(M)$. From \eqref{eq:jar1}, \eqref{eq:jar2}, and the inclusions
$$
R_{\varepsilon_1}^{n_k(\omega)}(\omega,x)\subset B_\omega^{n_k(\omega)}(x,\varepsilon)\subset R_{\varepsilon_2}^{n_k(\omega)}(\omega,x),
$$
there exist constants $b_\varepsilon,B_\varepsilon>0$ such that, for every $k\in\mathbb N$ and every $x\in M$,
\begin{align*}
&b_\varepsilon C e^{-K_\varepsilon(\theta^{-n_k(\omega)}\omega)}
\ell^u_{\gamma^u_{(\omega,x)}}\bigl(B(x,\varepsilon_1)\cap\gamma^u_{(\omega,x)}\bigr)\\
&\le
\frac{\upsilon_\omega(R_{\varepsilon_1}^{n_k(\omega)}(\omega,x))}
{[\lambda_{\theta^{-n_k(\omega)}\omega}^{(n_k(\omega))}]^{-1}
\exp\left(S_{n_k(\omega)}\bar\phi_{\theta^{-n_k(\omega)}\omega}\left((T_{\theta^{-n_k(\omega)}\omega}^{n_k(\omega)})^{-1}(x)\right)\right)}
\\
&\le
\frac{\upsilon_\omega(B_\omega^{n_k(\omega)}(x,\varepsilon))}
{[\lambda_{\theta^{-n_k(\omega)}\omega}^{(n_k(\omega))}]^{-1}
\exp\left(S_{n_k(\omega)}\bar\phi_{\theta^{-n_k(\omega)}\omega}\left((T_{\theta^{-n_k(\omega)}\omega}^{n_k(\omega)})^{-1}(x)\right)\right)}
\\
&\le
\frac{\upsilon_\omega(R_{\varepsilon_2}^{n_k(\omega)}(\omega,x))}
{[\lambda_{\theta^{-n_k(\omega)}\omega}^{(n_k(\omega))}]^{-1}
\exp\left(S_{n_k(\omega)}\bar\phi_{\theta^{-n_k(\omega)}\omega}\left((T_{\theta^{-n_k(\omega)}\omega}^{n_k(\omega)})^{-1}(x)\right)\right)}
\\
&\le
B_\varepsilon e^{K_\varepsilon(\theta^{-n_k(\omega)}\omega)}
\ell^u_{\gamma^u_{(\omega,x)}}\bigl(B(x,\varepsilon_2)\cap\gamma^u_{(\omega,x)}\bigr).
\end{align*}

Therefore, after absorbing the unstable leaf-length factors into positive measurable functions $c_\varepsilon(\omega)$ and $C_\varepsilon(\omega)$, we obtain
\begin{align*}
c_\varepsilon(\omega)e^{-K_\varepsilon(\theta^{-n_k(\omega)}\omega)}
&\le
\frac{\upsilon_\omega(B_\omega^{n_k(\omega)}(x,\varepsilon))}
{[\lambda_{\theta^{-n_k(\omega)}\omega}^{(n_k(\omega))}]^{-1}
\exp\left(S_{n_k(\omega)}\bar\phi_{\theta^{-n_k(\omega)}\omega}\left((T_{\theta^{-n_k(\omega)}\omega}^{n_k(\omega)})^{-1}(x)\right)\right)}\\
&\le
C_\varepsilon(\omega)e^{K_\varepsilon(\theta^{-n_k(\omega)}\omega)}.
\end{align*}
This proves the proposition.
\end{proof}

As a consequence of the weak Gibbs property above, we can identify the $\mathbb P$-relative topological pressure associated with the potential $\bar\phi$ and the measure $\upsilon_\phi$.
\begin{proposition}
Under Hypothesis \ref{hyp:h} we have that  $P_{\upsilon_\phi} (F, \bar{\phi} \mid \mathbb P) = \int_{\Omega} \log \lambda_\omega \mathbb P(d\omega)$.  \label{pro:top}
\end{proposition}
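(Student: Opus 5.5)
We identify $P_{\mathrm{top}}(F,\bar\phi\mid\mathbb P)$ through the spanning-set formula of Proposition~\ref{prop:psaning}, using the weak Gibbs estimate of Proposition~\ref{thm:wgibbs} to compare Bowen balls with the weights $[\lambda^{(n)}]^{-1}e^{S_n\bar\phi}$. Two reductions come first. Since $\theta$ preserves $\mathbb P$ and the $\mathbb P$-almost sure version of Proposition~\ref{prop:psaning} holds, we may run the argument on the backward fibres $\theta^{-n}\omega$, with the backward Bowen balls $B^n_\omega(x,\varepsilon)$, which are images under $T^n_{\theta^{-n}\omega}$ of forward $(\theta^{-n}\omega,n,\varepsilon)$ Bowen balls. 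By Birkhoff and the integrability of $\log\lambda$ from \eqref{eq:inequalitylambda}, $\frac1n\log\lambda^{(n)}_{\theta^{-n}\omega}\to\int\log\lambda_\omega\,\mathbb P(\d\omega)$ almost surely. Also, since $K_\varepsilon\in L^1$ (indeed $L^\infty$ in the proof), $K_\varepsilon(\theta^{-n_k}\omega)/n_k\to0$ along the sequence $n_k(\omega)$.

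\textbf{Upper bound.} Take a maximal $(\theta^{-n}\omega,n,\varepsilon)$-separated set $E$, which is also spanning. The balls $B^{n}_\omega(T^n x,\varepsilon/2)$, $x\in E$, are pairwise disjoint, so the lower weak Gibbs bound at $\varepsilon/2$ gives
\[
\sum_{x\in E} e^{S_n\bar\phi(x)} \le c_{\varepsilon/2}(\omega)^{-1}e^{K(\theta^{-n}\omega)}\lambda^{(n)}_{\theta^{-n}\omega}.
\]
Hence $Z_0(n,\varepsilon,\theta^{-n}\omega)\le C\lambda^{(n)}e^{o(n)}$ along $n=n_k$. To pass to a bound for all $n$, and to use the $\liminf$ version of Proposition~\ref{prop:psaning} consistently, we need the limit to exist almost surely. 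The proposition asserts exactly this (the $\limsup$ and $\liminf$ agree), so it suffices to evaluate along the subsequence $n_k$. This gives $P\le\int\log\lambda$.

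\textbf{Lower bound.} For any $(\theta^{-n}\omega,n,\varepsilon)$-spanning set $D$, the balls $B^n_\omega(T^n y,\varepsilon)$, $y\in D$, cover $M$. Since $\upsilon_\omega$ is a probability measure, the upper Gibbs bound gives
\[
1\le\sum_{y\in D}\upsilon_\omega(B)\le C_\varepsilon(\omega)e^{K}\,[\lambda^{(n)}]^{-1}\sum_{y\in D}e^{S_n\bar\phi(y)}.
\]
Taking the infimum over $D$, then logarithms, dividing by $n_k$ and letting $k\to\infty$ and then $\varepsilon\to0$, we obtain $P\ge\int\log\lambda$.

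\textbf{Main obstacle.} The delicate point is matching time directions and fibres. The Gibbs estimate lives on $\omega$ with backward orbits from $\theta^{-n}\omega$, while Proposition~\ref{prop:psaning} is stated for forward spanning sets over a fixed base point. I would handle this by using the almost sure version at the moving base point $\theta^{-n}\omega$. Equivalently, the limit for the integrated quantity is translation invariant, because the law of $\theta^{-n}\omega$ is $\mathbb P$. Justifying this replacement requires convergence in probability or a uniform-in-$n$ argument; a careful treatment would use the integrated formula together with Fatou. A secondary issue is that $\varepsilon$ must be small enough for Proposition~\ref{thm:wgibbs} to apply, which is harmless since only $\varepsilon\to0$ matters. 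Finally, since $P_{\upsilon_\phi}$ denotes the pressure evaluated via the Gibbs measure $\upsilon_\phi$, the identity reduces to this two-sided counting.
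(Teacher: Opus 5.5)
There is a genuine gap, and it sits in your final sentence. The paper proves this proposition from the weak Gibbs property together with the random Shannon--McMillan--Breiman theorem, and then uses it in Proposition~\ref{prop:eqstate}. Both facts show that $P_{\upsilon_\phi}(F,\bar\phi\mid\mathbb P)$ is the free energy of the measure, $h_{\upsilon_\phi}(F\mid\mathbb P)+\int\bar\phi\,\d\upsilon_\phi$, and not the topological pressure. If it were $P_{\mathrm{top}}$, the inequality $P_{\mathrm{top}}(F,\bar\phi\mid\mathbb P)\le\int\log\lambda_\omega\,\mathbb P(\d\omega)$ proved in Proposition~\ref{prop:eqstate} would be redundant. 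Worse, nothing would then show that $\upsilon_\phi$ attains the pressure. Counting spanning sets can at best identify $P_{\mathrm{top}}(F,\bar\phi\mid\mathbb P)$; it gives no information about $h_{\upsilon_\phi}(F\mid\mathbb P)$.

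The missing ingredient is an entropy computation for $\upsilon_\phi$. Since $\upsilon_\phi$ is ergodic (Corollary~\ref{cor:ergodic}), the random Brin--Katok/SMB local entropy formula applied to $F^{-1}$ gives, for $\upsilon_\phi$-a.e.\ $(\omega,x)$, $\lim_{\varepsilon\to0}\liminf_n -\frac1n\log\upsilon_\omega(B^n_\omega(x,\varepsilon))=\lim_{\varepsilon\to0}\limsup_n -\frac1n\log\upsilon_\omega(B^n_\omega(x,\varepsilon))=h_{\upsilon_\phi}(F\mid\mathbb P)$. You may therefore evaluate along the subsequence $n_k(\omega)$ of Proposition~\ref{thm:wgibbs}. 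Along it, $-\frac1{n_k}\log\upsilon_\omega(B^{n_k}_\omega(x,\varepsilon))=\frac1{n_k}\log\lambda^{(n_k)}_{\theta^{-n_k}\omega}-\frac1{n_k}\sum_{j=1}^{n_k}\bar\phi(F^{-j}(\omega,x))+o(1)$. Birkhoff's theorem for $\theta^{-1}$ and for $F^{-1}$ then yields $h_{\upsilon_\phi}(F\mid\mathbb P)+\int\bar\phi\,\d\upsilon_\phi=\int\log\lambda_\omega\,\mathbb P(\d\omega)$.

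Separately, your proposed remedy for the obstacle you flag does not work. The Gibbs bounds hold only along an $\omega$-dependent subsequence, so you cannot integrate them at a fixed $n$, and neither Fatou nor convergence in probability rescues this. The clean fix is to work with $\widehat F=F^{-1}$ at the fixed base point $\omega$. Then $B^n_\omega(x,\varepsilon)$ is exactly an $(\omega,n,\varepsilon)$-Bowen ball for $\widehat F$. The corresponding Birkhoff sum of $\bar\phi\circ F^{-1}$ is $\sum_{j=1}^n\bar\phi(F^{-j}(\omega,x))=S_n\bar\phi_{\theta^{-n}\omega}\bigl((T^n_{\theta^{-n}\omega})^{-1}x\bigr)$. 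So the almost sure version of Proposition~\ref{prop:psaning} applies directly, and a subsequence bound suffices in each direction: $\liminf$ for the upper bound, $\limsup$ for the lower bound. Repaired this way, your counting proves $P_{\mathrm{top}}(F,\bar\phi\mid\mathbb P)=\int\log\lambda_\omega\,\mathbb P(\d\omega)$. That is a simpler route to the inequality in Proposition~\ref{prop:eqstate}, but it is not the statement you were asked to prove.
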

\begin{proof}
The result follows from Theorem \ref{thm:wgibbs}  and the Shannon-McMillan-Breiman Theorem for random dynamical systems \cite[Proposition 2.1]{zhu2008local}  
\end{proof}

Before proving that $\upsilon_\phi$ is an equilibrium state, we need one last geometric consequence of the correlation estimates. Namely, we show that every stable manifold of fixed length becomes dense after pulling it back for a suitable amount of time. This will allow us, in the proof of the variational inequality, to construct spanning sets by taking separated points on a single backward image of a stable manifold.

\begin{proposition}\label{prop:epsdense}
    Assume that Hypothesis \ref{hyp:h} holds and fix $\varepsilon>0$. Let $\gamma^{s}(\omega)$ be an arbitrary piece of stable manifold of length $\delta$. Then, there exists $m=m(\omega,\gamma^{s}(\omega))$ such that $\left(T^{m}_{\theta^{-m}\omega}\right)^{-1} \gamma^s(\omega)$ is $\varepsilon$-dense.
\end{proposition}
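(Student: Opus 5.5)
The plan is to dualise the statement and reduce it to the fibrewise mixing assumption \textbf{(H2)} applied along the \emph{backward} orbit of $\omega$. Saying that $\left(T^{m}_{\theta^{-m}\omega}\right)^{-1}\gamma^s(\omega)$ is $\varepsilon$-dense is equivalent to the following: for every $y\in M$, the forward image $T^m_{\theta^{-m}\omega}\bigl(B(y,\varepsilon)\bigr)$ meets $\gamma^s(\omega)$. Fix $\varepsilon_u\in(0,\min\{\varepsilon_0,\varepsilon,\delta/4\})$. By Definition \ref{def:localmanifolds}, every such ball contains the local unstable manifold $W^u_{\varepsilon_u}(\theta^{-m}\omega,y)$. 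It is therefore enough to find $m$ such that, for every $y$, the set $T^m_{\theta^{-m}\omega}\bigl(W^u_{\varepsilon_u}(\theta^{-m}\omega,y)\bigr)$ crosses $\gamma^s(\omega)$. I will use the stopping time $N$ of \textbf{(H2)} for the pair $(\delta/4,\varepsilon_u)$.

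\textbf{Choosing $m$.} By Lemma \ref{lem:asfinite}, $N<\infty$ almost surely. Hence there is $k\in\mathbb N$ with $\mathbb P[N=k]>0$. Apply Poincar\'e recurrence to $\theta^{-1}$ and the positive-measure set $\{N=k\}$. For $\mathbb P$-a.e.\ $\omega$ this gives some $j\ge 0$ with $\theta^{-j-k}\omega\in\{N=k\}$, and this is the place where the proposition is only claimed for typical $\omega$. Set $m:=j+k$. Then $T^k_{\theta^{-m}\omega}$ sends every local unstable manifold of length $\varepsilon_u$ in the fibre over $\theta^{-m}\omega$ to a $\delta/4$-dense set in the fibre over $\theta^{-j}\omega$. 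Moreover, this image is a union of unstable disks of size at least $\varepsilon_u$, by the uniform expansion in \textbf{(H1)}.

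\textbf{Transporting the stable leaf and intersecting.} By the uniform backward expansion along $E^s$ in \textbf{(H1)}, $\left(T^{j}_{\theta^{-j}\omega}\right)^{-1}\gamma^s(\omega)$ is a stable curve of length at least $\delta$ in the fibre over $\theta^{-j}\omega$. It therefore contains a stable segment $\hat\gamma$ of length $\delta$. Let $z$ be the midpoint of $\hat\gamma$. By $\delta/4$-density, some point $w\in T^k_{\theta^{-m}\omega}\bigl(W^u_{\varepsilon_u}(\theta^{-m}\omega,y)\bigr)$ satisfies $d(w,z)<\delta/4<\delta_{\mathrm{loc}}$. Proposition \ref{prop:productstructure} then yields the point $[z,w]_{\theta^{-j}\omega}$. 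This point lies on the local stable manifold of $z$ and on the local unstable manifold of $w$.

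\textbf{Main obstacle.} The step I expect to be hardest is checking that $[z,w]_{\theta^{-j}\omega}$ lies in both $\hat\gamma$ and the image unstable disk, and not merely in the $\varepsilon$-local manifolds through $z$ and $w$. For this I will use the choice $\delta<\min\{\varepsilon^*/8,\delta_{\mathrm{loc}}/2\}$ from Definition \ref{def:deltaandcover}, the continuity of the bracket, and the bounded angle between $E^s$ and $E^u$. Together these should give $d(z,[z,w])$ and $d(w,[z,w])$ of order $\delta/4$. If needed, I will shrink $\varepsilon_u$ relative to the size of the image disks, or replace the midpoint argument by the rectangle inclusion $R_{\varepsilon_1}\subset B\subset R_{\varepsilon_2}$ used in the proof of Proposition \ref{thm:wgibbs}. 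Once the crossing is established, applying $T^j_{\theta^{-j}\omega}$ shows that $T^m_{\theta^{-m}\omega}\bigl(B(y,\varepsilon)\bigr)$ meets $\gamma^s(\omega)$ for every $y$, which is the claim.
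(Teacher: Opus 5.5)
Your route differs from the paper's and is sound, but the step you flag cannot be closed at the scales you fixed. Shrinking $\varepsilon_u$ alone does not help, for two reasons:
\begin{itemize}
\item With only $d(w,z)<\delta/4$, Proposition \ref{prop:productstructure} places $[z,w]$ in the local stable and unstable manifolds at the product-structure scale of Section \ref{sec:geom}, which exceeds $\delta$. The bracket may therefore leave $\hat\gamma$, which has intrinsic radius $\delta/2$ about $z$.
\item The point $w$ may lie at the edge of $T^k_{\theta^{-m}\omega}\bigl(W^u_{\varepsilon_u}(\theta^{-m}\omega,y)\bigr)$, so the bracket may also leave the image disk.
\end{itemize}

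The repair uses the fact that \textbf{(H2)} holds for every density scale.
\begin{itemize}
\item Choose $\eta\le\varepsilon_u/2$ so small that $W^s_\eta(\theta^{-j}\omega,z)\subset\hat\gamma$. This is uniform, by the bounded geometry of local stable disks.
\item Set $\delta':=\delta(\eta)$ from Proposition \ref{prop:productstructure}, and use the stopping time of \textbf{(H2)} for the pair $(\delta',\varepsilon_u/2)$ instead of $(\delta/4,\varepsilon_u)$.
\item You then obtain $w'\in W^u_{\varepsilon_u/2}(\theta^{-m}\omega,y)$ such that $w:=T^k_{\theta^{-m}\omega}w'$ satisfies $d(w,z)<\delta'$. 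Hence $[z,w]\in W^s_\eta(z)\cap W^u_\eta(w)$.
\item The triangle inequality along backward orbits gives $W^u_{\varepsilon_u/2}(\theta^{-j}\omega,w)\subset T^k_{\theta^{-m}\omega}\bigl(W^u_{\varepsilon_u/2}(\theta^{-m}\omega,w')\bigr)\subset T^k_{\theta^{-m}\omega}\bigl(W^u_{\varepsilon_u}(\theta^{-m}\omega,y)\bigr)$. So $[z,w]$ lies in both $\hat\gamma$ and the image disk.
\end{itemize}
A smaller point: for $\omega\notin\{N=k\}$, Poincar\'e recurrence does not give a visit at some time $m\ge k$. You need ergodicity of $\theta^{-1}$ for that, which is available.

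The paper argues instead by contradiction through the equilibrium state. It takes a bump $f$ supported in the unstable saturation $Q_\delta(\gamma^s(\omega))$ with $\upsilon_\omega(f)>0$, using full support from the weak Gibbs property. It also takes bumps $g_i$ on a finite $\varepsilon/4$-cover. Non-density at time $n$ forces $(f\circ T^n_{\theta^{-n}\omega})\,g_{i(n)}\equiv0$. This contradicts the pullback decay of correlations \eqref{eq:decaypullback1} along infinitely many $n$ with $\upsilon_{\theta^{-n}\omega}(g_{i_*})\ge c_{i_*}$. That route reuses estimates already proved, but it makes a purely topological statement depend on the potential and on the spectral and Gibbs machinery of Sections \ref{sec:RPF} and \ref{sec:eqstate}.

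Your argument uses only \textbf{(H1)}, \textbf{(H2)}, ergodicity and local product structure, and it yields more:
\begin{itemize}
\item $m$ depends only on $\omega$, uniformly over all stable leaves of length $\delta$.
\item You get directly the crossing property: every $W^u_{\varepsilon_u}(\theta^{-m}\omega,y)$ meets $(T^m_{\theta^{-m}\omega})^{-1}\gamma^s(\omega)$. This is exactly what Step 1 of Proposition \ref{prop:eqstate} uses.
\item The crossing property persists for every later time $m'\ge m$.
\end{itemize}
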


\begin{proof}
Let $\upsilon_\omega$ be as in Proposition \ref{prop:exponentialdecayofcorrelation}. 
Assume without loss of generality that $0<\delta<\varepsilon/4$ is small enough so that every point $x\in\gamma^s(\omega)$ admits a rectangle of local product structure $R(\omega,x)$ whose stable and unstable sides have length at least $4\delta$, and every ball of radius $\varepsilon/4$ contains a rectangle of local product structure of length at least $\delta$. Define
$$
Q_\delta(\gamma^s(\omega)):=\{y\in M:\ y\in W^u_\delta(\omega,x)\text{ for some }x\in\gamma^s(\omega)\}.
$$
By the choice of $\delta$, $Q_\delta(\gamma^s(\omega))$ is a rectangle which contains $\gamma^s(\omega)$ being saturated by local unstable manifolds of length $\delta$.

Choose finitely many points $\{x_i\}_{i=1}^k\subset M$ such that $M=\bigcup_{i=1}^k B_{\varepsilon/4}(x_i).$
For each $i\in\{1,\dots,k\}$ define
$$
g_i(x)=\frac{\mathrm{dist}(x,M\setminus B_{\varepsilon/2}(x_i))}
{\mathrm{dist}(x,M\setminus B_{\varepsilon/2}(x_i))+\mathrm{dist}(x,B_{\varepsilon/4}(x_i))}.
$$
Then $0\le g_i\le 1$, $g_i=1$ on $B_{\varepsilon/4}(x_i)$, and $\operatorname{supp}(g_i)\subset B_{\varepsilon/2}(x_i)$.

Since the angle between the stable and unstable directions is uniformly bounded away from degeneracy, there exists $r=r(\delta)>0$ such that, for every $\omega\in\Omega$ and every piece of stable manifold $\gamma^s(\omega)$ of length $\delta$, the set $Q_\delta(\gamma^s(\omega))$ contains a ball $B_r(x_*)$, where $x_*$ denotes the midpoint of $\gamma^s(\omega)$. Moreover, by the uniform local product structure, one may choose $r>0$ so that
$$
\operatorname{dist}\bigl(B_r(x_*),M\setminus Q_\delta(\gamma^s(\omega))\bigr)\ge \tau>0
$$
uniformly in $\omega$ and in the choice of $\gamma^s(\omega)$.

Define
$$
f(x):=\frac{d\bigl(x,M\setminus Q_\delta(\gamma^s(\omega))\bigr)}
{d\bigl(x,B_r(x_*)\bigr)+d\bigl(x,M\setminus Q_\delta(\gamma^s(\omega))\bigr)}.
$$
Then $0\le f\le 1$, $f=1$ on $B_r(x_*)$, and $f=0$ on $M\setminus Q_\delta(\gamma^s(\omega))$. The uniform separation above implies that there exists $C>0$ such that
\begin{align}
\|f\|_{\mathcal C^\beta}\le C \label{uniform}
\end{align}
for every $\omega\in\Omega$ and every piece of stable manifold $\gamma^s(\omega)$ of length $\delta$.

Finally, by Proposition \ref{thm:wgibbs}, there exists $c(\omega)>0$, independent of $y$ such that $\upsilon_\omega(B_r(y)) > c(\omega)$ for every $y\in M$. Since $f=1$ on $B_r(x_*)\subset Q_\delta(\gamma^s(\omega))$, it follows that
$$
\upsilon_\omega(f)\ge c(\omega)>0
$$
independent of the choice of the stable manifold.

For each $i$, since $g_i =  1$ on the non-empty ball $B_{\varepsilon/4}(x_i)$, the weak Gibbs property implies that $
\upsilon_\omega (g_i)>0$
for $\mathbb P$-almost every $\omega \in\Omega$. Hence one can choose $c_i>0$ so small that
$$
A_i:=\{\omega\in\Omega:\ \upsilon_\omega(g_i)\ge c_i\}\ \text{satisfies }\mathbb P(A_i)>1-\frac{1}{2k}.$$

We suppose for a contradiction that $\left(T^n_{\theta^{-n}\omega}\right)^{-1}\gamma^s(\omega)$ is not $\varepsilon$-dense for any $n\ge1$. Then, for every $n\ge1$, there exists  a point $p_n\in M$ such that
\begin{align}
\mathrm{dist}\Bigl(p_n,\left(T^n_{\theta^{-n}\omega}\right)^{-1}\gamma^s(\omega)\Bigr)\ge\varepsilon.\label{eq:pn}    
\end{align}
We now prove that this cannot occur. For clarity, we split the argument into four steps.

\begin{step}[1]
    Choose $i(n)\in\{1,\dots,k\}$ such that $p_n\in B_{\varepsilon/4}(x_{i(n)})$. We show that
\begin{align}
B_{3\varepsilon/4}(x_{i(n)})\cap \left(T^n_{\theta^{-n}\omega}\right)^{-1}\gamma^s(\omega)=\varnothing. \label{eq:set1}
\end{align}
\end{step}
Given  $z \in B_{3\varepsilon/4}(x_{i(n)})\cap \left(T^n_{\theta^{-n}\omega}\right)^{-1}\gamma^s(\omega)$ then
$$
d(z,p_n)\le d(z,x_{i(n)})+d(x_{i(n)},p_n)<\frac{3\varepsilon}{4}+\frac{\varepsilon}{4}=\varepsilon,
$$
contradicting the choice of $p_n$, showing \eqref{eq:set1}.

\begin{step}[2]
We show that
\begin{align}
   B_{\varepsilon/2}(x_{i(n)})\cap \left(T^n_{\theta^{-n}\omega}\right)^{-1}Q_\delta(\gamma^s(\omega))=\varnothing. \label{eq:set2}
\end{align}
\end{step}

Let $
y\in B_{\varepsilon/2}(x_{i(n)})\cap \left(T^n_{\theta^{-n}\omega}\right)^{-1}Q_\delta(\gamma^s(\omega)).$
Then $T^n_{\theta^{-n}\omega}(y)\in Q_\delta(\gamma^s(\omega))$, so there exists $x\in\gamma^s(\omega)$ such that $T^n_{\theta^{-n}\omega}(y)\in W^u_\delta(\omega, x)$. Set
$$
z:=\left(T^n_{\theta^{-n}\omega}\right)^{-1}(x)\in \left(T^n_{\theta^{-n}\omega}\right)^{-1}\gamma^s(\omega).
$$
Since $T^n_{\theta^{-n}\omega}(y)$ and $x$ lie on the same local unstable manifold of length $\delta$, backward contraction along unstable leaves gives $d(y,z)\le \delta<\frac{\varepsilon}{4}.$
Hence
$$
d(z,x_{i(n)})\le d(z,y)+d(y,x_{i(n)})<\frac{\varepsilon}{4}+\frac{\varepsilon}{2}=\frac{3\varepsilon}{4},
$$
which contradicts Step 1, and therefore proves \eqref{eq:set2}.

\begin{step}[3]
    We show that there cannot exist a sequence of points $p_n\in M$ such that \eqref{eq:pn} holds, and therefore we complete the proof of the Proposition
\end{step}

Since $\operatorname{supp}(f)\subset Q_\delta(\gamma^s(\omega))$ and $\operatorname{supp}(g_{i(n)})\subset B_{\varepsilon/2}(x_{i(n)})$, the Step 2 implies that
$$
\left(f\circ T^n_{\theta^{-n}\omega}\right)g_{i(n)}\equiv0 \ \text{for every } n\ge1.$$ Hence,
$$
\upsilon_{\theta^{-n}\omega}\left(\left(f\circ T^n_{\theta^{-n}\omega}\right)g_{i(n)}\right)=0\ \text{for every }n\ge1.$$

Define $F_i:=\{n\ge1:\ i(n)=i\}.$
On the one hand, since $\bigcup_{i=1}^k F_i=\mathbb N$, there exists $i_*\in\{1,\dots,k\}$ such that $F_{i_*}$ has upper density at least $1/k$. On the other hand, since $\mathbb P(A_{i_*})>1-\frac{1}{2k}$ and $\mathbb P$ is $\theta$-ergodic, Birkhoff's theorem gives that
$$
E_{i_*}(\omega):=\{n\ge1:\ \theta^{-n}\omega\in A_{i_*}\}
$$
has density $\mathbb P(A_{i_*})>1-\frac{1}{2k}$. Hence $
F_{i_*}\cap E_{i_*}(\omega)$ is infinite.

Fix $n\in F_{i_*}\cap E_{i_*}(\omega)$. Since $n\in F_{i_*}$, we have
\begin{align}
\upsilon_{\theta^{-n}\omega}\left(\left(f\circ T^n_{\theta^{-n}\omega}\right)g_{i_*}\right)=0.\label{eq:zero}
\end{align}
Since $n\in E_{i_*}(\omega)$, we also have
\begin{align}
\upsilon_{\theta^{-n}\omega}(g_{i_*})\ge c_{i_*}. \label{eq:notzero}
\end{align}
Applying Proposition \ref{prop:exponentialdecayofcorrelation}, \eqref{eq:zero} and \eqref{eq:notzero} we obtain that
\begin{align*}
0<\upsilon_\omega(f)c_{i_*}
\le \upsilon_\omega(f)\upsilon_{\theta^{-n}\omega}(g_{i_*})
&=
\left|
\upsilon_{\theta^{-n}\omega}\left(\left(f\circ T^n_{\theta^{-n}\omega}\right)g_{i_*}\right)
-
\upsilon_\omega(f)\upsilon_{\theta^{-n}\omega}(g_{i_*})
\right|\\
&\le
C(\omega)\Lambda^n \|f\|_{\mathcal C^\beta}\|g_{i_*}\|_{\mathcal C^\beta}.
\end{align*}
Since $\Lambda\in(0,1)$, the right-hand side tends to $0$ as $n\to\infty$ along the infinite set $F_{i_*}\cap E_{i_*}(\omega)$, which is impossible.

Therefore, our assumption was false. Hence there exists $m=m(\omega,\gamma^s(\omega))$ such that $\left(T^m_{\theta^{-m}\omega}\right)^{-1}\gamma^s(\omega)$
is $\varepsilon$-dense in $M$.
\end{proof}

We now prove that $\upsilon_\phi$ is a $\mathbb P$-relative equilibrium state for the potential $\bar\phi$.
\begin{proposition}\label{prop:eqstate}
The invariant measure $\upsilon_{\phi}:=\upsilon_\omega(\d x)\mathbb P(\d\omega)$ constructed in Lemma \ref{lem:nuprob} is a $\mathbb P$-relative equilibrium state for the potential $\bar\phi$.
\end{proposition}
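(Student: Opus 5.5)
We need to show $h_{\upsilon_\phi}(F\mid\mathbb P)+\int\bar\phi\,\d\upsilon_\phi=P_{\mathrm{top}}(F,\bar\phi\mid\mathbb P)$. The plan is to squeeze both quantities against $\int_\Omega\log\lambda_\omega\,\mathbb P(\d\omega)$. For the measure side we use Proposition \ref{pro:top}, which identifies the local-entropy expression of $\upsilon_\phi$ with $\int\log\lambda_\omega\,\d\mathbb P$. For the topological side we use the spanning-set formula of Proposition \ref{prop:psaning}. Since $\sup$ over invariant measures trivially dominates $h_{\upsilon_\phi}+\int\bar\phi\,\d\upsilon_\phi$, it suffices to prove two inequalities:
\[
h_{\upsilon_\phi}(F\mid\mathbb P)+\int\bar\phi\,\d\upsilon_\phi\ \ge\ \int\log\lambda_\omega\,\d\mathbb P
\]
and
\[
P_{\mathrm{top}}(F,\bar\phi\mid\mathbb P)\ \le\ \int\log\lambda_\omega\,\d\mathbb P .
\]

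\textbf{Lower bound.} By Corollary \ref{cor:ergodic}, $\upsilon_\phi$ is ergodic. The random Shannon--McMillan--Breiman / Brin--Katok theorem gives, for $\upsilon_\phi$-a.e.\ $(\omega,x)$,
\[
-\lim_{\varepsilon\to0}\lim_n\tfrac1n\log\upsilon_\omega(B^n_\omega(x,\varepsilon))=h_{\upsilon_\phi}(F\mid\mathbb P).
\]
We use the version for backward Bowen balls, applied to $F^{-1}$; the relative entropy of $F^{-1}$ equals that of $F$. Along the subsequence $n_k(\omega)$ of Proposition \ref{thm:wgibbs}, take logarithms and divide by $n_k$. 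Because $K_\varepsilon\in L^1$, we have $K_\varepsilon(\theta^{-n}\omega)/n\to0$ a.s. (Borel--Cantelli applied to $\sum\mathbb P[K_\varepsilon>\epsilon n]<\infty$). Birkhoff applied to $\log\lambda$ and $\bar\phi$ along the backward orbit gives
\[
\tfrac1n\log\lambda^{(n)}_{\theta^{-n}\omega}\to\int\log\lambda
\quad\text{and}\quad
\tfrac1n S_n\bar\phi_{\theta^{-n}\omega}\circ(T^n_{\theta^{-n}\omega})^{-1}\to\int\bar\phi\,\d\upsilon_\phi .
\]
Together these yield $h=\int\log\lambda-\int\bar\phi$, which is the content of Proposition \ref{pro:top}. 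In particular the lower bound holds with equality.

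\textbf{Upper bound.} We bound $Z_0(n,\varepsilon,\omega)$ from above, working in backward time so that it matches the Bowen balls $B^n_\omega$. Equivalently, we apply Proposition \ref{prop:psaning} to $F^{-1}$, whose pressure for the time-reversed Birkhoff sums coincides with $P_{\mathrm{top}}(F,\bar\phi\mid\mathbb P)$. Fix a maximal $(\omega,n,\varepsilon)$-separated set $E$; it is spanning. The balls $B^n_\omega(y,\varepsilon/2)$, $y\in E$, are disjoint, so $\sum_{y}\upsilon_\omega(B^n_\omega(y,\varepsilon/2))\le1$. At times $n=n_k(\omega)$, the lower Gibbs bound then gives
\[
\sum_{y\in E}e^{S_n\bar\phi(\cdot)}\le c_{\varepsilon/2}(\omega)^{-1}e^{K(\theta^{-n}\omega)}\lambda^{(n)}_{\theta^{-n}\omega}.
\]
Hence $\liminf_n\frac1n\log Z_0\le\int\log\lambda$. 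By the last sentence of Proposition \ref{prop:psaning}, the $\liminf$ and $\limsup$ coincide almost surely, so only a subsequence is needed.

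The main obstacle is the mismatch of time directions: $B^n_\omega$ and the Gibbs bound are formulated for backward iterates at the fibre $\omega$, whereas Proposition \ref{prop:psaning} uses forward iterates from $\omega$. One fix is to apply the spanning formula to $F^{-1}$ and verify that the pressures agree. Alternatively, one can use Proposition \ref{prop:epsdense}: it allows constructing spanning sets from separated points on a single pulled-back stable leaf, where the sum $\sum e^{S_n\bar\phi}$ is controlled by $\int_{\gamma}\mathcal L^n\mathbbm 1\le\|\mathcal L^n\mathbbm 1\|\approx\lambda^{(n)}$ via the distortion bound in Lemma \ref{lem:push_density}. I would try the $F^{-1}$ reduction first, and fall back on the stable-leaf spanning-set construction if the measurability or integrability conditions of Proposition \ref{prop:psaning} for $F^{-1}$ cause trouble.
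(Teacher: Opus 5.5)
Your proof is correct. It differs from the paper's only in how you prove $P_{\mathrm{top}}(F,\bar\phi\mid\mathbb P)\le\int\log\lambda_\omega\,\d\mathbb P$.

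\emph{What matches.} On the measure side the paper simply cites Proposition~\ref{pro:top}, which is what your lower-bound paragraph re-derives. The paper also passes to $\widehat F=F^{-1}$ with $\widehat{\bar\phi}=\bar\phi\circ F^{-1}$ and asserts that the two pressures agree, exactly as you propose.

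\emph{What differs.} For the upper bound the paper follows essentially your fallback, not your primary route:
\begin{itemize}
\item It uses Proposition~\ref{prop:epsdense} to make a pulled-back stable leaf $(T^m_{\theta^{-m}\omega})^{-1}\gamma^s(\omega)$ $\varepsilon$-dense.
\item It takes a maximal $\varepsilon/2$-separated set, in stable distance, on the further pullback $\Gamma_{n,m}(\omega)$, and pushes it forward to get a spanning set for $\widehat F$.
\item At return times to a set where short stable leaves carry $\mu$-mass at least $M_0$, it rewrites the partition sum as $M_0^{-1}e^{K}\int_{\Gamma_{n,m}}e^{S_n\bar\phi}\,\mu$.
\item It evaluates this integral through $\mathcal L^{n}\mu=\lambda^{(n)}\mu$.
\end{itemize}

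Your primary route is the classical packing argument: a maximal $(\omega,n,\varepsilon)$-separated set for $\widehat F$, disjoint Bowen balls $B^n_\omega(y,\varepsilon/2)$, and the lower half of Proposition~\ref{thm:wgibbs} along $n_k(\omega)$. It is legitimate, since the Birkhoff sum $\widehat S_n\widehat{\bar\phi}(\omega,y)=\sum_{i=1}^n\bar\phi(F^{-i}(\omega,y))$ matches the weak Gibbs denominator exactly. The $\widehat F$-Bowen balls at fibre $\omega$ are precisely $B^n_\omega$.

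\emph{Trade-offs.} Your version is shorter. It bypasses Proposition~\ref{prop:epsdense}, and with it the decay-of-correlations input, as well as the leafwise mass bound $M_0$. In exchange it uses the weak Gibbs lower bound quantitatively: uniformly in $x$ along the whole subsequence, together with $K_\varepsilon(\theta^{-n}\omega)/n\to0$. That uniformity in $x$ is the same input the paper itself invokes inside Proposition~\ref{prop:epsdense}, so you use nothing unavailable. The paper's route instead reads off the growth rate $\lambda^{(n)}$ from the eigen-relation on stable leaves. There it uses Proposition~\ref{thm:wgibbs} only for bounded distortion and, through Proposition~\ref{prop:epsdense}, for a qualitative uniform lower bound on ball masses.
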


\begin{proof}
We follow the strategy of \cite[Section 4]{ParmenterPollicott2022}, but applied to the inverse skew product. Let
$$
\widehat F:=F^{-1},\ 
\widehat F(\omega,x)=\left(\theta^{-1}\omega,\widehat T_\omega(x)\right),
\ 
\widehat T_\omega:=T_{\theta^{-1}\omega}^{-1},\ \text{and }\widehat{\bar\phi}:=\bar\phi\circ F^{-1}.$$ For every $n\ge1$,
$$
\widehat T_\omega^n
=
\widehat T_{\theta^{-(n-1)}\omega}\circ\cdots\circ\widehat T_{\theta^{-1}\omega}\circ\widehat T_\omega
=
\left(T_{\theta^{-n}\omega}^{n}\right)^{-1}.
$$
We denote by $\widehat Z_0(n,\varepsilon,\omega)$ the spanning quantity in Proposition \ref{prop:psaning} associated to the inverse skew product $\widehat F$ and to the potential $\widehat{\bar\phi}$.

Since $F$ is invertible, we have that
\begin{align}
P_{\mathrm{top}}(F,\bar\phi\mid\mathbb P)
=
P_{\mathrm{top}}\left(\widehat F,\widehat{\bar\phi}\mid\mathbb P\right).
\label{eq:inverse-pressure-polished}
\end{align}
By Proposition \ref{pro:top}, it is enough to whoe that
$$
P_{\mathrm{top}}\left(\widehat F,\widehat{\bar\phi}\mid\mathbb P\right) = P_{\mathrm{top}}(F,\bar\phi\mid\mathbb P) \le \int \log\lambda_\omega\,\mathbb P(\d\omega).
$$

Fix $\varepsilon>0$. Let $\gamma^s(\omega)=\gamma^s(\omega,x)$ be an arbitrary piece of stable manifold of length $\delta$. By Proposition \ref{prop:epsdense}, for $\mathbb P$-almost every $\omega$ there exists $
m=m(\omega,\gamma^s(\omega))\in\mathbb N$
such that $(T^m_{\theta^{-m}\omega})^{-1}\gamma^s(\omega)$
is $\varepsilon$-dense in $M$.

We now fix such an $\omega$ and such an $m$, and prove that
$$
\liminf_{n\to\infty}
\frac1n\log \widehat Z_0(n,2\varepsilon,\theta^{-m}\omega)
\le
\int\log\lambda_\omega\,\mathbb P(\d \omega).
$$
Using the almost sure version of Proposition \ref{prop:psaning}, applied to $\widehat F$ and $\widehat{\bar\phi}$, this will imply the result. We divide the remainder of the proof into four steps.

\begin{step}[1] We construct a suitable $(n, 2 \varepsilon,  \theta^{-m} \omega)$ spanning set for the inverse skew product and call it 
$S_n(\theta^{-m}\omega).$  
\end{step}

For $n\ge1$, set $\Gamma_{n,m}(\omega):=\left(T^{n+m}_{\theta^{-(n+m)}\omega}\right)^{-1}\gamma^s(\omega).$
This is a piece of a stable manifold in the fibre over $\theta^{-(n+m)}\omega$. We endow this piece of stable manifold of the distance $d_s(\theta^{-(n+m)}\omega,\cdot,\cdot).$
Choose a maximal $(\varepsilon/2)$-separated set
$$
S'_n(\theta^{-(n+m)}\omega)=\{x_1,\dots,x_{N_n}\}\subset \Gamma_{n,m}(\omega)
$$
with respect to this stable distance. From the maximal choice of $\{x_1,\ldots, x_{N_n}\}$
\begin{align}
\Gamma_{n,m}(\omega)\subset \bigcup_{i=1}^{N_n}
B_{d_s}(\theta^{-(n+m)}\omega,x_i,\varepsilon/2),\label{eq:cover}
\end{align}
and the balls $B_{d_s}(\theta^{-(n+m)}\omega,x_i,\varepsilon/4),$ $i=1,\dots,N_n,$
are pairwise disjoint and contained in $\Gamma_{n,m}(\omega)$. Hence
\begin{equation}\label{eq:balls-polished}
\bigcup_{i=1}^{N_n} B_{d_s}(\theta^{-(n+m)}\omega,x_i,\varepsilon/4)
\subset \Gamma_{n,m}(\omega).
\end{equation}

Define $
y_i:=T^n_{\theta^{-(n+m)}\omega}(x_i),$ $i \in \{1,\dots,N_n\}.$
Then $y_i\in (T^m_{\theta^{-m}\omega})^{-1}\gamma^s(\omega)$. We claim that $
S_n(\theta^{-m}\omega):=\{y_1,\dots,y_{N_n}\}$
is an $(n,2\varepsilon,\theta^{-m}\omega)$-spanning set for $\widehat F$.

Indeed, let $z\in M$. Since $(T^m_{\theta^{-m}\omega})^{-1}\gamma^s(\omega)$ is $\varepsilon$-dense, and by the choice of the local product structure of length $\varepsilon$, there exists $y\in (T^m_{\theta^{-m}\omega})^{-1}\gamma^s(\omega)$
such that $z$ lies on the local unstable manifold through $y$. Set
$$
u:=(T^n_{\theta^{-(n+m)}\omega})^{-1}(y)\in \Gamma_{n,m}(\omega).
$$
From \eqref{eq:cover}, there exists $i\in\{1,\dots,N_n\}$ such that $
d_s(\theta^{-(n+m)}\omega,u,x_i)<\varepsilon/2.$
Since $u$ and $x_i$ lie on the same local stable leaf, forward iterates through the random dynamics contract their distance uniformly. Therefore, for every $0\le r\le n$, we obtain
\begin{align}
d\left(
T^r_{\theta^{-(n+m)}\omega}(u),
T^r_{\theta^{-(n+m)}\omega}(x_i)
\right)<\varepsilon/2.\label{eq:dist1}
\end{align}
On the other hand, $z$ and $y$ belong to the same local unstable manifold. Hence, by the definition of local unstable manifolds, for every $0\le j\le n-1$,
\begin{align}
d\left(
\left(T^j_{\theta^{-(m+j)}\omega}\right)^{-1}(z),
\left(T^j_{\theta^{-(m+j)}\omega}\right)^{-1}(y)
\right)<\varepsilon. \label{eq:dist2}
\end{align}
Moreover, for every $0\le j\le n-1$,
$$
\left(T^j_{\theta^{-(m+j)}\omega}\right)^{-1}(y)
=
T^{n-j}_{\theta^{-(n+m)}\omega}(u),\ \text{and }\left(T^j_{\theta^{-(m+j)}\omega}\right)^{-1}(y_i)
=
T^{n-j}_{\theta^{-(n+m)}\omega}(x_i).
$$
Taking $r=n-j$ in \eqref{eq:dist1}, and combining this with \eqref{eq:dist2}, we obtain
$$
\max_{0\le j<n}
d\left(
\left(T^j_{\theta^{-(m+j)}\omega}\right)^{-1}(z),
\left(T^j_{\theta^{-(m+j)}\omega}\right)^{-1}(y_i)
\right)<2\varepsilon.
$$
This proves the claim.

\begin{step}[2]
We find an upper-bound bound for $\widehat Z_0(n,2 \varepsilon,\theta^{-m}\omega) $ which is stated in \eqref{eq:gammas-polished}. 
\end{step}

Since $S_n(\theta^{-m}\omega)$ is an $(n,2\varepsilon,\theta^{-m}\omega)$-spanning set for $\widehat F$, Proposition \ref{prop:psaning}, applied to $\widehat F$ and $\widehat{\bar\phi}$, gives
\begin{equation}\label{eq:Z0-first-bound}
\widehat Z_0(n,2\varepsilon,\theta^{-m}\omega)
\le \sum_{i=1}^{N_n}
\exp\Bigl(
S_n\bar\phi_{\theta^{-(n+m)}\omega}\circ
(T^n_{\theta^{-(n+m)}\omega})^{-1}(y_i)
\Bigr).
\end{equation}
Indeed, if $x_i=(T^n_{\theta^{-(n+m)}\omega})^{-1}(y_i)$, then $
\widehat S_n\widehat{\bar\phi}(\theta^{-m}\omega,y_i)
=
S_n\bar\phi_{\theta^{-(n+m)}\omega}(x_i).$

For each $i$, let $
\Gamma_{(x_i,\varepsilon/4)}\in\mathbb V_{\theta^{-(n+m)}\omega}^*$
be the functional
$$
\Gamma_{(x_i,\varepsilon/4)}(f)
:=
\int_{B_{d_s}(\theta^{-(n+m)}\omega,x_i,\varepsilon/4)}
f\,\d m_{B_{d_s}(\theta^{-(n+m)}\omega,x_i,\varepsilon/4)},
$$
where $m_{B_{d_s}(\theta^{-(n+m)}\omega,x_i,\varepsilon/4)}$ is the measure induced on the corresponding stable manifold. Then \eqref{eq:Z0-first-bound} becomes
\begin{align}
\widehat Z_0(n,2\varepsilon,\theta^{-m}\omega)
&\le \sum_{i=1}^{N_n}
\frac{1}{\Gamma_{(x_i,\varepsilon/4)}(\mu_{\theta^{-(n+m)}\omega})}
\Gamma_{(x_i,\varepsilon/4)}
\left(
e^{S_n\bar\phi_{\theta^{-(n+m)}\omega}\circ
(T^n_{\theta^{-(n+m)}\omega})^{-1}(y_i)}
\mu_{\theta^{-(n+m)}\omega}
\right).
\label{eq:gammas-polished}
\end{align}

\begin{step}[3] We find an increasing subsequence $\{n_k\}_{k\in\mathbb N} \subset \mathbb N$ and a suitable upper-bound for $\widehat Z_0(n_k, 2\varepsilon,\theta^{-m}\omega)$ which is given in  \eqref{eq:intn}.
\end{step}

Define $M(\omega):=
\inf\left\{
\int_{\gamma^s}\mu_\omega:\ 
\gamma^s\ \text{is a stable leaf of length }\varepsilon/4
\right\}.$
Since $\mu_\omega\in \operatorname{Int}(\mathcal C_\omega(b,c,\nu))$ for $\mathbb P$-almost every $\omega$, we have $M(\omega)>0$
for $\mathbb P$-almost every $\omega$. Hence there exists $M_0>0$ such that the set
$$
\Omega_{M_0}:=\{\omega\in\Omega:\ M(\omega)\ge M_0\}
$$
has positive $\mathbb P$-measure.

By ergodicity of $\theta$, for $\mathbb P$-almost every fixed $\omega$ there exists a strictly increasing sequence $(n_k)_{k\ge1}$ such that $\theta^{-(n_k+m)}\omega\in \Omega_{M_0}$
for every $k\ge1$. For such $n_k$, every piece of stable manifold $
B_{d_s}(\theta^{-(n_k+m)}\omega,x_i,\varepsilon/4)$ has $\mu_{\theta^{-(n_k+m)}\omega}$-mass at least $M_0$, and therefore
$$
\Gamma_{(x_i,\varepsilon/4)}\mu_{\theta^{-(n_k+m)}\omega}\ge M_0.
$$
Using \eqref{eq:balls-polished}, \eqref{eq:gammas-polished}, and the distortion estimate in Proposition \ref{thm:wgibbs}, we obtain
\begin{align*}
\widehat Z_0(n_k,2\varepsilon,\theta^{-m}\omega)
&\le \frac{1}{M_0}e^{K(\theta^{-m}\omega)}
\int_{\Gamma_{n_k,m}(\omega)}
e^{S_{n_k}\bar\phi_{\theta^{-(n_k+m)}\omega}\circ
(T^{n_k}_{\theta^{-(n_k+m)}\omega})^{-1}}
\mu_{\theta^{-(n_k+m)}\omega}.
\end{align*}

Since $
\mathcal L^{n_k}_{\theta^{-(n_k+m)}\omega}\mu_{\theta^{-(n_k+m)}\omega}
=
\lambda^{(n_k)}_{\theta^{-(n_k+m)}\omega}\mu_{\theta^{-m}\omega}.$ It follows that
\begin{align}
\widehat Z_0(n_k,2\varepsilon,\theta^{-m}\omega)
&\le
\frac{1}{M_0}e^{K(\theta^{-m}\omega)}
\lambda^{(n_k)}_{\theta^{-(n_k+m)}\omega}
\int_{(T^m_{\theta^{-m}\omega})^{-1}\gamma^s(\omega)}
\mu_{\theta^{-m}\omega}.\label{eq:intn}
\end{align}
Since $\mu_{\theta^{-m}\omega}\in \operatorname{Int}(\mathcal C_{\theta^{-m}\omega}(b,c,\nu))$, the last integral is strictly positive.

\begin{step}[4] We conclude the proof.
\end{step}

Taking logarithms, dividing by $n_k$ in equation \eqref{eq:intn}, and letting $k\to\infty$, we obtain
\begin{align}
\limsup_{k\to\infty}
\frac1{n_k}
\log \widehat Z_0(n_k,2\varepsilon,\theta^{-m}\omega)
\le
\limsup_{k\to\infty}
\frac1{n_k}
\log \lambda^{(n_k)}_{\theta^{-(n_k+m)}\omega}.
\label{eq:limsup-polished}
\end{align}
Since
$$
\log \lambda^{(n_k)}_{\theta^{-(n_k+m)}\omega}
=
\sum_{j=0}^{n_k-1}
\log \lambda_{\theta^{-(n_k+m)+j}\omega},
$$
Birkhoff's theorem applied to $\theta^{-1}$ yields
\begin{align}
\lim_{k\to\infty}
\frac1{n_k}
\log \lambda^{(n_k)}_{\theta^{-(n_k+m)}\omega}
=
\int\log \lambda_\omega\,\mathbb P(\d\omega)
\label{eq:birkhoff-polished}
\end{align}
for $\mathbb P$-almost every $\omega$. Hence, combining \eqref{eq:limsup-polished} and \eqref{eq:birkhoff-polished}, we obtain that
\begin{align}
\liminf_{n\to\infty}
\frac1n
\log \widehat Z_0(n,2\varepsilon,\theta^{-m}\omega)
\le
\int\log\lambda_\omega\,\mathbb P(\d\omega).
\label{eq:inverse-pressure-polished-final}
\end{align}

Recall that the $m$ in the above equation is not uniform on $\omega$. Hence, for $\mathbb P$-almost every $\omega$ we can associate an $m=m(\omega)$ such that \eqref{eq:inverse-pressure-polished-final} holds. In this way, there exists $m_0\in\mathbb N$ such that the set
$$
\mathbb P\left(
\left\{
\omega\in\Omega:
\liminf_{n\to\infty}
\frac1n
\log \widehat Z_0(n,2\varepsilon,\theta^{-m_0}\omega)
\le
\int\log\lambda_\omega\,\mathbb P(\d\omega)
\right\}
\right)>0.
$$
Finally, using the almost sure version of Proposition \ref{prop:psaning}, applied to $\widehat F$ and $\widehat{\bar\phi}$, and letting $\varepsilon\to0$, we conclude that
$$
P_{\mathrm{top}}(\widehat F,\widehat{\bar\phi}\mid\mathbb P)
\le
\int\log\lambda_\omega\,\mathbb P(\d\omega).
$$
From \eqref{eq:inverse-pressure-polished}, this gives
$$
P_{\mathrm{top}}(F,\bar\phi\mid\mathbb P)
\le
\int\log\lambda_\omega\,\mathbb P(\d\omega).
$$
From the above equation and Proposition \ref{pro:top}, this proves that $\upsilon_\phi$ is an equilibrium state for the potential $\bar\phi$.
\end{proof}

\section{Uniqueness of Equilibrium States}

It remains to prove that $\upsilon_\phi$ is the unique $\mathbb P$-relative equilibrium state for the potential $\bar\phi$. This is the final ingredient needed to complete the proof of Theorems \ref{theorem:mainthmA} and \ref{theorem:mainthmB}. We begin by introducing Sinai--Ledrappier--Young partitions (SLY partitions), which will be used to establish uniqueness of $\mathbb P$-relative equilibrium states.

\begin{definition}[SLY partition]\label{def:sly}
    A measurable partition $\xi$ of $\Omega \times M$ is said to be SLY  (Sinai-Ledrappier-Young) with respect to (i) $F:\Omega\times M\to\Omega\times M$ a regular random dynamical system satisfying Hypothesis \ref{hyp:h}, and (ii) $\eta$ a probability measure on $\Omega \times M$ that is $F$-invariant and so that  $\left(\proj_\Omega\right)_*\eta=\mathbb P$ if:
    \begin{enumerate}
        \item $(T_\omega)^{-1}\xi(\theta \omega, T_\omega x) \supset \xi(\omega,x)$, $\eta$-a.s.
        \item $\{\omega\} \times M \supset \xi(\omega,x)$, $\eta$-a.s.
        \item $\xi(\omega,x) \subset W^s(\omega, x) = \cup_{n=0}^\infty (T_\omega^n)^{-1} W^s_\varepsilon(\theta^n \omega, T_\omega^n x)$, $\eta$-a.s.
        \item $\xi(\omega,x)$ contains a neighbourhood of $x$ inside $W^s(\omega,x)$, $\eta$-a.s. 
        \item for any $B \in \mathcal{B}(\Omega \times M)$, $B_\omega = \{x \in M: (\omega,x) \in B\}$: \begin{equation*}
            (\omega,x) \mapsto m_{W^s(\omega,x)}(\xi(\omega,x)\cap B_\omega)
        \end{equation*}is measurable and $\eta$-a.s. finite
        \item $\bigvee_{n=0}^\infty F^n \xi = \mathcal{B}_{\Omega \times M}$
        \item $\bigwedge_{n=0}^\infty F^{-n} \xi = \mathcal{B}^s_{\Omega \times M}$ the $\sigma$-algebra of $\{\{\omega\} \times W^s(\omega,x):(\omega,x) \in \Omega \times M\}$-saturated sets.
        \item $h_\eta(F|\mathbb{P})=\int H_{\eta_\omega} ( T_{\theta^{-1} \omega} \xi_{\theta^{-1} \omega} | \xi_\omega ) d \mathbb{P}(\omega)$.
    \end{enumerate}
\end{definition}

\begin{remark}
    We use the following abuse of notation throughout. For any measurable set $A \subset M$, we write
    $$
    T_\omega(\{\omega\}\times A):=F(\{\omega\}\times A)=\{\theta\omega\}\times T_\omega(A).
    $$
    In other words, when $F$ acts on a set contained in the fibre $\{\omega\}\times M$, we simply denote this action by $T_\omega$.
\end{remark}

The existence of SLY partitions of this kind is guaranteed by \cite[Proposition 3.2.1]{kifer2006random}. We next prove a technical absolute-continuity statement for the conditional measures of an arbitrary relative equilibrium state along the elements of an SLY partition, comparing them with the corresponding conditional measures induced by the spectral measure $\mu_\omega$.

\begin{proposition}\label{prop:eqdisint}
    Let $\eta$ be any relative equilibrium state for a potential $\bar{\phi}$ that satisfies Definition \ref{def:potential} and a regular random dynamical system $F:\Omega\times M\to\Omega\times M$ that satisfies Hypothesis \ref{hyp:h}. Consider a SLY partition $\xi$ with respect to $\eta$ and $F$. Denote by $\eta_{\xi(\omega,x)}$ the probability on $\xi(\omega,x)$ given by the disintegration of $\eta$ over $\xi$. Then: 
    \begin{align}
            \eta_{\xi(\omega,x)} \sim \mu_{\xi(\omega,x)},\  \eta\text{-a.s.},\label{eq:eqstateabscont}
\end{align}
    where $\mu = \mu_\omega(\d x) \mathbb{P}(d\omega)$ is the measure induced on $\Omega \times M$ considering the conditionals given in Theorem \ref{thm:spectralgap} and further characterized in Lemma \ref{lem:stablemu}.
\end{proposition}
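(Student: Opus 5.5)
The plan is a Ledrappier--Young type argument: write the relative entropy of $\eta$ as the conditional entropy of the SLY partition, and compare it with the entropy of a reference family of conditionals built from $\mu_\omega$, for which the entropy formula is an equality. Along each element $\xi(\omega,x)$ the reference conditional is
$$
\mu_{\xi(\omega,x)}:=\frac{\mu_{\gamma^s_{(\omega,x)}}|_{\xi(\omega,x)}}{\mu_{\gamma^s_{(\omega,x)}}(\xi(\omega,x))},
$$
which is well defined by Lemma \ref{lem:dec}. Lemma \ref{lem:stablemargulis} gives the Margulis-type transport rule for this family. Iterating it, the $\mu$-conditional of the refined partition $T_{\theta^{-1}\omega}\xi_{\theta^{-1}\omega}$ inside $\xi_\omega$ has an explicit density:
$$
\frac{\d\mu_{\xi(\omega,x)}|_{T\xi}}{\d\mu_{\xi(\omega,x)}}(y)=\frac{e^{\bar\phi(\theta^{-1}\omega,T^{-1}_{\theta^{-1}\omega}y)}}{\lambda_{\theta^{-1}\omega}}\cdot\frac{\mu_{\xi}(\xi(\omega,x))}{\mu_{\xi(\theta^{-1}\omega,\cdot)}(\xi)}.
$$
Call the first factor $g(\omega,y)$ and the second $\Delta$. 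The second factor is a coboundary-type normalisation, so it telescopes under $\int\log$ by invariance of $\eta$ and $\theta$.

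First I would record that, by item (8) of Definition \ref{def:sly} and the Rokhlin formula,
$$
h_\eta(F\mid\mathbb P)=-\int\log \eta_{\xi(\omega,x)}\bigl((T\xi)(\omega,x)\bigr)\,\d\eta.
$$
Second, I would define
$$
\psi(\omega,x):=\log \mu_{\xi(\omega,x)}\bigl((T\xi)(\omega,x)\bigr).
$$
Using the density above together with the telescoping, and integrability from $\log\lambda_\omega\in L^1$ (the proposition giving \eqref{eq:inequalitylambda}), I expect
$$
-\int\psi\,\d\eta=\int\log\lambda_\omega\,\d\mathbb P-\int\bar\phi\,\d\eta=P_{\mathrm{top}}(F,\bar\phi\mid\mathbb P)-\int\bar\phi\,\d\eta,
$$
where the last equality uses Proposition \ref{pro:top} and Proposition \ref{prop:eqstate}. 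Third, since $\eta$ is an equilibrium state, the two entropy expressions coincide, so
$$
\int\Bigl(\log\eta_{\xi}(T\xi)-\log\mu_\xi(T\xi)\Bigr)\,\d\eta=0.
$$
Fourth, I would apply the standard Jensen argument, as in Ledrappier--Young. For each $n$, consider $\mathcal{P}_n:=\bigvee_{k=1}^n F^{k}\xi$ restricted to atoms of $\xi$. By the convexity inequality $\sum p_i\log(q_i/p_i)\le\log\sum q_i\le 0$,
$$
\int\log\frac{\mu_\xi(\mathcal P_n)}{\eta_\xi(\mathcal P_n)}\,\d\eta\le 0,
$$
with equality only if $\mu_\xi$ and $\eta_\xi$ agree on $\mathcal P_n$. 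Invariance makes the $n$-step quantities $n$ times the one-step quantity, which is $0$. Hence the two families agree on every $\mathcal P_n$, and by item (6) these generate, so $\eta_\xi=\mu_\xi$ (in particular $\eta_\xi\sim\mu_\xi$).

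The main obstacle is step two. It requires the telescoping normalisation factor to be $\eta$-integrable (or at least its quasi-integrable difference to vanish) even though $\mu_{\xi}(\xi)$ is only known to be positive $\mathbb P$-a.s. via interiority of $\mu_\omega$, with no uniform lower bound. I would first handle this by truncating to $\omega$ in a set where $\|\mu_\omega\|_{\omega,-}$ is bounded below (positive measure, as in Lemma \ref{lem:mu-minus-lp}). I would then pass to the induced system, or use the Ledrappier--Young trick of working with $\log^+$ and Fatou, to justify $\int(\log\Delta\circ F-\log\Delta)\,\d\eta=0$.
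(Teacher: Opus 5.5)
There is a genuine gap: the reference family $\mu_{\xi(\omega,x)}$ you compare $\eta_\xi$ with is not equivariant, so your second step fails. By Lemma \ref{lem:stablemargulis} the leaf measures are \emph{conformal}:
\[
\mu_{\gamma^s(\theta\omega,T_\omega x)}|_{T_\omega\xi(\omega,x)}=\lambda_\omega^{-1}(T_\omega)_*\bigl(e^{\bar\phi_\omega}\mu_{\gamma^s(\omega,x)}|_{\xi(\omega,x)}\bigr).
\]
Write $(\omega',x')=F^{-1}(\omega,x)$. Your one-step quantity is then
\[
\mu_{\xi(\omega,x)}\bigl((F\xi)(\omega,x)\bigr)=\frac{c(\omega,x)}{\lambda_{\omega'}}\int_{\xi(\omega',x')}e^{\bar\phi_{\omega'}}\,\d\mu_{\xi(\omega',x')},\qquad c(\omega,x)=\frac{\mu_{\gamma^s(\omega',x')}(\xi(\omega',x'))}{\mu_{\gamma^s(\omega,x)}(\xi(\omega,x))}.
\]
It contains the $\mu_\xi$-average of $e^{\bar\phi}$ over the whole atom $\xi(\omega',x')$, not the value $e^{\bar\phi(\omega',x')}$. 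The coboundary $c$ telescopes, but $\int\log\bigl(\int_{\xi}e^{\bar\phi}\,\d\mu_\xi\bigr)\,\d\eta\neq\int\bar\phi\,\d\eta$ in general. Hence $-\int\psi\,\d\eta=\int\log\lambda_\omega\,\d\mathbb P-\int\bar\phi\,\d\eta$ does not hold.

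The same defect breaks your fourth step. Splitting $\log\mu_\xi(\mathcal P_n)$ into $n$ one-step terms along the orbit requires $\mu_\xi(\,\cdot\mid F\xi)=F_*\mu_\xi$. By conformality this holds only if $e^{\bar\phi}$ is $\mu_\xi$-a.e. constant on atoms. In fact your conclusion $\eta_\xi=\mu_\xi$ is false. Any invariant $\eta$ satisfies $\eta_{\xi(\omega,x)}(\,\cdot\mid(F\xi)(\omega,x))=(T_{\omega'})_*\eta_{\xi(\omega',x')}$, and together with conformality this would force $\bar\phi$ to be constant on stable atoms. The integrability of $c$, which you single out as the main obstacle, is not the real issue.

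The missing idea is the stable Gibbs density. Take $\mu_{\xi(\omega,x)}$ to be the restriction of $\mu_{\gamma^s(\omega,x)}$ to $\xi(\omega,x)$, and compare $\eta_\xi$ with $\tilde\eta_{\xi(\omega,x)}:=L_\omega(x)^{-1}\Delta^s_{\omega,x}\,\mu_{\xi(\omega,x)}$, where $\Delta^s_{\omega,x}(y)=\prod_{j\ge0}e^{\bar\phi(F^j(\omega,y))-\bar\phi(F^j(\omega,x))}$ and $L_\omega(x)=\int\Delta^s_{\omega,x}\,\d\mu_{\xi(\omega,x)}$.

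The cocycle identity $\Delta^s_{\omega,x}(T_{\omega'}z)\,e^{\bar\phi(\omega',z)}=\Delta^s_{\omega',x'}(z)\,e^{\bar\phi(\omega',x')}$ absorbs the weight $e^{\bar\phi}$ from the conformality relation exactly. This gives the pointwise formula
\[
\tilde\eta_{\xi(\omega,x)}\bigl((F\xi)(\omega,x)\bigr)=e^{\bar\phi(\omega',x')}\lambda_{\omega'}^{-1}\,\frac{L_{\omega'}(x')}{L_\omega(x)}.
\]
From here your argument goes through:
\begin{enumerate}
\item The $L$-ratio is the telescoping term. Since $\tilde\eta_\xi(F\xi)\le 1$, its $\log$ is bounded above by $\log\lambda_{\omega'}-\bar\phi(\omega',x')\in L^1$.
\item Item (8) and the equilibrium property give $\int\log\tilde\eta_\xi(F\xi)\,\d\eta=\int\log\eta_\xi(F\xi)\,\d\eta$.
\item Your Jensen step then yields $\tilde\eta_\xi(F\xi)=\eta_\xi(F\xi)$ a.s.
\item Because $\tilde\eta_\xi$ is equivariant, this propagates via item (6) to $\eta_\xi=\tilde\eta_\xi$.
\end{enumerate}
Finally, $\tilde\eta_\xi\sim\mu_\xi$ because $\log\Delta^s$ is uniformly bounded, by H\"older continuity of $\bar\phi$ and uniform contraction along stable leaves. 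This is the route the paper takes.
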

\begin{proof}
    This proof adapts the arguments in \cite[Theorem 4.3]{carrasco2024equilibrium}. From Lemma \ref{lem:stablemargulis} we obtain that $$\mu_{\xi_{(\theta \omega,T_\omega x )}}\big|_{T_\omega \xi(\omega,x)} = e^{\bar{\phi}_\omega \circ (T_\omega)^{-1} - \ln \lambda_\omega} \hspace{1mm} {T_\omega}_* \mu_ {\xi(\omega,x)},$$where $\bar{\phi}_\omega = \phi_\omega - \phi_{\omega}^{J^s}$ and the vertical bar is for restriction (not conditioning).

    \begin{step}[1] We assume that \eqref{eq:eqstateabscont} holds and show that this implies that
    $$\eta_{\xi(\omega,x)} (d y) = \frac{\Delta^s_{\omega,x}(y) }{L_\omega(x')}\mu_{\xi(\omega,x)}(d y),\ \eta\text{-a.s.}$$
    where 
    $$\Delta^s_{\omega,x}(y) := \frac{\prod_{j=0}^\infty e^{\bar{\phi}_{\theta^j \omega}(T_\omega^j y)} }{ \prod_{j=0}^\infty e^{\bar{\phi}_{\theta^j \omega}(T_\omega^j x)} },$$
  and $L_\omega(x)=\int \Delta^s_{\xi(\omega,x) }(y) \mu_{\xi(\omega,x)}(\d  y)$.

    \end{step}

    Assuming that \eqref{eq:eqstateabscont} holds, we write $\eta_{\xi(\omega,x)} = \rho\big|_{\xi(\omega,x)} \mu_{\xi(\omega,x)}$. Then notice that, for any measurable subset $A \subset M$:
    
    \begin{align}
& \frac{1}{\eta_{\xi(\theta \omega, T_\omega x)} (T_\omega \xi(\omega,x)) }
\int_{A \cap T_\omega \xi(\omega,x)} 
\rho\big|_{\xi(\theta \omega, T_\omega x)} 
\, \d \mu_{\xi(\theta \omega, T_\omega x)}
= 
\frac{\eta_{\xi(\theta \omega,T_\omega x)} (A \cap T_\omega \xi(\omega,x)) }
{ \eta_{\xi(\theta \omega, T_\omega x)} (T_\omega \xi(\omega,x)) } \\
&= \eta_{(F \xi)(\theta \omega,T_\omega x)}(A) = (F_* \eta)_{(F \xi)(\theta \omega, T_\omega x)}(A) = F_* (\eta_{\xi(\omega,x)})(A) = {T_\omega}_* (\eta_{\xi(\omega,x)})(A) \\
&= \int_{A \cap T_\omega \xi(\omega,x)} 
\rho\big|_{\xi(\omega,x)} \circ (T_\omega)^{-1} 
\, d {T_\omega}_*\mu_{\xi(\omega,x)} \\
&= \int_{A \cap T_\omega \xi(\omega,x)} 
\rho\big|_{\xi(\omega,x)} \circ (T_\omega)^{-1} 
e^{\ln \lambda_\omega - \bar{\phi}_\omega \circ (T_\omega)^{-1} }
\, \d \mu_{\xi_{(\theta \omega,T_\omega x )}}.
\end{align}

The densities $\rho\big|_{\xi(\omega,x)}(z)$ lift to a single function $\rho$ on $\Omega \times M$ by $$\rho\big|_{\xi(\omega,x)}(z) = \rho\big|_{\xi(\omega,z)}(z) =: \rho(\omega,z).$$ The latter then implies that, for $\eta$-a.e. $(\omega,x)\in \Omega \times M$ and $\mu_{\xi(\theta \omega, T_\omega x)}$-a.e. $y \in T_\omega \xi(\omega,x)$
\begin{align}
\frac{\rho_\omega ( (T_\omega)^{-1} y) e^{\ln \lambda_\omega - \bar{\phi}_\omega( (T_\omega)^{-1}y) } }{\rho_{\theta \omega} (y)}    = \frac{1}{\eta\big|_{\xi(\theta \omega, T_\omega x)} (T_\omega  \xi(\omega,x) ) }.
\end{align}

 Choosing any two  points $y' = T_\omega x'$ and $y'' = T_\omega x''$ in $T_\omega \xi(\omega,x)$, with  $x',x'' \in \xi(\omega,x)$, it follows that the right side in the last equation is identical, implying that 
\begin{align}
    \frac{\rho_\omega (x') e^{\ln \lambda_\omega - \bar{\phi}_\omega( x') } }{\rho_{\theta \omega } (T_\omega x') } =  \frac{\rho_\omega (x'') e^{\ln \lambda_\omega - \bar{\phi}_\omega( x'') } }{\rho_{\theta \omega } (T_\omega x'') }
\end{align}
therefore, under induction,
\begin{align}
    \frac{\rho_\omega(x'') }{\rho_\omega(x')} = \frac{\prod_{j=0}^\infty e^{\bar{\phi}_{\theta^j \omega}(T_\omega^j x'')} }{ \prod_{j=0}^\infty e^{\bar{\phi}_{\theta^j \omega}(T_\omega^j x')} } =: \Delta^s_{\omega,x'}(x''),
\end{align}whose $\log$, as a function on $x'' \in \xi(\omega,x')$, is bounded away from zero and infinity, uniformly on $\omega, x'$ and $x''$ due to the uniformly hyperbolicity of $T_\omega$ and the fact that $\bar{\phi}$ Hölder in the fibre variable, uniformly in $\omega\in \Omega$.

Finally, writing $\rho_\omega(z'') = \rho_\omega(x') \Delta^s_{\omega,x'}(x'')$ and integrating over $\d \mu_{\xi(\omega,x)}(x'')$, one gets
\begin{align}
    \forall x'' \in \xi(\omega,x'): \rho_\omega(x'')= \frac{\Delta^s_{\omega,x'}(x'') }{L_\omega(x')},
\end{align}with $L_\omega(x')=\int \Delta^s_{\xi(\omega,x') }(x'') \d \mu_{\xi(\omega,x')}(x'')$, for $\eta$-a.e. $(\omega,x') \in \Omega \times M$. Choosing $x=x'$ and $y = x''$ Step 1 is completed.

  \begin{step}[2]
We now prove \eqref{eq:eqstateabscont}.
\end{step}

Define a measure $\tilde{\eta}$ by prescribing the same quotient measure on the partition $\xi$ as $\eta$, and by requiring that its conditional measure on each atom $\xi(\omega,x)$ be given by
\begin{align}
\tilde{\eta}(\xi(\omega,x)):=\eta(\xi(\omega,x)),\ \text{where }\tilde{\eta}_{\xi(\omega,x)}:=\frac{\Delta^s_{\omega,x}(\cdot)}{L_\omega(x)}\mu_{\xi(\omega,x)},
\end{align}
for $\eta$-a.e. $(\omega,x)\in\Omega\times M$.
We claim that $\tilde{\eta}=\eta$. Once this is proved, it follows immediately that
$$\eta_{\xi(\omega,x)}\sim\mu_{\xi(\omega,x)},
\ \eta\text{-a.s.},$$
and in fact $\eta_{\xi(\omega,x)}=\frac{\Delta^s_{\omega,x}(\cdot)}{L_\omega(x)}\mu_{\xi(\omega,x)}.$

    By property (6) of Definition \ref{def:sly}, the desired coincidence follows from checking $\eta$ and $\tilde{\eta}$ match on $F^n \xi$, or, by reduction, on $F \xi$.  Since $(F\xi)(\omega,x)= T_{\theta^{-1}\omega} \xi(\theta^{-1}\omega, (T_{\theta^{-1}\omega})^{-1}x)$, one calculates: 
    \begin{align}
&\tilde{\eta}_{\xi(\omega,x)}\bigl(T_{\theta^{-1}\omega}\xi(\theta^{-1}\omega,(T_{\theta^{-1}\omega})^{-1}x)\bigr)\\
&=\frac{1}{L_\omega(x)}
\int_{\xi(\theta^{-1}\omega,(T_{\theta^{-1}\omega})^{-1}x)}
\Delta^s_{\omega,x}(T_{\theta^{-1}\omega}z)\,
\bigl[(T_{\theta^{-1}\omega})^{-1}_*\mu_{\xi(\omega,x)}\bigr](d z)\\
&=\frac{1}{L_\omega(x)}
\int_{\xi(\theta^{-1}\omega,(T_{\theta^{-1}\omega})^{-1}x)}
\left(
\prod_{n=0}^\infty
\frac{e^{\bar{\phi}_{\theta^{n-1}\omega}(T_{\theta^{-1}\omega}^n z)}}
{e^{\bar{\phi}_{\theta^{n-1}\omega}(T_{\theta^{-1}\omega}^n (T_{\theta^{-1}\omega})^{-1}x)}}
\right)
\frac{e^{\bar{\phi}_{\theta^{-1}\omega}((T_{\theta^{-1}\omega})^{-1}x)}}{
e^{\ln\lambda_{\theta^{-1}\omega}}}\,
\mu_{\xi(\theta^{-1}\omega,(T_{\theta^{-1}\omega})^{-1}x)}(d z)\\
&=\frac{e^{\bar{\phi}_{\theta^{-1}\omega}((T_{\theta^{-1}\omega})^{-1}x)-\ln\lambda_{\theta^{-1}\omega}}}{L_\omega(x)}
\int_{\xi(\theta^{-1}\omega,(T_{\theta^{-1}\omega})^{-1}x)}
\Delta^s_{\theta^{-1}\omega,(T_{\theta^{-1}\omega})^{-1}x}(z)\,
 \mu_{\xi(\theta^{-1}\omega,(T_{\theta^{-1}\omega})^{-1}x)}(d z)\\
&=e^{\bar{\phi}_{\theta^{-1}\omega}((T_{\theta^{-1}\omega})^{-1}x)-\ln\lambda_{\theta^{-1}\omega}}
\frac{L_{\theta^{-1}\omega}((T_{\theta^{-1}\omega})^{-1}x)}{L_\omega(x)}.
\end{align}

    Since the first term is bounded by one, $$L_{\theta^{-1}\omega}( (T_{\theta^{-1}\omega})^{-1}x) / L_\omega(x)\text{ is bounded by  }e^{\ln \lambda_{\theta^{-1}\omega} - \bar{\phi}_{\theta^{-1}\omega}( (T_{\theta^{-1}\omega})^{-1}x ) },$$ whose $\log$ is in $L^1(\eta)$. Therefore
    \begin{align}
        0 & = \int_{\Omega \times M} \ln \frac{L_{\theta^{-1}\omega}( (T_{\theta^{-1}\omega})^{-1}x) }{L_\omega(x)} \eta(\d \omega,\d x),
    \end{align}which, together with the chain of equalities above, implies
    \begin{align*}
        &\int_{\Omega \times M} \ln \tilde{\eta}_{\xi(\omega,x)}(T_{\theta^{-1}\omega} \xi(\theta^{-1}\omega, (T_{\theta^{-1}\omega})^{-1}x)) \eta(\d \omega,\d x) \\& = \int -\bar{\phi}_{\theta^{-1}\omega}( (T_{\theta^{-1}\omega})^{-1}x ) + \ln \lambda_{\theta^{-1}\omega} \eta(\d \omega,\d x)\\& = \int_\Omega \ln \lambda_\omega \mathbb{P}(d \omega) - \int_{\Omega \times M} \bar{\phi}( \omega,x) \eta(\d \omega,\d x). 
    \end{align*}

    The same equality is valid for $\eta$ instead of $\tilde{\eta}$, because $\eta$ is assumed a relative equilibrium state and by property (8) of Definition \ref{def:sly}. Both integrals coinciding imply the first equality in the chain below
    \begin{align}
        0 & = \int_{\Omega \times M} \ln \frac{\tilde{\eta}_{\xi(\omega,x)}(T_{\theta^{-1}\omega} \xi(\theta^{-1}\omega, (T_{\theta^{-1}\omega})^{-1}x))}{\eta_{\xi(\omega,x)}(T_{\theta^{-1}\omega} \xi(\theta^{-1}\omega, (T_{\theta^{-1}\omega})^{-1}x))} \eta(\d \omega,\d x) \\ & \le \ln \int_{\Omega \times M} \frac{\tilde{\eta}_{\xi(\omega,x)}(T_{\theta^{-1}\omega} \xi(\theta^{-1}\omega, (T_{\theta^{-1}\omega})^{-1}x))}{\eta_{\xi(\omega,x)}(T_{\theta^{-1}\omega} \xi(\theta^{-1}\omega, (T_{\theta^{-1}\omega})^{-1}x))} \eta(\d \omega,\d x) \le \ln 1 = 0,
    \end{align}where the second inequality is justified in the next paragraph. The equality found above is an instance of Jensen's equality, which implies that the integrand is $\eta$-a.s. constant equal to $1$, as desired.

    The latter inequality follows from writing $\xi(\omega,x) = \bigsqcup_{j=1}^\infty A_{\omega,x}^j $, with $$A_{\omega,x}^j = T_{\theta^{-1}\omega} \xi(\theta^{-1}\omega, (T_{\theta^{-1} \omega})^{-1} y_j) \cap \xi(\omega,x),$$ for suitably chosen $y_j \in \xi(\omega,x)$, and checking that
    \begin{align}
        & \int_{\xi(\omega,x)} \frac{\tilde{\eta}_{\xi(\omega,y)}(T_{\theta^{-1}\omega} \xi(\theta^{-1}\omega, (T_{\theta^{-1}\omega})^{-1}y))}{\eta_{\xi(\omega,y)}(T_{\theta^{-1}\omega} \xi(\theta^{-1}\omega, (T_{\theta^{-1}\omega})^{-1}y))} \eta_{\xi(\omega,x)}(d y) \\ 
        & = \int_{\xi(\omega,x)} \frac{\tilde{\eta}_{\xi(\omega,x)}(T_{\theta^{-1}\omega} \xi(\theta^{-1}\omega, (T_{\theta^{-1}\omega})^{-1}y))}{\eta_{\xi(\omega,x)}(T_{\theta^{-1}\omega} \xi(\theta^{-1}\omega, (T_{\theta^{-1}\omega})^{-1}y))} \eta_{\xi(\omega,x)}(d y) \\ &= \sum_{\substack{j\ge 1 \\ \eta_{\xi(\omega,x)}(A_{\omega,x}^j)>0 }} \frac{\tilde{\eta}_{\xi(\omega,x)}(A_{\omega,x}^j)}{\eta_{\xi(\omega,x)}(A_{\omega,x}^j)} \eta_{\xi(\omega,x)}(A_{\omega,x}^j) \le 1.
    \end{align}
 
\end{proof}

\begin{proposition}\label{prop:absconstable}
For $\mathbb P$-almost every $\omega\in\Omega$, for every nearby pair
$(\gamma^s(\omega,\tilde x),\gamma^s(\omega,x))$, and for the unstable holonomy
$\mathrm{hol}^u_\omega:\gamma^s(\omega,\tilde x)\to\gamma^s(\omega,x)$
given by Definition~\ref{def:holonomy}, the measure
$\big((\mathrm{hol}^u_\omega)^{-1}\big)_*\mu_{\gamma^s(\omega,x)}$
is absolutely continuous with respect to $\mu_{\gamma^s(\omega,\tilde x)}$.
Moreover, its Radon--Nikodym derivative is given by
\begin{align}
\frac{d\big[\big((\mathrm{hol}^u_\omega)^{-1}\big)_*\mu_{\gamma^s(\omega,x)}\big]}
     {\d \mu_{\gamma^s(\omega,\tilde x)}}(z)
=
\prod_{j=1}^\infty
\frac{e^{\bar\phi\circ F^{-j}(\omega,\mathrm{hol}^u_\omega z)}}
     {e^{\bar\phi\circ F^{-j}(\omega,z)}},
\label{eq:hol}
\end{align}
for $\mu_{\gamma^s(\omega,\tilde x)}$-almost every
$z\in\gamma^s(\omega,\tilde x)$, where $\mu_{\gamma^s(\omega,x)}$ is the
measure constructed in Lemma~\ref{lem:stablemu}.
\end{proposition}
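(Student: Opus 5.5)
We derive the holonomy formula from the backward construction of $\mu_\omega$: first prove it for the approximants $\mathcal L^n_{\theta^{-n}\omega}\mathbbm 1$, then pass to the limit. Recall from Lemma~\ref{lem:stablemu} that for $f\in\mathcal C^\beta(M)$,
$$\mu_{\gamma^s(\omega,x)}(f)=\lim_{n\to\infty}\int_{\gamma^s(\omega,x)} f\,\frac{\mathcal L^n_{\theta^{-n}\omega}\mathbbm 1}{\|\mathcal L^n_{\theta^{-n}\omega}\mathbbm 1\|_\omega}\,\d m_{\gamma^s(\omega,x)},$$
and that $\mathcal L^n_{\theta^{-n}\omega}\mathbbm 1(y)=\exp\bigl(S_n\phi_{\theta^{-n}\omega}\circ (T^n_{\theta^{-n}\omega})^{-1}(y)\bigr)=\prod_{j=1}^n e^{\phi\circ F^{-j}(\omega,y)}$.

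\textbf{Step 1: change of variables at finite $n$.} Fix $f\in\mathcal C^\beta(\gamma^s(\omega,x))$. Change variables $y=\mathrm{hol}^u_\omega z$ using Proposition~\ref{prop:holjac}, which gives $\d m_{\gamma^s(\omega,x)}(\mathrm{hol}^u_\omega z)=\operatorname{Jac}(\mathrm{hol}^u_\omega)(z)\,\d m_{\gamma^s(\omega,\tilde x)}(z)$ with
$$\operatorname{Jac}(\mathrm{hol}^u_\omega)(z)=\prod_{j\ge1}e^{\phi^{J_s}\circ F^{-j}(\omega,\mathrm{hol}^u_\omega z)-\phi^{J_s}\circ F^{-j}(\omega,z)}.$$
This yields
$$\int_{\gamma^s(\omega,x)} f\,\mathcal L^n\mathbbm 1\,\d m=\int_{\gamma^s(\omega,\tilde x)} (f\circ\mathrm{hol}^u_\omega)\,\mathcal L^n\mathbbm 1(z)\,\Xi_n(z)\,\d m(z),$$
where
$$\Xi_n(z)=\prod_{j=1}^n\frac{e^{\phi\circ F^{-j}(\omega,\mathrm{hol}^u_\omega z)}}{e^{\phi\circ F^{-j}(\omega,z)}}\cdot\operatorname{Jac}(\mathrm{hol}^u_\omega)(z).$$

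\textbf{Step 2: uniform convergence of $\Xi_n$.} Combining the two products, $\Xi_n$ converges uniformly on $\gamma^s(\omega,\tilde x)$ to
$$\Xi(z)=\prod_{j\ge1}\frac{e^{\bar\phi\circ F^{-j}(\omega,\mathrm{hol}^u_\omega z)}}{e^{\bar\phi\circ F^{-j}(\omega,z)}}.$$
The tail is controlled by $\sum_{j>n}C\,e^{-\lambda j\kappa}d(z,\mathrm{hol}^u_\omega z)^\kappa$: by Proposition~\ref{prop:holderhol} the backward iterates of $z$ and $\mathrm{hol}^u_\omega z$ contract at rate $e^{-\lambda}$, and both $\phi$ and $\phi^{J_s}$ are uniformly Hölder (Definition~\ref{def:potential} and Proposition~\ref{prop:holderjacs}). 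The same estimate shows that $\Xi$ and $\Xi_n$ are uniformly $\kappa$-Hölder with uniformly bounded $\log$.

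\textbf{Step 3: passage to the limit.} Normalise both sides by $\|\mathcal L^n_{\theta^{-n}\omega}\mathbbm 1\|_\omega$. On the left this gives $\mu_{\gamma^s(\omega,x)}(f)$ in the limit. On the right, decompose
$$(f\circ\mathrm{hol}^u_\omega)\,\Xi_n=(f\circ\mathrm{hol}^u_\omega)\,\Xi+(f\circ\mathrm{hol}^u_\omega)(\Xi_n-\Xi).$$
The first term converges to $\mu_{\gamma^s(\omega,\tilde x)}\bigl((f\circ\mathrm{hol}^u_\omega)\Xi\bigr)$ by the definition of the leaf measure, since $(f\circ\mathrm{hol}^u_\omega)\Xi$ is Hölder (using that $\mathrm{hol}^u_\omega$ is $\mathcal C^{\nu_0}$). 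For the error term, the normalised leaf integrals are bounded by $\|\Xi_n-\Xi\|_\infty\|f\|_\infty\,m(\gamma)\sup_{\gamma}\frac{\mathcal L^n\mathbbm 1}{\|\mathcal L^n\mathbbm 1\|_\omega}$. This is the main obstacle: it requires a uniform-in-$n$ sup bound on the normalised densities along a leaf. The approach is as follows.
\begin{itemize}
\item The log-Hölder bound \eqref{eq:Holder1}-type distortion along stable leaves shows that $\mathcal L^n\mathbbm 1$ is comparable to its leaf average, with constant $e^{C}$ independent of $n$, because backward iterates along a stable leaf \emph{expand}. The correct justification is that $\mathcal L^n\mathbbm 1$ has bounded log-oscillation on $\gamma$, since $\mathcal L^n \mathbbm 1\in D(a',\kappa,\gamma)$ via the density-transport bound in Lemma~\ref{lem:push_density} run in reverse.
\item The leaf average is at most $\|\mathcal L^n\mathbbm 1\|_{\theta^{n}\cdot,+}\le\|\mathcal L^n\mathbbm 1\|_\omega$ by Lemma~\ref{lem:norm1}.
\end{itemize}
If that regularity is unavailable, fall back on positivity instead: Lemma~\ref{lem:positivemu} gives monotonicity of the normalised functionals, and sandwiching $\Xi_n$ between $(1\pm o(1))\Xi$ makes the error term vanish.

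\textbf{Conclusion.} We obtain $\mu_{\gamma^s(\omega,x)}(f)=\int (f\circ\mathrm{hol}^u_\omega)\,\Xi\,\d\mu_{\gamma^s(\omega,\tilde x)}$ for all Hölder $f$. By density of Hölder functions in $\mathcal C^0$ and the Riesz identification from Lemma~\ref{lem:positivemu}, this means exactly that $((\mathrm{hol}^u_\omega)^{-1})_*\mu_{\gamma^s(\omega,x)}=\Xi\,\mu_{\gamma^s(\omega,\tilde x)}$, which is \eqref{eq:hol}.
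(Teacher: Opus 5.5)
Your proof is correct in outline and takes a genuinely different route from the paper's.

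\textbf{How the two routes differ.} The paper never goes back to the approximants $\mathcal L^n_{\theta^{-n}\omega}\mathbbm 1$; it works with the limit $\mu$ directly.
\begin{itemize}
\item It pulls a small stable ball $B^s_r(z)\subset\tilde\gamma$ and its image $\mathrm{hol}^u_\omega(B^s_r(z))$ back by $n(\omega,r)$ steps.
\item Using the iterated Margulis relation (Lemma \ref{lem:stablemargulis}), it writes both $\mu$-masses as sums over branches of $\int e^{S_n\bar\phi}\,\d\mu_{\theta^{-n}\omega}$.
\item It compares paired branches using condition \textbf{(C3)} for $\mu_{\theta^{-n}\omega}$, together with $J^i_{-n}\to 1$ for the short holonomies.
\item It reads off the density as the limit of the ratio of masses of shrinking balls, so a differentiation-of-measures step is implicit.
\end{itemize}
You instead make an exact Lebesgue change of variables at each finite $n$, where the density $\mathcal L^n_{\theta^{-n}\omega}\mathbbm 1$ is explicit, and then pass to the limit in $n$ with the test function fixed.

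Your route is shorter and avoids the branch bookkeeping, \textbf{(C3)}, and the differentiation step. In exchange, it relies on two things: the construction of $\mu_\omega$ as a limit of normalised $\mathcal L^n_{\theta^{-n}\omega}\mathbbm 1$, and the full infinite-product formula for the Lebesgue holonomy Jacobian. The paper's argument is intrinsic to $\mu$ (Margulis relation plus cone property). It only needs that short holonomies have Jacobian close to $1$.

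\textbf{Points to repair (none fatal).}
\begin{itemize}
\item[(i)] \emph{Sign of the holonomy Jacobian.} The product displayed in Proposition \ref{prop:holjac} is the reciprocal of its own determinant expression. Since $|\det DT^{-n}_\omega(z)|_{E^s}|=\prod_{j=1}^n e^{\phi^{J_s}\circ F^{-j}(\omega,z)}$, the correct formula is $\operatorname{Jac}(\mathrm{hol}^u_\omega)(z)=\prod_{j\ge1}e^{\phi^{J_s}\circ F^{-j}(\omega,z)-\phi^{J_s}\circ F^{-j}(\omega,\mathrm{hol}^u_\omega z)}$. With the version you quote, $\Xi_n$ would converge to the product for $\phi+\phi^{J_s}$, not $\bar\phi$. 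With the correct sign you get exactly $\bar\phi=\phi-\phi^{J_s}$. Sanity check: for $\phi\equiv 0$ the density must reduce to $\operatorname{Jac}(\mathrm{hol}^u_\omega)$.
\item[(ii)] \emph{Your first bullet is false, and not needed.} Backward iterates of points on a stable leaf separate, so the log-oscillation of $\mathcal L^n_{\theta^{-n}\omega}\mathbbm 1$ along $\gamma$ is not bounded uniformly in $n$; the available estimate \eqref{eq:Holder1} grows linearly in $n$. So no sup bound is available. None is needed: set $\eta_n:=\mathcal L^n_{\theta^{-n}\omega}\mathbbm 1/\|\mathcal L^n_{\theta^{-n}\omega}\mathbbm 1\|_\omega>0$. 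The constant density $1/m(\tilde\gamma)$ lies in $D_1(a,\kappa,\tilde\gamma)$, so $\int_{\tilde\gamma}\eta_n\,\d m\le m(\tilde\gamma)\|\eta_n\|_{\omega,+}\le m(\tilde\gamma)$. Hence the error term is at most $\|f\|_\infty\|\Xi_n-\Xi\|_\infty\,m(\tilde\gamma)$.
\item[(iii)] \emph{Regularity of the test function.} The same uniform mass bound upgrades the convergence $\eta_n m_{\tilde\gamma}\to\mu_{\tilde\gamma}$ from $\mathcal C^\kappa$ test functions to all of $\mathcal C^0(\tilde\gamma)$. Use this, because $(f\circ\mathrm{hol}^u_\omega)\Xi$ need not be $\kappa$-H\"older. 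The tail estimate alone does not give H\"older regularity of $\Xi$ in $z$ either. Continuity of $\Xi$, as a uniform limit of the $\Xi_n$, is all you need.
\item[(iv)] \emph{Surjectivity.} $\mathrm{hol}^u_\omega$ need not map onto $\gamma$, so take $f$ supported in the interior of $\mathrm{hol}^u_\omega(\tilde\gamma)$.
\end{itemize}
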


\begin{proof}
Fix $\omega$ in a full $\mathbb P$-measure set on which Proposition~4.11,
Proposition~4.6, Lemma~\ref{lem:stablemu}, and
Lemma~\ref{lem:stablemargulis} all hold. Write
\begin{align*}
\gamma:=\gamma^s(\omega,x),\ \tilde\gamma:=\gamma^s(\omega,\tilde x)\ \text{and}\ h:=\mathrm{hol}^u_\omega:\tilde\gamma\to\gamma.
\end{align*}
For $z\in\operatorname{Int}(\tilde\gamma)$ and $r>0$, let
$B_r^s(z):=\{y\in\tilde\gamma:d^s(z,y)<r\}$. It is enough to prove that, for
$\mu_{\tilde\gamma}$-almost every $z\in\tilde\gamma$,
\begin{align}
\lim_{r\to0}
\frac{\mu_\gamma\big(h(B_r^s(z))\big)}
     {\mu_{\tilde\gamma}\big(B_r^s(z)\big)}
=
\prod_{j=1}^\infty
\frac{e^{\bar\phi\circ F^{-j}(\omega,hz)}}
     {e^{\bar\phi\circ F^{-j}(\omega,z)}}.
\label{eq:diff-quotient-hol}
\end{align}

Assume that $r>0$ is small enough so that
$B_r^s(z)\subset\operatorname{Int}(\tilde\gamma)$. Let $n=n(\omega,r)$ be the
smallest integer such that
\begin{align*}
T_{\theta^{-n}\omega}^{-n}\big(B_r^s(z)\big)
=
\bigcup_{i=1}^{N(\omega,r)}\tilde\gamma_i^{n,r},
\end{align*}
with each $\tilde\gamma_i^{n,r}\in\mathcal F^s_{\theta^{-n}\omega}$. For each
$i=1,\ldots,N(\omega,r)$, let
$h_{-n}^i:\tilde\gamma_i^{n,r}\to\gamma_i^{n,r}$ be the corresponding unstable
holonomy in the fibre over $\theta^{-n}\omega$. Then
\begin{align*}
T_{\theta^{-n}\omega}^{-n}\big(h(B_r^s(z))\big)
=
\bigcup_{i=1}^{N(\omega,r)}\gamma_i^{n,r},
\ \gamma_i^{n,r}:=h_{-n}^i(\tilde\gamma_i^{n,r}).
\end{align*}

Iterating Lemma~\ref{lem:stablemargulis} along each branch gives
\begin{align}
\mu_{\tilde\gamma}\big(B_r^s(z)\big)
=
\frac{1}{\lambda_{\theta^{-n}\omega}^{(n)}}
\sum_{i=1}^{N(\omega,r)}
\int_{\tilde\gamma_i^{n,r}}
e^{S_n\bar\phi(\theta^{-n}\omega,u)}
\mu_{\theta^{-n}\omega}(\d  u),
\label{eq:source-ball}
\end{align}
and
\begin{align}
\mu_\gamma\big(h(B_r^s(z))\big)
=
\frac{1}{\lambda_{\theta^{-n}\omega}^{(n)}}
\sum_{i=1}^{N(\omega,r)}
\int_{\gamma_i^{n,r}}
e^{S_n\bar\phi(\theta^{-n}\omega,v)}
\, \mu_{\theta^{-n}\omega}(\d v).
\label{eq:target-ball}
\end{align}
Set
\begin{align*}
A_i^{n,r}
&:=
\int_{\gamma_i^{n,r}}
e^{S_n\bar\phi(\theta^{-n}\omega,v)}
\,\mu_{\theta^{-n}\omega}(\d v),\ \text{and}\ B_i^{n,r}:=
\int_{\tilde\gamma_i^{n,r}}
e^{S_n\bar\phi(\theta^{-n}\omega,u)}
\,\mu_{\theta^{-n}\omega}(\d u).
\end{align*}

After enlarging $a$ and $a_1$ if necessary, the uniform H\"older regularity of
$\bar\phi$ implies that, for every $n\ge1$,
\begin{align*}
e^{S_n\bar\phi(\theta^{-n}\omega,\cdot)}\big|_{\gamma_i^{n,r}}
\in D(a_1,\kappa,\gamma_i^{n,r}),\ \text{and }e^{S_n\bar\phi(\theta^{-n}\omega,\cdot)}\big|_{\tilde\gamma_i^{n,r}}
\in D(a_1,\kappa,\tilde\gamma_i^{n,r}).
\end{align*}
Define
\begin{align}
\rho_i^{n,r}(v)
&:=
\frac{e^{S_n\bar\phi(\theta^{-n}\omega,v)}}
{\int_{\gamma_i^{n,r}} e^{S_n\bar\phi(\theta^{-n}\omega,w)}},
\ v\in\gamma_i^{n,r},
\label{eq:rhoin}
\end{align}
and let
\begin{align}
\tilde\rho_i^{n,r}(u)
&:=
\frac{
e^{S_n\bar\phi(\theta^{-n}\omega,h_{-n}^i(u))}J_{-n}^i(u)
}
{
\int_{\tilde\gamma_i^{n,r}}
e^{S_n\bar\phi(\theta^{-n}\omega,h_{-n}^i(w))}J_{-n}^i(w)
},
\ u\in\tilde\gamma_i^{n,r},
\label{eq:rhointilde}
\end{align}
be the density on $\tilde\gamma_i^{n,r}$ obtained from $\rho_i^{n,r}$ by the
unstable holonomy $h_{-n}^i$, where $J_{-n}^i$ denotes the leaf Jacobian of
$h_{-n}^i$. Since
$\mu_{\theta^{-n}\omega}\in\mathcal C_{\theta^{-n}\omega}(b,c,\nu)$, condition
\textbf{(C3)} applied to the nearby pair
$(\tilde\gamma_i^{n,r},\gamma_i^{n,r})$ gives
\begin{align}
e^{-c\,d_u(\tilde\gamma_i^{n,r},\gamma_i^{n,r})^\nu}
\le
\frac{
\int_{\tilde\gamma_i^{n,r}}\tilde\rho_i^{n,r}\,\d \mu_{\theta^{-n}\omega}
}
{
\int_{\gamma_i^{n,r}}\rho_i^{n,r}\,\d \mu_{\theta^{-n}\omega}
}
\le
e^{c\,d_u(\tilde\gamma_i^{n,r},\gamma_i^{n,r})^\nu}.
\label{eq:ratio1}
\end{align}

From \eqref{eq:rhoin} and \eqref{eq:rhointilde}, using the change of variables
associated with $h_{-n}^i$ for the normalising integrals, we obtain
\begin{align*}
\frac{
\int_{\tilde\gamma_i^{n,r}}\tilde\rho_i^{n,r}\,\d \mu_{\theta^{-n}\omega}
}
{
\int_{\gamma_i^{n,r}}\rho_i^{n,r}\,\d \mu_{\theta^{-n}\omega}
}
=
\frac{
\int_{\tilde\gamma_i^{n,r}}
e^{S_n\bar\phi(\theta^{-n}\omega,h_{-n}^i(u))}J_{-n}^i(u)
\, \mu_{\theta^{-n}\omega}(\d u)
}
{
\int_{\gamma_i^{n,r}}
e^{S_n\bar\phi(\theta^{-n}\omega,v)}
\,\mu_{\theta^{-n}\omega}(\d v)
}.
\end{align*}
Combining this identity with \eqref{eq:ratio1} yields
\begin{align*}
e^{-c\,d_u(\tilde\gamma_i^{n,r},\gamma_i^{n,r})^\nu}
\inf_{\tilde\gamma_i^{n,r}}J_{-n}^i
&\le
\frac{
\int_{\gamma_i^{n,r}}
e^{S_n\bar\phi(\theta^{-n}\omega,v)}
\,\mu_{\theta^{-n}\omega}(\d v)
}
{
\int_{\tilde\gamma_i^{n,r}}
e^{S_n\bar\phi(\theta^{-n}\omega,h_{-n}^i(u))}
\,\mu_{\theta^{-n}\omega}(\d u)
}\\
&\le
e^{c\,d_u(\tilde\gamma_i^{n,r},\gamma_i^{n,r})^\nu}
\sup_{\tilde\gamma_i^{n,r}}J_{-n}^i,
\end{align*}
hence
\begin{align*}
e^{-c\,d_u(\tilde\gamma_i^{n,r},\gamma_i^{n,r})^\nu}
\inf_{\tilde\gamma_i^{n,r}}J_{-n}^i
&\le
\frac{
\int_{\gamma_i^{n,r}}
e^{S_n\bar\phi(\theta^{-n}\omega,v)}
\, \mu_{\theta^{-n}\omega}(\d v)
}
{
\int_{\tilde\gamma_i^{n,r}}
\frac{e^{S_n\bar\phi(\theta^{-n}\omega,h_{-n}^i(u))}}{e^{S_n\bar\phi(\theta^{-n}\omega,u)}}  e^{S_n\bar\phi(\theta^{-n}\omega,u)}
\,\mu_{\theta^{-n}\omega}(\d u)
}\\
&\le
e^{c\,d_u(\tilde\gamma_i^{n,r},\gamma_i^{n,r})^\nu}
\sup_{\tilde\gamma_i^{n,r}}J_{-n}^i.
\end{align*}
It follows, therefore that
\begin{align}
e^{-c\,d_u(\tilde\gamma_i^{n,r},\gamma_i^{n,r})^\nu}
&\inf_{\tilde\gamma_i^{n,r}}J_{-n}^i
\inf_{u\in\tilde\gamma_i^{n,r}}
\frac{e^{S_n\bar\phi(\theta^{-n}\omega,h_{-n}^i(u))}}
     {e^{S_n\bar\phi(\theta^{-n}\omega,u)}} \le
\frac{A_i^{n,r}}{B_i^{n,r}}\nonumber\\
&\le
e^{c\,d_u(\tilde\gamma_i^{n,r},\gamma_i^{n,r})^\nu}
\sup_{\tilde\gamma_i^{n,r}}J_{-n}^i
\sup_{u\in\tilde\gamma_i^{n,r}}
\frac{e^{S_n\bar\phi(\theta^{-n}\omega,h_{-n}^i(u))}}
     {e^{S_n\bar\phi(\theta^{-n}\omega,u)}}.\label{eq:ab}
\end{align}

Using \eqref{eq:source-ball} and \eqref{eq:target-ball}, we have
\begin{align*}
\frac{\mu_\gamma\big(h(B_r^s(z))\big)}
     {\mu_{\tilde\gamma}\big(B_r^s(z)\big)}
=
\frac{\sum_{i=1}^{N(\omega,r)}A_i^{n,r}}
     {\sum_{i=1}^{N(\omega,r)}B_i^{n,r}}
=
\frac{\sum_{i=1}^{N(\omega,r)}(A_i^{n,r}/B_i^{n,r})B_i^{n,r}}
     {\sum_{i=1}^{N(\omega,r)}B_i^{n,r}}.
\end{align*}
From \eqref{eq:ab} and the above equation we obtain that
\begin{align}
\inf_{1\le i\le N(\omega,r)}
&\left[
e^{-c\,d_u(\tilde\gamma_i^{n,r},\gamma_i^{n,r})^\nu}
\inf_{\tilde\gamma_i^{n,r}}J_{-n}^i
\inf_{u\in\tilde\gamma_i^{n,r}}
\frac{e^{S_n\bar\phi(\theta^{-n}\omega,h_{-n}^i(u))}}
     {e^{S_n\bar\phi(\theta^{-n}\omega,u)}}
\right]
\le
\frac{\mu_\gamma\big(h(B_r^s(z))\big)}
     {\mu_{\tilde\gamma}\big(B_r^s(z)\big)}
\nonumber\\
&\le
\sup_{1\le i\le N(\omega,r)}
\left[
e^{c\,d_u(\tilde\gamma_i^{n,r},\gamma_i^{n,r})^\nu}
\sup_{\tilde\gamma_i^{n,r}}J_{-n}^i
\sup_{u\in\tilde\gamma_i^{n,r}}
\frac{e^{S_n\bar\phi(\theta^{-n}\omega,h_{-n}^i(u))}}
     {e^{S_n\bar\phi(\theta^{-n}\omega,u)}}
\right].
\label{eq:ineqstrong}
\end{align}

As $r\to0$, necessarily $n=n(\omega,r)\to\infty$. Moreover, since each pair
$(\tilde\gamma_i^{n,r},\gamma_i^{n,r})$ is obtained by pulling back the nearby
pair $(B_r^s(z),h(B_r^s(z)))$ under the inverse dynamics, the unstable distance
between the two leaves tends to zero uniformly on the choice of the pair $(\tilde\gamma_i^{n,r},\gamma_i^{n,r})$. In particular 
\begin{align}
\sup_{1\le i\le N(\omega,r)}
d_u(\tilde\gamma_i^{n,r},\gamma_i^{n,r})
\to0.
\label{eq:disteq}
\end{align}
By Proposition~\ref{prop:holjac}, we also have
\begin{align}
\sup_{1\le i\le N(\omega,r)}
\|J_{-n}^i-1\|_\infty
\to0.
\label{eq:jaceq}
\end{align}

Fix $u\in\tilde\gamma_i^{n,r}$ and write
$y=T_{\theta^{-n}\omega}^{n}(u)\in B_r^s(z)$. Since
$T_{\theta^{-n}\omega}^{n}(h_{-n}^i(u))=h(y)$, we have
\begin{align*}
\frac{e^{S_n\bar\phi(\theta^{-n}\omega,h_{-n}^i(u))}}
     {e^{S_n\bar\phi(\theta^{-n}\omega,u)}}
=
\prod_{j=1}^{n}
\frac{e^{\bar\phi\circ F^{-j}(\omega,h(y))}}
     {e^{\bar\phi\circ F^{-j}(\omega,y)}}.
\end{align*}
Since $y\in B_r^s(z)$, we have $y\to z$ uniformly as $r\to0$. Since also
$n(\omega,r)\to\infty$, the summable distortion estimate for $\bar\phi$ along
stable manifold gives
\begin{align}
\sup_{1\le i\le N(\omega,r)}
\sup_{u\in\tilde\gamma_i^{n,r}}
\left|
\frac{e^{S_n\bar\phi(\theta^{-n}\omega,h_{-n}^i(u))}}
     {e^{S_n\bar\phi(\theta^{-n}\omega,u)}}
-
\prod_{j=1}^{\infty}
\frac{e^{\bar\phi\circ F^{-j}(\omega,hz)}}
     {e^{\bar\phi\circ F^{-j}(\omega,z)}}
\right|
\to0.
\label{eq:prod}
\end{align}

Combining \eqref{eq:ineqstrong}, \eqref{eq:disteq}, \eqref{eq:jaceq}, and
\eqref{eq:prod}, we obtain
\begin{align*}
\lim_{r\to0}
\frac{\mu_\gamma\big(h(B_r^s(z))\big)}
     {\mu_{\tilde\gamma}\big(B_r^s(z)\big)}
=
\prod_{j=1}^{\infty}
\frac{e^{\bar\phi\circ F^{-j}(\omega,hz)}}
     {e^{\bar\phi\circ F^{-j}(\omega,z)}}.
\end{align*}
This proves \eqref{eq:diff-quotient-hol}, and therefore gives the claimed
Radon--Nikodym derivative.
\end{proof}

We finally prove the uniqueness of $\mathbb P$-relative equilibrium states.
\begin{theorem} 
Assume that the regular random dynamical system $F:\Omega\times M\to\Omega\times M$ satisfies Hypothesis \ref{hyp:h}. Given $\phi \in L^\infty(\Omega,\mathcal C^\beta(M))$ (see Definition \ref{def:potential}), there exists a unique $\mathbb P$-relative equilibrium state for a potential $\bar{\phi}(\omega,x) = \phi(\omega,x) - \phi^{J^s}(\omega,x)$. Moreover, such  $\mathbb P$-relative equilibrium state is the measure $\upsilon_\phi = \upsilon_\omega(\d x) \mathbb P(\d \omega)$ constructed in Lemma \ref{lem:nuprob}.\label{theorem:unique}
\end{theorem}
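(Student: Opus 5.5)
Existence is already settled by Proposition \ref{prop:eqstate}, which shows that $\upsilon_\phi$ is a $\mathbb P$-relative equilibrium state for $\bar\phi$. What remains is uniqueness. We take an arbitrary $\mathbb P$-relative equilibrium state $\eta$ for $\bar\phi$ and aim to prove $\eta=\upsilon_\phi$.

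\textbf{Step 1: stable conditionals of $\eta$.} Fix an SLY partition $\xi$ for $\eta$, which exists by \cite[Proposition 3.2.1]{kifer2006random}. By Proposition \ref{prop:eqdisint}, the conditionals satisfy
\[
\eta_{\xi(\omega,x)}=\frac{\Delta^s_{\omega,x}}{L_\omega(x)}\,\mu_{\xi(\omega,x)}
\]
$\eta$-almost surely. Thus, along stable leaves, $\eta$ has the same conditional structure as the spectral measure $\mu_\omega$, up to a continuous density that is bounded away from $0$ and $\infty$. Using property (4) of Definition \ref{def:sly} together with the conformality in Lemma \ref{lem:stablemargulis}, this statement transfers from atoms of $\xi$ to local stable leaves $\gamma^s\in\mathscr F^s_\omega$. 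Inside a rectangle $R_\delta(\omega,x)$, the disintegration of $\eta_\omega$ along local stable leaves then has conditionals of the form $\widetilde\Delta\cdot\mu_{\gamma^s}$, with transverse measure some $\hat\eta^u$ on the unstable side.

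\textbf{Step 2: identification via decay of correlations.} We compare $\eta$ with $\upsilon_\phi$ by testing against Hölder observables. For $f\in\mathcal C^\beta(M)$ we use $F$-invariance:
\[
\int f\,\d\eta_\omega=\int f\circ T^n_{\theta^{-n}\omega}\,\d\eta_{\theta^{-n}\omega}.
\]
We rewrite the right-hand side with the rectangle decomposition from Step 1: integrate over stable leaves of the fibre at $\theta^{-n}\omega$ against $\widetilde\Delta\,\mu_{\gamma^s}=\Gamma_{\gamma^s}(\widetilde\Delta\,\cdot\,\mu_{\theta^{-n}\omega})$, then average over $\hat\eta^u$. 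Each leaf contribution has the form $\Gamma_{\gamma^s}\bigl(\rho\,(f\circ T^n)\mu\bigr)$ with $\rho\in D(a,\kappa,\gamma^s)$. Enlarging $a$ if needed, we push these contributions forward with Lemma \ref{lem:push_density} and the cocycle identity $\mathcal L^n\mu_{\theta^{-n}\omega}=\lambda^{(n)}\mu_\omega$. The result is $\ell$-type averages of $f\mu_\omega$ on the fibre over $\omega$.

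To show these averages converge to $\upsilon_\omega(f)=\ell_\omega(f\mu_\omega)$ with an error independent of the leaf, we apply the spectral decomposition (Theorem \ref{thm:spectralgap}, Theorem \ref{thm:decay}) in its backward (pullback) form, exactly as in the proof of \eqref{eq:decaypullback1}. The functional $\eta\mapsto\Gamma_{\gamma^s}(\rho\,\eta)$ is a positive element of $\mathbb V^*$. Its normalised action on $\mathcal L^n(\cdot)$ converges to a multiple of $\ell$, with error bounded by $K(\omega)\chi_H^n$, uniformly in $\gamma^s$ and in $\rho$ within the cone. Averaging over $\hat\eta^u$ and normalising by $f\equiv1$ gives $\int f\,\d\eta_\omega=\upsilon_\omega(f)$. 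Since $\mathcal C^\beta$ is dense in $\mathcal C^0$, it follows that $\eta_\omega=\upsilon_\omega$ for $\mathbb P$-almost every $\omega$, hence $\eta=\upsilon_\phi$.

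An alternative route to Step 2 is an ergodic one. Every ergodic component of $\eta$ is again an equilibrium state, because entropy and $\int\bar\phi$ are affine. We would then show that each such component is absolutely continuous with respect to $\upsilon_\phi$, using Proposition \ref{prop:absconstable} to control the unstable transversal and Step 1 for the stable conditionals. Ergodicity of $\upsilon_\phi$ (Corollary \ref{cor:ergodic}) would then force equality. I would try the correlation argument first, keeping this as a fallback for controlling the transverse direction.

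\textbf{Main obstacle.} The hard step is passing from the SLY-atom statement of Proposition \ref{prop:eqdisint} to a disintegration along full local stable leaves with log-Hölder densities $\rho$ that are uniformly controlled in the cone sense. Only with such uniform control does the pullback convergence apply uniformly over the leaves. I would obtain the uniform bound from the uniform bound on $\log\Delta^s$ and the distortion estimates already used in Proposition \ref{prop:absconstable}.
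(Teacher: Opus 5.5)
\textbf{There is a gap: your Step 2 iterates in the wrong time direction, and as written it cannot produce $\upsilon_\omega(f)$.}

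In $\int f\,\d\eta_\omega=\int f\circ T^n_{\theta^{-n}\omega}\,\d\eta_{\theta^{-n}\omega}$ the stable leaves $\gamma^s$ lie in the fibre over $\theta^{-n}\omega$. The map $T^n_{\theta^{-n}\omega}$ sends them to stable pieces of length at most $e^{-\lambda_0 n}$ times their length, in the fibre over $\omega$.

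Lemma \ref{lem:push_density} cannot push such contributions forward. It expresses an integral over an admissible leaf in the \emph{target} fibre through its preimage pieces, and $T^n\gamma^s$ is far from admissible. The only identity you have is the conformality of Lemma \ref{lem:stablemargulis}. It turns $\Gamma_{\gamma^s}\bigl(\rho\,(f\circ T^n)\mu_{\theta^{-n}\omega}\bigr)$ into $\lambda^{(n)}_{\theta^{-n}\omega}$ times a weighted integral of $f$ against $\mu_\omega$ over the short leaf $T^n\gamma^s$. Averaging over $\hat\eta^u_{\theta^{-n}\omega}$ then just gives back the stable disintegration of $\eta_\omega$ itself, with its unknown transverse measure intact. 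Nothing has been averaged, and no $\ell$-type functional appears.

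The pullback estimate \eqref{eq:decaypullback1} does not help either. Its proof uses $\upsilon_{\theta^{-n}\omega}=\ell_{\theta^{-n}\omega}(\cdot\,\mu_{\theta^{-n}\omega})$ together with $\ell_{\theta^{-n}\omega}=(\lambda^{(n)}_{\theta^{-n}\omega})^{-1}\ell_\omega\circ\mathcal L^n_{\theta^{-n}\omega}$. Stable-leaf functionals on the fibre $\theta^{-n}\omega$ satisfy no such dual eigen-relation. A leaf functional converges to a multiple of $\ell$ only when it sits in the fibre where $\mathcal L^n$ \emph{lands}, since only then is its leaf pulled back and expanded.

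\textbf{How to repair it.} Reverse time: use $\eta_{\theta^n\omega}=(T^n_\omega)_*\eta_\omega$. Disintegrate $\eta_{\theta^n\omega}$ along admissible leaves $\gamma\in\mathscr F^s_{\theta^n\omega}$ with conditionals $\widetilde\Delta\,\mu_\gamma/Z_\gamma$, where $Z_\gamma=\Gamma_\gamma(\widetilde\Delta\mu_{\theta^n\omega})$. Then note that $(f\circ(T^n_\omega)^{-1})\mu_{\theta^n\omega}=(\lambda^{(n)}_\omega)^{-1}\mathcal L^n_\omega(f\mu_\omega)$. If $\widetilde\Delta/\int_\gamma\widetilde\Delta\in D_1(a,\kappa,\gamma)$, the forward estimate of Theorem \ref{thm:decay} gives, uniformly in $\gamma$,
\[
\Bigl|\Gamma_\gamma\bigl(\widetilde\Delta\,(f\circ(T^n_\omega)^{-1})\,\mu_{\theta^n\omega}\bigr)-\upsilon_\omega(f)\,Z_\gamma\Bigr|
\le\Bigl(\int_\gamma\widetilde\Delta\,\d m_\gamma\Bigr)K(\omega)\,\|\ell_\omega\|_{\mathbb V_\omega^*}\,\|f\mu_\omega\|_\omega\,\chi^n .
\]
To divide by $Z_\gamma$ you need a lower bound on the $\mu_{\theta^n\omega}$-mass of admissible leaves that is uniform in $\gamma$. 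You have not accounted for this. It holds only along times $n_k$ with $\theta^{n_k}\omega$ in a positive-measure set such as $\{L>C\}$ from the proof of Proposition \ref{thm:wgibbs}. Integrating over the transverse measure then gives $|\eta_\omega(f)-\upsilon_\omega(f)|\le C(\omega)\chi^{n_k}\to0$.

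\textbf{Comparison with the paper.} Once repaired, your argument is a genuine alternative to the paper's. The paper runs a Hopf argument:
\begin{itemize}
\item ergodicity of $\upsilon_\phi$ (Corollary \ref{cor:ergodic}) and full support of $\upsilon_\omega$;
\item Proposition \ref{prop:absconstable}, used to find an $\upsilon_\phi$-generic and an $\eta$-generic point on one local unstable leaf;
\item equality of backward Birkhoff averages along that leaf.
\end{itemize}
That argument needs only $\eta_\xi\sim\mu_\xi$ on SLY atoms, so it never faces your ``main obstacle'' of log-H\"older densities on full admissible leaves. Your repaired route has to pay that measure-theoretic cost, but it needs no ergodicity, no generic points and no holonomy result.

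\textbf{Your fallback.} As stated it targets the wrong intermediate claim. Proposition \ref{prop:absconstable} controls how $\mu_{\gamma^s}$ changes under unstable holonomy, not the transverse measure of $\eta$. So it cannot give $\eta\ll\upsilon_\phi$. What it does give is exactly the common unstable leaf used in the paper's proof.
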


\begin{proof}
Existence follows from Proposition \ref{prop:eqstate}, where
$\upsilon_{\phi}(\d\omega,\d x)=\upsilon_\omega(\d x)\mathbb P(\d\omega)$
was shown to be a $\mathbb P$-relative equilibrium state for the potential
$\overline\phi$. By Corollary \ref{cor:ergodic}, the measure $\upsilon_\phi$
is ergodic. Moreover, by Theorem \ref{thm:wgibbs}, $\upsilon_\omega$ is fully
supported for $\mathbb P$-a.e. $\omega\in \Omega$.

Assume, by contradiction, that there exists another ergodic $F$-invariant
measure
$\eta=\eta_\omega(\d x)\mathbb P(\d\omega)$
which is a $\mathbb P$-relative equilibrium state for the same potential
$\overline\phi$.

Let $\{f_j\}_{j\geq 1}$ be a countable dense subset of
$\mathcal C^0(\Omega\times M)$. For every ergodic $F$-invariant probability
measure $\mu$, define
$$
X^\mu=
\left\{
(\omega,x)\in\Omega\times M:
\lim_{n\to \infty}\frac{1}{n}\sum_{i=0}^{n-1} f_j\circ F^{\pm i}(\omega,x)
=
\int_{\Omega\times M} f_j \d \mu
\text{ for every } j\geq 1
\right\}.
$$
By Birkhoff's ergodic theorem, $\mu(X^\mu)=1$. Since
$\{f_j\}_{j\geq 1}$ is dense in $\mathcal C^0(\Omega\times M)$, every point in
$X^\mu$ is generic for $\mu$ with respect to all continuous observables.

Applying this construction to the measures $\upsilon_\phi$ and $\eta$, we obtain
sets $X^{\upsilon_\phi}$ and $X^\eta$ such that $
\upsilon_\phi(X^{\upsilon_\phi})=1= \eta(X^\eta).$
For each $\omega\in\Omega$, set
$$
X^{\upsilon_\phi}_\omega
=
\{x\in M:(\omega,x)\in X^{\upsilon_\phi}\},
\ 
X^\eta_\omega
=
\{x\in M:(\omega,x)\in X^\eta\}.
$$
Using the disintegrations of $\upsilon_\phi$ and $\eta$, we get
$$
1
=
\upsilon_\phi(X^{\upsilon_\phi})
=
\int_\Omega \upsilon_\omega(X^{\upsilon_\phi}_\omega)\,\mathbb P(\d\omega)
\ \text{and}\ 
1
=
\eta(X^\eta)
=
\int_\Omega \eta_\omega(X^\eta_\omega)\,\mathbb P(\d\omega).
$$
Hence, there exists a set of full $\mathbb P$-measure 
$\Omega_{0}$ so that
$$\upsilon_\omega(X^{\upsilon_\phi}_\omega)=1
\ \text{and}\ 
\eta_\omega(X^\eta_\omega)=1\ \text{for each } \omega\in\Omega_0.
$$

Let $\xi^\eta$ be an SLY partition associated with $F$ and the probability
measure $\eta$. We take $\xi^\eta$ with elements of sufficiently small
diameter. For each $\omega\in\Omega$, denote by $\xi^\eta_\omega$ the
partition induced by $\xi^\eta$ on the fibre $\{\omega\}\times M$, namely
$$
\xi^\eta_\omega
=
\{\gamma\subset M:\{\omega\}\times\gamma
\text{ is an atom of } \xi^\eta \text{ contained in } \{\omega\}\times M\}.
$$
Similarly, let $\xi^{\upsilon_\phi}$ be an SLY partition associated with
$F$ and $\upsilon_\phi$, and denote by $\xi^{\upsilon_\phi}_\omega$ the
corresponding fibre partition.

Let us fix $\omega\in \Omega_0$. Since $\eta$ is an equilibrium state for the potential $\overline\phi$,
Proposition \ref{prop:eqdisint} implies that, for every $\omega\in\Omega_0$,
the conditional measure $\eta_\omega$ admits the disintegration
$$
\int_M f(x)\eta_\omega(\d x)
=
\int_{\xi^\eta_\omega}
\int_\gamma f(x)\eta^\eta_\gamma(\d x)\Gamma^\eta_\omega(\d\gamma)
=
\int_{\xi^\eta_\omega}
\int_\gamma f(x)\rho^\eta_\gamma(x)\mu_\gamma(\d x)\Gamma^\eta_\omega(\d\gamma)
$$
for every $f\in L^1(M,\eta_\omega)$, where $\Gamma^\eta_\omega$ is a
probability measure on the quotient partition $\xi^\eta_\omega$, the measure
$\mu_\gamma$ is supported on $\gamma$, and $\rho^\eta_\gamma$ is bounded from
above and below on $\gamma$ by positive constants. Since
$\eta_\omega(X^\eta_\omega)=1$, it follows that
\begin{align}
\eta^\eta_\gamma(X^\eta_\omega\cap\gamma)=1
\text{ for } \Gamma^\eta_\omega\text{-a.e. }\gamma\in\xi^\eta_\omega.
\label{eq:eta-conditional-generic-full}
\end{align}
Equivalently, since $\eta^\eta_\gamma=\rho^\eta_\gamma\mu_\gamma$ and
$\rho^\eta_\gamma$ is bounded from below by a positive constant,
$$
\mu_\gamma(\gamma\setminus X^\eta_\omega)=0
\text{ for } \Gamma^\eta_\omega\text{-a.e. }\gamma\in\xi^\eta_\omega.
$$

Fix $\omega\in\Omega_0$ and choose an element
$\gamma\in\xi^\eta_\omega$ for which \eqref{eq:eta-conditional-generic-full}
holds. Let $z\in\gamma$ and set
$$
R_\omega(\gamma)
=
[W_{\delta/2}(\omega,z),\gamma]_\omega^{\varepsilon}.
$$
Since the SLY partition was chosen with sufficiently small diameter, this
rectangle is well-defined. Moreover, $R_\omega(\gamma)$ has non-empty
interior in the corresponding local product chart. Since $\upsilon_\omega$
is fully supported, we have that $
\upsilon_\omega(R_\omega(\gamma))>0.$
As $\upsilon_\omega(X^{\upsilon_\phi}_\omega)=1$, we also have
$$
\upsilon_\omega(R_\omega(\gamma)\cap X^{\upsilon_\phi}_\omega)>0.
$$

Now disintegrate $\upsilon_\omega$ with respect to the SLY partition
$\xi^{\upsilon_\phi}_\omega$. Hence there exists
$\widetilde\gamma\in\xi^{\upsilon_\phi}_\omega$ such that
\begin{align}
\upsilon_{\widetilde\gamma}
(R_\omega(\gamma)\cap\widetilde\gamma\cap X^{\upsilon_\phi}_\omega)
>0.
\label{eq:positive-conditional-upsilon}
\end{align}
Using again Proposition \ref{prop:eqdisint}, we may write
$\upsilon^{\upsilon_\phi}_{\widetilde\gamma}
=
\rho^{\upsilon_\phi}_{\widetilde\gamma}\mu_{\widetilde\gamma}$, where
$\rho^{\upsilon_\phi}_{\widetilde\gamma}$ is bounded from above and below by
positive constants on $\widetilde\gamma$. From \eqref{eq:positive-conditional-upsilon} we obtain
\begin{align}
\mu_{\widetilde\gamma}
(R_\omega(\gamma)\cap\widetilde\gamma\cap X^{\upsilon_\phi}_\omega)
>0.
\label{eq:positive-mu-tildegamma}
\end{align}

By construction, there is an unstable holonomy map
$$
\mathrm{hol}^u_\omega:
R(\omega,\gamma)\cap\widetilde\gamma
\longrightarrow
\mathrm{hol}^u_\omega(R(\omega,\gamma)\cap\widetilde\gamma)\subset\gamma.
$$
By Proposition \ref{prop:absconstable}, applied to the local holonomy
$\mathrm{hol}^u_\omega:R(\omega,\gamma)\cap\widetilde\gamma\to\gamma$, the
measures $\mu_{\widetilde\gamma}$ and
$\big((\mathrm{hol}^u_\omega)^{-1}\big)_*\mu_\gamma$ are equivalent on
$R(\omega,\gamma)\cap\widetilde\gamma$. Hence \eqref{eq:positive-mu-tildegamma}
implies
$$
\mu_\gamma\left(
\mathrm{hol}^u_\omega
\big(
R(\omega,\gamma)\cap\widetilde\gamma\cap X^{\upsilon_\phi}_\omega
\big)
\right)>0.
$$
On the other hand, since
$\mu_\gamma(\gamma\setminus X^\eta_\omega)=0$, it follows that
$$
\mu_\gamma\left(
\mathrm{hol}^u_\omega
\big(
R(\omega,\gamma)\cap\widetilde\gamma\cap X^{\upsilon_\phi}_\omega
\big)
\cap X^\eta_\omega
\right)>0.
$$
Hence there exists 
\begin{align}
    x\in R(\omega,\gamma)\cap\widetilde\gamma\cap X^{\upsilon_\phi}_\omega \ \text{such that }
y:=\mathrm{hol}^u_\omega(x)\in X^\eta_\omega\cap\gamma.\label{eq:xy}
\end{align}

We can now finish the proof. Let $f:\Omega\times M\to\mathbb R$ be an arbitrary continuous function. From \eqref{eq:xy}, $x\in X^{\upsilon_\phi}$ and $y\in X^\eta$ belong to
the same local unstable set, their backward iterates converge to each other, i.e.
$
d(F^{-i}(\omega,x),F^{-i}(\omega,y))\to 0
\text{ as } i\to\infty.$ Hence
\begin{align*}
\int_{\Omega\times M} f \d \upsilon_\phi =
\lim_{n\to\infty}
\frac1n\sum_{i=0}^{n-1}f\circ F^{-i}(\omega,x) =
\lim_{n\to\infty}
\frac1n\sum_{i=0}^{n-1}f\circ F^{-i}(\omega,y) =
\int_{\Omega\times M} f \d \eta .
\end{align*}
Since the above holds for every continuous function $f:\Omega\times M\to\mathbb R$,
we conclude that $\upsilon_\phi=\eta$. This proves the uniqueness of the
$\mathbb P$-relative equilibrium state.

\end{proof}

\section{Proof of the main theorems}
\label{sec:maintheorems}
Below, we prove the main results of the paper. They are a consequence of Theorem \ref{theorem:unique} and Proposition \ref{prop:exponentialdecayofcorrelation}.

\begin{proof}[Proof of Theorem \ref{theorem:mainthmA}]
Let $\phi:\Omega\times M\to \mathbb R$ be such that
$\phi\in L^\infty(\Omega;\mathcal C^\beta(M))$, and set
$\widetilde\phi=\phi+\phi^{J^s}\in L^\infty(\Omega;\mathcal C^\beta(M))$.
By Theorem \ref{theorem:unique}, the measure
$\upsilon:=\upsilon_{\widetilde\phi}$ is the unique $\mathbb P$-relative
equilibrium state for the potential $\phi$.
\end{proof}

\begin{proof}[Proof of Theorem \ref{theorem:mainthmB}]
By Theorem \ref{theorem:mainthmA}, the measure
$\upsilon:=\upsilon_{\widetilde\phi}$ is the unique $\mathbb P$-relative
equilibrium state for $\phi$. Applying Proposition \ref{prop:exponentialdecayofcorrelation} to
$\widetilde\phi=\phi+\phi^{J^s} $ gives the first two quenched exponential decay estimates stated in
Theorem \ref{theorem:mainthmB} under Hypothesis \ref{hyp:h}. If, in addition,
Hypothesis \ref{hyp:h'} holds, the same proposition gives the corresponding
estimates with constants $C_p\in L^p(\Omega,\mathbb P)$ and rates
$\Lambda_p\in(0,1)$, for every $p\in[1,\infty)$.
\end{proof}

\section*{Acknowledgements}

LA was supported by Fundação para a Ciência e a Tecnologia grant PD/BD/150458/2019 and also by the Faculty of Sciences of the University of Porto, the Center of Mathematics of the University of Porto, the Université de Toulon, France, and the Centre de Physique Théorique in Luminy, Marseille, France. MMC is supported by the São Paulo Research Foundation (FAPESP, grant no. 2025/26997-9). MMC thanks the Institut de Mathématiques de Marseille (I2M) for its hospitality and financial support during his visit to the institute.  SV thanks the France-Australia Mathematical and Interactions ANU-CNRS International Research Laboratory at the Australian National University in Canberra  where part of this work was carried out.  SV thanks the Great Bay University in Dongguan (China) for the visiting professorship during which this work was completed. The authors thank the Centre de Physique Théorique in Marseille, where part of this work was done.

\addtocontents{toc}{\protect\setcounter{tocdepth}{2}}
\printbibliography

\newpage

\markboth{LUCAS AMORIM, MATHEUS M. CASTRO, SANDRO VAIENTI, AND BENO\^IT SAUSSOL}
{THERMODYNAMIC FORMALISM FOR HYPERBOLIC RANDOM DYNAMICAL SYSTEMS}
\begin{appendix}

\section{Proof of Proposition \ref{prop:cone_invariance_random}}
\label{appendix:A}
In this section, we prove Proposition \ref{prop:cone_invariance_random}. The proof follows the ideas of \cite[Lemma 4.5]{liu2024exponential}, adapted to the weighted random transfer operator $\mathcal L_\omega$.

\begin{proof}[Proof of Proposition \ref{prop:cone_invariance_random}]
    
We will construct constants $a,a_1,b,c>0$, exponents $\kappa,\kappa_1,\nu\in(0,1]$, and a constant $\lambda_2\in(0,1)$ such that, for $\mathbb P$-almost every $\omega\in\Omega$ and every $f\in \mathcal C_\omega(b,c,\nu)$,
$
\mathcal L_\omega f\in\mathcal C_{\theta\omega}(\lambda_2 b,\lambda_2 c,\nu).
$
The proof is divided into verifying the three defining conditions of the target cone. Along the way, the parameters above will be chosen so that $\mathcal L_\omega f$ satisfies \textbf{(C1)}, \textbf{(C2)} and \textbf{(C3)} for the cone $\mathcal C_{\theta\omega}(\lambda_2 b,\lambda_2 c,\nu)$.

It is clear that if $f$ satisfies condition \textbf{(C1)} of $\mathcal C_\omega(b,c,\nu)$, then, by Lemma \ref{lem:push_density}, $\mathcal L_\omega f$ also satisfies \textbf{(C1)} of $\mathcal C_{\theta\omega}(\lambda_2 b,\lambda_2 c,\nu)$ for $a>0$ large enough.

We now check \textbf{(C2)} of $\mathcal C_{\theta\omega}(\lambda_2 b,\lambda_2 c,\nu)$. Fix $\gamma_{\theta\omega}\in\mathscr F_{\theta\omega}^s$ and
$\rho_{\theta\omega},\varsigma_{\theta\omega}\in D_1(a,\kappa,\gamma_{\theta\omega})$.
By Lemma \ref{lem:push_density}, there exist positive densities
$\hat\rho_\omega^{(i)}$ and $\hat\varsigma_\omega^{(i)}$ such that, if
$$
\rho_\omega^{(i)}
:=
\frac{\hat\rho_\omega^{(i)}}
{\int_{\gamma_\omega^{(i)}}\hat\rho_\omega^{(i)}},
\ 
\varsigma_\omega^{(i)}
:=
\frac{\hat\varsigma_\omega^{(i)}}
{\int_{\gamma_\omega^{(i)}}\hat\varsigma_\omega^{(i)}},
$$
then
$$
\int_{\gamma_{\theta\omega}} \mathcal L_\omega f\,\rho_{\theta\omega}
=
\sum_{i=1}^{Q_\omega(\gamma_{\theta\omega})}
\left(
\int_{\gamma_\omega^{(i)}}\hat\rho_\omega^{(i)}
\right)
\int_{\gamma_\omega^{(i)}} f\,\rho_\omega^{(i)}
$$
and
$$
\int_{\gamma_{\theta\omega}} \mathcal L_\omega f\,\varsigma_{\theta\omega}
=
\sum_{i=1}^{Q_\omega(\gamma_{\theta\omega})}
\left(
\int_{\gamma_\omega^{(i)}}\hat\varsigma_\omega^{(i)}
\right)
\int_{\gamma_\omega^{(i)}} f\,\varsigma_\omega^{(i)}.
$$
From \textbf{(C2)} it follows that
$$
\int_{\gamma_\omega^{(i)}} f\,\varsigma_\omega^{(i)}
\le
\exp\left(
b\,\Theta^{a,\kappa}_{\gamma_\omega^{(i)}}
\bigl(\rho_\omega^{(i)},\varsigma_\omega^{(i)}\bigr)
\right)
\int_{\gamma_\omega^{(i)}} f\,\rho_\omega^{(i)}.
$$
By Lemma \ref{lem:densityimprovment}, taking $a>0$ large enough, there exists
$\Lambda_1\in(0,1)$ such that
\begin{align}
\Theta^{a,\kappa}_{\gamma_\omega^{(i)}}
\bigl(\rho_\omega^{(i)},\varsigma_\omega^{(i)}\bigr)
\le
\Lambda_1\,
\Theta^{a,\kappa}_{\gamma_{\theta\omega}}
\bigl(\rho_{\theta\omega},\varsigma_{\theta\omega}\bigr).
\label{eq:ineqc222}
\end{align}
Moreover,
\begin{align}
\frac{\hat\varsigma_\omega^{(i)}(x)}
{\hat\rho_\omega^{(i)}(x)}
=
\frac{\varsigma_{\theta\omega}(T_\omega x)}
{\rho_{\theta\omega}(T_\omega x)}
\le
e^{\Theta^+_{\gamma_{\theta\omega}}
(\rho_{\theta\omega},\varsigma_{\theta\omega})}
\le
e^{\Theta^{a,\kappa}_{\gamma_{\theta\omega}}
(\rho_{\theta\omega},\varsigma_{\theta\omega})}.
\label{eq:ineqc22}
\end{align}
Integrating \eqref{eq:ineqc22} on $\gamma_\omega^{(i)}$ gives $
\int_{\gamma_\omega^{(i)}}\hat\varsigma_\omega^{(i)}
\le
e^{\Theta^{a,\kappa}_{\gamma_{\theta\omega}}
(\rho_{\theta\omega},\varsigma_{\theta\omega})}
\int_{\gamma_\omega^{(i)}}\hat\rho_\omega^{(i)}.$
Therefore,
\begin{align}
\int_{\gamma_{\theta\omega}}
\mathcal L_\omega f\,\varsigma_{\theta\omega}
&\le
e^{b\Lambda_1\,
\Theta^{a,\kappa}_{\gamma_{\theta\omega}}
(\rho_{\theta\omega},\varsigma_{\theta\omega})}
\sum_{i=1}^{Q_\omega(\gamma_{\theta\omega})}
\left(
\int_{\gamma_\omega^{(i)}}\hat\varsigma_\omega^{(i)}
\right)
\int_{\gamma_\omega^{(i)}} f\,\rho_\omega^{(i)}
\notag\\
&\le
e^{(b\Lambda_1+1)\,
\Theta^{a,\kappa}_{\gamma_{\theta\omega}}
(\rho_{\theta\omega},\varsigma_{\theta\omega})}
\int_{\gamma_{\theta\omega}}
\mathcal L_\omega f\,\rho_{\theta\omega}.
\label{eq:ineqc2}
\end{align}
Choose
$$
\lambda_2\ge\frac{\Lambda_1+1}{2}
\ \text{and}\ 
b\ge\frac{1}{\lambda_2-\Lambda_1}.
$$
Then $b\Lambda_1+1\le\lambda_2 b$, and hence
$$
\int_{\gamma_{\theta\omega}}
\mathcal L_\omega f\,\varsigma_{\theta\omega}
\le
e^{\lambda_2 b\,
\Theta^{a,\kappa}_{\gamma_{\theta\omega}}
(\rho_{\theta\omega},\varsigma_{\theta\omega})}
\int_{\gamma_{\theta\omega}}
\mathcal L_\omega f\,\rho_{\theta\omega}.
$$

We finally show \textbf{(C3)}. We choose
$$
a_1:=\alpha_0 a>\frac a2>
2\frac{\|\phi\|_{\mathcal C^\kappa}+C_2}{1-e^{-\lambda_0}}>0.
$$
Moreover, we choose $\kappa_1\in(0,1)$ sufficiently close to $1$ so that
$\kappa+\nu\le\kappa_1\nu_0$.
Let $(\tilde\gamma_{\theta\omega},\gamma_{\theta\omega})$ be a nearby pair and let
$\tilde\rho_{\theta\omega}$ be the density on $\tilde\gamma_{\theta\omega}$ induced from
$\rho_{\theta\omega}\in D_1(a_1,\kappa_1,\gamma_{\theta\omega})$
by unstable holonomy.

By Lemma \ref{lem:push_density}, there are decompositions
$$
T_\omega^{-1}(\gamma_{\theta\omega})
=
\bigsqcup_i\gamma_\omega^{(i)},
\ 
T_\omega^{-1}(\tilde\gamma_{\theta\omega})
=
\bigsqcup_i\tilde\gamma_\omega^{(i)},
$$
and positive unnormalised pullback densities
$\hat\rho_\omega^{(i)}$ on $\gamma_\omega^{(i)}$ and
$\hat{\tilde\rho}_\omega^{(i)}$ on $\tilde\gamma_\omega^{(i)}$.
Set
$$
\rho_\omega^{(i)}
:=
\frac{\hat\rho_\omega^{(i)}}
{\int_{\gamma_\omega^{(i)}}\hat\rho_\omega^{(i)}},
\ 
\tilde\rho_\omega^{(i)}
:=
\frac{\hat{\tilde\rho}_\omega^{(i)}}
{\int_{\tilde\gamma_\omega^{(i)}}\hat{\tilde\rho}_\omega^{(i)}}.
$$
Then
$$
\int_{\gamma_{\theta\omega}}
\mathcal L_\omega f\,\rho_{\theta\omega}
=
\sum_i
\left(
\int_{\gamma_\omega^{(i)}}\hat\rho_\omega^{(i)}
\right)
\int_{\gamma_\omega^{(i)}} f\,\rho_\omega^{(i)}
$$
and
$$
\int_{\tilde\gamma_{\theta\omega}}
\mathcal L_\omega f\,\tilde\rho_{\theta\omega}
=
\sum_i
\left(
\int_{\tilde\gamma_\omega^{(i)}}\hat{\tilde\rho}_\omega^{(i)}
\right)
\int_{\tilde\gamma_\omega^{(i)}} f\,\tilde\rho_\omega^{(i)}.
$$
Let $\bar\rho_\omega^{(i)}$ be the density on
$\tilde\gamma_\omega^{(i)}$ obtained by transporting
$\rho_\omega^{(i)}$ by unstable holonomy from
$\gamma_\omega^{(i)}$ to $\tilde\gamma_\omega^{(i)}$, and let
$\hat{\bar\rho}_\omega^{(i)}$ be the corresponding transport of
$\hat\rho_\omega^{(i)}$.
Since $
\hat{\bar\rho}_\omega^{(i)}
=
\left(
\int_{\gamma_\omega^{(i)}}\hat\rho_\omega^{(i)}
\right)
\bar\rho_\omega^{(i)},$
condition \textbf{(C3)} applied to the nearby pair
$(\tilde\gamma_\omega^{(i)},\gamma_\omega^{(i)})$ gives
\begin{align}
\left|
\log\int_{\tilde\gamma_\omega^{(i)}} f\,\hat{\bar\rho}_\omega^{(i)}
-
\log\int_{\gamma_\omega^{(i)}} f\,\hat\rho_\omega^{(i)}
\right|
\le
c\,d_u(\tilde\gamma_\omega^{(i)},\gamma_\omega^{(i)})^\nu.
\label{eq:31}
\end{align}
The nearby-leaf distance contracts under $T_\omega^{-1}$ in the unstable direction by at least
$e^{-\lambda_0}$, so $d_u(\tilde\gamma_\omega^{(i)},\gamma_\omega^{(i)})
\le
e^{-\lambda_0}
d_u(\tilde\gamma_{\theta\omega},\gamma_{\theta\omega}).$
Combining this with \eqref{eq:31}, we obtain
\begin{align}
\left|
\log\int_{\tilde\gamma_\omega^{(i)}} f\,\hat{\bar\rho}_\omega^{(i)}
-
\log\int_{\gamma_\omega^{(i)}} f\,\hat\rho_\omega^{(i)}
\right|
\le
c e^{-\nu\lambda_0}
d_u(\tilde\gamma_{\theta\omega},\gamma_{\theta\omega})^\nu.
\label{eq:32}
\end{align}

To finish the proof, we use the following claim.
\begin{claim}\label{claim65}
There exists $K_0>0$, independent of $c$, such that, for each $i$,
\begin{align}
\left|
\log\int_{\tilde\gamma_\omega^{(i)}} f\,\hat{\tilde\rho}_\omega^{(i)}
-
\log\int_{\tilde\gamma_\omega^{(i)}} f\,\hat{\bar\rho}_\omega^{(i)}
\right|
\le
K_0\,d_u(\tilde\gamma_{\theta\omega},\gamma_{\theta\omega})^\nu.
\label{eq:33}
\end{align}
\end{claim}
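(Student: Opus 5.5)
The plan is to reduce Claim \ref{claim65} to a comparison of the two densities $\hat{\tilde\rho}_\omega^{(i)}$ and $\hat{\bar\rho}_\omega^{(i)}$, both living on the same leaf $\tilde\gamma_\omega^{(i)}$. Then we use \textbf{(C2)} (in fact only \textbf{(C1)} and the leafwise Hilbert metric) to turn a pointwise log-ratio bound into a bound on the integrals against $f$.

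Concretely, for $u\in\tilde\gamma_\omega^{(i)}$ write $y=T_\omega u\in\tilde\gamma_{\theta\omega}$, $v=\mathrm{hol}^u u\in\gamma_\omega^{(i)}$, and note $T_\omega v=\mathrm{hol}^u_{\theta\omega} y$ by invariance of the unstable foliation. Unwinding the definitions from Lemma \ref{lem:push_density} and Definition \ref{def:cone}, we get
$$\hat{\tilde\rho}_\omega^{(i)}(u)=e^{\phi_\omega(u)-\phi^{J_s}(\omega,u)}\,\rho_{\theta\omega}(\mathrm{hol}^u_{\theta\omega}y)\,\mathrm{Jac}(\mathrm{hol}^u_{\theta\omega})(y)$$
and
$$\hat{\bar\rho}_\omega^{(i)}(u)=e^{\phi_\omega(v)-\phi^{J_s}(\omega,v)}\,\rho_{\theta\omega}(T_\omega v)\,\mathrm{Jac}(\mathrm{hol}^u_\omega)(u).$$
The $\rho_{\theta\omega}$ factors coincide. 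The log-ratio therefore equals
$$[\phi_\omega(u)-\phi_\omega(v)]-[\phi^{J_s}(\omega,u)-\phi^{J_s}(\omega,v)]+\log\mathrm{Jac}(\mathrm{hol}^u_{\theta\omega})(y)-\log\mathrm{Jac}(\mathrm{hol}^u_\omega)(u).$$
Each term is controlled by $d(u,v)^{\nu_0}$ or $d(y,\mathrm{hol}y)^{\nu_0}$: the first two by Proposition \ref{prop:holderjacs} and $\kappa\le\beta$, the last two by the second bullet of Proposition \ref{prop:holderhol}. Since $d(u,v)\le e^{-\lambda}d_u(\tilde\gamma_{\theta\omega},\gamma_{\theta\omega})$ and $\nu<\nu_0$, this gives $\sup|\log(\hat{\tilde\rho}/\hat{\bar\rho})|\le K_1 d_u^{\nu}$, with $K_1$ depending only on $\|\phi\|$, $C_2$, $a_0'$ and the diameter bound.

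To pass to integrals against $f$, I would not rely only on the sup bound, because $f$ is not positive. Instead I normalize both densities and view them in $D_1(a,\kappa,\tilde\gamma_\omega^{(i)})$. Their log-Hölder constants are at most $\alpha_0 a_1$ plus the Hölder constants of the distortion factors. Here the choice $\kappa+\nu<\kappa_1\nu_0$ is what allows the $\kappa_1$-regularity of $\rho_{\theta\omega}$ and the $\nu_0$-regularity of the Jacobians to be absorbed into a $\kappa$-Hölder bound that is still of size $a$.

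The main obstacle is to show that $\Theta^{a,\kappa}(\tilde\rho,\bar\rho)\le K_2 d_u^{\nu}$, not merely $\Theta^+$. This requires a Hölder bound on the ratio $g=\hat{\tilde\rho}/\hat{\bar\rho}$ with constant $\lesssim d_u^{\nu}$ in the $d(x,y)^\kappa$ scale. I would obtain it by interpolation, using $|\log g(x)-\log g(y)|\le\min\{2K_1d_u^{\nu_0},\,C d(x,y)^{\kappa_1\nu_0}\}\le C' d_u^{\nu}d(x,y)^{\kappa}$. This is exactly where $\kappa+\nu\le\kappa_1\nu_0$ enters. The explicit formulas for $\alpha^{a,\kappa},\beta^{a,\kappa}$ in \eqref{eq:leafhilbertmetric} then bound $\Theta^{a,\kappa}$ by a constant times $\|\log g\|_\infty+[\log g]_\kappa$, since the densities lie in the strictly smaller cone with parameter $\alpha_0 a$.

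Given this bound, \textbf{(C2)} yields $|\log\int f\tilde\rho-\log\int f\bar\rho|\le bK_2d_u^\nu$. Adding the difference of normalizing constants, which is at most $K_1d_u^\nu$, proves \eqref{eq:33} with $K_0=bK_2+K_1$. This $K_0$ depends on $b$ but not on $c$, which is what is needed to later choose $c$ large so that $ce^{-\nu\lambda_0}+K_0\le\lambda_2c$.
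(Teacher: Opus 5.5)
Your proposal is correct and follows essentially the same route as the paper. It uses the same pointwise log-ratio identity (with $T_\omega\circ\mathrm{hol}^u_\omega=\mathrm{hol}^u_{\theta\omega}\circ T_\omega$, so the $\rho_{\theta\omega}$ factors cancel) and the same split via \textbf{(C2)} into the difference of normalising constants plus $b\,\Theta^{a,\kappa}$. It also uses the same interpolation $\min\{d_u^{\kappa+\nu},d(x,y)^{\kappa+\nu}\}\le d_u^{\nu}d(x,y)^{\kappa}$, together with the gap $a-a_1$, to control the second-type ratios in $\Theta^{a,\kappa}$, and so yields $K_0$ of the form $K_1+bK_2$, independent of $c$, exactly as the paper does.
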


Assuming the claim, combining \eqref{eq:32} and \eqref{eq:33} by the triangle inequality gives
$$
\left|
\log\int_{\tilde\gamma_\omega^{(i)}} f\,\hat{\tilde\rho}_\omega^{(i)}
-
\log\int_{\gamma_\omega^{(i)}} f\,\hat\rho_\omega^{(i)}
\right|
\le
(c e^{-\nu\lambda_0}+K_0)
d_u(\tilde\gamma_{\theta\omega},\gamma_{\theta\omega})^\nu.
$$
Summing over $i$ and using the same log-sum comparison as in \eqref{eq:ineqc2}, we obtain
$$
\left|
\log\int_{\tilde\gamma_{\theta\omega}}
\mathcal L_\omega f\,\tilde\rho_{\theta\omega}
-
\log\int_{\gamma_{\theta\omega}}
\mathcal L_\omega f\,\rho_{\theta\omega}
\right|
\le
(c e^{-\lambda_0\nu}+K_0)
d_u(\tilde\gamma_{\theta\omega},\gamma_{\theta\omega})^\nu.
$$
Enlarge $\lambda_2$, if necessary, so that $\lambda_2\in
(\max\{\Lambda_1,e^{-\lambda_0\nu}\},1),$
and assume $c\ge
\frac{K_0}{\lambda_2-e^{-\lambda_0\nu}}.$
Then $
c e^{-\lambda_0\nu}+K_0\le\lambda_2 c,$
so \textbf{(C3)} holds for $\mathcal L_\omega f$ with respect to the cone
$\mathcal C_{\theta\omega}(\lambda_2 b,\lambda_2 c,\nu)$.
All three defining conditions hold, hence $
\mathcal L_\omega f\in
\mathcal C_{\theta\omega}(\lambda_2 b,\lambda_2 c,\nu).
$
To conclude the proof of the proposition, it remains to prove Claim \ref{claim65}.
 
\end{proof}

\begin{proof}[Proof of Claim \ref{claim65}]
We follow \cite[Sublemma 4.1]{liu2006smooth}.
Fix $j$. We write
$$
\hat\rho'_\omega
:=
\hat{\tilde\rho}_\omega^{(j)},
\ 
\hat\rho''_\omega
:=
\hat{\bar\rho}_\omega^{(j)},\ \text{and let }
\rho'_\omega
:=
\frac{\hat\rho'_\omega}
{\int_{\tilde\gamma_\omega^{(j)}}\hat\rho'_\omega},
\ 
\rho''_\omega
:=
\frac{\hat\rho''_\omega}
{\int_{\tilde\gamma_\omega^{(j)}}\hat\rho''_\omega}.
$$
By Lemmas \ref{lem:push_density} and \ref{lem:densityimprovment}, together with the choices of
$a_1$ and $\kappa_1$,
$$
\rho'_\omega,\rho''_\omega
\in
D(a_1,\kappa_1\nu_0,\tilde\gamma_\omega^{(j)})
\subset
D(a_1,\kappa,\tilde\gamma_\omega^{(j)}).
$$
By condition \textbf{(C2)} in the definition of
$\mathcal C_\omega(b,c,\nu)$,
\begin{align}
\left|
\log\int_{\tilde\gamma_\omega^{(j)}} f\,\hat\rho'_\omega
-
\log\int_{\tilde\gamma_\omega^{(j)}} f\,\hat\rho''_\omega
\right| \leq
\left|
\log\int_{\tilde\gamma_\omega^{(j)}}\hat\rho'_\omega
-
\log\int_{\tilde\gamma_\omega^{(j)}}\hat\rho''_\omega
\right|
+
b\,\Theta^{a,\kappa}_{\tilde\gamma_\omega^{(j)}}
(\rho'_\omega,\rho''_\omega).
\label{4.37}
\end{align}
We estimate the two terms on the right-hand side.

Let $\mathrm{hol}^u_{\theta\omega}:\gamma_{\theta\omega}\to\tilde\gamma_{\theta\omega}$ be unstable holonomy and define
the transported density $\tilde\rho_{\theta\omega}$ on $\tilde\gamma_{\theta\omega}$ by $
\tilde\rho_{\theta\omega}(y)
:=
\rho_{\theta\omega}
\bigl((\mathrm{hol}^u_{\theta\omega})^{-1}(y)\bigr)
\,
\mathrm{Jac}
\bigl((\mathrm{hol}^u_{\theta\omega})^{-1}\bigr)(y).$
Let
$\gamma_\omega^{(j)}\subset T_\omega^{-1}(\gamma_{\theta\omega})$
and
$\tilde\gamma_\omega^{(j)}\subset T_\omega^{-1}(\tilde\gamma_{\theta\omega})$
be the corresponding pieces, and let
$\psi_\omega^j:\tilde\gamma_\omega^{(j)}\to\gamma_\omega^{(j)}$
be the unstable holonomy between them.

By Lemma \ref{lem:push_density}, for
$x\in\tilde\gamma_\omega^{(j)}$,
\begin{align}
\hat\rho'_\omega(x)
&=
e^{\phi_\omega(x)-\phi^{J^s}(\omega,x)}
\tilde\rho_{\theta\omega}(T_\omega x),
\label{4.38}\\
\hat\rho''_\omega(x)
&=
e^{\phi_\omega(\psi_\omega^j(x))-\phi^{J^s}(\omega,\psi_\omega^j(x))}
\rho_{\theta\omega}(T_\omega\psi_\omega^j(x))
\,
\mathrm{Jac}(\psi_\omega^j)(x).
\label{4.39}
\end{align}
By the definition of the holonomy maps,
\begin{equation}
(\mathrm{hol}^u_{\theta\omega})^{-1}(T_\omega x)
=
T_\omega\circ\psi_\omega^j(x)
\ \text{for all }
x\in\tilde\gamma_\omega^{(j)}.
\end{equation}
Hence
\begin{equation}
\tilde\rho_{\theta\omega}(T_\omega x)
=
\rho_{\theta\omega}(T_\omega\psi_\omega^j(x))
\,
\mathrm{Jac}
\bigl((\mathrm{hol}^u_{\theta\omega})^{-1}\bigr)(T_\omega x).
\label{4.41}
\end{equation}
Combining \eqref{4.38}, \eqref{4.39}, and \eqref{4.41}, we obtain
\begin{equation}
\frac{\hat\rho'_\omega(x)}
{\hat\rho''_\omega(x)}
=
e^{\phi_\omega(x)-\phi_\omega(\psi_\omega^j(x))}
e^{-\phi^{J^s}(\omega,x)+\phi^{J^s}(\omega,\psi_\omega^j(x))}
\frac{
\mathrm{Jac}
\bigl((\mathrm{hol}^u_{\theta\omega})^{-1}\bigr)(T_\omega x)
}{
\mathrm{Jac}(\psi_\omega^j)(x)
}.
\label{4.42}
\end{equation}

From Proposition \ref{prop:holderhol}, for
$x\in\tilde\gamma_\omega^{(j)}$,
\begin{align}
\left|
\log\left|
\mathrm{Jac}
\bigl((\mathrm{hol}^u_{\theta\omega})^{-1}\bigr)(T_\omega x)
\right|
-
\log\left|
\mathrm{Jac}(\psi_\omega^j)(x)
\right|
\right|
\le&
a_0'\,
d\left(
T_\omega x,
(\mathrm{hol}^u_{\theta\omega})^{-1}(T_\omega x)
\right)^{\nu_0}\notag
\\
&+
a_0'\,
d\left(
x,\psi_\omega^j(x)
\right)^{\nu_0}
\notag\\
\le&
a_0'(1+e^{-\lambda\nu_0})
d_u(\gamma_{\theta\omega},\tilde\gamma_{\theta\omega})^{\nu_0}.
\label{4.43}
\end{align}
Since $\phi_\omega$ is $\nu_0$--H\"older and
$\phi^{J^s}(\omega,\cdot)$ is $\nu_0$--H\"older, with uniform constants, we also have
\begin{align}
\left|
\phi_\omega(x)-\phi_\omega(\psi_\omega^j(x))
\right|
&\le
C_{10}e^{-\lambda\nu_0}
d_u(\gamma_{\theta\omega},\tilde\gamma_{\theta\omega})^{\nu_0},
\label{4.44}\\
\left|
\phi^{J^s}(\omega,x)-\phi^{J^s}(\omega,\psi_\omega^j(x))
\right|
&\le
C_{11}e^{-\lambda\nu_0}
d_u(\gamma_{\theta\omega},\tilde\gamma_{\theta\omega})^{\nu_0}.
\label{4.45}
\end{align}
Thus \eqref{4.42}--\eqref{4.45} imply that
\begin{equation}
e^{-K_3 d_u(\gamma_{\theta\omega},\tilde\gamma_{\theta\omega})^{\nu_0}}
\le
\frac{\hat\rho'_\omega(x)}
{\hat\rho''_\omega(x)}
\le
e^{K_3 d_u(\gamma_{\theta\omega},\tilde\gamma_{\theta\omega})^{\nu_0}},
\label{4.46}
\end{equation}
where $
K_3
:=
a_0'(1+e^{-\lambda\nu_0})
+
C_{10}e^{-\lambda\nu_0}
+
C_{11}e^{-\lambda\nu_0}.$ Integrating \eqref{4.46} gives
\begin{equation}
\left|
\log\int_{\tilde\gamma_\omega^{(j)}}\hat\rho'_\omega
-
\log\int_{\tilde\gamma_\omega^{(j)}}\hat\rho''_\omega
\right|
\le
K_3\,
d_u(\gamma_{\theta\omega},\tilde\gamma_{\theta\omega})^{\nu_0}.
\label{4.48}
\end{equation}
Moreover, after normalisation,
\begin{equation}
e^{-2K_3 d_u(\gamma_{\theta\omega},\tilde\gamma_{\theta\omega})^{\nu_0}}
\le
\frac{\rho'_\omega(x)}
{\rho''_\omega(x)}
\le
e^{2K_3 d_u(\gamma_{\theta\omega},\tilde\gamma_{\theta\omega})^{\nu_0}},
\end{equation}
and hence
\begin{equation}
d^+_{\tilde\gamma_\omega^{(j)}}
(\rho'_\omega,\rho''_\omega)
\le
4K_3\,
d_u(\gamma_{\theta\omega},\tilde\gamma_{\theta\omega})^{\nu_0}.
\label{4.49}
\end{equation}
We next estimate
\begin{equation}
\Theta^{a,\kappa}_{\tilde\gamma_\omega^{(j)}}
(\rho'_\omega,\rho''_\omega)
\le
d^+_{\tilde\gamma_\omega^{(j)}}
(\rho'_\omega,\rho''_\omega)
+
\log\left(
\hat\tau_2(\omega)/\hat\tau_1(\omega)
\right),
\label{4.50}
\end{equation}
where
$$
\hat\tau_1(\omega)
=
\inf_{x\ne y\in\tilde\gamma_\omega^{(j)}}
\left\{
1,\,
\frac{
\exp(a\,d(x,y)^\kappa)-\rho''_\omega(y)/\rho''_\omega(x)
}{
\exp(a\,d(x,y)^\kappa)-\rho'_\omega(y)/\rho'_\omega(x)
}
\right\},
$$
and
$$
\hat\tau_2(\omega)
=
\sup_{x\ne y\in\tilde\gamma_\omega^{(j)}}
\left\{
1,\,
\frac{
\exp(a\,d(x,y)^\kappa)-\rho''_\omega(y)/\rho''_\omega(x)
}{
\exp(a\,d(x,y)^\kappa)-\rho'_\omega(y)/\rho'_\omega(x)
}
\right\}.
$$
Define
\begin{align}
B_1(x,y,\omega)
:=
\frac{\rho'_\omega(y)}
{\rho'_\omega(x)}
e^{-a\,d(x,y)^\kappa},\ 
B_2(x,y,\omega)
:=
\frac{\rho''_\omega(y)}
{\rho''_\omega(x)}
e^{-a\,d(x,y)^\kappa}.
\end{align}
Since
$\rho'_\omega,\rho''_\omega\in
D(a_1,\kappa,\tilde\gamma_\omega^{(j)})$,
\begin{equation}
\log B_1(x,y,\omega)
\le
-(a-a_1)d(x,y)^\kappa<0,
\ 
\log B_2(x,y,\omega)
\le
-(a-a_1)d(x,y)^\kappa<0.
\end{equation}
Hence
$$
\max\left\{B_1(x,y,\omega),B_2(x,y,\omega)\right\}
\le
e^{-(a-a_1)d(x,y)^\kappa}<1.
$$
On the one hand,
\begin{align}
|B_1(x,y,\omega)-B_2(x,y,\omega)|
&\le
\max\{B_1(x,y,\omega),B_2(x,y,\omega)\}
\left|
\log B_1(x,y,\omega)-\log B_2(x,y,\omega)
\right|
\notag\\
&\le
4K_3\,
d_u(\gamma_{\theta\omega},\tilde\gamma_{\theta\omega})^{\nu_0}\le
4K_3\,
d_u(\gamma_{\theta\omega},\tilde\gamma_{\theta\omega})^{\kappa+\nu}.
\label{4.54}
\end{align}
On the other hand, since
$\rho'_\omega,\rho''_\omega\in
D(a_1,\kappa_1\nu_0,\tilde\gamma_\omega^{(j)})$,
\begin{align}
|B_1(x,y,\omega)-B_2(x,y,\omega)|
&\le
2a_1\,d(x,y)^{\kappa_1\nu_0}\le
2a_1\,d(x,y)^{\kappa+\nu}.
\label{4.55}
\end{align}
Therefore,
\begin{align}
|B_1(x,y,\omega)-B_2(x,y,\omega)|
&\le
\min\left\{
4K_3\,
d_u(\gamma_{\theta\omega},\tilde\gamma_{\theta\omega})^{\kappa+\nu},
\,
2a_1\,d(x,y)^{\kappa+\nu}
\right\}
\notag\\
&\le
\max\{4K_3,2a_1\}\,
d_u(\gamma_{\theta\omega},\tilde\gamma_{\theta\omega})^\nu\,
d(x,y)^\kappa
\notag\\
&=:
K_4\,
d_u(\gamma_{\theta\omega},\tilde\gamma_{\theta\omega})^\nu\,
d(x,y)^\kappa.
\label{4.56}
\end{align}
It follows that
\begin{align}
\left|
\log\frac{1-B_2(x,y,\omega)}
{1-B_1(x,y,\omega)}
\right|
&\le
\frac{
|B_1(x,y,\omega)-B_2(x,y,\omega)|
}{
1-\max\{B_1(x,y,\omega),B_2(x,y,\omega)\}
}
\notag\\
&\le
\frac{
K_4\,
d_u(\gamma_{\theta\omega},\tilde\gamma_{\theta\omega})^\nu\,
d(x,y)^\kappa
}{
1-\exp\left(-(a-a_1)d(x,y)^\kappa\right)
}
\notag\\
&\le
K_5\,
d_u(\gamma_{\theta\omega},\tilde\gamma_{\theta\omega})^\nu,
\label{4.57}
\end{align}
where $K_5
:=
K_4
\sup_{z\in(0,1)}
\frac{z^\kappa}
{1-\exp\left(-(a-a_1)z^\kappa\right)}
<\infty.$
Hence
\begin{equation}
\left|
\log\left(
\hat\tau_2(\omega)/\hat\tau_1(\omega)
\right)
\right|
\le
2K_5\,
d_u(\gamma_{\theta\omega},\tilde\gamma_{\theta\omega})^\nu.
\label{4.58}
\end{equation}
Combining
\eqref{4.37},
\eqref{4.48},
\eqref{4.49},
\eqref{4.50},
and
\eqref{4.58},
we obtain the claim with
$$
K_0
:=
K_3+b(4K_3+2K_5).
$$
\end{proof}

\section{Proof of Lemma \ref{lem:finitediam}}
\label{appendix:B}
In this section, we prove Lemma \ref{lem:finitediam}. The argument follows the strategy of \cite[Lemma 4.6]{liu2024exponential}, adapted to the weighted random transfer operator $\mathcal L_\omega$.

\begin{proof}[Proof of Lemma \ref{lem:finitediam}]

Fix $\omega\in\Omega$ and set
$$
E_\phi:=e^{\|\phi\|_{L^\infty(\Omega\times M)}}
\ \text{and}\ 
\underline m_s:=
\inf_{(\omega,x)\in\Omega\times M}
\mathrm m\left(D_xT_\omega|_{E^s(\omega,x)}\right)>0,
$$
where $\mathrm m(A)$ denotes the co-norm of a linear map $A$, namely $
\mathrm m(A):=\inf_{\|v\|=1}\|Av\|.$

Let $\e\in(0,\min\{\e_0,\delta_0\}]$ be as fixed in Section~\ref{sec:geom}, let
$\delta_{\mathrm{loc}}:=\delta(\e)$
be the local product-structure scale from Proposition~\ref{prop:productstructure}, and let $\delta$ be the scale fixed in Definition~\ref{def:deltaandcover}. Recall also the definition of $\e^*$ from Definition~\ref{def:stableleaves}. Let $\e_u\in \left(0,\min\left\{\e_0,\frac{\delta}{4}\right\}\right)$ be the unstable scale fixed in the statement of the lemma, and let $N:\Omega\to\mathbb N\cup \{\infty\}$ be the stopping time given by Hypothesis \ref{hyp:h} for the pair $\left(\frac{\delta}{4},\e_u\right)$.

Choose $n_0\in\mathbb N$ such that $e^{\lambda_0 n_0}\e^*\geq 24\e$ and $e^{-\lambda_0 n_0}\e<\frac{\delta}{2}.$
Finally, choose $\omega\in \Omega$ such that $N_{n_0}(\omega)>0$ set $n:=N_{n_0}(\omega)\geq n_0,$ where $N_{n_0}(\omega)$ in given in \eqref{eq:ni}. We divide the proof into five steps.

\begin{step}[1]
We show that $
\mathcal L_\omega^n\left(\mathcal C_\omega(b,c,\nu)\right)\subset \mathcal C_{\theta^n\omega}(\lambda_2b,\lambda_2c,\nu)\subset \mathcal C_{\theta^n\omega}(b,c,\nu),$ and there exists $K_4>0$ such that
$$
\mathrm{Diam}(\mathcal L_\omega^n \mathcal C_\omega(b,c,\nu)) \leq K_4 + 2 \log \sup_{\varphi\in \mathcal C_\omega(b,c,\nu)}
\frac{\|\mathcal L_\omega^n\varphi\|_{\theta^n\omega,+}}{\|\mathcal L_\omega^n\varphi\|_{\theta^n\omega,-}}.
$$
\end{step}

By Proposition \ref{prop:cone_invariance_random}, iterated $n$ times, and since
$\mathcal C_{\theta^j\omega}(\lambda_2b,\lambda_2c,\nu)\subset
\mathcal C_{\theta^j\omega}(b,c,\nu)$ for every $j$, we have
$$
\mathcal L_\omega^n\mathcal C_\omega(b,c,\nu)
\subset
\mathcal C_{\theta^n\omega}(\lambda_2b,\lambda_2c,\nu)
\subset
\mathcal C_{\theta^n\omega}(b,c,\nu).
$$ 
We now show that
$$
\Theta_{\theta^n\omega}^{b,c,\nu}(\psi_1,\psi_2)
\le
d_{\theta^n\omega}^+(\psi_1,\psi_2)
+
2\log\left(\frac{1+\lambda_2}{1-\lambda_2}\right).
$$
Let
$$
0<t<
\frac{1-\lambda_2}{1+\lambda_2}
\alpha_{\theta^n\omega}^+(\psi_1,\psi_2).
$$
We prove that $\psi_2-t\psi_1\in\mathcal C_{\theta^n\omega}(b,c,\nu)$.

Condition $\mathbf{(C1)}$ follows from the definition of
$\alpha_{\theta^n\omega}^+(\psi_1,\psi_2)$. Indeed, for every
$\gamma\in\mathscr F_{\theta^n\omega}^s$ and every
$\rho\in D_1(a,\kappa,\gamma)$,
\begin{equation}\label{eq:C1-positive}
\int_\gamma(\psi_2-t\psi_1)\rho
=
\int_\gamma\psi_2\rho
-
t\int_\gamma\psi_1\rho
>0.
\end{equation}

We verify $\mathbf{(C2)}$. Fix
$\gamma\in\mathscr F_{\theta^n\omega}^s$ and
$\rho,\varsigma\in D_1(a,\kappa,\gamma)$. Since
$\psi_i\in\mathcal C_{\theta^n\omega}(\lambda_2b,\lambda_2c,\nu)$, for $i=1,2$,
\begin{equation}\label{eq:C2-strict}
e^{-\lambda_2 b\Theta_{\gamma}^{a,\kappa}(\rho,\varsigma)}
\int_\gamma\psi_i\varsigma
\le
\int_\gamma\psi_i\rho
\le
e^{\lambda_2 b\Theta_{\gamma}^{a,\kappa}(\rho,\varsigma)}
\int_\gamma\psi_i\varsigma.
\end{equation}
Using \eqref{eq:C2-strict}, we obtain
\begin{equation}\label{eq:C2-first-bound}
\begin{split}
\int_\gamma(\psi_2-t\psi_1)\rho
&\le
e^{\lambda_2 b\Theta_{\gamma}^{a,\kappa}(\rho,\varsigma)}
\int_\gamma\psi_2\varsigma
-
t e^{-\lambda_2 b\Theta_{\gamma}^{a,\kappa}(\rho,\varsigma)}
\int_\gamma\psi_1\varsigma.
\end{split}
\end{equation}
We claim that the right-hand side of \eqref{eq:C2-first-bound} is bounded above by $
e^{b\Theta_{\gamma}^{a,\kappa}(\rho,\varsigma)}
\int_\gamma(\psi_2-t\psi_1)\varsigma.$
To see this, it is enough to prove
\begin{equation}\label{eq:C2-needed}
\begin{split}
&t\left(\int_\gamma\psi_1\varsigma\right)
\left(
e^{b\Theta_{\gamma}^{a,\kappa}(\rho,\varsigma)}
-
e^{-\lambda_2 b\Theta_{\gamma}^{a,\kappa}(\rho,\varsigma)}
\right)\le
\left(\int_\gamma\psi_2\varsigma\right)
\left(
e^{b\Theta_{\gamma}^{a,\kappa}(\rho,\varsigma)}
-
e^{\lambda_2 b\Theta_{\gamma}^{a,\kappa}(\rho,\varsigma)}
\right).
\end{split}
\end{equation}
By the definition of $\alpha_{\theta^n\omega}^+(\psi_1,\psi_2)$,
\begin{equation}\label{eq:alpha-lower-C2}
\int_\gamma\psi_2\varsigma
\ge
\alpha_{\theta^n\omega}^+(\psi_1,\psi_2)
\int_\gamma\psi_1\varsigma.
\end{equation}
Since $
t<
\frac{1-\lambda_2}{1+\lambda_2}
\alpha_{\theta^n\omega}^+(\psi_1,\psi_2)$, to show \eqref{eq:C2-needed} it is enough to verify
\begin{equation}\label{eq:elementary-C2}
\frac{1-\lambda_2}{1+\lambda_2}
\left(
e^{b\Theta_{\gamma}^{a,\kappa}(\rho,\varsigma)}
-
e^{-\lambda_2 b\Theta_{\gamma}^{a,\kappa}(\rho,\varsigma)}
\right)\le
e^{b\Theta_{\gamma}^{a,\kappa}(\rho,\varsigma)}
-
e^{\lambda_2 b\Theta_{\gamma}^{a,\kappa}(\rho,\varsigma)}.
\end{equation}
If $\Theta_{\gamma}^{a,\kappa}(\rho,\varsigma)=0$, then
\eqref{eq:elementary-C2} is immediate. If
$\Theta_{\gamma}^{a,\kappa}(\rho,\varsigma)>0$, then
\begin{align*}
\frac{
e^{b\Theta_{\gamma}^{a,\kappa}(\rho,\varsigma)}
-
e^{\lambda_2 b\Theta_{\gamma}^{a,\kappa}(\rho,\varsigma)}
}{
e^{b\Theta_{\gamma}^{a,\kappa}(\rho,\varsigma)}
-
e^{-\lambda_2 b\Theta_{\gamma}^{a,\kappa}(\rho,\varsigma)}
}=
\frac{
1-e^{-(1-\lambda_2)b\Theta_{\gamma}^{a,\kappa}(\rho,\varsigma)}
}{
1-e^{-(1+\lambda_2)b\Theta_{\gamma}^{a,\kappa}(\rho,\varsigma)}
}
\ge
\frac{1-\lambda_2}{1+\lambda_2},
\end{align*}
because $x\mapsto(1-e^{-x})/x$ is decreasing on $(0,\infty)$. Hence
\eqref{eq:elementary-C2} holds. Combining \eqref{eq:C2-first-bound},
\eqref{eq:alpha-lower-C2}, and \eqref{eq:elementary-C2}, we obtain
\begin{equation}\label{eq:C2-final}
\int_\gamma(\psi_2-t\psi_1)\rho
\le
e^{b\Theta_{\gamma}^{a,\kappa}(\rho,\varsigma)}
\int_\gamma(\psi_2-t\psi_1)\varsigma.
\end{equation}
This verifies  $\mathbf{(C2)}$.

We verify $\mathbf{(C3)}$ in the same way. Let
$(\tilde\gamma,\gamma)$ be a nearby pair in
$\mathscr F_{\theta^n\omega}^s$, let
$\rho\in D_1(a_1,\kappa,\gamma)$, and let $\tilde\rho$ be the density induced on
$\tilde\gamma$ by unstable holonomy. Since
$\psi_i\in\mathcal C_{\theta^n\omega}(\lambda_2b,\lambda_2c,\nu)$, for $i=1,2$,
\begin{equation}\label{eq:C3-strict}
e^{-\lambda_2 c d_u(\tilde\gamma,\gamma)^\nu}
\int_\gamma\psi_i\rho
\le
\int_{\tilde\gamma}\psi_i\tilde\rho
\le
e^{\lambda_2 c d_u(\tilde\gamma,\gamma)^\nu}
\int_\gamma\psi_i\rho.
\end{equation}
Using \eqref{eq:C3-strict}, we obtain
\begin{equation}\label{eq:C3-first-bound}
\begin{split}
\int_{\tilde\gamma}(\psi_2-t\psi_1)\tilde\rho
&\le
e^{\lambda_2 c d_u(\tilde\gamma,\gamma)^\nu}
\int_\gamma\psi_2\rho
-
t e^{-\lambda_2 c d_u(\tilde\gamma,\gamma)^\nu}
\int_\gamma\psi_1\rho.
\end{split}
\end{equation}
We claim that the right-hand side of \eqref{eq:C3-first-bound} is bounded above by $e^{c d_u(\tilde\gamma,\gamma)^\nu}
\int_\gamma(\psi_2-t\psi_1)\rho.$
Equivalently, it is enough to prove
\begin{equation}\label{eq:C3-needed}
\begin{split}
&t\left(\int_\gamma\psi_1\rho\right)
\left(
e^{c d_u(\tilde\gamma,\gamma)^\nu}
-
e^{-\lambda_2 c d_u(\tilde\gamma,\gamma)^\nu}
\right) \le
\left(\int_\gamma\psi_2\rho\right)
\left(
e^{c d_u(\tilde\gamma,\gamma)^\nu}
-
e^{\lambda_2 c d_u(\tilde\gamma,\gamma)^\nu}
\right).
\end{split}
\end{equation}
Since $\rho\in D_1(a_1,\kappa,\gamma)\subset D_1(a,\kappa,\gamma)$, the definition of
$\alpha_{\theta^n\omega}^+(\psi_1,\psi_2)$ gives
\begin{equation}\label{eq:alpha-lower-C3}
\int_\gamma\psi_2\rho
\ge
\alpha_{\theta^n\omega}^+(\psi_1,\psi_2)
\int_\gamma\psi_1\rho.
\end{equation}
Since $t<
\frac{1-\lambda_2}{1+\lambda_2}
\alpha_{\theta^n\omega}^+(\psi_1,\psi_2),$
the estimate \eqref{eq:C3-needed} is a consequence of 
\begin{equation}\label{eq:elementary-C3}
\frac{1-\lambda_2}{1+\lambda_2}
\left(
e^{c d_u(\tilde\gamma,\gamma)^\nu}
-
e^{-\lambda_2 c d_u(\tilde\gamma,\gamma)^\nu}
\right)\le
e^{c d_u(\tilde\gamma,\gamma)^\nu}
-
e^{\lambda_2 c d_u(\tilde\gamma,\gamma)^\nu}.
\end{equation}
The proof of \eqref{eq:elementary-C3} is identical to the proof of
\eqref{eq:elementary-C2}, replacing
$b\Theta_{\gamma}^{a,\kappa}(\rho,\varsigma)$ by
$c d_u(\tilde\gamma,\gamma)^\nu$. Combining \eqref{eq:C3-first-bound},
\eqref{eq:alpha-lower-C3}, and \eqref{eq:elementary-C3}, we get
\begin{equation}\label{eq:C3-final}
\int_{\tilde\gamma}(\psi_2-t\psi_1)\tilde\rho
\le
e^{c d_u(\tilde\gamma,\gamma)^\nu}
\int_\gamma(\psi_2-t\psi_1)\rho.
\end{equation}
This proves $\mathbf{(C3)}$. Therefore $
\psi_2-t\psi_1\in\mathcal C_{\theta^n\omega}(b,c,\nu),$ and hence
\begin{equation}\label{eq:alpha-full-lower}
\alpha_{\theta^n\omega}^{b,c,\nu}(\psi_1,\psi_2)
\ge
\frac{1-\lambda_2}{1+\lambda_2}
\alpha_{\theta^n\omega}^+(\psi_1,\psi_2).
\end{equation}

Analogously, if
$$
s>
\frac{1+\lambda_2}{1-\lambda_2}
\beta_{\theta^n\omega}^+(\psi_1,\psi_2),
$$
then $
s\psi_1-\psi_2\in\mathcal C_{\theta^n\omega}(b,c,\nu),$
and consequently
\begin{equation}\label{eq:beta-full-upper}
\beta_{\theta^n\omega}^{b,c,\nu}(\psi_1,\psi_2)
\le
\frac{1+\lambda_2}{1-\lambda_2}
\beta_{\theta^n\omega}^+(\psi_1,\psi_2).
\end{equation}
Using \eqref{eq:alpha-full-lower} and \eqref{eq:beta-full-upper}, we obtain
\begin{align*}
\Theta_{\theta^n\omega}^{b,c,\nu}(\psi_1,\psi_2)
&=
\log
\frac{\beta_{\theta^n\omega}^{b,c,\nu}(\psi_1,\psi_2)}
{\alpha_{\theta^n\omega}^{b,c,\nu}(\psi_1,\psi_2)}\le
\log
\left[
\left(\frac{1+\lambda_2}{1-\lambda_2}\right)^2
\frac{\beta_{\theta^n\omega}^+(\psi_1,\psi_2)}
{\alpha_{\theta^n\omega}^+(\psi_1,\psi_2)}
\right]
\\
&=
d_{\theta^n\omega}^+(\psi_1,\psi_2)
+
2\log\left(\frac{1+\lambda_2}{1-\lambda_2}\right).
\end{align*}

Hence
\begin{align*}
d_{\theta^n\omega}^+(\psi_1,\psi_2)
&\le
\log
\left(
\frac{\|\psi_2\|_{\theta^n\omega,+}}
{\|\psi_1\|_{\theta^n\omega,-}}
\frac{\|\psi_1\|_{\theta^n\omega,+}}
{\|\psi_2\|_{\theta^n\omega,-}}
\right)=
\log
\frac{\|\psi_1\|_{\theta^n\omega,+}}
{\|\psi_1\|_{\theta^n\omega,-}}
+
\log
\frac{\|\psi_2\|_{\theta^n\omega,+}}
{\|\psi_2\|_{\theta^n\omega,-}}
\\
&\le
2\log
\sup_{\phi\in\mathcal C_\omega(b,c,\nu)}
\frac{\|\mathcal L_\omega^n\phi\|_{\theta^n\omega,+}}
{\|\mathcal L_\omega^n\phi\|_{\theta^n\omega,-}}.
\end{align*}
Combining the two estimates, we get
$$
\Theta_{\theta^n\omega}^{b,c,\nu}
(\mathcal L_\omega^n\phi_1,\mathcal L_\omega^n\phi_2)
\le
K_4
+
2\log
\sup_{\phi\in\mathcal C_\omega(b,c,\nu)}
\frac{\|\mathcal L_\omega^n\phi\|_{\theta^n\omega,+}}
{\|\mathcal L_\omega^n\phi\|_{\theta^n\omega,-}},
$$
where $
K_4:=2\log\left(\frac{1+\lambda_2}{1-\lambda_2}\right).$ Taking the supremum over
$\phi_1,\phi_2\in\mathcal C_\omega(b,c,\nu)$ yields
$$
\operatorname{Diam}_{\Theta_{\theta^n\omega}^{b,c,\nu}}
\left(
\mathcal L_\omega^n\mathcal C_\omega(b,c,\nu)
\right)
\le
K_4
+
2\log
\sup_{\phi\in\mathcal C_\omega(b,c,\nu)}
\frac{\|\mathcal L_\omega^n\phi\|_{\theta^n\omega,+}}
{\|\mathcal L_\omega^n\phi\|_{\theta^n\omega,-}}.
$$

\begin{step}[2] We show that $\|\mathcal L_\omega^n \varphi\|_{\theta^n\omega,+} \leq E_\phi^n \|\varphi\|_{\omega,+}$ for every $\varphi \in \mathcal C_\omega(b,c,\nu)$ and $\omega\in\Omega.$    
\end{step}

Write $\psi:=\mathcal L_\omega^n\varphi.$ Let $\gamma(\theta^n\omega)\in\mathscr F^s_{\theta^n\omega}$ and
$\rho_{\theta^n\omega}\in D_1(a,\kappa,\gamma(\theta^n\omega))$. Iterating Lemma \ref{lem:push_density}, we obtain a finite
family of pairwise disjoint branches
$$
(T_\omega^{n})^{-1}(\gamma(\theta^n\omega))=\bigcup_{j=1}^{Q_n(\gamma(\theta^n\omega))}\gamma^{(j)}(\omega),
$$
together with positive densities $\bar\rho_\omega^{(j)}$ on $\gamma^{(j)}(\omega)$ such that
$$
\int_{\gamma(\theta^n\omega)}\psi\,\rho_{\theta^n\omega}
=
\sum_{j=1}^{Q_n(\gamma(\theta^n\omega))}
\int_{\gamma^{(j)}(\omega)}\varphi\,\bar\rho_\omega^{(j)}.
$$
Whenever $\int_{\gamma^{(j)}(\omega)}\bar\rho_\omega^{(j)}>0$, set $
\rho_\omega^{(j)}:=
\frac{\bar\rho_\omega^{(j)}}{\int_{\gamma^{(j)}(\omega)}\bar\rho_\omega^{(j)}}.$
 By repeated application of Lemma \ref{lem:push_density}, one has $
\rho_\omega^{(j)}\in D_1(a,\kappa,\gamma^{(j)}(\omega)).$ Hence
\begin{align*}
\int_{\gamma(\theta^n\omega)}\psi\,\rho_{\theta^n\omega}
&=
\sum_j
\left(\int_{\gamma^{(j)}(\omega)}\bar\rho_\omega^{(j)}\right)
\left(\int_{\gamma^{(j)}(\omega)}\varphi\,\rho_\omega^{(j)}\right)\\
&\leq
\|\varphi\|_{\omega,+}
\sum_j\int_{\gamma^{(j)}(\omega)}\bar\rho_\omega^{(j)}.
\end{align*}
Applying the same identity with $\varphi\equiv1$, we get
$$
\sum_j\int_{\gamma^{(j)}(\omega)}\bar\rho_\omega^{(j)}
=
\int_{\gamma(\theta^n\omega)}\mathcal L_\omega^n \mathbbm 1\,\rho_{\theta^n\omega}.
$$
Since
$$
\mathcal L_\omega^n \mathbbm 1(x)=e^{S_n\phi_\omega(T_\omega^{-n}x)}
\leq
E_\phi^n
\ \text{for every }x\in M,
$$
it follows that
$$
\int_{\gamma(\theta^n\omega)}\psi\,\rho_{\theta^n\omega}
\leq
E_\phi^n\|\varphi\|_{\omega,+}.
$$
Taking the supremum over $\gamma(\theta^n\omega)$ and $\rho_{\theta^n\omega}$ yields $\|\psi\|_{\theta^n\omega,+}\leq E_\phi^n\|\varphi\|_{\omega,+}.$

\begin{step}[3]
Let $\varphi\in\mathcal C_\omega(b,c,\nu)$. Define
\begin{equation}\label{eq:half-plus-norm}
\|\varphi\|_{\omega,+}^{(1/2)}
:=
\sup_{\gamma(\omega)\in\mathscr F_\omega^s}
\sup_{\rho_\omega\in D_1(a/2,\kappa,\gamma(\omega))}
\int_{\gamma(\omega)}\varphi\,\rho_\omega.
\end{equation}
We show that
\begin{equation}\label{eq:full-plus-controlled-by-half-plus}
\|\varphi\|_{\omega,+}
\le
2e^{a\varepsilon^\kappa}
\|\varphi\|_{\omega,+}^{(1/2)}.
\end{equation}
\end{step}

Fix $\gamma(\omega)\in\mathscr F_\omega^s$ and
$\rho_\omega\in D_1(a,\kappa,\gamma(\omega))$. Let $k_\omega$ denote the constant
normalised density on $\gamma(\omega)$, namely
\begin{equation}\label{eq:constant-density}
k_\omega(x)
:=
\frac{1}{m_{\gamma(\omega)}(\gamma(\omega))},
\ 
x\in\gamma(\omega).
\end{equation}
Since $\rho_\omega\in D_1(a,\kappa,\gamma(\omega))$, there exists
$x_0\in\gamma(\omega)$ such that
\begin{equation}\label{eq:rho-equals-constant-somewhere}
\rho_\omega(x_0)=k_\omega(x_0).
\end{equation}
Moreover, for every $x\in\gamma(\omega)$, using
\eqref{eq:rho-equals-constant-somewhere} and
$\mathrm{diam}(\gamma(\omega))\le\varepsilon$,
\begin{equation}\label{eq:rho-upper-constant}
\rho_\omega(x)
\le
e^{a d(x,x_0)^\kappa}\rho_\omega(x_0)
\le
e^{a\varepsilon^\kappa}k_\omega(x).
\end{equation}
Hence
\begin{equation}\label{eq:h-positive}
2e^{a\varepsilon^\kappa}k_\omega-\rho_\omega>0.
\end{equation}
It is easy to see that
\begin{equation}\label{eq:h-in-Da}
2e^{a\varepsilon^\kappa}k_\omega-\rho_\omega
\in
D(a,\kappa,\gamma(\omega)).
\end{equation}

From \eqref{eq:h-positive} and \eqref{eq:h-in-Da}, the function
$2e^{a\varepsilon^\kappa}k_\omega-\rho_\omega$ is a positive multiple of an element of
$D_1(a,\kappa,\gamma(\omega))$. Thus condition $\mathbf{(C1)}$ gives $\int_{\gamma(\omega)}
\varphi\left(2e^{a\varepsilon^\kappa}k_\omega-\rho_\omega\right)>0.$ This implies
\begin{equation}\label{eq:rho-controlled-by-k}
\int_{\gamma(\omega)}\varphi\,\rho_\omega
\le
2e^{a\varepsilon^\kappa}
\int_{\gamma(\omega)}\varphi\,k_\omega.
\end{equation}
Since $k_\omega\in D_1(a/2,\kappa,\gamma(\omega))$, the definition
\eqref{eq:half-plus-norm} gives
\begin{equation}\label{eq:k-controlled-by-half-plus}
\int_{\gamma(\omega)}\varphi\,k_\omega
\le
\|\varphi\|_{\omega,+}^{(1/2)}.
\end{equation}
Combining \eqref{eq:rho-controlled-by-k} and \eqref{eq:k-controlled-by-half-plus}, we obtain
\begin{equation}\label{eq:rho-controlled-by-half-plus}
\int_{\gamma(\omega)}\varphi\,\rho_\omega
\le
2e^{a\varepsilon^\kappa}
\|\varphi\|_{\omega,+}^{(1/2)}.
\end{equation}
Taking the supremum in \eqref{eq:rho-controlled-by-half-plus} over
$\gamma(\omega)\in\mathscr F_\omega^s$ and
$\rho_\omega\in D_1(a,\kappa,\gamma(\omega))$ proves
\eqref{eq:full-plus-controlled-by-half-plus}.

\begin{step}[4]
We show that $
\|\mathcal L_\omega^n\varphi\|_{\theta^n\omega,-}\geq D_1(\omega)^{-1}\|\mathcal L_\omega^n\varphi\|_{\theta^n\omega,+}$ for every $\varphi\in\mathcal C_\omega(b,c,\nu)$.
\end{step}

Let $\psi:=\mathcal L_\omega^n\varphi.$ Choose
$\gamma^*(\omega)\in\mathscr F_\omega^s$ and
$\rho_\omega^*\in D_1(a/2,\kappa,\gamma^*(\omega))$
such that
\begin{equation}\label{eq:almost-half-plus-maximiser}
\int_{\gamma^*(\omega)}
\varphi\,\rho_\omega^*
\ge
\frac12
\|\varphi\|_{\omega,+}^{(1/2)}.
\end{equation}
Using \eqref{eq:full-plus-controlled-by-half-plus}, we obtain
\begin{equation}\label{eq:good-initial-leaf}
\int_{\gamma^*(\omega)}
\varphi\,\rho_\omega^*
\ge
\frac{e^{-a\varepsilon^\kappa}}{4}
\|\varphi\|_{\omega,+}.
\end{equation}

We now fix an arbitrary local stable manifold
$\gamma(\theta^n\omega)\in\mathscr F_{\theta^n\omega}^s$.
Choose points $x^*,x_n\in M$ such that
$$
W_{\varepsilon^*}^s(\omega,x^*)
\subset
\gamma^*(\omega),
\ 
W_{\varepsilon^*}^s(\theta^n\omega,x_n)
\subset
\gamma(\theta^n\omega).
$$
Let $\gamma^u(\omega):=
W_{\varepsilon_u}^u(\omega,x^*).$
Since $\varepsilon_u<\delta/4$,  $\gamma^u(\omega)\subset B_M(\delta/4,x^*),$
 and $n=N_{n_0}(\omega)$, we obtain that $T_\omega^n(\gamma^u(\omega))$ is $\delta/4$-dense in $M$. Hence there exists
$z\in\gamma^u(\omega)$ such that $T_\omega^n(z)\in B_M(\delta/4,x_n).$
In particular, $d(T_\omega^n(z),x_n)<\delta/4<\delta_{\mathrm{loc}},$
so the bracket $y:=[x_n,T_\omega^n(z)]_{\theta^n\omega}^{\varepsilon}$
is well-defined. By the choice of $\delta$ and the local product structure,
$$
y\in W_{\varepsilon^*}^s(\theta^n\omega,x_n)
\subset
\gamma(\theta^n\omega).
$$
Set $\tilde y:=(T_\omega^n)^{-1}(y).$
From the fact that $y$ and $T_\omega^n(z)$ lie on the same local unstable manifold over the fibre
$\theta^n\omega$, backward contraction along unstable leaves gives
$$
d(\tilde y,z)
\le
e^{-\lambda_0 n}d(y,T_\omega^n(z))
\le
e^{-\lambda_0 n}\varepsilon
\le
e^{-\lambda_0 n_0}\varepsilon
<
\frac{\delta}{2}.
$$
Using that $d(z,x^*)<\delta/4$, we have $
d(\tilde y,x^*)<\frac{3\delta}{4}<\delta_{\mathrm{loc}}.$
Hence, by the local product structure, there exists
$\widetilde\gamma(\omega)\in\mathscr F_\omega^s$ such that $
\widetilde\gamma(\omega)
\subset
(T_\omega^n)^{-1}(\gamma(\theta^n\omega)),$
the pair
$(\widetilde\gamma(\omega),\gamma^*(\omega))$
is nearby, and $d_u(\widetilde\gamma(\omega),\gamma^*(\omega))
\le
\varepsilon.$

Choose an arbitrary density
$\rho_{\theta^n\omega}\in D_1(a,\kappa,\gamma(\theta^n\omega))$. Let $\bar\rho_\omega$ be the unnormalised pullback density on
$\widetilde\gamma(\omega)$ obtained from
$\rho_{\theta^n\omega}$ by iterating Lemma \ref{lem:push_density}
along this branch. Since all branch contributions are positive,
\begin{equation}\label{eq:branch-lower-bound-step4}
\int_{\gamma(\theta^n\omega)}
\psi\,\rho_{\theta^n\omega} = \int_{\gamma(\theta^n\omega)}
\mathcal L_\omega^n \varphi \,\rho_{\theta^n\omega} 
\ge
\int_{\widetilde\gamma(\omega)}
\varphi\,\bar\rho_\omega.
\end{equation}
Write
$$
M_\omega:=
\int_{\widetilde\gamma(\omega)}
\bar\rho_\omega,
\ 
\widetilde\rho_\omega:=
\frac{\bar\rho_\omega}{M_\omega}.
$$
From Lemma
\ref{lem:push_density} we obtain
\begin{equation}\label{eq:pullback-half-density-step4}
\widetilde\rho_\omega
\in
D_1(\alpha_0 a,\kappa,\widetilde\gamma(\omega)).
\end{equation}

In the follows we estimate $M_\omega$ from below. Along the chosen branch,
$$
\bar\rho_\omega(x)
=
e^{S_n\phi_\omega(x)-S_n\phi_{J^s,\omega}(x)}
\rho_{\theta^n\omega}(T_\omega^n x).
$$
Since
$\phi^{J^s}(\omega,x)=-\log J^s(\omega,x)$,
changing variables
$y=T_\omega^n(x)$ along the stable manifold gives
\begin{equation}\label{eq:M-change-variables-step4}
M_\omega
=
\int_{T_\omega^n(\widetilde\gamma(\omega))}
e^{S_n\phi_\omega(T_\omega^{-n}y)}
\rho_{\theta^n\omega}(y).
\end{equation}
Because
$\rho_{\theta^n\omega}\in
D_1(a,\kappa,\gamma(\theta^n\omega))$
and
$\operatorname{diam}(\gamma(\theta^n\omega))\le\varepsilon$,
for every
$y\in\gamma(\theta^n\omega)$,
$$
1
=
\int_{\gamma(\theta^n\omega)}
\rho_{\theta^n\omega}
\le
e^{a\varepsilon^\kappa}
\rho_{\theta^n\omega}(y)
m_{\gamma(\theta^n\omega)}
(\gamma(\theta^n\omega)).
$$
Hence
\begin{equation}\label{eq:rho-final-lower-step4}
\rho_{\theta^n\omega}(y)
\ge
\frac{e^{-a\varepsilon^\kappa}}
{m_{\gamma(\theta^n\omega)}
(\gamma(\theta^n\omega))}
\end{equation}
for every
$y\in\gamma(\theta^n\omega)$.
Using \eqref{eq:M-change-variables-step4} and
\eqref{eq:rho-final-lower-step4},
\begin{equation}\label{eq:M-lower-preliminary-step4}
M_\omega
\ge
E_\phi^{-n}
e^{-a\varepsilon^\kappa}
\frac{
m_{\gamma(\theta^n\omega)}
(T_\omega^n(\widetilde\gamma(\omega)))
}{
m_{\gamma(\theta^n\omega)}
(\gamma(\theta^n\omega))
}.
\end{equation}
Since
$\widetilde\gamma(\omega)\in\mathscr F_\omega^s$, $
m_{\widetilde\gamma(\omega)}
(\widetilde\gamma(\omega))
\ge
A(\varepsilon)/(4J^2),$ $m_{\gamma(\theta^n\omega)}
(\gamma(\theta^n\omega))
\le
A(\varepsilon),$ and, along stable leaves, $
m_{\gamma(\theta^n\omega)}
(T_\omega^n(\widetilde\gamma(\omega)))
\ge
\underline m_s^n
m_{\widetilde\gamma(\omega)}
(\widetilde\gamma(\omega)).$
Combining these estimates with
\eqref{eq:M-lower-preliminary-step4},
we obtain
\begin{equation}\label{eq:M-lower-step4}
M_\omega
\ge
E_\phi^{-n}
e^{-a\varepsilon^\kappa}
\frac{\underline m_s^n}{4J^2}.
\end{equation}

Let $k_\omega$ be the constant normalised density on
$\widetilde\gamma(\omega)$. Since
$k_\omega\in D_1(a_1,\kappa,\widetilde\gamma(\omega))$ and, by
\eqref{eq:pullback-half-density-step4},
$$
\widetilde\rho_\omega
\in
D_1(\alpha_0 a,\kappa,\widetilde\gamma(\omega))
=
D_1(a_1,\kappa,\widetilde\gamma(\omega)),
$$
we have $\Theta_{\widetilde\gamma(\omega)}^{a,\kappa}
(\widetilde\rho_\omega,k_\omega)
\le
K_0$
for some $K_0>0$. Thus condition $\mathbf{(C2)}$ yields
\begin{equation}\label{eq:C2-first-lower-step4}
\int_{\widetilde\gamma(\omega)}
\varphi\,\widetilde\rho_\omega
\ge
e^{-bK_0}
\int_{\widetilde\gamma(\omega)}
\varphi\,k_\omega.
\end{equation}

Let
$\mathrm{hol}_\omega^u:\gamma^*(\omega)\to\widetilde\gamma(\omega)$
be the unstable holonomy map. Define the density $\widetilde{k}_\omega$ on
$\gamma^*(\omega)$ by
$$
\widetilde{k}_\omega(y)
:=
k_\omega\bigl(\mathrm{hol}_\omega^u(y)\bigr)\,
\mathrm{Jac}\bigl(\mathrm{hol}_\omega^u\bigr)(y),
\ 
y\in\gamma^*(\omega).
$$
Since $k_\omega$ is constant, $k_\omega\in D_1(a_1,\kappa_1,\widetilde\gamma(\omega)).$
Therefore, applying condition $\mathbf{(C3)}$ to the nearby pair
$(\gamma^*(\omega),\widetilde\gamma(\omega))$, we obtain
\begin{equation}\label{eq:C3-lower-step4}
\int_{\widetilde\gamma(\omega)}
\varphi\,k_\omega
\ge
e^{-c\varepsilon^\nu}
\int_{\gamma^*(\omega)}
\varphi\,\widetilde{k}_\omega.
\end{equation}
Moreover, $
\widetilde{k}_\omega
\in
D_1(a_1,\kappa_1,\gamma^*(\omega))
\subset
D_1(a_1,\kappa,\gamma^*(\omega)),$
and, since
$$
\rho_\omega^*
\in
D_1(a/2,\kappa,\gamma^*(\omega))
\subset
D_1(a_1,\kappa,\gamma^*(\omega)),
$$
we have $
\Theta_{\gamma^*(\omega)}^{a,\kappa}
(\widetilde{k}_\omega,\rho_\omega^*)
\le
K_0.$ Thus condition $\mathbf{(C2)}$ yields
\begin{equation}\label{eq:C2-lower-step4}
\int_{\gamma^*(\omega)}
\varphi\,\widetilde{k}_\omega
\ge
e^{-bK_0}
\int_{\gamma^*(\omega)}
\varphi\,\rho_\omega^*.
\end{equation}
Combining
\eqref{eq:branch-lower-bound-step4},
\eqref{eq:M-lower-step4},
\eqref{eq:C2-first-lower-step4},
\eqref{eq:C3-lower-step4},
\eqref{eq:C2-lower-step4},
and
\eqref{eq:good-initial-leaf},
we obtain
\begin{align}
\int_{\gamma(\theta^n\omega)}
\psi\,\rho_{\theta^n\omega} \ge
M_\omega
\int_{\widetilde\gamma(\omega)}
\varphi\,\widetilde\rho_\omega \ge
\frac{\underline m_s^n}{16J^2}
e^{
-2a\varepsilon^\kappa
-c\varepsilon^\nu
-2bK_0}
E_\phi^{-n}
\|\varphi\|_{\omega,+}.
\label{eq:final-lower-before-step2}
\end{align}
By Step 2, $\|\psi\|_{\theta^n\omega,+}
\le
E_\phi^n
\|\varphi\|_{\omega,+}.$
Therefore \eqref{eq:final-lower-before-step2} gives
\begin{equation}\label{eq:final-lower-step4}
\int_{\gamma(\theta^n\omega)}
\psi\,\rho_{\theta^n\omega}
\ge
\frac{\underline m_s^n}{16J^2}
e^{
-2a\varepsilon^\kappa
-c\varepsilon^\nu
-2bK_0}
E_\phi^{-2n}
\|\psi\|_{\theta^n\omega,+}.
\end{equation}
Since
$\gamma(\theta^n\omega)\in\mathscr F_{\theta^n\omega}^s$
and
$\rho_{\theta^n\omega}\in
D_1(a,\kappa,\gamma(\theta^n\omega))$
were arbitrary, taking the infimum in
\eqref{eq:final-lower-step4} yields
$$
\|\psi\|_{\theta^n\omega,-}
\ge
D_1(\omega)^{-1}
\|\psi\|_{\theta^n\omega,+},\ \text{where }
D_1(\omega)
:=
K_5
\frac{E_\phi^{2n}}{\underline m_s^n}
$$
and $K_5
:=
16J^2
e^{
2a\varepsilon^\kappa
+c\varepsilon^\nu
+2bK_0}.$
Equivalently,
$$
\frac{
\|\mathcal L_\omega^n\varphi\|_{\theta^n\omega,+}
}{
\|\mathcal L_\omega^n\varphi\|_{\theta^n\omega,-}
}
\le
D_1(\omega)
$$
for every
$\varphi\in\mathcal C_\omega(b,c,\nu)$.

 \begin{step}[5]
We conclude the proof.
\end{step}

From Step 1 and Step 3 we have that  $
\Theta_{\theta^n\omega}^{b,c,\nu}(\psi_1,\psi_2)
\leq
 K_4 + \log D_1(\omega).$
Hence
$$
\sup_{\varphi_1,\varphi_2\in\mathcal C_\omega(b,c,\nu)}
\Theta_{\theta^n\omega}^{b,c,\nu}\left(\mathcal L_\omega^n\varphi_1,\mathcal L_\omega^n\varphi_2\right)
\leq
D_2(\omega),
$$
with $
D_2(\omega):=K_4+ 2\log D_1(\omega).$ This completes the proof.
\end{proof}

\end{appendix}

\addtocontents{toc}{\protect\setcounter{tocdepth}{2}}

\end{document}